%
%
%


\documentclass[11pt]{amsart}





\usepackage{amsmath,amsthm,indentfirst}
\usepackage{amssymb}
\usepackage{amsfonts}
\usepackage{amscd}
\usepackage[latin1]{inputenc}
\usepackage{hyperref}
\usepackage{ifthen, amsfonts, amssymb, graphicx, srcltx, mathrsfs, xfrac,
amsmath}
\usepackage{epsfig}
\usepackage{tikz}
\usepackage{pinlabel}
\usepackage{enumerate}
\input xy 
\xyoption{all}
\usepackage{color, soul}

\theoremstyle{plain}
\newtheorem*{maintheorem*}{Main Theorem}

\newtheorem*{thmd*}{Theorem 1.5}
\newtheorem*{thme*}{Theorem 1.6}
\newtheorem*{remark*}{Remark}
\newtheorem*{conjecture*}{Conjecture}
\newtheorem*{prop*}{Proposition}
\newtheorem{thm}{Theorem}[section]
\newtheorem{cor}[thm]{Corollary}
\newtheorem{lem}[thm]{Lemma}
\newtheorem{prop}[thm]{Proposition}

\theoremstyle{definition}

\newtheorem*{proofc*}{Proof of Theorem C}

\newtheorem{question}[thm]{Question}

\newtheorem{definition}[thm]{Definition}

\newtheorem{remark}[thm]{Remark}
\newtheorem{notation}[thm]{Notation}
\newtheorem{claim}[thm]{Claim}

\DeclareMathOperator{\Teich}{Teich}

\DeclareMathOperator{\base}{base}
\DeclareMathOperator{\tw}{tw}

\DeclareMathOperator{\diam}{diam}

\DeclareMathOperator{\hyp}{Hyp}
\DeclareMathOperator{\Area}{Area}
\DeclareMathOperator{\cyl}{cyl}
\DeclareMathOperator{\Ext}{Ext}

\newcommand{\ML}{\mathcal{ML}}
\newcommand{\PML}{\mathcal{PML}}
\newcommand{\EL}{\mathcal{EL}}

\numberwithin{equation}{section}

\begin{document}


\title[Limit sets of Teichm\"{u}ller geodesics]{Limit sets of Teichm\"{u}ller geodesics with minimal nonuniquely ergodic vertical foliation, II}

\date{\today}


\author[Jeff Brock]{Jeffrey Brock}
\address{Department of Mathematics, Brown University, Providence, RI, }
\email{brock@math.brown.edu}
\author[Chris Leininger]{Christopher Leininger}
\address{ Department of Mathematics, University of Illinois at Urbana-Champaign, 1409 W Green ST, Urbana, IL}
\email{clein@illinois.edu}
\author{Babak Modami}
\address{ Department of Mathematics, University of Illinois at Urbana-Champaign, 1409 W Green ST, Urbana, IL}
\email{bmodami@illinois.edu}
\author{Kasra Rafi}
\address{Department of Mathematics, University of Toronto, Toronto, ON}
\email{rafi@math.utoronto.edu}
\thanks{The first author was partially supported by NSF grant DMS-1207572,  the second author by NSF grant DMS-1510034, the third author by NSF grant DMS-1065872 and the fourth author by NCERC grant \# 435885.}


\subjclass[2010]{Primary 32G15, 57M50 Secondary 37D40, 37A25} 



\begin{abstract}
Given a sequence of curves on a surface, we provide conditions which ensure that (1) the sequence is an infinite quasi-geodesic in the curve complex, (2) the limit in the Gromov boundary is represented by a nonuniquely ergodic ending lamination, and (3) the sequence divides into a finite set of subsequences, each of which projectively converges to one of the ergodic measures on the ending lamination.  The conditions are sufficiently robust, allowing us to construct sequences on a closed surface of genus $g$ for which the space of measures has the maximal dimension $3g-3$, for example.


We also study the limit sets in the Thurston boundary of Teichm\"{u}ller geodesic rays defined by quadratic differentials whose vertical foliations are obtained from the constructions mentioned above.  We prove that such examples exist for which the limit is a cycle in the $1$--skeleton of the simplex of projective classes of measures visiting every vertex.
\end{abstract}

\maketitle

\setcounter{tocdepth}{1}
\tableofcontents

\section{Introduction}
This paper builds on the work of the second and fourth author with Anna Lenzhen, \cite{nonuniqueerg}, in which  the authors construct a sequence of curves in the five-punctured sphere $S$ with the following properties (see \S\ref{sec : background} for definitions).  First, the sequence is a quasi-geodesic ray in the curve complex of $S$, and hence converges to some ending lamination $\nu$.  Second, $\nu$ is nonuniquely ergodic, and the sequence naturally splits into two subsequences, each of which converges to one of the ergodic measures on $\nu$ in the space of projective measured laminations.  Third, for any choice of measure $\bar \nu$ on $\nu$ and base point $X$ in Teichm\"uller space, the Teichm\"uller ray based at $X$ and defined by the quadratic differential with vertical foliation $\bar \nu$, accumulates on the entire simplex of measures on $\nu$ in the Thurston compactification.  The construction in \cite{nonuniqueerg} was actually a family of sequences depending on certain parameters.

In this paper we extract the key features of the sequences produced in
the above construction as a set of {\em local} properties for any
sequence of curves $\{\gamma_k\}_{k=0}^\infty$ on any surface, which
we denote $\mathcal P$; see \S\ref{sec : sequence} and
Definition~\ref{def : P} as well as \S\ref{sec : constructions} for
examples.  Here, ``local'' is more precisely {\em $m$--local} for some
$2 \leq m \leq \xi(S)$ (where $\xi(S) =\dim_{\mathbb C}(\Teich(S))$),
and means that the conditions in $\mathcal P$ involve relations
between curves contained subsets of the form
$\{\gamma_k,\ldots,\gamma_{k+2m}\}$ for $k \geq 0$.  We refer to the
number $m$ as the {\em subsequence counter}.  Most of the conditions in
$\mathcal P$ are stated in terms of intersection numbers, though they
also include information about twisting which is recorded in an auxiliary sequence $\{e_k\}_{k=0}^\infty \subset \mathbb N$. 

\begin{thm} \label{thm : subsurfcoeff}
For appropriate choices of parameters in $\mathcal P$, any sequence $\{\gamma_k\}_{k=0}^\infty \subset \mathcal C(S)$ satisfying $\mathcal P$ will be the vertices of a quasi-geodesic in $\mathcal C(S)$ and hence will limit to an ending lamination $\nu$ in $\partial \mathcal C(S) \cong \mathcal{EL}(S)$.

If $\mu = \gamma_0 \cup \ldots \cup \gamma_{m-1}$, then for any $k
\geq m$, the subsequence counter, we have
$$d_{\gamma_{k}}(\mu,\nu) \stackrel + \asymp e_{k}.$$
On the other hand, there is a constant $R>0$ with the property that for any proper subsurface $W\neq \gamma_{k}$ for any $k\in\mathbb{N}$ we have
$$d_{W}(\mu,\nu) < R.$$ 
\end{thm}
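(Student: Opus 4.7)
The plan is to verify the three assertions in order, using the $m$-local nature of $\mathcal{P}$ to reduce global statements about $\{\gamma_k\}$, $\mu$, and $\nu$ to controlled computations on finite windows of the sequence.

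First, for the quasi-geodesic claim, I would combine two standard tools. The conditions in $\mathcal{P}$ can be arranged so that for each $k$ there is some ``witness'' subsurface $W_k$ (typically an annulus $\gamma_j$ with $|j-k|<m$, carrying large twisting $e_j$) for which $d_{W_k}(\gamma_{k-m},\gamma_{k+m})$ is bounded below by a prescribed large constant, while $d_{W_k}(\gamma_i,\gamma_j)$ is uniformly bounded whenever the indices $i,j$ lie outside a bounded neighborhood of $k$. By the Bounded Geodesic Image theorem of Masur--Minsky, any $\mathcal{C}(S)$-geodesic from $\gamma_0$ to $\gamma_N$ must then pass within distance one of $\partial W_k$ for each $k\le N$; this forces the path $\{\gamma_k\}$ to be a quasi-geodesic in $\mathcal{C}(S)$ with constants depending only on the parameters in $\mathcal{P}$. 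Hyperbolicity of $\mathcal{C}(S)$ together with Klarreich's identification of $\partial\mathcal{C}(S)$ with $\mathcal{EL}(S)$ then provides the limit $\nu$.

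Second, for the annular estimate $d_{\gamma_k}(\mu,\nu)\stackrel{+}{\asymp} e_k$: by design of $\mathcal{P}$, the twisting parameter $e_k$ is realized as $d_{\gamma_k}(\gamma_{k-1},\gamma_{k+1})$ (or, more safely, $d_{\gamma_k}(\gamma_{k-m+1},\gamma_{k+m-1})$) up to an additive constant. The plan is to show that $d_{\gamma_k}(\mu,\gamma_{k-m+1})$ and $d_{\gamma_k}(\nu,\gamma_{k+m-1})$ are each uniformly bounded. For $\mu$ this uses the $m$-local intersection bounds in $\mathcal{P}$ together with the behavior of annular projections under curves of controlled intersection. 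For $\nu$, one uses that $\gamma_j\to\nu$ in $\mathcal{PML}$ along appropriate subsequences and that subsurface projections are continuous on the subset where the lamination fills no subsurface of the annular complex; equivalently, by $\mathcal{P}$, curves $\gamma_j$ with $j\ge k+m$ project uniformly close to $\pi_{\gamma_k}(\gamma_{k+m-1})$, and one takes a limit.

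Third, and this is where I expect the main difficulty, is the uniform bound $d_W(\mu,\nu)<R$ for every proper subsurface $W$ distinct from every $\gamma_k$. The plan is: given such a $W$, let $[a,b]$ be the minimal index interval so that all $\gamma_k$ with $\pi_W(\gamma_k)\ne\emptyset$ that contribute non-trivially to $d_W(\mu,\nu)$ lie in $[a,b]$. A Behrstock-type inequality, together with the large lower bounds on $d_{\gamma_k}(\gamma_{k-m},\gamma_{k+m})$ established in step one, forces $b-a\le 2m$: outside such a window, the witness subsurfaces $\gamma_k$ dominate and push $\pi_W(\gamma_i)$ close to a single point. Within a window of length $\le 2m$, the explicit intersection bounds in $\mathcal{P}$ bound $\mathrm{diam}_W\{\pi_W(\gamma_a),\ldots,\pi_W(\gamma_b)\}$ by a constant depending only on the parameters. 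Finally, the contributions from $\mu$ and from the tail of $\{\gamma_k\}$ beyond this window are bounded exactly as in step two, yielding a single constant $R$.

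The main obstacle is the third step. The Behrstock-plus-$m$-locality argument must be executed carefully because $W$ could be any of infinitely many subsurfaces, and one has to rule out pathologies where $W$ interacts nontrivially with many $\gamma_k$'s at once, especially when $W$ is an annulus about a component of some $\gamma_k$ or a subsurface whose boundary shares curves with several $\gamma_k$. Isolating the ``active window'' uniformly, and verifying the bound on $d_W$ inside it in terms only of the parameters chosen in $\mathcal{P}$, is the bulk of the work; the quasi-geodesic and twisting statements are essentially corollaries of the same local estimates.
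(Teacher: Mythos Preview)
Your overall approach matches the paper's: the quasi-geodesic statement is proved via a local-to-global principle (Theorem~4.1 and Proposition~4.6) using exactly the witness-subsurface plus Bounded Geodesic Image mechanism you describe, and the subsurface coefficient bounds are Proposition~4.7, whose proof for general $W$ is precisely an active-window argument of the kind you outline in step three.

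One genuine correction in step two: neither $d_{\gamma_k}(\gamma_{k-1},\gamma_{k+1})$ nor $d_{\gamma_k}(\gamma_{k-m+1},\gamma_{k+m-1})$ is defined, because condition~(i) of $\mathcal{P}$ makes any $m$ consecutive curves pairwise disjoint, so $\gamma_{k\pm 1}$ and $\gamma_{k\pm(m-1)}$ all have empty projection to the annulus about $\gamma_k$. The correct anchors are $\gamma_{k-m}$ and $\gamma_{k+m}$: condition~(iii) gives $\gamma_{k+m}=\mathcal{D}_{\gamma_k}^{e_k}(\gamma_{k+m}')$ with $i(\gamma_{k+m}',\gamma_k)=b$, so $d_{\gamma_k}(\gamma_{k-m},\gamma_{k+m})\stackrel{+}{\asymp} e_k$. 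With that fix your step two goes through, using the Bounded Geodesic Image theorem along the already-established quasi-geodesic to bound $d_{\gamma_k}(\gamma_{k+m},\nu)$ and $d_{\gamma_k}(\gamma_{k-m},\mu)$.

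In step three, two refinements the paper needs that you should anticipate: the window of indices where $W$ can miss the sequence has length bounded by $4m$, not $2m$ (this comes from the filling condition on $2m$ consecutive curves, see Claim~4.2); and inside the window the case where indices differ by exactly $2m$ requires the specific structure $\gamma_{k+m}=\mathcal{D}_{\gamma_k}^{e_k}(\gamma_{k+m}')$ with bounded $i(\gamma_{k+m}',\gamma_j)$, not just raw intersection bounds between the $\gamma_j$ themselves.
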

See Propositions~\ref{prop : P implies loc-to-global} and \ref{prop :
  anncoeff + coeffbd} for precise statements.  Here $d_W$ is the
projection coefficient for $W$ and $d_\gamma$ the projection
coefficient for (the annular neighborhood of) $\gamma$; see
\S\ref{subsec : sub-coeff}. 

Although the conditions in $\mathcal P$ only provide local information about intersection numbers, we can deduce estimates on intersection numbers between any two curves in the sequence from this; see Theorem~\ref{thm : intgkgi}.  From these estimates, we are able to promote the convergence $\gamma_k \to \nu$ in $\overline{\mathcal C(S)}$ into precise information about convergence in $\mathcal{PML}(S)$.  To state this, we note that the local condition depends on the subsequence counter $m$.  There are $m$ subsequences $\{\gamma_i^h\}_{i=0}^\infty$, for $h=0,\ldots,m-1$, defined by $\gamma_i^h = \gamma_{im+h}$.
\begin{thm} For appropriate choices of parameters in $\mathcal P$, and any sequence $\{\gamma_k\}_{k=0}^\infty \subset \mathcal C(S)$ satisfying $\mathcal P$, the ending lamination $\nu \in \EL(S)$ from Theorem \ref{thm : subsurfcoeff} is nonuniquely ergodic.  Moreover, if $m$ is the subsequence counter, then the dimension of the space of measures on $\nu$ is precisely $m$, and the $m$ subsequences $\{\gamma_i^h\}_{i=0}^\infty$ converge to $m$ ergodic measures $\bar \nu^h$ on $\nu$, for $h = 0,\ldots,m-1$, spanning the space of measures.
\end{thm}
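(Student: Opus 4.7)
I would establish four facts in sequence: (i) each of the $m$ subsequences $\{\gamma_i^h\}_{i=0}^\infty$ converges projectively in $\PML(S)$ to some $\bar\nu^h \in \ML(S)$; (ii) each $\bar\nu^h$ has support equal to $\nu$; (iii) $\bar\nu^0,\ldots,\bar\nu^{m-1}$ are linearly independent; and (iv) every transverse measure on $\nu$ is a nonnegative linear combination of the $\bar\nu^h$. Together (i)--(iii) yield nonunique ergodicity and that the dimension of the space of measures on $\nu$ is at least $m$; adding (iv) gives dimension exactly $m$ and forces each $\bar\nu^h$ to be ergodic as an extreme ray of the simplex of measures.

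For (i), the plan is to use the intersection-number estimates $i(\gamma_k,\gamma_l)$ from Theorem~\ref{thm : intgkgi} to choose normalizations $L_i^h > 0$ so that $\gamma_i^h/L_i^h$ has bounded pairing with a fixed reference multicurve. Compactness of $\PML(S)$ extracts a subsequential limit $\bar\nu^h$, and passing the intersection estimates to the limit prescribes the pairings $i(\bar\nu^h, \gamma_l^{h'})/L_l^{h'}$ for every $h'$ and $l$; this pins down $\bar\nu^h$ and upgrades subsequential convergence to convergence along the full subsequence. For (ii), Theorem~\ref{thm : subsurfcoeff} gives $\gamma_k \to \nu$ in $\mathcal C(S) \cup \EL(S)$; combined with Klarreich's identification $\partial\mathcal C(S) \cong \EL(S)$ and the standard comparison between Hausdorff and $\PML$ limits, the minimality of $\nu$ forces $\supp(\bar\nu^h) = \nu$. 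For (iii), a hypothetical relation $\sum_h a_h \bar\nu^h = 0$ is paired against the curves $\gamma_l^{h'}$; because any two measures supported on $\nu$ satisfy $i(\bar\nu^h,\bar\nu^{h'}) = 0$, one must work with subleading asymptotics of $i(\bar\nu^h, \gamma_l^{h'})/L_l^{h'}$. The estimates from Theorem~\ref{thm : intgkgi} produce different rates of decay depending on the residue class $h$ relative to $h'$, yielding a nondegenerate linear system in the $a_h$ that forces each $a_h = 0$.

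The main obstacle is step (iv), showing the $\bar\nu^h$ span the space of measures on $\nu$. The plan is to argue that any transverse measure $\bar\mu$ on $\nu$ is determined by the asymptotic behavior of $i(\bar\mu, \gamma_l^{h'})/L_l^{h'}$ along each residue class $h'$, which furnishes candidate coefficients $c_h \ge 0$. Using (i), one checks that $\sum_h c_h \bar\nu^h$ produces identical asymptotics against every $\gamma_l^{h'}$, so that $\bar\mu - \sum_h c_h \bar\nu^h$ pairs trivially with all $\gamma_l^{h'}$ in the relevant sense; a Bonahon-type current argument together with the Hausdorff convergence $\gamma_l \to \nu$ then forces the difference to vanish as a transverse measure on $\nu$. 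The hard part will be justifying the existence and nonnegativity of the $c_h$ from the limiting data: this is precisely where the local structure $\mathcal P$ with subsequence counter $m$ must intervene to rule out any additional ergodic measure beyond the $\bar\nu^h$. Once (iv) is in place, the $\bar\nu^h$ are the $m$ extreme rays of the $(m{-}1)$-simplex of projective measures on $\nu$ and hence ergodic, completing the proof.
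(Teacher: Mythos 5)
Your steps (i)--(iii) are broadly consonant with the paper, though with one caveat on (i): the multiplicative estimates of Theorem~\ref{thm : intgkgi} alone cannot ``pin down'' the limits $\bar\nu^h$, since they only determine the normalized pairings up to a uniform multiplicative constant $\kappa$. The paper instead proves honest convergence of $\gamma_i^h/c_i^h$ in $\ML(S)$ by a Cauchy-sequence argument (Lemma~\ref{lem : convint}) that exploits the Dehn-twist recursion of Proposition~\ref{prop : application of flp}, not just the coarse bounds; and in place of your linear independence (iii) it proves the stronger divergent-ratio statement of Theorem~\ref{thm : mutualsing}, which is what is actually used later to identify each $\bar\nu^h$ with a single ergodic measure (Lemma~\ref{lem : dimension suffices}).

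The genuine gap is your step (iv), i.e.\ the inequality $d\le m$ ruling out ergodic measures beyond the $\bar\nu^h$, and the route you sketch cannot close it. The normalized pairings $i(\bar\mu,\gamma_l^{h'})/L_l^{h'}$ of a fixed measure $\bar\mu$ on $\nu$ converge to $i(\bar\mu,\bar\nu^{h'})=0$, since any two measures supported on $\nu$ have zero intersection; so the data you propose to use lives entirely in ``subleading asymptotics,'' and nothing proven in the combinatorial part of the paper controls the intersections of a \emph{hypothetical extra} ergodic measure $\bar\mu^j$ with the sequence curves --- the estimates of Theorem~\ref{thm : intgkgi} and Lemma~\ref{lem : int estimate general} only govern pairings among the $\gamma_k$ and their limits. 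You acknowledge this is ``precisely where the local structure $\mathcal P$ must intervene,'' but no mechanism is supplied, and a Bonahon-type current argument does not substitute for one. The paper's actual mechanism is geometric rather than combinatorial: it forms the Teichm\"uller geodesic whose vertical foliation is the sum of \emph{all} ergodic measures on $\nu$, invokes the Lenzhen--Masur result (Theorem~\ref{thm : Lenzhen Masur}) producing, along a sequence of times, an $(\epsilon_k,\epsilon)$--thick decomposition with one piece per ergodic measure carrying a definite amount of that measure's area, and then proves (Lemma~\ref{lem : always contain our cylinders}, using the flat strip/area and extremal length estimates together with the subsurface-coefficient bounds of Proposition~\ref{prop : anncoeff + coeffbd}) that every such piece must contain a curve of the sequence $\{\gamma_l\}$ either non-peripherally or as the core of a cylinder component. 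Disjointness of the pieces yields $d$ pairwise disjoint curves from the sequence, which by the intersection pattern forces $d\le m$; Lemma~\ref{lem : dimension suffices} (via Theorem~\ref{thm : mutualsing}) then gives $d=m$ and identifies the $\bar\nu^h$, up to scale, with the ergodic measures. Without an argument of this kind --- or some other genuinely new input bounding the number of ergodic measures --- your outline does not prove the theorem.
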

\noindent For precise statements, see Theorems~\ref{thm : MLlimitgi}, \ref{thm : mutualsing}, and \ref{thm : all ergodic measures}.  

We note that for any nonuniquely ergodic lamination $\nu$, the space of measures is always the cone on the {\em simplex of measures on $\nu$}, denoted  $\Delta(\nu)$, which is projectively well-defined.  The vertices of $\Delta(\nu)$ are the {\em ergodic measures}, and the dimension of the space of measures is at most $\xi(S)$:   This follows from the fact that the Thurston symplectic form on the $2\xi(S)$--dimensional space $\ML(S)$ must restrict to zero on  the cone on $\Delta(\nu)$ since it is bounded above by the geometric intersection number, \cite[Ch. 3.2]{phtraintr}, and consequently must be at most half-dimensional (see also \cite[\S 1]{masurIET} and the reference to \cite{veechIET,katokINV}).  We note that the subsequence counter $m$ can also be at most $\xi(S)$, and the explicit constructions in \S\ref{sec : constructions} are quite flexible and provide examples with this maximal dimension, as well as examples with smaller dimensions.

As an application of these theorems, together with the main result of the first and third author in \cite{wprecurnue} and Theorem~\ref{thm : subsurfcoeff}, we have.
 \begin{cor}
Suppose $\nu$ is as in Theorem~\ref{thm : subsurfcoeff}.  Any
Weil-Petersson geodesic ray with forward ending lamination $\nu$ is
recurrent to a compact subset of the moduli space. 
 \end{cor}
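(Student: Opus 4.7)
The strategy is to verify the hypothesis of the main theorem of \cite{wprecurnue} for $\nu$ and then invoke that theorem directly. The recurrence criterion of \cite{wprecurnue} asserts that a Weil-Petersson geodesic ray whose forward ending lamination has uniformly bounded subsurface projections to all non-annular proper subsurfaces (relative to some fixed marking) returns infinitely often to a compact subset of moduli space.  No hypothesis on annular projections is needed, which is important because Theorem \ref{thm : subsurfcoeff} in fact exhibits large annular coefficients $d_{\gamma_k}(\mu,\nu) \stackrel{+}{\asymp} e_k$ along the sequence, reflecting the significant twisting required in the construction.

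The second half of Theorem \ref{thm : subsurfcoeff} supplies precisely the needed bound: there is a constant $R>0$ such that for every proper subsurface $W$ that is not an annulus around some $\gamma_k$ we have $d_W(\mu,\nu) < R$. In particular, $d_W(\mu,\nu) < R$ for every non-annular proper subsurface $W$.  Since subsurface projections change by at most a uniform additive constant under a bounded change of reference marking (by the standard Behrstock-Masur-Minsky machinery), this also gives a uniform bound relative to whichever marking is used in the statement of the recurrence theorem in \cite{wprecurnue}.

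Applying the recurrence theorem of \cite{wprecurnue} to any Weil-Petersson geodesic ray with forward ending lamination $\nu$ then produces a compact subset of $\Mod(S)\backslash\Teich(S)$ to which the ray returns infinitely often, as required.  The only point that requires checking is that the hypothesis in \cite{wprecurnue} is indeed the bounded non-annular subsurface projection condition rather than some more restrictive combinatorial hypothesis on $\nu$; once that is confirmed, the corollary follows immediately by combining the two results with no additional argument.
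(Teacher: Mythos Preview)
Your proposal is correct and follows exactly the paper's approach: verify that $\nu$ has nonannular bounded combinatorics via Theorem~\ref{thm : subsurfcoeff} (the paper cites the more precise Proposition~\ref{prop : anncoeff + coeffbd}), then invoke \cite[Theorem~4.1]{wprecurnue}. Your observation that the large annular coefficients $d_{\gamma_k}(\mu,\nu)\stackrel{+}{\asymp} e_k$ are irrelevant to the hypothesis of \cite{wprecurnue} is exactly the point.
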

 Here, the {\em ending lamination} of a Weil-Petersson geodesic ray is
 given as in
 \cite {bmm1,bmm2}.  The Corollary, which follows directly from
 \cite[Theorem~4.1]{wprecurnue} after observing that $\nu$ satisfies the
 condition of {\em nonannular bounded combinatorics} (see Proposition~\ref{prop : anncoeff + coeffbd}), provides greater
 insight into the class of Weil-Petersson ending laminations that
 violate {\em Masur's criterion}. In particular, these nonuniquely ergodic
 laminations determine {\em recurrent} Weil-Petersson geodesic
 rays, by contrast to the setting of Teichm\"uller geodesics where
 Masur's criterion \cite{masurcriterion} guarantees a Teichm\"uller geodesic with such a
 vertical foliation diverges.
 
For any lamination $\nu$ coming from a sequence $\{\gamma_k\}_{k=0}^\infty$ satisfying $\mathcal P$, as well as some additional conditions (see (\ref{eq : G1}) in $\S$\ref{sec : actinterval} and condition $\mathcal P(iv)$ in $\S$\ref{sec : limset}), we analyze the limit set of a Teichm\"uller geodesic ray defined by a quadratic differential with vertical foliation $\bar \nu$ supported on $\nu$.  To describe our result about the limiting behavior of this geodesic ray, we denote the simplex of the projective classes of measures supported on the lamination by $\Delta(\nu)$ in the space of projective measured foliations, viewed as the Thurston boundary of Teichm\"uller space.

\begin{thm} \label{thm:main limit theorem}
Suppose that $\nu$ is the limiting lamination of a sequence $\{\gamma_k\}_{k=0}^\infty$ satisfying the conditions $\mathcal P$, $\mathcal P(iv)$, and (\ref{eq : G1}). Let $\bar{\nu}=\sum_{h=0}^{m-1}x_{h}\bar{\nu}^{h}$ where $x_{h}>0$ for $h=0,...,m-1$, and $r:[0,\infty)\to\Teich(S)$ be a Teichm\"{u}ller geodesic ray with vertical measured lamination $\bar{\nu}$. Then the limit set of $r$ in the Thurston boundary is the simple closed curve in the simplex $\Delta(\nu)$ of measures on $\nu$ that is the concatenation of edges 
$$\bigl[[\bar{\nu}^{0}],[\bar{\nu}^{1}] \bigr] \cup \bigl[ [\bar{\nu}^1,\bar{\nu}^2] \bigr] \cup \ldots \cup \bigl[ [\bar{\nu}^{m-1}],[\bar{\nu}^{0}] \bigr].$$
\end{thm}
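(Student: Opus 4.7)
The plan is to analyze the ray $r(t)$ via a sequence of \emph{active intervals} indexed by the curves $\gamma_k$, refining the strategy of \cite{nonuniqueerg}. The argument breaks into three steps: localize the limit set inside $\Delta(\nu)$, identify balance times along $r$, and trace the projective trajectory on the transition windows between them.

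I would first show that the limit set of $r$ in the Thurston boundary lies in $\Delta(\nu)$. Since $\bar\nu$ is minimal but not uniquely ergodic (by Theorem~\ref{thm : MLlimitgi}), the standard exponential estimates on vertical versus horizontal lengths along a Teichm\"uller ray force every $\PML(S)$-accumulation point $[\xi]$ of $r$ to satisfy $i(\xi,\bar\nu)=0$; because $\nu$ is minimal and filling, $[\xi]$ is therefore a projective class of a transverse measure on $\nu$, i.e.\ a point of $\Delta(\nu)$.

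Next, for each $k \geq m$, I would use Rafi's theory of short curves on Teichm\"uller geodesics together with the projection estimate $d_{\gamma_k}(\mu,\nu) \stackrel{+}{\asymp} e_k$ from Theorem~\ref{thm : subsurfcoeff} to produce a nonempty \emph{active interval} $I_k$ on which $\gamma_k$ is uniformly short in $X_t = r(t)$, containing a balance time $\tau_k$ at which $\gamma_k$ attains its minimal extremal length. The nonannular bounded combinatorics bound $d_W(\mu,\nu)<R$ from Theorem~\ref{thm : subsurfcoeff} rules out short curves coming from any proper non-annular subsurface, and conditions $\mathcal P(iv)$ and (\ref{eq : G1}) ensure that the intervals $I_k$ are pairwise disjoint, ordered along $r$, and satisfy $\tau_k \to \infty$. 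Third, I would identify the limiting trajectory. At $t=\tau_k$, pairing $X_t$ against a fixed marking shows that $r(\tau_k)$ converges in the Thurston boundary to $[\gamma_k]$ as $k\to\infty$. Since $\gamma_k \to [\bar\nu^h]$ projectively whenever $k \equiv h \pmod{m}$ (by Theorem~\ref{thm : MLlimitgi}, with intersection number control from Theorem~\ref{thm : intgkgi}), each vertex $[\bar\nu^h]$ is accumulated by such a subsequence. For $t$ in a transition window $[\tau_k,\tau_{k+1}]$, Rafi's two-short-curve analysis of the thick--thin decomposition of $X_t$ shows that $X_t$ projectively interpolates between $[\gamma_k]$ and $[\gamma_{k+1}]$ with the convex combination governed by a monotone function of $t$ determined by the ratios of vertical/horizontal lengths of these two curves. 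Passing to $k \to \infty$, this interpolation becomes a monotone sweep from $[\bar\nu^h]$ to $[\bar\nu^{h+1}]$ along the edge of $\Delta(\nu)$ joining them. Since $k \bmod m$ cycles through $0,\ldots,m-1$ repeatedly and $\tau_k\to\infty$, the limit set equals the claimed cyclic concatenation of edges.

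The main obstacle is making the third step quantitative: showing that each transition sweeps the \emph{entire} edge, that no accumulation occurs in the interior of a higher-dimensional face of $\Delta(\nu)$, and that no $t$-subsequence converges to a measure not on the cycle. The crux will be a precise asymptotic for the hyperbolic (or extremal) length function $\ell_{X_t}(\eta)$ on a carefully chosen family of test curves $\eta$ that separate the ergodic measures $\bar\nu^h$, using the local intersection number estimates of Theorem~\ref{thm : intgkgi} fed into the short-curve formulas from Rafi's theory restricted to each transition window.
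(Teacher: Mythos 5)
Your outline (active intervals, vertices from the subsequences $\gamma_k$, edge-interpolation on transition windows) is the right skeleton, and it is broadly the paper's strategy, but the step you anchor everything on is false, and the part you defer ("making the third step quantitative") is precisely the content of the paper's proof. The incorrect anchor is the claim that $r(\tau_k)$ is projectively close to $[\gamma_k]$ at the balance time $\tau_k$. At the balance time $a_k$ the whole multicurve $\sigma_k=\gamma_k\cup\dots\cup\gamma_{k+m-1}$ is short (Lemma~\ref{lem : af in Jl}), and for any test curve $\delta$ the length decomposes over a bounded-length pants decomposition as $\hyp(\delta)\approx\sum_\alpha i(\delta,\alpha)\bigl[w(\alpha)+\tw_\alpha(\delta)\hyp(\alpha)\bigr]$ (Proposition~\ref{prop : hXgP}). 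At $t=a_k$ the twisting about $\gamma_k$ has not yet been picked up (Theorem~\ref{thm : rafitw} gives $\tw_{\gamma_k}\hyp(\gamma_k)=O(1)$ for $t\le a_k$), so the $\gamma_k$--term is only $i(\delta,\gamma_k)\cdot O(\log e_k)$, while the curves $\gamma_{k+1},\dots,\gamma_{k+m-1}$ have widths of the same order but intersection numbers $i(\delta,\gamma_{k+j})\asymp A(0,k+j)$ that are larger by unbounded multiplicative factors (Lemma~\ref{lem : int estimate general}). Hence at $\tau_k$ the surface is \emph{not} near $[\gamma_k]$; the vertex $[\bar\nu^{\bar k}]$ is reached only at the right endpoint $\bar a_k$ of the active interval, where the accumulated twist $e_k$ makes the $\gamma_k$--contribution $\asymp e_k\, i(\delta,\gamma_k)$ dominant (this is Case 1 of Theorems~\ref{thm : reduction 1} and \ref{thm : reduction 2}). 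Consequently your description of the window $[\tau_k,\tau_{k+1}]$ as a single-edge sweep between $[\gamma_k]$ and $[\gamma_{k+1}]$, governed by the vertical/horizontal lengths of those two curves, is not correct: on the window $[\bar a_k,\bar a_{k+1}]$ the two dominant curves are $\gamma_{k+1}$ and $\gamma_{k+m}$, and the interpolation parameter is the ratio of the quantities $U=w+\tw\cdot\hyp$ weighted by the intersection numbers, not a flat-length ratio of $\gamma_k,\gamma_{k+1}$.

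Beyond the misplaced anchor, the quantitative heart you leave open is exactly what requires the machinery of the paper: the pants-decomposition length formula together with the twist formula, the width growth estimate, the control of pants curves \emph{not} in the sequence (this is where $\mathcal P(iv)$ is actually used, via Theorem~\ref{thm : abds} and Lemma~\ref{lem : lower bound on sequence curves}, not for disjointness of intervals), and the growth condition (\ref{eq : G1}) whose role is to pin down the relative positions of the active intervals ($\bar a_k\ll a_l$ for $k<l<k+m$, Lemmas~\ref{lem : af+al} and \ref{lem : af in Jl}) so that exactly two terms dominate on each window. Your suggestion to test against curves separating the ergodic measures does not by itself rule out accumulation in the interior of higher-dimensional faces or show that each whole edge is swept; the paper gets both from the two-term asymptotic of Theorem~\ref{thm : reduction 2} (which forces every limit onto a single edge) plus a connectedness argument for the Hausdorff limits of the arcs $r([\bar a_k,\bar a_{k+1}])$, whose endpoints converge to the two vertices. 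As it stands, the proposal has a genuine gap: the stated vertex-identification would fail, and the decisive estimates are not supplied.
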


When $m \geq 3$, the theorem shows that there are Teichm\"{u}ller geodesics whose limit set does not contain any point in the interior of $\Delta(\nu)$.  In addition, it answers the following question raised by Jonathan Chaika: 
\begin{question}
Is the limit set of each Teichm\"{u}ller geodesic ray simply connected?  
\end{question}
For $m \geq 3$, the theorem shows that answer to this question is no. Namely, Teichm\"{u}ller geodesic rays with vertical measured lamination as above provide examples of geodesics with limit set being a topological circle, and hence not simply connected. \\

The results of this paper (as well as those of \cite{nonuniqueerg}) were inspired by work of Masur in \cite{2bdriesteich}, Lenzhen \cite{lenzhen-nolimit}, and Gabai \cite{gabaiendlamspace}.  In \cite{2bdriesteich} Masur showed that if $\nu$ is a uniquely ergodic foliation, then any Teichm\"uller ray defined by a quadratic differential with vertical foliation supported on $\nu$ limits to $[\nu]$ in the Thurston compactification.   Lenzhen \cite{lenzhen-nolimit} gave the first examples of Teichm\"uller rays which do not converge in the Thurston compactification.  Lenzhen's rays were defined by quadratic differentials with non-minimal vertical foliations, and in both \cite{nonuniqueerg} and \cite{nuechaikaetl}, nonconvergent rays defined by quadratic differentials with minimal vertical foliations were constructed.  The methods in these two papers are quite different, and as mentioned above, the approach taken in this paper is more closely related to that of \cite{nonuniqueerg}.  We also note the results of this paper, as well as \cite{lenzhen-nolimit,nonuniqueerg,nuechaikaetl}, are in sharp contrast to the work of Hackobyan and Saric in \cite{haksar} where it is shown that Teichm\"uller rays in the {\em universal Teichm\"uller space} always converge in the corresponding Thurston compactification.

Our example of nonuniquely ergodic laminations obtained from a sequence of curves are similar to those produced by Gabai's in \cite{gabaiendlamspace}.  On the other hand, our construction provides additional information, especially important are the estimates on intersection numbers and subsurface projections, that allow us to study the limiting  behavior of the associated Teichm\"uller rays.   For more on the history and results about the existence and constructions of nonuniquely ergodic laminations and the study of limit sets of Teichm\"{u}ller geodesics with such vertical laminations we refer the reader to the introduction of \cite{nonuniqueerg}.

\medskip
\noindent\textbf{Acknowledgement:} We would like to thank Howard Masur for illuminating 
conversations and communications as well as the anonymous referee for helpful suggestions. We also would like to thank Anna Lenzhen; 
her collaboration in the first paper was crucial  for the development of the current 
paper. Finally we would like to thank MSRI at Berkeley for hosting the program Dynamics on moduli spaces 
 in April 2015; where the authors had the chance to form some of the techniques of this paper.  

\section{Background} \label{sec : background}

We use the following notation throughout this paper.
\begin{notation}
Suppose $K\geq 1$ and $C\geq 0$ and $f,g:X\to\mathbb{R}$ are two functions. We write $f\stackrel{+}{\asymp}_{C}g$ if $f(x)-C\leq g(x)\leq f(x)+C$, $f\stackrel{*}{\asymp}_{K}g$ if $\frac{1}{K}f(x)\leq g(x)\leq Kf(x)$, and $f\asymp_{K,C}g$ if $\frac{1}{K}(f(x)-C)\leq g(x)\leq Kf(x)+C$. We also write $f\stackrel{*}{\prec}_{K}g$ if $f(x)\leq Kg(x)$, $f\stackrel{+}{\prec}_{C} g$ if $f(x)\leq g(x)+C$, and $f\prec_{K,C}g$ if $f(x)\leq Kg(x)+C$.
When the constants are known from the text we drop them from the notations.  Finally, we also write $f = O(g)$ if $f \stackrel * \prec g$.
\end{notation}

Let $S=S_{g,b}$ be an orientable surface of finite type with genus $g$ and $b$ holes (a hole can be either a puncture or a boundary component). Define the complexity of $S$ by $\xi(S)=3g-3+b$.  The main surface we will consider will have $\xi > 1$ and all holes will be punctures.  However, we will also be interested in subsurfaces and covers of the main surface, which can also have $\xi \leq 1$.  For surfaces $S$ with $\xi(S) \geq 1$, we will equip it with a reference metric, which is any complete, hyperbolic metric of finite area with geodesic boundary (if any).

\subsection{Curve complexes}\label{subsec : ccplx} 
For any surface $Y$, $\xi(Y) \geq 1$, the curve complex of $Y$, denoted by $\mathcal{C}(Y)$, is a flag complex whose vertices are the isotopy classes of simple closed curves on $Y$ that are essential, meaning non-null homotopic and non-peripheral.  For $\xi(Y) > 1$, a set of $k+1$ distinct isotopy classes of curves defines a $k$--simplex if any pair can be represented by disjoint curves. For $\xi(Y)=1$ ($Y$ is $S_{0,4}$ or $S_{1,1}$), the definition is modified as follows: a set of $k+1$ distinct isotopy classes defines a $k$--simplex if the curves can be represented intersecting twice (for $Y= S_{0,2}$) or once (for $Y = S_{1,1}$).  

The only surface $Y$ with $\xi(Y) < 1$ of interest for us is a compact annulus with two boundary components.  These arise as follows.  For any essential simple closed curve $\alpha$ on our main surface $S$, let $Y_\alpha$ denote the annular cover of $S$ to which $\alpha$ lifts.  The reference hyperbolic metric on $S$ lifts and provides a compactification of this cover by a compact annulus with boundary (which is independent of the metric).   The curve complex of $\alpha$, denoted $\mathcal C(Y_\alpha)$, or simply $\mathcal C(\alpha)$, has vertex set being the properly embedded, essential arcs in $Y_\alpha$, up to isotopy fixing the boundary pointwise.   A set of isotopy classes of arcs spans a simplex if any pair can be realized with disjoint interiors.  

Distances between vertices in $\mathcal C(Y)$ (for any $Y$) will be measured in the $1$--skeleton, so the higher dimensional simplices are mostly irrelevant.  Masur and Minsky \cite{mm1} proved that for any $Y$, there is a $\delta > 0$ so that $\mathcal C(Y)$ is $\delta$--hyperbolic.

For surfaces $Y$ with $\xi(Y) \geq 1$, we also consider the arc and curve complex $\mathcal{AC}(Y)$, defined in a similar way to $\mathcal C(Y)$.  Here vertices are isotopy classes of essential simple closed curves and essential, properly embedded arcs (isotopies need not fix the boundary pointwise), with simplices defined again in terms of disjoint representatives.   Arc and curve complexes are quasi-isometric to curve complexes, and so are also $\delta$--hyperbolic.  

Multicurves (respectively, multiarcs) are disjoint unions of pairwise non-isotopic essential simple closed curves (respectively, simple closed curves and properly embedded arcs).  Up to isotopy a multicurve (respectively, multiarc) determines, and is determined by, a simplex in $\mathcal C(S)$ (respectively, $\mathcal{AC}(S)$).  A marking $\mu$ is a pants decomposition $\base(\mu)$, called the base of $\mu$, together with a transversal curve $\beta_\alpha$, for each $\alpha \in \base(\mu)$, which is a curve minimally intersecting $\alpha$ and disjoint from $\base(\mu) - \alpha$.  A partial marking $\mu$ is similarly defined, but not every curve in the pants decomposition $\base(\mu)$ is required to have a transversal curve.

For more details on curve complexes, arc and curve complexes, and markings, we refer the reader to \cite{mm1}.

\begin{remark}
When the number $\xi(S)$ is at least $1$, it is equal to the number of curves in a pants decomposition.  When all the holes of $S$ are punctures, $\xi(S)$ is also the complex dimension of Teichm\"{u}ller space of $S$.
\end{remark}

\subsection{Laminations and foliations.} 
A lamination will mean a geodesic lamination (with respect to the reference metric if no other metric is specified), and a measured lamination is a geodesic lamination $\nu$, called the support, with an invariant transverse measure $\bar \nu$.  We will often refer to a measured lamination just by the measure $\bar \nu$ (as this determines the support $\nu$).   The space of all measured laminations will be denoted $\mathcal{ML}(S)$, and for any two metrics, the resulting spaces of measured laminations are canonically identified.  By taking geodesic representatives, simple closed curves and multicurves determine geodesic laminations.  Weighted simple closed curves and multicurves determine measured laminations are dense in $\mathcal{ML}(S)$, and the geometric intersection number extends to a continuous, bi-homogeneous function
\[ i \colon \mathcal{ML}(S) \times \mathcal{ML}(S) \to \mathbb R.\]
By a measured foliation on $S$ we will mean a singular measured foliation with prong singularities of negative index (and at punctures, filling in the puncture produces a $k$--prong singularity with $k \geq 1$).  When convenient, a measured foliation may be considered only defined up to measure equivalence, and the space of measure equivalence classes of measured foliations is denoted $\mathcal{MF}(S)$.  The spaces $\mathcal{MF}(S)$ and $\mathcal{ML}(S)$ are canonically identified, and we will frequently not distinguish between measured laminations and measured foliations.  A foliation or lamination is uniquely ergodic if it supports a unique (up to scaling) transverse measure, or equivalently, if the first return map to (the double of) any transversal is uniquely ergodic.  Otherwise it is nonuniquely ergodic.  We write $\mathcal{PML}(S)$ and $\mathcal{PMF}(S)$ for the quotient spaces, identifying measured laminations or foliations that differ by scaling the measure.
See \cite{phtraintr,notesonthurston,FLP,thurston:GT,fol=lam} for complete definitions, detailed discussion, and equivalence of $\mathcal{MF}(S)$ and $\mathcal{ML}(S)$.

\subsection{Gromov boundary of the curve complex}
A lamination $\nu$ on $S$ is called an ending lamination if it is minimal (every leaf is dense) and filling (every simple closed geodesic on the surface nontrivially, transversely intersect $\nu$). Every ending lamination admits a transverse measure, and we let $\mathcal{EL}(S)$ denote the space of all ending laminations.  This is the quotient space of the subspace of $\mathcal{ML}(S)$ consisting of measured laminations supported on ending laminations, by the map which forgets the measures.  The following theorem of Klarreich \cite{bdrycc} identifies the Gromov boundary of $\mathcal C(S)$ with $\mathcal{EL}(S)$.

\begin{thm}\textnormal{(Boundary of the curve complex)}\label{thm : bdrycc}
  There is a homeomorphism $\Phi$ from the Gromov boundary of
  $\mathcal{C}(S)$ equipped with its standard topology to
  $\mathcal{EL}(S)$. 
  
  Let $\{\gamma_{k}\}_{k=0}^{\infty}$ be a sequence of
  curves in $\mathcal{C}_{0}(S)$ that converges to a point $x$ in the
  Gromov boundary of $\mathcal{C}(S)$. Regarding each $\gamma_{k}$ as
  a projective measured lamination, any accumulation point of the sequence
  $\{\gamma_{k}\}_{k=0}^{\infty}$ in $\mathcal{PML}(S)$ is supported on $\Phi(x)$.
\end{thm}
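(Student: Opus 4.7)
The plan is to build the correspondence $\Phi$ by taking $\mathcal{PML}(S)$--accumulation points of sequences converging in the Gromov boundary, and then upgrade this to a homeomorphism using a combination of Luo's intersection/distance estimate in $\mathcal{C}(S)$ and the Masur--Minsky quasi-geodesic theorem for systole sequences along Teichm\"uller rays.

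First, given $x\in\partial_\infty \mathcal{C}(S)$, pick any sequence $\{\gamma_k\}$ converging to $x$; by compactness of $\mathcal{PML}(S)$, extract an accumulation point $[\mu]$. The central lemma to establish is a common-support property: every such accumulation point $\mu$ (over all choices of subsequences and of sequences $\{\gamma_k\}\to x$) has the same support $\nu_x$, and $\nu_x$ is minimal and filling, hence an ending lamination. I would prove this by contradiction: if $\alpha$ is an essential curve with $i(\alpha,\mu)=0$ (either because $\alpha$ is disjoint from $\supp(\mu)$, or because $\supp(\mu)$ is nonminimal and $\alpha$ is parallel into a complementary region of a sublamination), then continuity and bi-homogeneity of $i$ force $i(\alpha,\gamma_k)/c_k\to 0$ along a subsequence (where $c_k$ are the scaling constants with $\gamma_k/c_k\to\mu$). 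Using Luo's bound $d_{\mathcal{C}(S)}(\alpha,\beta)\leq 2\log_2 i(\alpha,\beta)+2$ for intersecting curves, together with the Gromov hyperbolicity of $\mathcal{C}(S)$, one shows the $\gamma_k$ cannot then diverge in $\mathcal{C}(S)$ fast enough to reach $x$. A parallel argument handles independence of $\supp(\mu)$ across sequences: two different supports would yield two different curves $\alpha$, each disjoint from one of the supports, and Luo's bound would force a uniform $\mathcal{C}(S)$--bound contradicting divergence. Defining $\Phi(x)=\nu_x$, the second assertion of the theorem is immediate from the construction.

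For surjectivity of $\Phi$, given $\nu\in\mathcal{EL}(S)$, choose a transverse measure $\bar\nu$ and a Teichm\"uller ray $r\colon[0,\infty)\to\Teich(S)$ with vertical measured lamination $\bar\nu$. The sequence of systoles $\alpha_t$ along $r$ projects to a parametrized quasi-geodesic in $\mathcal{C}(S)$ by Masur--Minsky, hence converges to some $x\in\partial_\infty\mathcal{C}(S)$. To identify $\Phi(x)=\nu$, one verifies that any $\mathcal{PML}(S)$--limit of the $\alpha_t$ is supported on $\nu$: on the exponentially expanding vertical side, short curves are forced to be nearly parallel to leaves of the vertical foliation, and the minimality of $\nu$ ensures the support is exactly $\nu$. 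Injectivity follows from the common-support lemma: two distinct Gromov boundary points giving the same ending lamination would admit sequences whose $\mathcal{PML}(S)$--accumulation points overlap, contradicting distinctness in $\partial_\infty\mathcal{C}(S)$ via the intersection-number argument above.

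The main obstacle is proving that $\Phi$ is a \emph{homeomorphism}, specifically bicontinuity between the Gromov-shadow topology on $\partial_\infty\mathcal{C}(S)$ and the coarse Hausdorff topology on $\mathcal{EL}(S)$ (the quotient of the subspace topology on the ending-lamination-supported part of $\mathcal{ML}(S)$). Continuity of $\Phi$ is straightforward from the accumulation-point definition. Continuity of $\Phi^{-1}$, however, requires a uniform shadowing: if $\nu_n\to\nu$ in $\mathcal{EL}(S)$, then the boundary points $\Phi^{-1}(\nu_n)$ must converge to $\Phi^{-1}(\nu)$ in the Gromov boundary. In Klarreich's original proof this proceeds through delicate Teichm\"uller-geometric estimates on quadratic differentials degenerating to $\nu$; a more modern strategy (following Hamenst\"adt or Bowditch) is to use an ambient hyperbolicity argument showing that any $\mathcal{C}(S)$--geodesic between curves close to $\nu_n$ and $\nu$ must pass near the rays associated to these laminations, yielding the required convergence in $\partial_\infty\mathcal{C}(S)$. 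I expect this uniform shadowing step to be the principal technical hurdle.
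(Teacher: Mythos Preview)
The paper does not prove this theorem at all: it is stated in the background section as a known result of Klarreich, with citation \cite{bdrycc}, and is used without proof throughout. So there is no ``paper's own proof'' to compare your proposal against.

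That said, your sketch is a reasonable outline of one of the standard approaches to Klarreich's theorem (closer in spirit to the later treatments by Hamenst\"adt and Bowditch than to Klarreich's original Teichm\"uller-geometric argument). A few of the steps would need tightening if you were actually writing this up. In particular, the contradiction argument for minimality and fillingness is not quite as you state it: knowing $i(\alpha,\gamma_k)/c_k\to 0$ does not by itself bound $d_{\mathcal{C}(S)}(\alpha,\gamma_k)$, since $c_k$ may grow. The usual route is instead to control the Gromov product $(\gamma_k\,|\,\gamma_l)$ in terms of intersection numbers (e.g.\ via Luo or Bowditch's inequality) and show that a non-filling accumulation point forces this product to stay bounded along a subsequence, contradicting convergence in $\partial_\infty\mathcal{C}(S)$. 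Your identification of bicontinuity of $\Phi^{-1}$ as the main technical obstacle is accurate.
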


We will use this theorem throughout to identify points in $\partial \mathcal C(S)$ with ending laminations in $\mathcal{EL}(S)$.

\subsection{Subsurface coefficients} \label{subsec : sub-coeff} An {\em essential subsurface} $Y$ of a surface $Z$ with $\xi(Y) \geq 1$ is a closed, connected, embedded subsurface whose boundary components are either essential curves in $Z$ or boundary component of $Z$, and whose punctures are punctures of $Z$.  All such subsurfaces are considered up to isotopy, and we often choose representatives that are components of complements of small neighborhoods of simple closed geodesics, which therefore have minimal, transverse intersection with any lamination.
The only essential subsurfaces $Y$ of $Z$ with $\xi(Y) < 1$ are not actually subsurfaces at all, but rather such a $Y$ is the compactified annular covers $Y_\alpha$ of $Z$ associated to a simple closed curve $\alpha$ in $Z$.  We sometimes confuse an annular neighborhood of $\alpha$ with the cover $Y_\alpha$ (hence the reference to it as a subsurface) when convenient.  We will always write $Y \subseteq Z$ to denote an essential subsurface, even when it is not, strictly speaking, a subset of $Z$.

Let $Y\subseteq Z$ be an essential non-annular subsurface and $\lambda$ a lamination (possibly a multicurve) and we define the subsurface projection of $\lambda$ to $Y$. Represent $Y$ as a component of the complement of a very small neighborhood of geodesic multicurve. If $\lambda\cap Y=\emptyset$, then define $\pi_{Y}(\lambda)=\emptyset$. Otherwise, $\pi_{Y}(\lambda)$ is the union of all curves which are (i) simple closed curve components of $Y \cap \lambda$ or (ii) essential components of $\partial N(a \cup \partial Y)$, where $a \subset \lambda \cap Y$ is any arc, and $N(a \cup \partial Y)$ is a regular neighborhood of the union.  If $Y_\alpha$ is an essential annular subsurface, then $\pi_{Y_\alpha}(\lambda)$, or simply $\pi_\alpha(\lambda)$, is defined as follows.  For any component of the preimage of $\lambda$ in the annular cover corresponding to $\alpha$, the closure is an arc in $Y_\alpha$, and we take the union of all such arcs that are essential (that is, the arcs that connect the two boundary components).

For a marking $\mu$ (or partial marking), if $Y = Y_\alpha$ is an annulus with core curve $\alpha\in\base(\mu)$, then $\pi_{Y}(\mu) = \pi_\alpha(\beta_\alpha)$, where $\beta_\alpha$ is the transverse curve for $\alpha$ in $\mu$.  Otherwise, $\pi_{Y}(\mu)=\pi_{Y}(\base(\mu))$.  For any lamination or partial marking $\lambda$ and any essential subsurface $Y$, $\pi_Y(\lambda)$ is a subset of diameter at most $2$.


Let $\mu,\mu'$ be laminations, multiarcs, or partial markings on $Z$ and $Y \subset Z$ an essential subsurface. The {\it $Y$--subsurface coefficient} of $\mu$ and $\mu'$ is defined by
\begin{equation}d_{Y}(\mu,\mu'):=\diam_{\mathcal{C}(Y)}(\pi_{Y}(\mu)\cup\pi_{Y}(\mu'))\end{equation}
\begin{remark}
The subsurface coefficient is sometimes alternatively defined as the (minimal) distance between $\pi_{Y}(\mu)$ and $\pi_{Y}(\mu')$.  Since the diameter of the projection of any marking or lamination is bounded by $2$, these definitions differ by at most $4$.  The definition we have chosen satisfies a triangle inequality (when the projections involved are nonempty), which is particular useful for our purposes.
\end{remark}

The following lemma provides an upper bound for a subsurface coefficient in terms of intersection numbers. 
\begin{lem}\label{lem : id}\cite[$\S 2$]{mm2}
Given curves $\alpha,\beta\in \mathcal{C}(S)$, for any essential subsurface $Y\subseteq S$ we have
$$d_{Y}(\alpha,\beta)\leq 2i(\alpha,\beta)+1.$$
When $Y$ is an annular subsurface the above bound holds with multiplicative factor $1$.
\end{lem}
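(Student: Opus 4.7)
The plan is to split into the non-annular and annular cases and in each bound $d_Y(\alpha,\beta) = \diam_{\mathcal{C}(Y)}(\pi_Y(\alpha)\cup\pi_Y(\beta))$ by the geometric intersection number of $\alpha$ and $\beta$. In both cases I would first observe that for any simple closed curve $\gamma$, the set $\pi_Y(\gamma)$ has diameter at most $2$ in $\mathcal{C}(Y)$: distinct components of $\gamma\cap Y$ are pairwise disjoint, so their boundary-neighborhood projections form a simplex. By the triangle inequality it then suffices to fix a single component $a\subseteq\alpha\cap Y$ and a single component $b\subseteq\beta\cap Y$ and bound $d_{\mathcal{C}(Y)}(\pi_Y(a),\pi_Y(b))$ in terms of $|a\cap b|\leq i(\alpha,\beta)$; the degenerate cases where $\alpha\cap Y=\emptyset$ or $\beta\cap Y=\emptyset$ reduce immediately to the diameter bound on a single projection.

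For non-annular $Y$, I would induct on $k=|a\cap b|$. When $k=0$, the arcs $a$ and $b$ are disjoint essential arcs, so $\pi_Y(a)\cup\pi_Y(b)$ spans a simplex and $d_{\mathcal{C}(Y)}(\pi_Y(a),\pi_Y(b))\leq 1$. For $k\geq 1$, let $p\in a\cap b$ be the intersection closest along $b$ to one of its endpoints on $\partial Y$, and let $b_0\subseteq b$ be the initial subarc from that endpoint to $p$, which meets $a$ only at $p$. Surger $a$ along $b_0$: cut $a$ at $p$ and attach a copy of $b_0$ to each half, producing two candidate arcs. A standard Euler-characteristic argument shows that at least one of the two candidates is essential in $Y$; call it $a'$. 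Then $|a'\cap b|\leq k-1$ and $a,a'$ lie at $\mathcal{C}(Y)$-distance at most $2$, since they share a common subarc whose boundary-neighborhood curves project close to both. Iterating the surgery $k$ times and combining with the diameter bounds $\diam(\pi_Y(\alpha)),\diam(\pi_Y(\beta))\leq 2$ gives $d_Y(\alpha,\beta)\leq 2i(\alpha,\beta)+1$.

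For the annular case $Y=Y_\alpha$, the complex $\mathcal{C}(Y_\alpha)$ is quasi-isometric to $\mathbb{Z}$, and the distance in $\mathcal{C}(Y_\alpha)$ between two essential arcs in the compactified cover is measured (up to an additive constant) by their algebraic intersection number. Each essential arc of the preimage of $\beta$ in $Y_\alpha$ corresponds to exactly one crossing of $\alpha$ with $\beta$, so the union of all such lifts of $\beta$ has $\mathcal{C}(Y_\alpha)$-diameter bounded by $i(\alpha,\beta)$. Absorbing the universal additive constant then yields the sharper bound $d_Y(\alpha,\beta)\leq i(\alpha,\beta)+1$.

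The principal obstacle is the non-annular surgery step: one must rule out the surgered arc being null-homotopic or $\partial$-parallel, and verify that it remains at uniformly bounded $\mathcal{C}(Y)$-distance from the original. Both are standard arguments using that $a$ itself is essential and that $a$ and $a'$ together cobound a disk region near $b_0$, but they are essentially the only nontrivial work in the proof; the initial diameter reductions and the annular counting are routine.
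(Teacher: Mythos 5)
There is no in-paper proof to compare against here: the paper simply quotes this bound from \cite[$\S 2$]{mm2}, and your surgery-induction strategy for the non-annular case is the standard argument from that source, so the overall plan is reasonable. However, as written your bookkeeping does not deliver the stated inequality. The base case is incorrect: two disjoint essential arcs in $Y$ need not have disjoint projections, so $\pi_Y(a)\cup\pi_Y(b)$ need not span a simplex; the correct statement is that the projection of a disjoint union has diameter at most $2$ (this is exactly the diameter bound the paper invokes elsewhere). With a cost of $2$ per surgery step, a cost of $2$ at the end, and the two diameter-$2$ terms for $\pi_Y(\alpha)$ and $\pi_Y(\beta)$, your argument gives roughly $2i(\alpha,\beta)+5$, not $2i(\alpha,\beta)+1$. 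The cleaner route is to run the induction in $\mathcal{AC}(Y)$, where each surgered arc is disjoint from its predecessor, so that $d_{\mathcal{AC}(Y)}(a,b)\leq i(\alpha,\beta)+1$, and only then compare $\mathcal{AC}(Y)$ with $\mathcal{C}(Y)$. (For every use in this paper any bound linear, or even logarithmic, in $i(\alpha,\beta)$ suffices, as the remark after the lemma indicates, but the constant you claim is not what your steps produce.)

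The genuine gap is in the annular case. The lemma concerns an arbitrary annular subsurface, whose core is in general a third curve $\gamma$ crossed by both $\alpha$ and $\beta$; you instead take $Y=Y_\alpha$, the annulus around one of the two curves, in which case $\pi_Y(\alpha)$ contains no essential arcs and the coefficient $d_Y(\alpha,\beta)$ is not the quantity you analyze. Moreover, the quantity you do bound is not the relevant one: the essential lifts of $\beta$ to the annular cover are pairwise disjoint, so their union has diameter at most $1$ in the annular complex regardless of $i(\alpha,\beta)$. What must be bounded is the distance between an essential lift $\tilde\alpha$ of $\alpha$ and an essential lift $\tilde\beta$ of $\beta$ in $\mathcal{C}(Y_\gamma)$, which is at most $1$ plus the number of interior intersection points of $\tilde\alpha$ with $\tilde\beta$; one must then verify that these interior intersections inject into $\alpha\cap\beta$, which is the actual content of the computation in \cite[$\S 2$]{mm2}. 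Without identifying and carrying out that step, the claimed multiplicative factor $1$ for annuli is not established.
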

\begin{remark}
The bound in the above lemma can be improved to $\prec \log i(\alpha,\beta)$ for $\xi(Y) \geq 1$, but the bound given is sufficient for our purposes.
\end{remark}



The following result equivalent to \cite[Corollary D]{compteichlip} provides for a comparison between the logarithm of intersection number and sum of subsurfaces coefficients.  For a pair of markings $\mu,\mu'$, the intersection number $i(\mu,\mu')$ is defined to be the sum of the intersection numbers of the curves in $\mu$ with those in $\mu'$.

\begin{thm}\label{thm : i=dY} 
Given $A>0$ sufficiently large, there are constants so that for any two multi-curves, multi-arcs or markings $\mu$ and $\mu'$ we have
$$\log i(\mu,\mu') \asymp \sum_{\substack{W\subseteq Y,\\ \text{non-annular}}} \{d_{W}(\mu,\mu')\}_{A}+\sum_{\substack{W\subseteq Y,\\ \text{annular}}} \log\{ d_{W}(\mu,\mu')\}_{A}.$$
Where $W$ is so that $\mu,\mu'\pitchfork W$.
\end{thm}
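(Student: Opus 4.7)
Since the statement is cited as equivalent to Corollary~D of \cite{compteichlip}, the plan is to translate Choi--Rafi's combinatorial distance formula into a formula for the logarithm of intersection number. My first step would be to reduce to the case of complete markings by extending any multi-curve or multi-arc to a marking via a choice of pants decomposition and transversals with bounded intersection; this operation changes both sides of the desired estimate by at most uniformly bounded amounts, so the estimate for general $\mu,\mu'$ follows from the estimate for markings.

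For the upper bound $\sum \prec \log i(\mu,\mu')$, I would bound the contribution of each subsurface individually. For non-annular $W$, the remark following Lemma~\ref{lem : id} yields $d_W(\mu,\mu')\prec \log i(\mu,\mu')$; for annular $W=Y_\alpha$, Lemma~\ref{lem : id} gives $d_\alpha(\mu,\mu')\leq i(\mu,\mu')+1$, so $\log d_\alpha(\mu,\mu')\prec \log i(\mu,\mu')$. The remaining issue is to bound the number of subsurfaces $W$ with $d_W(\mu,\mu')>A$; for $A$ chosen sufficiently large, this count can be controlled by $O(\log i(\mu,\mu'))$ using Behrstock's inequality together with the bounded geodesic image theorem, completing this direction.

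For the lower bound $\log i(\mu,\mu') \prec \sum$, I would build a Masur--Minsky hierarchy joining $\mu$ to $\mu'$ and track intersection number growth along a resolution of it into a sequence of markings. The crucial arithmetic is that traversing a $\mathcal{C}(W)$--geodesic of length $d_W$ inside a non-annular subsurface multiplies intersection number by at most a factor comparable to $\exp(d_W)$, since each elementary marking move multiplies intersections by a bounded factor, while executing $d_\alpha$ Dehn twists along an annular subsurface $Y_\alpha$ multiplies intersection number only by a factor comparable to $d_\alpha$. Taking logarithms converts these contributions into exactly the truncated non-annular terms plus the logarithmic annular terms appearing on the right. The main obstacle is the precise accounting of contributions from overlapping subsurfaces to avoid double-counting, and matching hierarchy geodesics to the subsurface projections; this is handled by the threshold $A$, chosen large enough to absorb bounded errors, together with the structural features of the Masur--Minsky hierarchy.
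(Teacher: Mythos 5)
Your proposal is being measured against a proof the paper does not actually contain: Theorem~\ref{thm : i=dY} is quoted as equivalent to Corollary D of \cite{compteichlip} (Choi--Rafi), so the ``paper's proof'' is a citation, and your sketch has to stand on its own. It does not, because of a genuine gap in the direction $\sum \prec \log i(\mu,\mu')$. Bounding each non-annular term by $d_W(\mu,\mu')\prec \log i(\mu,\mu')$, each annular term by $\log d_\alpha(\mu,\mu')\prec\log i(\mu,\mu')$, and then bounding the \emph{number} of subsurfaces with projection above the threshold $A$ by $O(\log i(\mu,\mu'))$ only yields $\sum \prec (\log i(\mu,\mu'))^2$; a count bound times a per-term bound can never give the linear estimate you need. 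The correct accounting is multiplicative and nested: using Theorem~\ref{thm : behineq} one time-orders the domains $W$ with $\{d_W(\mu,\mu')\}_A>0$ (Behrstock's partial order), and shows that each such domain forces a multiplicative contribution to the intersection number, roughly $i(\mu,\mu') \stackrel{*}{\succ} \prod_{W \,\mathrm{non\text{-}annular}} e^{c\,\{d_W(\mu,\mu')\}_A}\cdot \prod_{\alpha} \bigl(\{d_\alpha(\mu,\mu')\}_A\bigr)^{c}$, so that $\log i(\mu,\mu')$ dominates the sum itself. (Choi--Rafi obtain this direction from their Teichm\"uller-distance formula rather than by counting large domains.) Without this step your upper bound does not close.

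Two further points. First, the reduction to complete markings is not a bounded-error operation as you state it: completing $\mu$ to a marking $\hat\mu$ creates overlaps $\hat\mu\pitchfork W$ for subsurfaces $W$ that $\mu$ misses, and both the new terms $d_W(\hat\mu,\mu')$ and $\log i(\hat\mu,\mu')$ depend on the choice of completion in an unbounded way; one must complete $\mu$ \emph{relative to} $\mu'$ (e.g.\ using the projection of $\mu'$ to the complement of $\mu$) so that the added projections fall below the cutoff $A$, and then control the change in intersection number. Second, your hierarchy-resolution argument for $\log i(\mu,\mu')\prec\sum$ is the standard route and is fine as a sketch, but it silently uses that an elementary move multiplies intersection number with a fixed marking by a bounded factor --- which requires the filling estimate $i(\alpha,\beta)\stackrel{*}{\prec} i(\alpha,\mu)\,i(\mu,\beta)$ for a complete marking $\mu$ --- and that all twist moves about a fixed annulus can be grouped into a single block of the resolution so that they contribute a factor comparable to $d_\alpha$ rather than an exponential; both should be stated and justified.
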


In this theorem, $\{ \cdot \}_{A}$ is a cut-off function defined by $\{x\}_A = x$ if $x\geq A$, and $\{x\}_{A}=0$ if $x<A$.

\begin{notation}  \label{notation : cutting}
Given a lamination or a partial marking $\mu$ and subsurface $Y$ we say that $\mu$ and $Y$ overlap, writing $\mu \pitchfork Y$ if $\pi_Y(\mu) \neq \emptyset$.  For any marking $\mu$ and any subsurface $Y$, we have $\mu \pitchfork Y$.  Given two subsurfaces $Y$ and $Z$, if $\partial{Y}\pitchfork Z$ and $\partial{Z}\pitchfork Y$ then we say that $Y$ and $Z$ overlap, and write $Y\pitchfork Z$.
\end{notation}

The inequality first proved by J.~Behrstock \cite{beh} relates subsurface coefficients for overlapping subsurfaces.
\begin{thm}[Behrstock inequality]\label{thm : behineq}
There is a constant $B_{0}>0$ so that given a partial marking or lamination $\mu$ and subsurfaces $Y$ and $Z$ satisfying $Y\pitchfork Z$ we have
$$\min\{d_{Y}(\partial{Z},\mu),d_{Z}(\partial{Y},\mu)\}\leq B_{0}.$$
whenever $\mu \pitchfork Y$ and $\mu \pitchfork Z$.
\end{thm}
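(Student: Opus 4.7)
The plan is a proof by contradiction based on an elementary disjointness principle for subsurface projections. Suppose both $d_Y(\partial Z, \mu) > B_0$ and $d_Z(\partial Y, \mu) > B_0$ for a universal constant $B_0$ to be chosen, and derive a contradiction. I would begin by fixing geodesic (or transverse minimal-position) representatives of $\mu$, $\partial Y$, and $\partial Z$ in the reference metric, so that all intersections are transverse; the hypothesis $Y \pitchfork Z$ ensures the arc systems $\partial Z \cap Y$ and $\partial Y \cap Z$ are nonempty and essential in $Y$ and $Z$ respectively.

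The underlying tool, used implicitly in Lemma 2.4, is the following disjointness principle: if $A, B$ are disjoint essential arc/curve systems in a subsurface $W$, then $\pi_W(A)$ and $\pi_W(B)$ lie within $\mathcal{C}(W)$-distance at most $2$, since their resolutions $\partial N(A \cup \partial W)$ and $\partial N(B \cup \partial W)$ can be realized disjointly. Treating first the case that both $Y$ and $Z$ are non-annular: since $\mu \pitchfork Y$, I may pick an essential arc or curve $a$ of $\mu \cap Y$ contributing to $\pi_Y(\mu)$. If $a$ were disjoint from $\partial Z$ inside $Y$, the disjointness principle applied in $Y$ would give $d_Y(\partial Z, \mu) \leq 4$; hence once $B_0 > 4$, the arc $a$ must cross $\partial Z$ inside $Y$.

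Next I would use such a crossing to extract an essential sub-arc of $\mu$ inside $Z$ that is disjoint from $\partial Y$. Concretely, follow $a$ past a crossing point into $Z$; in the generic situation (which can be arranged after taking $B_0$ large enough to force many crossings, via the disjointness principle applied iteratively) one obtains an interior sub-arc $a_0 \subset a$ with both endpoints on $\partial Z$ and interior in $Y \cap Z$. As an arc in $Z$ with endpoints on $\partial Z$, $a_0$ is a sub-arc of a component of $\mu \cap Z$ contributing to $\pi_Z(\mu)$, and since $a_0$ lies in the interior of $Y$, it is disjoint from the arc system $\partial Y \cap Z$ inside $Z$. Applying the disjointness principle again in $Z$ yields $d_Z(\partial Y, \mu) \leq 4$, contradicting the second hypothesis once $B_0$ is sufficiently large. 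Degenerate cases (few crossings, endpoints lying on $\partial Y \cap \partial Z$, or components of $\mu \cap Z$ that extend outside of $Y$) are handled by a short case analysis: replace $a_0$ by the full component of $\mu \cap Z$ containing it, and note that disjointness from $\partial Y$ can be arranged on the relevant portion by using minimal position of the representatives.

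The main technical obstacle is the annular case, where $Y$ or $Z$ (or both) is an annulus and the ``sub-arcs inside $Y$ or $Z$'' must be interpreted in the compactified annular cover rather than on $S$ itself. This would be handled by lifting $\mu$ and $\partial Z$ (respectively $\partial Y$) to the relevant annular cover $Y_\alpha$; in the cover, essential properly embedded arcs form the vertices of $\mathcal{C}(Y_\alpha)$, the ``disjoint arc systems have nearby projections'' principle holds verbatim, and the same crossing/interior-sub-arc dichotomy carries through. The constant $B_0$ produced by this argument is a small universal number, independent of $\mu$, $Y$, and $Z$.
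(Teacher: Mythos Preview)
The paper does not prove Theorem~\ref{thm : behineq}; it is cited from Behrstock \cite{beh}, with Remark~\ref{rem : Behrstock sharp} pointing to \cite{mangahas} for the explicit constant $B_0=10$. So there is no in-paper argument to compare against.

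Your sketch is essentially the elementary argument recorded in \cite{mangahas} (attributed there to Leininger), which is precisely the source of the effective bound the paper quotes. The outline is sound: large $d_Y(\partial Z,\mu)$ forces an arc $a$ of $\mu\cap Y$ to cross $\partial Z$ several times, and an innermost sub-arc $a_0\subset Y\cap Z$ with both endpoints on $\partial Z$ is then a full component of $\mu\cap Z$ lying in the interior of $Y$, hence disjoint from $\partial Y$. Two details are worth tightening. First, to guarantee that at least one such middle sub-arc lies on the $Z$-side (rather than all in $Y\setminus Z$), you need $a$ to cross $\partial Z$ at least three times; this is where the concrete lower bound on $B_0$ enters, via the inequality of Lemma~\ref{lem : id}. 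Second, you must verify that $a_0$ is \emph{essential} in $Z$; this holds because geodesic arcs in a hyperbolic subsurface with geodesic boundary are never boundary-parallel, but it deserves a sentence rather than being absorbed into ``minimal position''. With these points in place the non-annular case is complete, and your treatment of the annular case by lifting to the compactified cover is the standard one.
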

\begin{remark} \label{rem : Behrstock sharp}  As shown in \cite{mangahas}, the constant $B_0$ can be taken to be $10$.  In fact, if one projection is at least $10$, then the other is $\leq 4$.
\end{remark}

The following theorem is a straightforward consequence of the Bounded Geodesic Image Theorem \cite[Theorem 3.1]{mm2}.

\begin{thm}\textnormal{(Bounded geodesic image)}\label{thm : bddgeod}
Given $k\geq 1$ and $c\geq 0$, there is a constant $G>0$ with the following property. Let $Y\subsetneq S$ be a subsurface. Let $\{\gamma_{k}\}_{i=0}^{\infty}$ be a $1-$Lipschitz $(k,c)-$quasi-geodesic in $\mathcal{C}(S)$ so that $\gamma_{k}\pitchfork Y$ for all $i$. Then $\diam_{Y}(\{\gamma_{k}\}_{i=0}^{\infty})\leq G$.
\end{thm}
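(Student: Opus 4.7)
The plan is to bootstrap the quasi-geodesic statement from the geodesic version of the Bounded Geodesic Image Theorem of Masur--Minsky (\cite[Theorem 3.1]{mm2}) using the Morse (stability) lemma for quasi-geodesics in the Gromov hyperbolic space $\mathcal C(S)$.

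First, fix indices $i<j$ and a $\mathcal C(S)$-geodesic $g=[w_0,\dots,w_N]$ from $\gamma_i$ to $\gamma_j$. The Morse lemma provides a constant $D=D(\delta,k,c)$ so that the Hausdorff distance between $g$ and $\{\gamma_i,\dots,\gamma_j\}$ in $\mathcal C(S)$ is at most $D$. I would then partition the vertices of $g$ into those that cut $Y$ (with $\pi_Y(w_\ell)\ne\emptyset$) and ``gap'' vertices (with $\pi_Y(w_\ell)=\emptyset$, which forces $d_{\mathcal C(S)}(w_\ell,\partial Y)\le 1$). On each maximal run of consecutive cutting vertices, the original Masur--Minsky BGIT bounds the $\mathcal C(Y)$-diameter of the projections by a uniform constant $M$.

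The main obstacle is to control (a) the number of gap vertices, and (b) the variation of $\pi_Y$ across each one. For (a), each gap vertex $w_\ell$ has an associated $\gamma_{n(\ell)}$ with $d_{\mathcal C(S)}(\gamma_{n(\ell)}, w_\ell)\le D$, and hence $d_{\mathcal C(S)}(\gamma_{n(\ell)},\partial Y)\le D+1$. Since $\{\gamma_n\}$ is a $(k,c)$-quasi-geodesic, any two such indices satisfy
\[
|n(\ell)-n(\ell')| \le 2k(D+1) + kc,
\]
so $\{\gamma_{n(\ell)}\}$ has bounded $\mathcal C(S)$-diameter, and pulling this back via Morse places the gap vertices in a subarc of $g$ of bounded length, yielding a uniform bound on their total number. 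For (b), across a gap vertex $w_\ell$ the two adjacent cutting vertices $w_{\ell\pm 1}$ are disjoint from $w_\ell$, which is itself disjoint from $\partial Y$; a short argument combining Lemma~\ref{lem : id} with Behrstock's inequality (Theorem~\ref{thm : behineq}) bounds the jump $d_Y(w_{\ell-1},w_{\ell+1})$ by a uniform constant.

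Assembling the BGIT bound on each subarc of cutting vertices with the uniform estimates on the finitely many gaps yields a universal bound $G_0$ on $\diam_Y(\{w_\ell : w_\ell\pitchfork Y\})$; a final application of Lemma~\ref{lem : id} accounts for the $\mathcal C(S)$-distance (at most $D$) from $\gamma_i,\gamma_j$ to the extremal cutting vertices of $g$, producing $d_Y(\gamma_i,\gamma_j)\le G$ for a universal $G=G(\delta,k,c)$ independent of $i,j$. Taking the supremum over $i,j$ gives the asserted bound on $\diam_Y(\{\gamma_k\})$.
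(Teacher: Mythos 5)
Your overall architecture (Morse stability plus the geodesic Bounded Geodesic Image Theorem applied to maximal runs of vertices of $g$ that cut $Y$) is sensible, and step (a) is fine---in fact easier than you make it: any two vertices of $g$ with empty projection to $Y$ are within distance $1$ of components of $\partial Y$, hence within distance $3$ of each other in $\mathcal C(S)$, so the gap vertices occupy an index window of length at most $3$ on the geodesic. The genuine gap is step (b). There is no local bound on the jump $d_Y(w_{\ell-1},w_{\ell+1})$ across a gap vertex: Behrstock's inequality (Theorem~\ref{thm : behineq}) requires two \emph{overlapping} subsurfaces, and the only subsurface attached to $w_\ell$ (the annulus with core $w_\ell$, or any subsurface with $w_\ell$ in its boundary and disjoint from $Y$) does not overlap $Y$, so it says nothing about $d_Y$; and Lemma~\ref{lem : id} is useless because $i(w_{\ell-1},w_{\ell+1})$ is not controlled. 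Indeed the local configuration admits no uniform bound: take $w_\ell$ disjoint from $Y$, take $w_{\ell-1}$ any curve cutting $Y$ and disjoint from $w_\ell$, and set $w_{\ell+1}=\varphi^n(w_{\ell-1})$ where $\varphi$ is the Dehn twist about the core of $Y$ (if $Y$ is an annulus) or a partial pseudo-Anosov supported on $Y$ (otherwise). Since $\varphi$ fixes $w_\ell$, the path $w_{\ell-1},w_\ell,w_{\ell+1}$ is a geodesic of length two satisfying all of your local hypotheses, yet $d_Y(w_{\ell-1},w_{\ell+1})\to\infty$ with $n$. The same misconception (that bounded $\mathcal C(S)$-distance controls subsurface projection) also appears in your last sentence, where Lemma~\ref{lem : id} is invoked to ``account for'' a $\mathcal C(S)$-distance of at most $D$; note moreover that this final step is vacuous, since $\gamma_i,\gamma_j$ are themselves (cutting) vertices of $g$. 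The natural repair---replace $w_{\ell\pm1}$ by quasi-geodesic vertices $\gamma_p,\gamma_q$ within $D$ of them---stalls for the same reason: $d_{\mathcal C(S)}(w_{\ell-1},\gamma_p)\le D$ gives no bound on $d_Y(w_{\ell-1},\gamma_p)$.

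The global hypothesis (all $\gamma_n$ cut $Y$, plus the \emph{parametrized} $(k,c)$ lower bound) must enter, and a standard way to use it is to track projections along the quasi-geodesic rather than along $g$. For the record, the paper itself gives no argument, asserting the statement as a direct consequence of \cite[Theorem 3.1]{mm2}; a complete proof along correct lines runs as follows. Since the path is $1$--Lipschitz and every vertex cuts $Y$, consecutive projections satisfy $d_Y(\gamma_n,\gamma_{n+1})\le 4$, so if $d_Y(\gamma_a,\gamma_b)$ were larger than a suitable $G(k,c)$ one could choose $a=n_0<n_1<\dots<n_r=b$ with $d_Y(\gamma_{n_s},\gamma_{n_{s+1}})\ge M$ for each $s$ and $r$ large. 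By the geodesic BGIT each geodesic $[\gamma_{n_s},\gamma_{n_{s+1}}]$ has a vertex within distance $1$ of $\partial Y$, and by the Morse lemma applied to the quasi-geodesic segment with the same endpoints there is $m_s\in[n_s,n_{s+1}]$ with $d_{\mathcal C(S)}(\gamma_{m_s},\partial Y)\le D+2$. The vertices $\gamma_{m_s}$ are then pairwise within distance $2D+5$, while $m_{s+2}>m_s$, so the lower quasi-geodesic inequality bounds $r$ in terms of $k,c,D$ alone---a contradiction. This is where the parametrization and the $1$--Lipschitz hypothesis are genuinely used, and it is exactly the content missing from your step (b).
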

\medskip

\subsection{Teichm\"{u}ller theory}\label{teichth}
We assume that any holes of $S$ are punctures.  The Teichm\"uller space of $S$, $\Teich(S)$, is the space of equivalence classes of marked complex structures $[f \colon S \to X]$, where $f$ is an orientation preserving  homeomorphism to a finite type Riemann surface $X$, where $(f \colon S \to X) \sim (g \colon S \to Y)$ if $f \circ g^{-1}$ is isotopic to a conformal map.  We often abuse notation, and simply refer to $X$ as a point in Teichm\"uller space, with the equivalence class of marking implicit.  We equip $\Teich(S)$ with the Teichm\"uller metric, whose geodesics are defined in terms of quadratic differentials.

Let $X$ be a finite type Riemann surface and let $T^{(1,0)*}X$ be the holomorphic cotangent bundle of $X$.  A quadratic differential $q$ is a nonzero, integrable, holomorphic section of the bundle $T^{(1,0)*}X\otimes T^{(1,0)*}X$.  In local coordinates $q$ has the form $q(z)dz^{2}$ where $q(z)$ is holomorphic function.  Changing to a different coordinate $w$, $q$ changes by the square of the derivative, and is thus given by $q(z(w)) (\frac{\partial w}{\partial z})^2 dw^2$.  The integrability condition is only relevant when $X$ has punctures, in which case it guarantees that $q$ has at worst simple poles at the punctures.

In local coordinates away from zeros of $q$ the quadratic differential $q$ determines the $1-$form $\sqrt{q(z)dz^{2}}$. Integrating this $1$--form determines {\it a natural coordinate} $\zeta=\xi+i\eta$. Then the trajectories of $d\xi\equiv 0$ and $d\eta\equiv 0$, respectively, determine the horizontal and vertical foliations of $q$ on $X$. Integrating $|d\xi|$ and $|d\eta|$ determines transverse measures on vertical and horizontal foliations, respectively.  These extend to measured foliations on the entire surface $S$ with singularities at the zeros.  Using the identification $\mathcal{MF}(S) \cong \mathcal{ML}(S)$, we often refer to the vertical and horizontal measured laminations of $q$.

Now given a quadratic differential $q$ on $X$, the associated Teichm\"{u}ller geodesic is determined by the family of Riemann surfaces $X_t$ defined by local coordinates $\zeta_{t}=e^{t}\xi +e^{-t}\eta$ where $\zeta=\xi+i\eta$ is a natural coordinate of $q$ at $X$ and $t\in\mathbb{R}$.  Every Teichm\"uller geodesic ray based at $X$ is determined by a quadratic differential $q$ on $X$.  See \cite{gardiner} for details on Teichm\"uller space and quadratic differentials.

\subsection{The Thurston compactification}
Given a point $[f \colon S \to X]$ in $\Teich(S)$ and a curve $\alpha$, the hyperbolic length of $\alpha$ at $[f \colon S \to X]$ is defined to be hyperbolic length of the geodesic homotopic to $f(\alpha)$ in $X$.  Again abusing notation and denoting the point in $\Teich(S)$ by $X$, we write the hyperbolic length simply as $\hyp_X(\alpha)$.  The hyperbolic length function extends to a continuous function
\[ \hyp_{(\cdot)}(\cdot) \colon \Teich(S) \times \mathcal{ML}(S) \to \mathbb R.\]

The Thurston compactification, $\overline{\Teich(S)} = \Teich(S) \cup \mathcal{PML}(S)$ is constructed so that a sequence $\{X_n\} \subseteq \Teich(S)$ converges to $[\bar \nu] \in \mathcal{PML}(S)$ if and only if
\[ \lim_{n \to \infty} \frac{\hyp_{X_n}(\alpha)}{\hyp_{X_n}(\beta)} = \frac{i(\bar \nu,\alpha)}{i(\bar \nu,\beta)}\]
for all simple closed curves $\alpha,\beta$ with $i(\bar \nu,\beta) \neq 0$.
See \cite{bonahon-teich,FLP} for more details on the Thurston compactification.

\subsection{Some hyperbolic geometry}

Here we list a few important hyperbolic geometry estimates.  For a hyperbolic metric $X \in \Teich(S)$ and a simple closed curve $\alpha$, in addition to the length $\hyp_X(\alpha)$, we also have the quantity $w_X(\alpha)$, the {\em width of $\alpha$ in $X$}.  This is the width of a maximal embedded tubular neighborhood of $\alpha$ in the hyperbolic metric $X$--that is, $w_X(\alpha)$ is the maximal $w$ so that the open $w/2$--neighborhood of $\alpha$ is an annular neighborhood of $\alpha$.
The Collar Lemma (see e.g.~\cite[\S 4.1]{buser}) provides a lower bound on the width
\begin{lem} \label{lem : collar lemma}
For any simple closed curve $\alpha$, we have
\[ w_X(\alpha) \geq 2\sinh^{-1}\Big( \frac{1}{\sinh(\hyp_X(\alpha)/2)} \Big).\]
Consequently, 
\begin{equation}
\label{eq : w} w_X(\alpha) \stackrel+ \asymp 2 \log\Big(\frac{1}{\hyp_X(\alpha)}\Big)
\end{equation}
\end{lem}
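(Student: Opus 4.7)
The plan is to reduce the claim to a statement in hyperbolic geometry of $\mathbb{H}^2$, then apply a standard hyperbolic trigonometric identity. Fix a lift $\tilde\alpha \subset \mathbb{H}^2$ of $\alpha$; its stabilizer in the deck group $\Gamma$ is the cyclic group generated by a hyperbolic isometry $T$ of translation length $\ell := \hyp_X(\alpha)$. The open $(w/2)$-neighborhood of $\tilde\alpha$ projects injectively to $X$ if and only if $d(\tilde\alpha, g\cdot\tilde\alpha) \geq w$ for every $g \in \Gamma$ not stabilizing $\tilde\alpha$. Since $\alpha$ is simple, any two distinct lifts of $\alpha$ are disjoint, so this distance is strictly positive, and consequently
\[
w_X(\alpha) = \inf\{d(\tilde\alpha, g\cdot\tilde\alpha) : g \in \Gamma,\ g\cdot\tilde\alpha \neq \tilde\alpha\}.
\]
It thus suffices to show that any two disjoint lifts $L_1, L_2$ of $\alpha$ satisfy $\sinh(d(L_1,L_2)/2)\sinh(\ell/2) \geq 1$.

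The main input is a classical hyperbolic trigonometric inequality, most easily derived by working inside a pair of pants containing $\alpha$ in a pants decomposition of $X$. Decompose such a pair of pants into two right-angled hexagons along the three common perpendicular seams, where $\alpha$ contributes a side of length $\ell/2$ to each hexagon. Applying the right-angled hexagon identity (e.g., Buser, Theorem~2.4.1) relating the three alternating sides to the configuration and then letting the boundary lengths of the other two pants components tend to zero shows that the perpendicular distance $w$ from $\alpha$ to any seam satisfies $\sinh(w/2)\sinh(\ell/2) \geq 1$, giving an embedded collar of total width at least $2\sinh^{-1}(1/\sinh(\ell/2))$. Since this bound depends only on $\ell$ and not on the other boundary lengths, it holds uniformly across any pants decomposition containing $\alpha$, proving the first displayed inequality.

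For the asymptotic (\ref{eq : w}), I combine the lower bound with a matching upper bound coming from Gauss--Bonnet. Using $\log(\sinh(\ell/2)) \stackrel{+}{\asymp} \log(\ell/2)$ for small $\ell$ and $\sinh^{-1}(y) \stackrel{+}{\asymp} \log(2y)$ for large $y$, the lower bound gives $w_X(\alpha) \geq 2\log(1/\ell) - C$ as $\ell \to 0$. For the upper bound, the area of the embedded standard collar of width $w_X(\alpha)$ is exactly $2\ell\sinh(w_X(\alpha)/2)$, which cannot exceed the total hyperbolic area $-2\pi\chi(S)$ of $X$; inverting yields $w_X(\alpha) \leq 2\log(1/\ell) + C'$ for a constant depending only on $\chi(S)$. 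For $\ell$ bounded away from zero both sides stay uniformly bounded and the additive comparison is trivial.

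The main obstacle is identifying the correct polygonal configuration to obtain the half-argument $\sinh(d/2)$ in the trigonometric inequality; naive universal-cover approaches, such as a right-angled hexagon formed by a lift $L_1$, a transverse lift $L_2$, and its $T$-translate $T\cdot L_2$, yield only the weaker bound $\sinh(d)\sinh(\ell/2) \geq 1$, which is a factor of two off from what the lemma requires. Once the correct pants/hexagon configuration is set up, the remaining trigonometric calculation and the subsequent asymptotic analysis are routine.
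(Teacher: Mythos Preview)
The paper does not prove the first inequality; it records it as the Collar Lemma, citing Buser~\cite[\S 4.1]{buser}. For~\eqref{eq : w} the paper says only that it ``comes from the first, together with an easy area argument.'' Your derivation of~\eqref{eq : w}---lower bound by expanding $\sinh^{-1}$, upper bound from $2\ell\sinh(w_X(\alpha)/2)\leq -2\pi\chi(S)$---is precisely this area argument, so on that half you match the paper.

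Your attempt at the first inequality has a genuine gap at ``letting the boundary lengths of the other two pants components tend to zero.'' The hyperbolic structure $X$ is fixed, so $\ell_2,\ell_3$ cannot vary; and even read as a monotonicity argument the direction is backwards, since as $\ell_2,\ell_3\to 0$ the seam lengths go to infinity and the inequality becomes vacuous. The seam length $a_3=d(\gamma_1,\gamma_2)$ is in fact decreasing in $\ell_2$, with $\cosh(a_3)\to\coth(\ell/2)$ as $\ell_2\to\infty$, yielding $\sinh(a_3)\sinh(\ell/2)\geq 1$---not the stronger $\sinh(a_3/2)\sinh(\ell/2)\geq 1$ you asserted. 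That weaker bound is what is actually needed (a half-collar of width $\sinh^{-1}(1/\sinh(\ell/2))$ fits on each side of $\alpha$ in the adjacent pants, and the two halves glue across $\alpha$ to give total width $2\sinh^{-1}(1/\sinh(\ell/2))$), so the idea is salvageable; but as written both the limit and the inequality are wrong. Buser's proof avoids limits entirely via a direct trirectangle identity inside the hexagon.
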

The second statement comes from the first, together with an easy area argument.  The implicit additive error depends only on the topology of $S$.  

We also let $\epsilon_0 > 0$ be the Margulis constant, which has the property that any two hyperbolic geodesics of length at most $\epsilon_0$ must be embedded and disjoint.

\subsection{Short markings}
For $L>0$ sufficiently large, an $L$--bounded length marking at $X\in\Teich(S)$ (or $L$--short marking) is a marking with the property that any curve in $\base(\mu)$ has hyperbolic length less than $L$, and so that for each $\alpha\in\base(\mu)$, the transversal curve to $\alpha$ has smallest possible length in $X$. Choosing $\epsilon$ sufficiently large (larger than the Bers constant of the surface) the distance between any two points in Teichm\"uller space can be estimated up to additive and multiplicative error in terms of the subsurface coefficients of the short markings at those points, together with the lengths of their base curves; see \cite{rcombteich}.

\section{Sequences of curves}\label{sec : sequence}

Over the course of the next three sections we will provide general conditions on a sequence of curves which guarantee that any accumulation point in $\PML(S)$ of this sequence is a nonuniquely ergodic ending lamination.  In \cite[\S 9]{gabaiendlamspace}, Gabai describes a construction of minimal filling nonuniquely ergodic geodesic laminations. The construction is topological in nature. Our construction in this paper and that of \cite{nonuniqueerg} can be considered as quantifications of Gabai's construction where the estimates for intersection numbers are computed explicitly.  These estimates allow us to provide more detailed information about the limits in $\PML(S)$ as well as limiting behavior of associated Teichm\"uller geodesics.

In this section we state conditions a sequence of curves can satisfy, starting with an example, and describe a useful way of mentally organizing them.  The conditions are motivated by the examples in \cite{nonuniqueerg}, and so we recall that construction to provide the reader concrete examples to keep in mind.  A more robust construction that illustrates more general phenomena is detailed in \S\ref{sec : constructions}.

Throughout the rest of this paper $\{e_{k}\}_{k=0}^{\infty}$ is an increasing sequence of integers satisfying  
\begin{equation}\label{eq : ek}e_{k+1}\geq ae_{k} \;\text{for any}\; k\geq 0.\end{equation}
where $a>1$.  Consequently, for all $l < k$, we have $e_k \geq a^{k-l}e_l$.  

\subsection{Motivating example.}

The motivating examples are sequences of curves in $S_{0,5}$, the five-punctured sphere.  We view this surface as the double of a pentagon minus its vertices over its boundary.  This description provides an obvious order five rotational symmetry $\rho$ obtained by rotating the pentagon counter-clockwise by an angle $4\pi/5$.  Let $\gamma_0$ be a curve which is the boundary of a small neighborhood of one of the sides of the pentagon and let $\gamma = \rho^2(\gamma_0)$ (see Figure~\ref{Fig:fivecurves}).   Write  $\mathcal D = \mathcal D_\gamma$ for the positive Dehn twist about $\gamma$.

Now define $\gamma_k$ to be the image of $\gamma_0$ under a composition of powers of $\mathcal D$ and $\rho$ by the formula:
\[ \gamma_k = \mathcal D^{e_2} \rho \mathcal D^{e_3} \rho \cdots \mathcal D^{e_k} \rho \mathcal D^{e_{k+1}} \rho(\gamma_0).\]
The first five curves, $\gamma_0,\ldots,\gamma_4$ in the sequence are shown in Figure~\ref{Fig:fivecurves}.

\begin{figure}[htb]
\labellist
\small\hair 2pt
 \pinlabel {$\gamma_0$} [ ] at 55  32
 \pinlabel {$\gamma_1$} [ ] at 185 60
 \pinlabel {$\gamma = \gamma_2$} [ ] at 308 49
 \pinlabel {$\gamma_3$} [ ] at 440 32
 \pinlabel {$\gamma_4$} [ ] at 560 32 
 \pinlabel {$2e_2$} [ ] at 520 90
\endlabellist
\begin{center}
\includegraphics[width=5in,height=.9in]{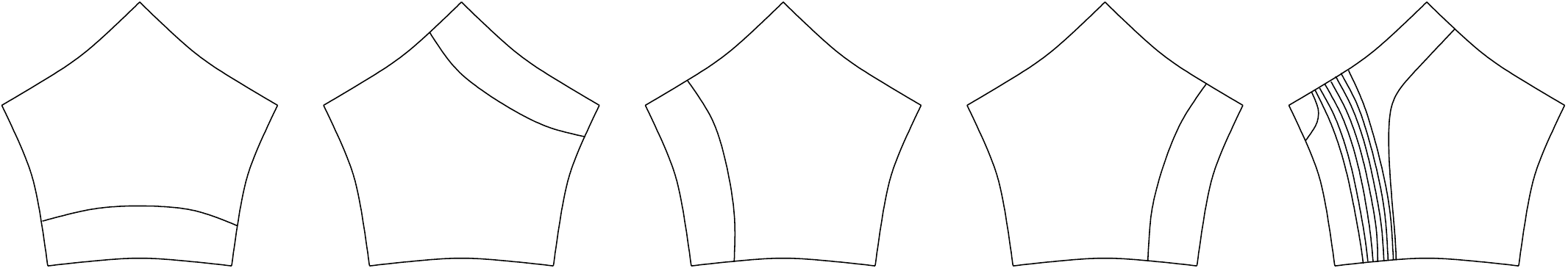}
\caption{The curves $\gamma_0,\ldots,\gamma_4$ in $S_{0,4}$.  Any five consecutive curves $\gamma_{k-2},\ldots,\gamma_{k+2}$ differ from 
those shown here by a homeomorphism, and replacing $e_2$ by $e_k$.}
\label{Fig:fivecurves}
\end{center}
\end{figure}

Observe that for any $k \geq 3$, the four consecutive curves $\gamma_{k-2},\ldots,\gamma_{k+1}$ are just the image of $\gamma_0,\ldots,\gamma_3$ under the homeomorphism
\[ \Phi_{k-1} = \mathcal D^{e_2} \rho \cdots \mathcal D^{e_{k-1}} \rho. \]
Furthermore, the next curve in the sequence, $\gamma_{k+2}$, is the image of $\mathcal D^{e_k} \rho(\gamma_3)$.  In particular, up to homeomorphism, any five consecutive curves $\gamma_{k-2},\ldots,\gamma_{k+2}$ in the sequence appear as in Figure~\ref{Fig:fivecurves} with $e_2$ replaced by $e_k$.

\subsection{Intersection conditions}
We now describe the general conditions, and verify that the above sequence of curves satisfies them.  To begin, we fix positive integers $b_1 \leq b \leq b_2$.  We will also assume that $e_0  > E + G$ (and hence $e_k > a^k (E+G)$ for all $k$), where $G$ is the constant from Theorem~\ref{thm : bddgeod} and $E$ is the constant in Theorem~\ref{thm : locglob} below.  For the examples in $S_{0,5}$ described above, we will have $b = b_1 = b_2 = 2$.

In the next definition, $\mathcal D_\gamma$ is the Dehn twist in a curve $\gamma$.

\begin{definition}\label{def : P}
Suppose $m \leq \xi(S)$, and assume $b,b_1,b_2,a,$ and $\{e_k\}_{k=0}^\infty$ are as above.  We say that a sequence of curves $\{\gamma_{k}\}_{k=0}^{\infty}$ on $S$ satisfies $\mathcal P$ if the following properties hold for all $k \geq 0$:
\begin{enumerate}[(i)]
\item $\gamma_{k},...,\gamma_{k+m-1}$ are pair-wise disjoint and distinct,
\item\label{int : fill} $\gamma_{k},...,\gamma_{k+2m-1}$ fill the surface $S$, and 
\item\label{int : gk+m} $\gamma_{k+m} = \mathcal D^{e_k}_{\gamma_k}(\gamma_{k+m}')$, where $\gamma'_{k+m}$ is a curve such that
\[ i(\gamma'_{k+m},\gamma_j) \left\{ \begin{array}{ll} \in [b_1,b_2] & \mbox{for } j\in\{k - m,...,k-1\} \\
= b & \mbox{for } j = k\\
= 0 & \mbox{for } j\in\{k+1,...,k+m-1\}  \end{array} \right. \]
(here we ignore any situation with $j < 0$).
\end{enumerate}
We will wish to impose some additional constraints on the constant $a$ (specifically, we will require it to be chosen so that (\ref{eq : restriction on a}) holds), and so in the notation we sometimes express the dependence on $a$ writing $\mathcal P = \mathcal P(a)$.  Of course, $\mathcal P$ depends on the choice of constants $b_1 \leq b \leq b_2$ and the sequence $\{e_k\}$, but we will impose no further constraints on the $b$ constants, and the conditions on $\{e_k\}$ depend on $a$.
\end{definition}

Here we verify that the sequence of curves on $S_{0,5}$ described above satisfies these conditions with $m = 2$.  Note that the conditions are all ``local'', meaning that they involve a consecutive sequence of at most $2m+1$ curves---for our example, that's a sequence of at most $5$ consecutive curves.  As noted above, any five consecutive curves $\gamma_{k-2},\ldots,\gamma_{k+2}$ differ from those in Figure~\ref{Fig:fivecurves} by applying the homeomorphism $\Phi_{k-1} = \mathcal D^{e_2} \rho \cdots \mathcal D^{e_{k-1}} \rho$, and changing $e_2$ to $e_k$.  From this, it is straight forward to verify that this sequence satisfies these conditions.

Since $m = 2$, condition (i) says that $2$ consecutive curves are disjoint, while (ii) says that four consecutive curves fill $S_{0,5}$. Note that (i) is clearly true for $\gamma_0,\gamma_1$ and (ii) for $\gamma_0,\ldots,\gamma_3$.  Since any two or four consecutive curves differ from these by a homeomorphism, conditions (i) and (ii) hold for all $k$.

Finally, note that $\gamma_4 = \mathcal D_\gamma^{e_2} \rho(\gamma_3)$, and so setting $\gamma_4' = \rho(\gamma_3)$ and observing that $\gamma = \gamma_2$, (iii) clearly holds for $k=2$ by inspection of Figure~\ref{Fig:fivecurves}.   The case for general $k$ follows from this figure as well, after applying $\Phi_{k-1}$.  Specifically, $\gamma_{k+2}$ is obtained from $\rho(\gamma_3)$ by applying $\Phi_{k-1} \mathcal D_{\gamma_2}^{e_k}$, or equivalently, setting $\gamma_{k+2}' = \Phi_{k-1}(\rho(\gamma_3))$
\[ \gamma_{k+1} =  \Phi_{k-1} \mathcal D_{\gamma_2}^{e_k} \Phi_{k-1}^{-1} (\Phi_{k-1}(\rho(\gamma_3))) = \mathcal D_{\Phi_{k-1}(\gamma_2)}^{e_k}(\gamma_{k+2}') = \mathcal D_{\gamma_k}^{e_k}(\gamma_{k+2}'). \]
Because $\gamma_{k-2},\gamma_{k-1},\gamma_k,\gamma_{k+1},\gamma_{k+2}',\gamma_{k+2}$ are the images of $\gamma_0,\gamma_1,\gamma_2,\gamma_3,\gamma_4',\gamma_4$, respectively, under $\Phi_{k-1}$, (iii) follows for general $k$ by inspection of Figure~\ref{Fig:fivecurves}.\\

Returning to the general case, we elaborate a bit on the properties in $\mathcal P$.  First we make a simple observation.
\begin{lem} \label{lem : length 2m intervals}
For every $j,k \geq 0$ with $j \in \{k-m+1,\ldots,k\}$, we have $i(\gamma_{k+m},\gamma_j)\in[b_{1},b_{2}]$ with $i(\gamma_{k+m},\gamma_k) = b$.  \end{lem}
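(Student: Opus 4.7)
The plan is to reduce everything to condition (iii) of $\mathcal P$ by showing that the Dehn twist $\mathcal D_{\gamma_k}^{e_k}$ does not affect any of the relevant intersection numbers. The key input is condition (i), which states that $m$ consecutive curves in the sequence are pairwise disjoint.

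First I would handle the range $j \in \{k-m+1,\ldots,k-1\}$. For such a $j$, applying condition (i) to the block $\gamma_j, \gamma_{j+1}, \ldots, \gamma_{j+m-1}$ (which contains $\gamma_k$ since $k \leq j+m-1$), we conclude that $\gamma_j$ and $\gamma_k$ are disjoint. Therefore $\mathcal D_{\gamma_k}^{e_k}$ fixes the isotopy class of $\gamma_j$, and using invariance of intersection number under homeomorphism we get
\[ i(\gamma_{k+m},\gamma_j) = i\bigl(\mathcal D_{\gamma_k}^{e_k}(\gamma'_{k+m}),\gamma_j\bigr) = i\bigl(\gamma'_{k+m}, \mathcal D_{\gamma_k}^{-e_k}(\gamma_j)\bigr) = i(\gamma'_{k+m},\gamma_j), \]
which lies in $[b_1,b_2]$ by condition (iii).

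Next I would handle the case $j = k$. Since $\mathcal D_{\gamma_k}^{e_k}$ fixes $\gamma_k$ itself (the twisting curve), the same computation gives
\[ i(\gamma_{k+m},\gamma_k) = i(\gamma'_{k+m},\gamma_k) = b, \]
again directly from (iii). Combined with $b_1 \leq b \leq b_2$, this shows $i(\gamma_{k+m},\gamma_k) \in [b_1,b_2]$ with the stronger equality as claimed.

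There is no real obstacle here; the only subtlety is making sure the index range lines up correctly, namely that property (i) applied to the window starting at index $j$ really does force $\gamma_j$ and $\gamma_k$ to be disjoint when $k-m+1 \leq j \leq k-1$. Once that index bookkeeping is in place, the statement is an immediate consequence of the twist invariance of intersection number and condition (iii) of $\mathcal P$.
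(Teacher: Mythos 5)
Your proof is correct and follows essentially the same route as the paper: use disjointness from condition (i) to see that $\mathcal D_{\gamma_k}^{e_k}$ fixes $\gamma_j$ (and $\gamma_k$), then apply the twist and invariance of intersection number to reduce to condition (iii). The index bookkeeping you flag is handled exactly as you describe, so nothing is missing.
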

\begin{proof}  Since $\gamma_{k+m} = \mathcal D^{e_k}_{\gamma_k}(\gamma_{k+m}')$ and $\mathcal D_{\gamma_k}(\gamma_j) = \gamma_j$  (because $i(\gamma_j,\gamma_k) = 0$), it follows that
\[  i(\gamma_{k+m}, \gamma_j) = i(\mathcal D^{-e_k}_{\gamma_k}(\gamma_{k+m}),\mathcal D^{-e_k}_{\gamma_k}(\gamma_j)) =  i(\gamma_{k+m}',\gamma_j)  \in [b_1,b_2]  \]
proving the first statement.  For the special case $j=k$, $i(\gamma_{k+m}',\gamma_k) = b$, and the second statement follows.
\end{proof}

\subsection{Visualizing the conditions of $\mathcal P$.}
The conditions imposed in $\mathcal P$ involve intervals of length $m$ and $2m$, as well as mod $m$ congruence conditions.  It is useful to view the tail of the sequence starting at any curve $\gamma_i$ (for example, when $i = 0$ this is the entire sequence), in the following form
\begin{equation} \label{eq:how we view it} \xymatrix{ & \gamma_i \ar[r] & \gamma_{i+1} \ar[r] & \cdots \ar[r] & \gamma_{i+m-1} \ar `r[d] `[l] `^d[llll] `^r[d] [dlll]   \\
& \gamma_{i+m} \ar[r] & \gamma_{i+m+1} \ar[r] & \cdots \ar[r] & \gamma_{i+2m-1} \ar `r[d] `[l] `^d[llll] `^r[d] [dlll]  \\
& \gamma_{i+2m} \ar[r] & \gamma_{i+2m+1} \ar[r] & \cdots & }
\end{equation}
From the first condition of $\mathcal P$, all curves in any row are pairwise disjoint. Lemma~\ref{lem : length 2m intervals} tells us that $\gamma_i$ intersects the curve directly below it $b$ times and it intersects everything in the row directly below it between $b_1$ and $b_2$ times.  The second condition in $\mathcal P$ tells us that any two consecutive rows fill $S$.  The third condition (part of which is used in the proof of Lemma~\ref{lem : length 2m intervals}), can be thought of as saying that going straight down two rows from $\gamma_i$ to $\gamma_{i+2m}$ gives a curve that ``almost'' differs by the power of the Dehn twist $\mathcal D_{\gamma_{i+m}}^{e_{i+m}}$.  To understand this interpretation, note that $\gamma_{i+2m}'$ and $\gamma_{i+2m}$ differ precisely by this power of a twist, while on the other hand, each of $\gamma_{i+2m}'$ and $\gamma_i$ have intersection number at most $b_2$ with the filling set $\gamma_i,\ldots,\gamma_{i+2m-1}$ (which we view as saying that $\gamma_i$ and $\gamma_{i+2m}'$ are ``similar'').

\section{Curve complex quasi-geodesics}\label{sec : locglob}

The purpose of this section is to provide general conditions (Theorem~\ref{thm : locglob}) on a sequence of subsurfaces in terms of subsurface coefficients of consecutive elements which guarantee that their boundaries define a quasi-geodesic in the curve complex of the surface.  Appealing to Theorem~\ref{thm : bdrycc}, we deduce that such sequences determine an ending lamination.   We end by proving that   a sequence of curves satisfying $\mathcal P$ are core curves of annuli satisfying the conditions of Theorem~\ref{thm : locglob}, and hence are vertices of a quasi-geodesic in $\mathcal C(S)$ defining an ending lamination $\nu \in \mathcal{EL}(S)$.

Variations of this result appeared in \cite{mangahas}, \cite{CLM}, \cite{wpbehavior}, \cite{nonuniqueerg}, and \cite{wprecurnue} for example. Here our conditions only involve the intersection pattern and projection coefficients of fixed number of consecutive subsurfaces along the sequence. In this sense these are local conditions. 

\begin{thm}\label{thm : locglob}\textnormal{(Local to Global)}
Given a surface $S$ and $2 \leq m \leq \xi(S)$, there are constants $E> C>0$ with the following properties. Let $\{Y_{k}\}_{k= 0}^{\infty}$ be a sequence of subsurfaces of $S$. Suppose that for each integer $k\geq 0$,
\begin{enumerate}
\item\label{lg : locqg} The multi-curves $\partial{Y}_{k},...,\partial{Y}_{k+m-1}$ are pairwise disjoint,
\item \label{lg : locint}$Y_{k}\pitchfork Y_{j}$ for all $j\in\{k+m,...,k+2m-1\}$, and
\item \label{lg : loclargesubsurf}$d_{Y_{k}}(\partial{Y}_{j},\partial{Y}_{j'}) > E$  for any $j\in\{k+m,...,k+2m-1\}$ and any $j'\in\{k-2m+1,...,k-m\}$.

\hspace{-1.8cm} Then for every $j,j',k$ with $j\geq k+m$ and $j' \leq k-m$ we have
 \begin{equation}\label{eq : int} Y_{k}\pitchfork Y_{j} \;\text{and}\; Y_{k}\pitchfork Y_{j'} \end{equation}
and
\begin{equation}\label{eq : locglob}d_{Y_{k}}(\partial{Y}_{j},\partial{Y}_{j'})\geq d_{Y_{k}}(\partial{Y}_{k-m},\partial{Y}_{k+m})-C. \end{equation}

\hspace{-1.8cm} Furthermore suppose that for some $n\geq 1$ and all $k \geq 0$
\item \label{lg : locfill} the multi-curves $\partial{Y}_{k}, ....,\partial{Y}_{k+2n-1}$ fill $S$. 
\end{enumerate}
Then for any two indices $k,j\geq 0$ with $|k-j|\geq 2n$ we have
\begin{equation}\label{eq : lbdS} d_S(\partial Y_j,\partial Y_k) \geq \frac{|k-j|}{4n} - \left( \frac{m}{2n} + 1 \right).\end{equation}
\end{thm}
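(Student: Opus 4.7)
The plan falls naturally into two stages.

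\textbf{Stage 1 (the subsurface projection statements \eqref{eq : int} and \eqref{eq : locglob}).} I would argue by a combined induction on $j-k$ (forwards) and $k-j'$ (backwards), proving the stronger statement that $Y_k \pitchfork Y_j$ with $d_{Y_k}(\partial Y_{k+m}, \partial Y_j) \leq C_0$ for a uniform constant $C_0$ (and its symmetric counterpart for $j' \leq k-m$). The base case $j \in \{k+m, \ldots, k+2m-1\}$ is handled by conditions (1) and (2) directly: condition (2) supplies the overlap, and the pairwise disjointness of $\partial Y_{k+m}, \ldots, \partial Y_{k+2m-1}$ in (1) forces their $\pi_{Y_k}$--images to lie in a uniformly bounded-diameter set of $\mathcal C(Y_k)$. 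For the inductive step with $j \geq k+2m$, condition (3) at index $j-m$ (pairing $j$ against $j-2m$) yields $d_{Y_{j-m}}(\partial Y_j, \partial Y_{j-2m}) > E$; combined via the triangle inequality with the negative-side inductive hypothesis $d_{Y_{j-m}}(\partial Y_{j-2m}, \partial Y_k) \leq C_0$, this gives $d_{Y_{j-m}}(\partial Y_j, \partial Y_k) > E - C_0$. Choosing $E$ large enough that $E - C_0 > B_0$, the Behrstock inequality (Theorem~\ref{thm : behineq}) applied to $Y_{j-m} \pitchfork Y_k$ with a full marking $\mu$ containing $\partial Y_j$ then delivers $d_{Y_k}(\partial Y_{j-m}, \partial Y_j) \leq B_0 + O(1)$ and, in particular, forces $Y_k \pitchfork Y_j$. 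The inductive hypothesis $d_{Y_k}(\partial Y_{k+m}, \partial Y_{j-m}) \leq C_0$ together with a triangle inequality then closes the induction once $C_0$ is chosen large enough to absorb the additive slack. Statement \eqref{eq : locglob} follows by symmetry and two triangle inequalities, with $C = 2C_0 + O(1)$.

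\textbf{Stage 2 (the curve-complex lower bound \eqref{eq : lbdS}).} WLOG assume $k < j$, and let $\alpha_0 = \partial Y_j, \alpha_1, \ldots, \alpha_L = \partial Y_k$ realize $L = d_S(\partial Y_j, \partial Y_k)$ as a $\mathcal C(S)$--geodesic. For each intermediate $l \in [k+m, j-m]$, statement \eqref{eq : locglob} applied at $Y_l$, combined with condition (3) at $l$ to furnish $d_{Y_l}(\partial Y_{l-m}, \partial Y_{l+m}) > E$, yields $d_{Y_l}(\partial Y_j, \partial Y_k) > E - C$. Choosing $E > C + G$ (with $G$ the Bounded Geodesic Image constant of Theorem~\ref{thm : bddgeod}), the contrapositive of BGIT produces, for each such $l$, some vertex $\alpha_{i(l)}$ on the geodesic failing to overlap $Y_l$ --- equivalently, disjoint from $\partial Y_l$ (up to annular subtleties). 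Condition (4) then enters decisively: since $\partial Y_{l_0}, \ldots, \partial Y_{l_0+2n-1}$ fill $S$ for every $l_0$, no essential simple closed curve $\alpha_i$ can avoid all $2n$ of these simultaneously, so the ``eligibility set'' $\{l : \alpha_i \text{ disjoint from } \partial Y_l\}$ cannot contain $2n$ consecutive indices. A pigeonhole argument over the $j-k-2m+1$ middle indices --- made effective by selecting $i(l)$ as, say, the smallest valid index and coupling adjacent (hence disjoint) pairs $(\alpha_i, \alpha_{i+1})$ in the geodesic, whose union is still a non-filling multicurve --- translates this constraint into $L \geq \frac{|k-j|}{4n} - \bigl(\frac{m}{2n} + 1\bigr)$.

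The principal technical obstacles are (a) the Behrstock-marking step in Stage 1, where one must transfer the bound from the auxiliary marking $\mu$ back to $\partial Y_j$ itself while simultaneously verifying the overlap $Y_k \pitchfork Y_j$ (which is needed to even have a projection $\pi_{Y_k}(\partial Y_j)$ to bound), and (b) the final counting step in Stage 2, where the factor $4n$ rather than the naive $2n$ dictates that one must treat adjacent geodesic vertices in pairs in order to extract the correct linear slope. Both steps are delicate but standard modulo the careful choice of constants $E > C_0 + B_0$ and $E > C + G$.
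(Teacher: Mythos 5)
Your Stage 1 is essentially the paper's argument: the paper also runs a double induction on $(j-k,\,k-j')$, and the heart of its inductive step is exactly the bound $d_{Y_k}(\partial Y_{k\pm m},\partial Y_j)\leq B_0$ obtained from the Behrstock inequality (Theorem~\ref{thm : behineq}), from which \eqref{eq : locglob} follows by two triangle inequalities; the only cosmetic difference is that you pivot at $Y_{j-m}$ (hypothesis (3) there plus the inductive bound on the negative side) while the paper pivots at $Y_{k+m}$ (the inductive conclusion \eqref{eq : locglob} there). Your worry (a) resolves itself in the same way as in the paper: the largeness of $d_{Y_{j-m}}(\partial Y_k,\partial Y_j)$, with both boundaries overlapping $Y_{j-m}$, already forces $i(\partial Y_k,\partial Y_j)\neq 0$, hence $Y_k\pitchfork Y_j$; after that, Behrstock applies directly with $\mu=\partial Y_j$ and no auxiliary marking is needed.

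Stage 2, however, has a genuine gap in the counting. What you extract from condition (4) — that the set $\{l : \alpha_i \text{ disjoint from } \partial Y_l\}$ contains no $2n$ consecutive indices — does not bound the size of that set, and hence does not bound the fibers of the assignment $l\mapsto \alpha_{i(l)}$: a single geodesic vertex could a priori serve as the Bounded Geodesic Image witness for, say, every other index in the middle range, and then no linear lower bound on the number of vertices follows; the ``coupling adjacent pairs'' pigeonhole does not repair this, since the union of the eligibility sets of two adjacent vertices is subject to the same weak constraint. The missing ingredient is the paper's Claim~\ref{claim : new ends of I}: for a \emph{fixed} multicurve $\delta$, the entire set of indices $s\in[j,k]$ with $i(\delta,\partial Y_s)=0$ lies in an interval of length at most $4n-2$, hence has fewer than $4n$ elements. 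Its proof needs more than the filling condition: if $\delta$ misses both $\partial Y_{s'}$ and $\partial Y_{r'}$ with $r'-s'\geq 4n-1$, one uses (4) to find an intermediate $t\in[s'+n,\,s'+3n-1]$ with $\delta\pitchfork Y_t$; since $t$ is at least $n\geq m$ from both ends, the already-proved first part gives $d_{Y_t}(\partial Y_{s'},\partial Y_{r'})\geq E-C>4$, contradicting the triangle inequality through $\pi_{Y_t}(\delta)$, which gives at most $4$. This interplay between (4) and the projection bounds \eqref{eq : locglob} is what makes the map at most $4n$-to-one and produces \eqref{eq : lbdS}; note also that the factor $4n$ comes from this interval length (the intermediate $t$ must stay $n$ away from both avoided indices), not from pairing adjacent geodesic vertices.
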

In the hypotheses (as well as the conclusions) of this theorem, we ignore any condition in which there is a negative index.

\begin{proof}  Set the constants
$$C=2B_{0}+4+G \quad  \mbox{ and } \quad E=C+B_{0}+G+4.$$ 
Here $B_{0}$ is the constant from Theorem \ref{thm : behineq} (Behrstock inequality) and $G$ is the constant from Theorem~\ref{thm : bddgeod} (Bounded geodesic image theorem) for a geodesic (i.e.~$k=1$, $c =0$).
We prove (\ref{eq : int}) and (\ref{eq : locglob}) simultaneously by a double induction on $(j-k,k-j')$. 

For the base of induction, suppose that $m\leq k-j'\leq 2m-1$ and $m\leq j-k\leq 2m-1$.  The statement (\ref{eq : int}) follows from (\ref{lg : locint}). To prove (\ref{eq : locglob}) note that by (\ref{lg : locqg}) $\partial{Y}_{k+m},...,\partial{Y}_{j}$ are pairwise disjoint and have non-empty projections to $Y_k$.  Consequently, the distance in $Y_k$ between any two of these boundaries is at most $2$, and so $\diam_{Y_{k}}(\{\partial{Y}_{l}\}_{l=k+m}^{j})\leq 2$.  Similarly, $\diam_{Y_{k}}(\{\partial{Y}_{l}\}_{l=j'}^{k-m})\leq 2$.
By the triangle inequality we have
\begin{eqnarray*}
d_{Y_{k}}(\partial{Y}_{j},\partial{Y_{j'}})&\geq& d_{Y_{k}}(\partial{Y}_{k-m},\partial{Y_{k+m}})-d_{Y_{k}}(\partial{Y}_{j},\partial{Y_{k+m}})-d_{Y_{i}}(\partial{Y}_{k-m},\partial{Y_{j'}})\\
&\geq&d_{Y_{k}}(\partial{Y}_{k-m},\partial{Y_{k+m}})-4 \geq d_{Y_{k}}(\partial{Y}_{k-m},\partial{Y_{k+m}})-C
\end{eqnarray*}
which is the bound (\ref{eq : locglob}).

 Suppose that  (\ref{eq : int}) and (\ref{eq : locglob}) hold for all $m\leq k-j'\leq 2m-1$ and $m\leq j-k\leq N$, for some $N \geq 2m-1$. We suppose $j-k=N+1$ and we must prove both \eqref{eq : int} and \eqref{eq : locglob} for $(j-k,k-j')$.  
 
 From the base of induction we already have $Y_k \pitchfork Y_{j'}$.   To complete the proof of \eqref{eq : int}, we prove $Y_k \pitchfork Y_j$.  Since $m = (k+m)-k \leq 2m-1$ and $m \leq j-(k+m) = N+1-m \leq N$, from the inductive hypothesis we have
 \[ Y_k \pitchfork Y_{k+m} \mbox{ and } Y_j \pitchfork Y_{k+m} \]
 and
 \[ d_{Y_{k+m}}(\partial Y_k, \partial Y_j) \geq d_{Y_{k+m}}(\partial Y_k, \partial Y_{k+2m}) - C \geq E-C \geq 4.\]
Consequently, $i(\partial Y_k,\partial Y_j) \neq 0$ and $Y_k \pitchfork Y_j$ as required.

We now turn to the proof of \eqref{eq : locglob}.   Since $Y_k \pitchfork Y_j$ and $Y_k \pitchfork Y_{j'}$, by \eqref{lg : locint} we may write the following triangle inequality
\begin{eqnarray} \label{eq : dYiYjYj'}
d_{Y_{k}}(\partial{Y}_{j'},\partial{Y}_{j}) & \geq & d_{Y_{k}}(\partial{Y}_{k-m},\partial{Y}_{k+m})\\ \notag
& & \, -d_{Y_{k}}(\partial{Y}_{k-m},\partial{Y}_{j'})-d_{Y_{k}}(\partial{Y}_{j},\partial{Y}_{k+m}).
\end{eqnarray}
Since $m \leq j-(k+m) = N+1-m \leq N$, from the inductive hypothesis we have
\[ d_{Y_{k+m}}(\partial Y_k,\partial Y_j) \geq d_{Y_{k+m}}(\partial Y_k,\partial Y_{k+2m}) - C \geq E - C \geq B_0.\]
By Theorem~\ref{thm : behineq}, $d_{Y_k}(\partial Y_{k+m},Y_j) \leq B_0$. On the other hand, as in the proof of the base case of induction, since $m \leq k-j' \leq 2m-1$  we have
\[ d_{Y_k}(\partial Y_{k-m},\partial Y_{j'}) \leq 2.\]
Combining these two inequalities with \eqref{eq : dYiYjYj'}, we obtain
\begin{eqnarray*}
d_{Y_k}(\partial Y_{j'},\partial Y_j) & \geq & d_{Y_k}(\partial Y_{k-m},\partial Y_{k+m}) - B_0 -2 \\
& \geq & d_{Y_k}(\partial Y_{k-m},\partial Y_{k+m}) - C.
\end{eqnarray*}
This completes the first half of the double induction.

We now know that \eqref{eq : int} and \eqref{eq : locglob} hold for all $j,j',k$ with $m \leq k-j' \leq 2m-1$ and all $j-k \geq m$.  We assume that they hold for $m \leq k-j' \leq N$ and $j-k \geq m$ for some $N \geq 2m-1$, and prove that they hold for $k-j' = N+1$.  The proof of \eqref{eq : int} is completely analogous to the proof in the first part of the induction, and we omit it.  The proof of \eqref{eq : locglob} is also similar, but requires one additional step so we give the proof.

We may again write the triangle inequality \eqref{eq : dYiYjYj'}.  Since $m \leq (k-m)-j' = N+1-m \leq N$ by the inductive hypothesis we have
\[
d_{Y_{k-m}}(\partial{Y}_{k},\partial{Y}_{j'}) \geq E-C\geq B_{0},
\]
and so Theorem~\ref{thm : behineq} again implies $d_{Y_k}(\partial Y_{k-m},\partial Y_{j'}) \leq B_0$.  If $j-k \leq 2m-1$, then as above $d_{Y_k}(\partial Y_{k+m},\partial Y_j) \leq 2$.  Otherwise, by induction we have
\[ d_{Y_{k+m}}(\partial Y_k,\partial Y_j) \geq E-C \geq B_0 \]
and Theorem~\ref{thm : behineq} once again implies $d_{Y_k}(\partial Y_{k+m},\partial Y_j) \leq B_0$.  Combining these inequalities with \eqref{eq : dYiYjYj'} we have
\begin{eqnarray*}
d_{Y_k}(\partial Y_{j'},\partial Y_j) & \geq & d_{Y_k}(\partial Y_{k-m},\partial Y_{k+m}) - B_0 - \max \{2,B_0\} \\
& \geq & d_{Y_k}(\partial Y_{k-m},\partial Y_{k+m}) - C.
\end{eqnarray*}
This completes the proof of (\ref{eq : locglob}), and hence the double induction is finished.

\bigskip
 
Now further assuming \eqref{lg : locfill} we prove \eqref{eq : lbdS}.  Note that we must have $n \geq m$.  Without loss of generality we assume $j < k$, so that $k - j \geq 2n \geq 2m$.  For the rest of the proof, for any $s,r \in \mathbb Z$, $s \leq r$, we write $[s,r] = \{ t \in \mathbb Z \mid s \leq t \leq r \}$.

Suppose $\delta$ is any multi-curve.  Let $\mathcal I(\delta) = \{ s \in [j,k] \mid i(\delta,\partial Y_s) \neq 0 \}$. 

\begin{claim} \label{claim : new ends of I} Suppose $s',r' \in [j,k] \setminus \mathcal I(\delta)$.  Then $|r'-s'| \leq 4n-2$.
\end{claim}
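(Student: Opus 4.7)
The plan is to prove the claim by contradiction. Suppose $s' < r'$ with $r' - s' \geq 4n - 1$ and both $s', r' \in [j,k] \setminus \mathcal I(\delta)$, so that $\delta$ is disjoint from each of $\partial Y_{s'}$ and $\partial Y_{r'}$. I aim to derive a contradiction using the filling hypothesis (4), together with the already-established conclusions \eqref{eq : int} and \eqref{eq : locglob}.

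First I would use hypothesis (4). Since $r' - s' \geq 4n - 1$, the interval $[s', r']$ contains two windows of $2n$ consecutive indices, namely $W_1 = [s', s'+2n-1]$ and $W_2 = [r'-2n+1, r']$; these windows are disjoint (or share at most one index) and both lie in $[s', r']$. By (4), each of the multi-curves $\bigcup_{l \in W_i}\partial Y_l$ fills $S$. Since a non-trivial essential multi-curve cannot be disjoint from a filling set of curves, $\delta$ must intersect at least one $\partial Y_l$ in each of $W_1$ and $W_2$. Combined with the hypothesis that $\delta$ is disjoint from the two endpoint boundaries $\partial Y_{s'}$ and $\partial Y_{r'}$, this produces indices $l_1 \in [s'+1, s'+2n-1]$ and $l_2 \in [r'-2n+1, r'-1]$ with $l_1, l_2 \in \mathcal I(\delta)$.

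To finish, I expect to leverage the first part of the theorem: since $r' - s' \geq 4n - 1 \geq m$, we have $Y_{s'} \pitchfork Y_{r'}$ by \eqref{eq : int}, and $d_{Y_{s'}}(\partial Y_{s'-m}, \partial Y_{r'}) \geq E - C$ (and similarly with the roles reversed) by \eqref{eq : locglob}. Disjointness of $\delta$ from $\partial Y_{s'}$ and from $\partial Y_{r'}$ strongly constrains the subsurface projections $\pi_{Y_{s'}}(\delta)$ and $\pi_{Y_{r'}}(\delta)$: $\delta$ must be isotoped into one of the finitely many components of $S\setminus \partial Y_{s'}$ (and likewise of $S\setminus \partial Y_{r'}$), which will typically force $\pi_{Y_{s'}}(\delta)$ and $\pi_{Y_{r'}}(\delta)$ to behave like $\pi_{Y_{s'}}(\partial Y_{r'})$ and $\pi_{Y_{r'}}(\partial Y_{s'})$, respectively. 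Applying Behrstock's inequality (Theorem~\ref{thm : behineq}) to the overlapping pair $(Y_{s'}, Y_{r'})$ with the partial marking $\delta$ should then contradict the large projections guaranteed by \eqref{eq : locglob}.

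The main obstacle is this last step: cleanly translating ``$\delta$ disjoint from $\partial Y_l$'' into a relation between $\pi_{Y_l}(\delta)$ and $\pi_{Y_l}(\partial Y_{l'})$ for the relevant $l, l'$, and organizing the resulting subsurface projection bounds to obtain the clean inequality $r' - s' \leq 4n - 2$ rather than some slightly weaker multiple of $n$.
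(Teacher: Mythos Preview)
Your opening by contradiction and your use of hypothesis (4) to find an index in $\mathcal I(\delta)$ are on the right track, but the final step --- projecting to $Y_{s'}$ and $Y_{r'}$ and invoking Behrstock --- does not work, and this is a genuine gap rather than a cosmetic difficulty. The problem is twofold. First, since $i(\delta,\partial Y_{s'})=0$, the curve $\delta$ need not overlap $Y_{s'}$ at all (it could lie entirely in the complement), in which case $\pi_{Y_{s'}}(\delta)=\emptyset$ and Behrstock's inequality simply does not apply. Second, even when $\delta\subset Y_{s'}$, disjointness from $\partial Y_{s'}$ gives no information whatsoever relating $\pi_{Y_{s'}}(\delta)$ to $\pi_{Y_{s'}}(\partial Y_{r'})$; there is no mechanism by which $\delta$ is forced to ``behave like'' $\partial Y_{r'}$ in $\mathcal{C}(Y_{s'})$.

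The missing idea is to project not to the endpoint subsurfaces but to a \emph{middle} one. Choose the filling window $[s'+n,\, s'+3n-1]$ (which sits inside $[s',r']$ since $r'-s'\geq 4n-1$), and let $t$ be an index in it with $t\in\mathcal I(\delta)$; such a $t$ exists by hypothesis (4). Then $t-s'\geq n\geq m$ and $r'-t\geq n\geq m$, so by \eqref{eq : int} and \eqref{eq : locglob} we have $Y_t\pitchfork Y_{s'}$, $Y_t\pitchfork Y_{r'}$, and $d_{Y_t}(\partial Y_{s'},\partial Y_{r'})\geq E-C>4$. But $\pi_{Y_t}(\delta)\neq\emptyset$ and $i(\delta,\partial Y_{s'})=i(\delta,\partial Y_{r'})=0$, so the triangle inequality through $\delta$ gives $d_{Y_t}(\partial Y_{s'},\partial Y_{r'})\leq 2+2=4$, the desired contradiction. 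Your windows $W_1,W_2$ are placed at the ends, so the resulting $l_1,l_2$ can be arbitrarily close to $s',r'$ and you cannot guarantee the $\geq m$ separation needed to invoke \eqref{eq : locglob}; shifting to the centered window fixes this and also yields the sharp bound $4n-2$.
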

Observe that by the claim, $[j,k] \setminus \mathcal I(\delta)$ contains fewer than $4n$ integers.
\begin{proof}
Without loss of generality, we assume $s' < r'$, and suppose for a contradiction $r' - s' \geq 4n-1$.   By \eqref{lg : locfill} $\partial Y_{s'+n},\ldots,\partial Y_{s'+3n-1}$ fills $S$, and so there exists $s'+n \leq t \leq s'+3n-1$ with $t \in \mathcal I(\delta)$.

Now observe that $s' + m \leq s' + n \leq t$ and $t \leq s'+3n-1 \leq r' - n \leq r' - m$, by the first part of the theorem we know that
\[ d_{Y_t}(\partial Y_{s'},\partial Y_{r'}) \geq E - C > 4. \]
On the other hand, since $i(\delta,\partial Y_{s'}) = 0 = i(\delta,\partial Y_{r'})$, and since $t \in \mathcal I(\delta)$ implies $\pi_{Y_t}(\delta) \neq \emptyset$, the triangle inequality implies
\[ d_{Y_t}(\partial Y_{s'},\partial Y_{r'}) \leq d_{Y_t}(\partial Y_{s'},\delta) + d_{Y_t}(\delta,\partial Y_{r'}) \leq 2+ 2 = 4\]
a contradiction.
\end{proof}

Let $\eta$ be a geodesic in $\mathcal{C}(S)$ connecting $\partial{Y}_{j}$ to $\partial{Y}_{k}$. For any $l\in\{j+m,..., k-m\}$, by (\ref{eq : locglob}) we have that
$$d_{Y_{l}}(\partial{Y}_{j},\partial{Y}_{k})\geq E-C> G.$$
Thus Theorem \ref{thm : bddgeod} guarantees that there is a curve $\delta_{l}\in\eta$ disjoint from $Y_{l}$.  Choose one such $\delta_l \in \eta$ for each $l \in [j+m,k-m]$.  By the previous claim there are at most $4n$ integers $l' \in [j+m,k-m]$ such that $i(\delta_l,\partial Y_{l'}) = 0$, and hence $l \mapsto \delta_l$ is at most $4n$-to-$1$.

Therefore, $\eta$ contains at least $\tfrac{k-j-2m + 1}{4n} > \tfrac{k-j}{4n} - \tfrac{m}{2n}$ curves.  It follows that
\[ d_S(\partial Y_j,\partial Y_k) \geq \frac{k-j}{4n} - \left( \frac{m}{2n} + 1 \right) \]
proving \eqref{eq : lbdS}.  This completes the proof of the theorem.
\end{proof}

\begin{thm}\label{thm : minfill}
Let $\{Y_{k}\}_{k=0}^{\infty}$ be an infinite sequence of subsurfaces satisfying conditions (1)-(4) in Theorem \ref{thm : locglob}. Then there exists a unique $\nu \in \EL(S)$ so that any accumulation point of $\{\partial Y_{k}\}_{k=0}^{\infty}$ in $\PML(S)$ is supported on $\nu$.
\end{thm}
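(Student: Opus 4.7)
The plan is to combine Theorem~\ref{thm : locglob} with Klarreich's identification of the Gromov boundary $\partial \mathcal C(S)$ with $\EL(S)$ from Theorem~\ref{thm : bdrycc}. The quantitative distance estimate \eqref{eq : lbdS}, together with the disjointness of consecutive multi-curves $\partial Y_k$, will exhibit $\{\partial Y_k\}$ as a quasi-geodesic ray in $\mathcal C(S)$, whose unique limit in the Gromov boundary will be the desired ending lamination $\nu$.

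Concretely, I would first pick for each $k \geq 0$ a component $\alpha_k$ of $\partial Y_k$. By hypothesis~(1) of Theorem~\ref{thm : locglob} and the standing assumption $m \geq 2$, the multi-curves $\partial Y_k$ and $\partial Y_{k+1}$ are disjoint, so $d_S(\alpha_k,\alpha_{k+1}) \leq 1$ and hence $d_S(\alpha_j,\alpha_k) \leq |k-j|$ for all $j,k$. Combining this with the lower bound \eqref{eq : lbdS}, which is nontrivial once $|k-j| \geq 2n$ and holds up to additive error for smaller gaps, the map $k \mapsto \alpha_k$ is a $(K,c)$--quasi-geodesic in $\mathcal C(S)$ for constants depending only on $m$ and $n$. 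Since $\mathcal C(S)$ is $\delta$--hyperbolic, this unbounded quasi-geodesic ray converges to a unique point $x \in \partial \mathcal C(S)$, and Theorem~\ref{thm : bdrycc} yields an ending lamination $\nu := \Phi(x) \in \EL(S)$ with the property that any accumulation point of $\{\alpha_k\}$ in $\PML(S)$ is supported on $\nu$.

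To upgrade from a single component $\alpha_k$ to the full multi-curve $\partial Y_k$, I would observe that the same quasi-geodesic argument applies verbatim to any alternative choice of components, and all such sequences lie at bounded $\mathcal C(S)$--distance from $\{\alpha_k\}$, hence share the same Gromov boundary limit $x$ and thus the same associated ending lamination $\nu$. A projective accumulation point of $\{\partial Y_k\}$ in $\PML(S)$ is a non-negative combination of projective accumulation points of the individual component-sequences, each supported on the minimal lamination $\nu$; such a combination is again supported on $\nu$. Uniqueness of $\nu$ follows from the uniqueness of the Gromov boundary limit of a quasi-geodesic in a hyperbolic space. The main content of the proof is thus the quasi-geodesic property, which is immediate from Theorem~\ref{thm : locglob}; the remaining steps are formal consequences of hyperbolicity and Klarreich's theorem.
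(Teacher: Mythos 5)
Your proposal is correct and follows essentially the same route as the paper: Theorem~\ref{thm : locglob} (inequality~(\ref{eq : lbdS})) makes $\{\partial Y_k\}$ a quasi-geodesic in the $\delta$--hyperbolic complex $\mathcal C(S)$, which converges to a boundary point, and Theorem~\ref{thm : bdrycc} supplies the ending lamination $\nu$. The only difference is that you spell out the passage from boundary components to the full multi-curves, which the paper subsumes in the phrase ``(multi-curve) quasi-geodesic.''
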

\begin{proof}
By Theorem \ref{thm : locglob}, Inequality (\ref{eq : lbdS}), the sequence $\{\partial{Y}_{k}\}_{k=0}^{\infty}$ is (multi-curve) quasi-geodesic in $\mathcal{C}(S)$. Furthermore $\mathcal{C}(S)$ is $\delta-$hyperbolic. Thus the sequence converges to a point in the Gromov boundary of $\mathcal{C}(S)$. Theorem~\ref{thm : bdrycc} completes the proof.
\end{proof}

We complete this section by showing that $\mathcal P$ is sufficient to imply the hypotheses of Theorem~\ref{thm : locglob}.  Given a curve $\alpha$ and an annular subsurface $Y_\beta$ with core curve $\beta$, we note that $\alpha \pitchfork Y_\beta$ if and only if $i(\alpha,\beta) \neq 0$.  Consequently, to remind the reader of the relation to Theorem~\ref{thm : locglob}, we write $\alpha \pitchfork \beta$ to mean $i(\alpha,\beta) \neq 0$.

\begin{prop} \label{prop : P implies loc-to-global}
Any sequence $\{\gamma_k\}_{k=0}^\infty$ satisfying $\mathcal P(a)$ with $a > 2$ and $e_0 \geq E$ are the core curves of annuli satisfying conditions (1)--(4) of Theorem~\ref{thm : locglob} with $n =m$. Consequently, $\{\gamma_k\}_{k=0}^\infty$ is a $1$--Lipschitz, $(4m,\tfrac32)$--quasi-geodesic in $\mathcal C(S)$ and there exists $\nu \in \EL(S)$ so that any accumulation point of $\{\gamma_k\}_{k=0}^\infty$ in $\PML(S)$ is supported on $\nu$.
\end{prop}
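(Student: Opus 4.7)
The plan is to verify the four hypotheses of Theorem~\ref{thm : locglob} with $Y_k$ taken to be the annular neighborhood of $\gamma_k$ and $n = m$, and then combine the conclusion of that theorem with Theorem~\ref{thm : minfill} to extract both the quasi-geodesic bound and the limiting lamination. Three of the four conditions will be essentially immediate: condition (1) is exactly $\mathcal P$(i); condition (4) with $n=m$ is exactly $\mathcal P$(ii); and condition (2), which asks that $i(\gamma_k,\gamma_j) \neq 0$ for $j \in \{k+m,\ldots,k+2m-1\}$, follows from Lemma~\ref{lem : length 2m intervals} together with $b_1 \geq 1$.

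The substantive work is condition (3): the bound $d_{\gamma_k}(\gamma_j, \gamma_{j'}) > E$ for $j \in \{k+m,\ldots,k+2m-1\}$ and $j' \in \{k-2m+1,\ldots,k-m\}$. First I would reduce to the extremal case $j = k+m$, $j' = k-m$. Observe that $\{\gamma_{k+m},\ldots,\gamma_{k+2m-1}\}$ and $\{\gamma_{k-2m+1},\ldots,\gamma_{k-m}\}$ are each collections of $m$ pairwise disjoint curves (by $\mathcal P$(i)) that all project nontrivially to $Y_{\gamma_k}$ (by Lemma~\ref{lem : length 2m intervals}). The annular version of Lemma~\ref{lem : id} then yields $d_{\gamma_k}(\gamma_{k+m},\gamma_j) \leq 1$ and $d_{\gamma_k}(\gamma_{k-m},\gamma_{j'}) \leq 1$, so the triangle inequality gives
\[ d_{\gamma_k}(\gamma_j,\gamma_{j'}) \geq d_{\gamma_k}(\gamma_{k+m},\gamma_{k-m}) - 2. \]

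The remaining and key estimate is $d_{\gamma_k}(\gamma_{k+m},\gamma_{k-m}) \stackrel{+}{\asymp} e_k$. Here I would use the defining relation $\gamma_{k+m} = \mathcal D_{\gamma_k}^{e_k}(\gamma_{k+m}')$ from $\mathcal P$(iii). Since $i(\gamma_{k+m}', \gamma_{k-m}) \in [b_1,b_2]$ (again by $\mathcal P$(iii), as $k-m \in \{k-m,\ldots,k-1\}$), the annular form of Lemma~\ref{lem : id} gives $d_{\gamma_k}(\gamma_{k+m}',\gamma_{k-m}) \leq b_2+1$. Combining with the standard annular Dehn twist estimate $d_\alpha(\mathcal D_\alpha^n(\beta),\delta) \geq |n| - d_\alpha(\beta,\delta) - c$ (valid whenever both projections are nonempty), one obtains $d_{\gamma_k}(\gamma_{k+m},\gamma_{k-m}) \geq e_k - (b_2+1) - c$. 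The hypothesis $e_0 \geq E$ together with the growth $e_k \geq a^k e_0$ with $a>2$ makes this uniformly exceed $E$ in $k$; the main delicate point is precisely this bookkeeping of constants, since we only assume $e_0 \geq E$ rather than a larger quantity incorporating $b_2$ and $c$, and the $a>2$ growth is what absorbs the extra additive slack.

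With all four hypotheses of Theorem~\ref{thm : locglob} verified, inequality (\ref{eq : lbdS}) with $n=m$ gives $d_{\mathcal C(S)}(\gamma_j,\gamma_k) \geq \tfrac{|j-k|}{4m} - \tfrac{3}{2}$. The matching upper bound $d_{\mathcal C(S)}(\gamma_j,\gamma_k) \leq |j-k|$ is the $1$--Lipschitz property, immediate from $\mathcal P$(i) since consecutive curves are disjoint and distinct, hence at $\mathcal C(S)$--distance at most $1$. Together these yield the $(4m,\tfrac32)$--quasi-geodesic statement, and Theorem~\ref{thm : minfill} then supplies the ending lamination $\nu \in \mathcal{EL}(S)$ on which every $\mathcal{PML}(S)$--accumulation point of $\{\gamma_k\}$ is supported.
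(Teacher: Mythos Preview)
Your proof is correct and follows essentially the same approach as the paper. The paper's argument is terser: it asserts $d_{\gamma_k}(\gamma_{k-m},\gamma_{k+m}) \geq e_k$ directly (leaving the Dehn twist computation implicit) and then uses $e_k \geq a^k E > 2E$ together with the disjointness $i(\gamma_j,\gamma_{k-m}) = 0 = i(\gamma_{j'},\gamma_{k+m})$ to get $d_{\gamma_k}(\gamma_j,\gamma_{j'}) \geq a^k E - 2 > E$, whereas you unpack the annular Dehn twist estimate and the constant bookkeeping explicitly; but the structure of the argument---verify (1), (2), (4) trivially, reduce (3) to the case $j'=k-m$, $j=k+m$ via the pairwise disjointness of each block, then invoke the twist---is identical.
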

\begin{proof}
Condition (i) of $\mathcal P$ is the same as condition (1) of Theorem~\ref{thm : locglob}, while (ii) is just condition (4) with $n = m$.  Condition (2) follows from Lemma~\ref{lem : length 2m intervals}.  Finally, to see that condition (3) is satisfied, we note that $d_{\gamma_k}(\gamma_{k-m},\gamma_{k+m}) \geq e_k > a^k E > 2E > E$, for all $k \geq m$.  Furthermore, for $k-2m+1 \leq j \leq k-m$, $\gamma_j \pitchfork \gamma_k$ by Lemma~\ref{lem : length 2m intervals}, and similarly $\gamma_{j'} \pitchfork \gamma_k$, for $k+m \leq j' \leq k+2m-1$.  For $j$ and $j'$ in these intervals, $i(\gamma_j,\gamma_{k-m}) = 0$ and $i(\gamma_{j'},\gamma_{k+m}) = 0$.  Therefore, by the triangle inequality, $d_{\gamma_k}(\gamma_j,\gamma_{j'}) \geq a^k E - 2  > E$, as required by (3).
\end{proof}

\subsection{Subsurface coefficient bounds}\label{subsec : subsurfbd}

We will need estimates on all subsurface coefficients for a sequence satisfying $\mathcal P$.  This follows from what we have done so far, together with similar arguments.
\begin{prop}  \label{prop : anncoeff + coeffbd}
Given a sequence $\{\gamma_k\}_{k=0}^\infty$ satisfying $\mathcal P(a)$ with $a > 2$ and $e_0 \geq E$, then there exists $R > 0$ with the following property.
\begin{enumerate}
\item  If $i,j,k$ satisfy $j \leq i-m$ and $i+m \leq k$ then $\gamma_i \pitchfork \gamma_k$, $\gamma_i \pitchfork \gamma_j$, and
\begin{equation} \label{eqn : gamma_i, big projection} d_{\gamma_i}(\gamma_j,\gamma_k) \stackrel{+}{\asymp}_R e_i \quad \mbox{ and } \quad d_{\gamma_i}(\gamma_j,\nu) \stackrel{+}{\asymp}_R e_i
\end{equation}
\item  If $W \subsetneq S$ is a proper subsurface, $W \neq \gamma_i$ for any $i$, then for any $j,k$ with $\gamma_j \pitchfork W$ and $\gamma_k \pitchfork W$
\begin{equation} \label{eqn : not gamma_i, small projection} d_W(\gamma_j,\gamma_k) < R \quad \mbox{ and } \quad d_W(\gamma_j,\nu) < R.
\end{equation}
\end{enumerate}
Let $\mu$ be a marking on $S$, then there is a constant $R(\mu)$ so that

For any $k$ sufficiently large and $i\leq k-m$
\begin{equation}\label{eq : mu gi big proj}
d_{\gamma_i}(\mu,\gamma_{k})\stackrel{+}{\asymp}_{R(\mu)} e_{i} \;\;\mbox{ and }\;\; d_{\gamma_i}(\mu,\nu)\stackrel{+}{\asymp}_{R(\mu)} e_{i}
\end{equation}
For any proper subsurface $W\neq \gamma_{i}$ for any $i$ we have
\begin{equation}\label{eq : not gi small proj}
d_{W}(\mu,\gamma_{k})<R(\mu)  \;\;\mbox{ and }\;\; d_{W}(\mu,\nu)<R(\mu) 
\end{equation}
\end{prop}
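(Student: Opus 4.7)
The plan is to derive Proposition~\ref{prop : anncoeff + coeffbd} from Theorem~\ref{thm : locglob} (applied via Proposition~\ref{prop : P implies loc-to-global} with $n=m$), together with the Behrstock inequality (Theorem~\ref{thm : behineq}) and the Bounded Geodesic Image theorem (Theorem~\ref{thm : bddgeod}); the versions involving $\nu$ and $\mu$ are then obtained by passing to the limit. I handle (1) first, since its proof is quite clean, and then tackle (2), which carries the main technical weight.

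For (1), the overlap statements and the lower bound $d_{\gamma_i}(\gamma_j,\gamma_k) \geq d_{\gamma_i}(\gamma_{i-m},\gamma_{i+m})-C$ are immediate from Theorem~\ref{thm : locglob}. I would evaluate the base projection directly: since $\gamma_{i+m} = \mathcal D_{\gamma_i}^{e_i}(\gamma'_{i+m})$ and, by condition (iii) of $\mathcal P$ together with Lemma~\ref{lem : length 2m intervals}, both $\gamma_{i-m}$ and $\gamma'_{i+m}$ intersect $\gamma_i$ in at most $b_2$ points, a standard Dehn-twist estimate in the annular complex of $\gamma_i$ yields $d_{\gamma_i}(\gamma_{i-m},\gamma_{i+m}) \stackrel{+}{\asymp} e_i$ with additive error controlled by $b_2$. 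For the matching upper bound it suffices to show $d_{\gamma_i}(\gamma_j,\gamma_{i-m}) = O(1)$ (and its symmetric partner). If $j \in [i-2m+1, i-m]$ then $\gamma_j$ and $\gamma_{i-m}$ are disjoint by condition (i) of $\mathcal P$, so the projections differ by at most $2$. If $j \leq i-2m$, applying Theorem~\ref{thm : locglob} at index $i-m$ gives $d_{\gamma_{i-m}}(\gamma_j,\gamma_i) \geq e_{i-m} - O(1) > B_0$ once $e_0$ is large enough, and the Behrstock inequality for the overlapping annuli $\gamma_{i-m}$ and $\gamma_i$ with witness $\gamma_j$ then forces $d_{\gamma_i}(\gamma_{i-m},\gamma_j) \leq B_0$. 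The triangle inequality now gives $d_{\gamma_i}(\gamma_j,\gamma_k) \stackrel{+}{\asymp} e_i$.

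For (2), I would study the set $L_W = \{ l : \gamma_l \pitchfork W \}$. Any $\gamma_l \notin L_W$ is disjoint from $W$, so lies at $\mathcal C(S)$-distance at most $1$ from $\partial W$; since $\{\gamma_l\}$ is a $(4m,3/2)$-quasi-geodesic (Proposition~\ref{prop : P implies loc-to-global}), the indices of all such $\gamma_l$ lie in a single interval of length bounded in terms of $m$. Outside this exceptional interval every $\gamma_l$ projects to $W$, so Theorem~\ref{thm : bddgeod} bounds the diameter of the $W$-projection by $G$ on each side. To control the change across the exceptional interval, I would exploit that condition (ii) of $\mathcal P$ forces $L_W$ to meet every window of $2m$ consecutive indices, and that for any gap of length $2 \leq l_{t+1}-l_t \leq 2m$ the interior curves are disjoint from $W$; in particular, whenever a central curve $\gamma_{l_t+m}$ of a twist $\mathcal D_{\gamma_{l_t+m}}^{e_{l_t+m}}$ lies in the gap, the twist can be supported in $S\setminus W$ and so acts trivially on $\pi_W$, giving $\pi_W(\gamma_{l_t+2m}) = \pi_W(\gamma'_{l_t+2m})$. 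Combining this with Lemma~\ref{lem : id} applied to $\gamma'_{l_t+2m}$ (or directly to intersections bounded by $b_2$ via Lemma~\ref{lem : length 2m intervals}) yields $d_W(\gamma_{l_t},\gamma_{l_{t+1}}) \leq 2b_2+1$; the uniform bound $R$ then follows by summing the finitely many gap contributions with the BGI estimates on the clean regions.

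The statements involving $\nu$ follow from $\gamma_k \to \nu$ in $\overline{\mathcal C(S)}$ (Proposition~\ref{prop : P implies loc-to-global} and Theorem~\ref{thm : bdrycc}): the annular projections $\pi_{\gamma_i}(\gamma_k)$ stabilize as $k\to\infty$ by the upper bound in (1), so the estimates pass to the limit. The marking version is obtained by noting that $\mu$ has bounded intersection with the initial segment of the sequence, so replacing $\gamma_j$ by $\mu$ only shifts constants by a $\mu$-dependent additive amount, producing $R(\mu)$. The main obstacle I anticipate is in (2): producing a \emph{truly uniform} $R$ even when $W$ overlaps many of the $\gamma_i$. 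The argument hinges on the observation that any Dehn twist whose center is disjoint from $W$ contributes nothing to $\pi_W$, combined with the Behrstock dichotomy: whenever $W$ and some $\gamma_i$ overlap, the large projection $d_{\gamma_i}(\gamma_j,\gamma_k) \asymp e_i \gg B_0$ from (1) forces at least one of $d_W(\partial\gamma_i,\gamma_j), d_W(\partial\gamma_i,\gamma_k)$ to be $\leq B_0$. Careful bookkeeping will be required to ensure that these two effects together cap the sum over all potential contributions, rather than allowing a chain of overlapping subsurfaces to accumulate a growing $W$-projection.
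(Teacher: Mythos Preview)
Your proposal is essentially correct and follows the same architecture as the paper's proof: for (1), evaluate the base projection $d_{\gamma_i}(\gamma_{i-m},\gamma_{i+m})\stackrel{+}{\asymp} e_i$ and control the drift to general $j,k$; for (2), use BGI on the subintervals where every curve projects to $W$, then bridge a bounded ``exceptional'' window via the intersection bounds from $\mathcal P$ and the fact that a Dehn twist about a curve missing $W$ does not move $\pi_W$.

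Two minor differences are worth noting. In (1) the paper obtains the upper bound more directly than you do: since every $\gamma_l$ with $l\ge i+m$ overlaps $\gamma_i$ (by \eqref{eq : int}), the tail $\{\gamma_l\}_{l=i+m}^{k}$ is a quasi-geodesic entirely projecting to $\gamma_i$, so Theorem~\ref{thm : bddgeod} gives $d_{\gamma_i}(\gamma_{i+m},\gamma_k)\le G$ in one stroke, and symmetrically on the other side. Your Behrstock argument via $\gamma_{i\pm m}$ is perfectly valid but costs an extra case split. In (2), your claim that $\pi_W(\gamma_{l_t+2m})=\pi_W(\gamma'_{l_t+2m})$ is only literally true when $W$ is non-annular; when $W$ is an annulus with core disjoint from $\gamma_{l_t+m}$ the two projections can differ by~$1$ (the paper records this, citing \cite{rk1mcg}), which just bumps the constant.

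Your closing worry about ``chains of overlapping subsurfaces'' is unnecessary: your own argument already shows that the indices with $\gamma_l\not\pitchfork W$ all lie within an interval of length $O(m)$, so there are only $O(1)$ gaps to bridge, each of size $\le 2m$; the intersection/twist estimate bounds each bridge by $O(b_2)$, and the Behrstock dichotomy is not needed here at all.
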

\begin{proof}
We begin with the proofs of (\ref{eqn : gamma_i, big projection}) and (\ref{eqn : not gamma_i, small projection}).
First note that since any accumulation point of $\{\gamma_k\}$ in $\PML(S)$ is supported on $\nu$, any Hausdorff accumulation point of $\{\gamma_k\}$ contains $\nu$.  Thus, for any fixed, proper subsurface $W \subsetneq S$ and all sufficiently large $k$ we have $\pi_W(\nu) \subseteq \pi_W(\gamma_k)$.  Furthermore, since $\nu$ is an ending lamination, $\pi_W(\nu) \neq \emptyset$, and hence $d_W(\gamma_k,\nu) \leq 1$, for $k$ sufficiently large.  Therefore, for each of (\ref{eqn : gamma_i, big projection}) and (\ref{eqn : not gamma_i, small projection}), the statement on the left implies the one on the right after increasing the constant by at most $1$.  Thus it suffices to prove the two statements on the left.

We begin with (\ref{eqn : gamma_i, big projection}).  From the conditions in $\mathcal P$, we have $d_{\gamma_i}(\gamma_{i-m},\gamma_{i+m}) = e_i$.   By Theorem~\ref{thm : locglob} (which is applicable according to Proposition~\ref{prop : P implies loc-to-global}), $\{\gamma_l\}_{l = i+m}^k$ is a $1$--Lipschitz $(4m,3/2)$--quasi-geodesic such that every curve has nonempty projection to $\gamma_i$.  Therefore, by Theorem~\ref{thm : bddgeod} and the triangle inequality we have
\[ |d_{\gamma_i}(\gamma_{i-m},\gamma_k) - d_{\gamma_i}(\gamma_{i-m},\gamma_{i+m})| \leq d_{\gamma_i}(\gamma_{i+m},\gamma_k) \leq G.\]
Note that $G$ depends only on $m$.
Similar reasoning implies
\[ |d_{\gamma_i}(\gamma_j,\gamma_k) - d_{\gamma_i}(\gamma_{i-m},\gamma_k)| \leq d_{\gamma_i}(\gamma_j,\gamma_{i-m}) \leq G. \]
Combining these we have
\begin{eqnarray*} 
|d_{\gamma_i}(\gamma_j,\gamma_k)  - d_{\gamma_i}(\gamma_{i-m},\gamma_{i+m})| & = & |d_{\gamma_i}(\gamma_j,\gamma_k) - d_{\gamma_i}(\gamma_{i-m},\gamma_k)\\
& &  + d_{\gamma_i}(\gamma_{i-m},\gamma_k) - d_{\gamma_i}(\gamma_{i-m},\gamma_{i+m})|\\
& \leq & 2G
\end{eqnarray*}
It follows that $d_{\gamma_i}(\gamma_j,\gamma_k) \stackrel + \asymp_{2G} e_i$.  For $R \geq 2G$, (\ref{eqn : gamma_i, big projection}) holds.

We now move on to (\ref{eqn : not gamma_i, small projection}), and without loss of generality assume $j \leq k$.  If $k \leq j+2m-1$, then the conditions in $\mathcal P$ together with Lemma~\ref{lem : length 2m intervals} imply $i(\gamma_j,\gamma_k) \leq b_2$, so by Lemma~\ref{lem : id}, $d_W(\gamma_j,\gamma_k) \leq 2 b_2 + 1$.  

Next, suppose that $k = j + 2m$.  Let $\gamma_k'$ be the element guaranteed by $\mathcal P$, so that $\gamma_k = \mathcal D_{\gamma_{k-m}}^{e_{k-m}}(\gamma_k')$.  There are two cases to consider depending on whether $\gamma_k' \not\pitchfork W$ or $\gamma_k' \pitchfork W$.  If $\gamma_k' \not\pitchfork W$, then since $\gamma_k  = \mathcal D_{\gamma_{k-m}}^{e_{k-m}}(\gamma_k') \pitchfork W$, we must have $\gamma_{k-m} \pitchfork W$.  Now observe that $j \leq k-m = j + m \leq j + 2m-1$ and $k-m \leq k \leq k-m + 2m - 1$.   It follows from the previous paragraph that $d_W(\gamma_j,\gamma_{k-m}) \leq 2b_2+1$ and $d_W(\gamma_{k-m},\gamma_k) \leq 2b_2 + 1$, hence
\[ d_W(\gamma_j,\gamma_k) \leq 4b_2 + 2. \]

Now suppose $\gamma_k' \pitchfork W$.  If $\gamma_{k-m} \pitchfork W$ then just as in the first case we have $d_W(\gamma_j,\gamma_k) \leq 4b_2 + 2$.   Suppose then that $\gamma_{k-m} \not\pitchfork W$.  If $W$ is not an annulus, then $\pi_W(\gamma_k) = \pi_W(\gamma_k')$ since $\mathcal D_{\gamma_{k-m}}$ is supported outside $W$.  Therefore
\[ d_W(\gamma_j,\gamma_k) = d_W(\gamma_j,\gamma_k') \leq 2b_2 + 1 \]
since $i(\gamma_j,\gamma_k') \leq b_2$.   If $W$ is an annulus, because $W \neq \gamma_{k-m}$ and $\gamma_{k-m} \not \pitchfork W$, it easily follows that 
\[ d_W(\gamma_j,\gamma_k) \leq d_W(\gamma_j,\gamma_k') + d_W(\gamma_k',\gamma_k) \leq (2b_2 +1) + 1\]
(see e.g.~\cite{rk1mcg}).
Therefore, we have shown that if $k \leq j + 2m$, we have
\begin{equation} \label{eqn : bounding projection, small} d_W(\gamma_j,\gamma_k) \leq 4b_2 + 2.
\end{equation}

Now we suppose $k > j + 2m$.  Setting $\delta=\partial W$, as in the proof of Theorem~\ref{thm : locglob} we let $\mathcal I(\delta) = \{ s \in [j,k] \mid i(\delta,\gamma_s) \neq 0 \}$.   Similarly, we let $\mathcal I(W) = \{ s \in [j,k] \mid \gamma_s \pitchfork W\}$, and observe that $\mathcal I(\delta) \subseteq \mathcal I(W)$.

Note that $j,k \in \mathcal I(W)$, and we let $s \leq r$ be such that $[j,s],[r,k] \subseteq \mathcal I(W)$ are maximal subintervals of $\mathcal I(W)$ containing $j$ and $k$, respectively (if $\mathcal  I(W) = [j,k]$, we can arbitrarily choose $j \leq s < k$ and $r = s+1$ for the argument below).  By our choice of $r$ and $s$, it follows that $s+1,r-1 \not \in \mathcal I(W)$, and so Claim \ref{claim : new ends of I} implies $r-1 - (s+1) \leq 4m-2$ and hence $r-s \leq 4m$.

Note that since any $2m$ consecutive curves fill $S$, either $r-s \leq 2m$, or else there exists $s',r' \in \mathcal I(W)$ such that $s < s' \leq r' < r$ and $r- r', r'-s',s'-s \leq 2m$.  For example, consider the extremal case that $r-s = 4m$.  Then
\[ s' = \max \mathcal I(W) \cap [s,s+2m] \quad \mbox{ and } \quad r' = \min \mathcal I(W)\cap [s+2m,r]\]
have the desired properties.  Indeed, $s'-s,r-r'$ are clearly less than $2m$.  If $r'-s' > 2m$, then since any $2m$ consecutive curves fill $S$, there must be some $s' < u < r'$ in $\mathcal I(W)$, contradicting the choice of either $s'$ or $r'$. The general case is similar.  

By the triangle inequality and (\ref{eqn : bounding projection, small}) we have
\begin{equation} \label{eqn : bounding projection, medium} d_W(\gamma_s,\gamma_r) \leq d_W(\gamma_s,\gamma_{s'}) + d_W(\gamma_{s'},\gamma_{r'}) + d_W(\gamma_{r'},\gamma_r) \leq 12b_2 + 6.
\end{equation}
Since $\{\gamma_l\}_{l = j}^s$ and $\{\gamma_l\}_{l =r}^k$ are $1$--Lipschitz $(4m,3/2)$--quasi-geodesics with $\gamma_l \pitchfork W$ for all $l \in [j,s] \cup [r,k]$, we can apply Theorem~\ref{thm : bddgeod}, and so the triangle inequality and (\ref{eqn : bounding projection, medium}) give us
\[ d_W(\gamma_j,\gamma_k) \leq d_W(\gamma_j,\gamma_s) + d_W(\gamma_s,\gamma_r) + d_W(\gamma_r,\gamma_k) \leq 2 G + 12b_2 + 6.\]
So the inequality on the left of (\ref{eqn : not gamma_i, small projection}) holds for any $R \geq 2G + 12b_2+6$. This completes the proof of the first four estimates.

Given a marking $\mu$, note that the intersection number of any curve in $\mu$ and any of the curves in the set of filling curves $\gamma_{0},...,\gamma_{2m-1}$ is bounded. Then the estimates in (\ref{eq : mu gi big proj}) follow from the ones in (\ref{eqn : gamma_i, big projection}) and Lemma \ref{lem : id} respectively. Similarly the estimates in (\ref{eqn : not gamma_i, small projection}) follow from the ones in (\ref{eq : not gi small proj}).
\end{proof}

\section{Measures supported on laminations}\label{sec : measnue}

In this section we begin by proving intersection number estimates for a sequence of curves satisfying $\mathcal P$.  Using these estimates, we decompose the sequence into $m$ subsequences and prove that these converge in $\mathcal{PML}(S)$.  In the next section, we will show that these $m$ limits are precisely the vertices of the simplex of measures on the single topological lamination $\nu$ from Proposition~\ref{prop : P implies loc-to-global}.



\subsection{Intersection number estimates}  \label{S : intersection number estimates} Here we estimate the intersection numbers of curves in the sequence of curves $\{\gamma_{k}\}_{k=0}^{\infty}$ satisfying $\mathcal P$.  The estimates will be in terms of the constant $b$ and sequence $\{e_k\}$ fixed above.
Specifically, given $i,k\in\mathbb{N}$ with $k\geq i$, define
\begin{equation}\label{eq : Aik} A(i,k):= \prod_{\substack{i+m \leq j<k \; and\\ j\equiv k \mod m}}be_{j}.\end{equation}
When the set of indices of the product is the empty set we define the product to be $1$.
It is useful to observe that for $k \geq i+2m$, 
\[ A(i,k) = b e_{k-m} A(i,k-m).\]
It is also useful to arrange the indices as in \eqref{eq:how we view it} in the following form
\begin{equation} \label{eq:how we view indices} \begin{array}{ccccccc}
i & \quad \quad & i+1 & \quad \quad & \cdots & \quad \quad & i+m-1\\
i+m & & i+m+1 & & \cdots & & i+2m-1\\
i+2m & & i+2m+1 & & \cdots \end{array} \end{equation}
Then $A(i,k)$ is $1$ exactly when $k$ is in the first or second row.  If $k$ is below these rows, then the product defining $A(i,k)$ is over all indices $j$ directly above $k$, up to and including the entry in the second row. 

We now state the main estimate on intersection numbers.
\begin{thm}\label{thm : intgkgi}
Suppose $\{\gamma_k \}_{k=0}^\infty$ is a sequence on a surface $S$ satisfying $\mathcal P(a)$.
For $a$ is sufficiently large, there is a constant $\kappa = \kappa(a)>1$, so that for each $i,k$ with $k\geq i+m$ we have
\begin{equation}\label{eq : intgkgi}i(\gamma_i,\gamma_k)\stackrel{*}{\asymp}_{\kappa}A(i,k).\end{equation}
\end{thm}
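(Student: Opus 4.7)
The plan is to proceed by strong induction on the index difference $k - i$, combining the recursive twist structure \textit{(iii)} of $\mathcal{P}$ with the classical Dehn twist intersection inequality
$$\bigl|\, i(\mathcal{D}_\alpha^n(\beta), \gamma) - |n| \cdot i(\alpha, \beta) \cdot i(\alpha, \gamma)\, \bigr| \leq i(\beta, \gamma),$$
valid for simple closed curves $\alpha, \beta, \gamma$ with $i(\alpha,\beta), i(\alpha,\gamma) > 0$. The combinatorial identity that drives the induction is
$$A(i, k) = b \cdot e_{k-m} \cdot A(i, k-m) \quad \text{whenever } k - i \geq 2m,$$
which follows immediately from the definition of $A(i,k)$: the factor $b e_{k-m}$ corresponds to the new index $j = k-m$, and the remaining factors coincide with those of $A(i, k-m)$.

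The base case $m \leq k - i \leq 2m - 1$ is immediate, since $A(i,k) = 1$ (the defining product is empty) and Lemma~\ref{lem : length 2m intervals} gives $i(\gamma_i, \gamma_k) \in [b_1, b_2]$, handled by any $\kappa \geq \max(b_2, 1/b_1)$. For the inductive step with $k - i \geq 2m$, I would apply the Dehn twist inequality with $\alpha = \gamma_{k-m}$, $\beta = \gamma'_k$, $n = e_{k-m}$, and $\gamma = \gamma_i$. Condition $\mathcal{P}$\textit{(iii)} supplies $i(\alpha, \beta) = b$; since $(k-m) - i \geq m$, Proposition~\ref{prop : anncoeff + coeffbd} ensures $i(\gamma_{k-m}, \gamma_i) > 0$, and the inductive hypothesis applied to the pair $(i, k-m)$ yields $i(\gamma_{k-m}, \gamma_i) \stackrel{*}{\asymp}_\kappa A(i, k-m)$. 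Combined with the identity above, the main term in the Dehn twist formula has size $e_{k-m} \cdot b \cdot i(\gamma_{k-m}, \gamma_i) \stackrel{*}{\asymp}_\kappa A(i, k)$, exactly the desired quantity.

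The main obstacle is controlling the error term $i(\gamma'_k, \gamma_i)$ against this main term. For $i \in \{k-2m, \ldots, k-m-1\}$, condition $\mathcal{P}$\textit{(iii)} directly bounds $i(\gamma'_k, \gamma_i) \leq b_2$, while the main term is at least $b_1 b \, e_{k-m}$, so the error is negligible once $e_{k-m}$ is large. For $i \leq k - 2m - 1$, I would carry along a parallel inductive estimate
$$i(\gamma'_k, \gamma_i) \stackrel{*}{\asymp}_{\kappa} b \cdot A(i, k-m) \;=\; \frac{A(i,k)}{e_{k-m}},$$
which is of strictly smaller order of magnitude than the main term. This auxiliary claim is proved by the same Dehn twist mechanism applied to the inverse relation $\gamma'_k = \mathcal{D}_{\gamma_{k-m}}^{-e_{k-m}}(\gamma_k)$, together with the intersection data supplied by $\mathcal{P}$\textit{(iii)} at the boundary range. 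The two inductive claims feed each other at each level.

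Finally, the hypothesis that $a$ is sufficiently large, coupled with $e_{k+1} \geq a e_k$, is precisely what is needed to absorb all additive errors (the $i(\gamma'_k, \gamma_i)$ terms and bounded constants from Lemma~\ref{lem : length 2m intervals} and the Dehn twist inequality) into the geometric factor $e_{k-m}$. Quantitatively, the induction step introduces a multiplicative slippage of the form $\kappa \to \kappa\bigl(1 + O(1/e_{k-m})\bigr)$, and since $\sum_k 1/e_k$ converges by (\ref{eq : ek}), the resulting $\kappa$ remains uniformly bounded across the induction, producing the theorem's final multiplicative constant $\kappa(a)$.
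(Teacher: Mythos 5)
Your outer skeleton matches the paper's argument (induction on $k-i$ via Proposition~\ref{prop : intdtflp}, with main term $be_{k-m}\,i(\gamma_i,\gamma_{k-m})\stackrel{*}{\asymp}A(i,k)$), but the step where you control the error term $i(\gamma_i,\gamma_k')$ for $i\le k-2m-1$ has a genuine gap. Your auxiliary claim $i(\gamma_k',\gamma_i)\stackrel{*}{\asymp}b\,A(i,k-m)=A(i,k)/e_{k-m}$ is not true in general, and the proposed mechanism cannot prove it: applying Proposition~\ref{prop : intdtflp} to $\gamma_k'=\mathcal D_{\gamma_{k-m}}^{-e_{k-m}}(\gamma_k)$ produces a main term $e_{k-m}\,b\,i(\gamma_{k-m},\gamma_i)\stackrel{*}{\asymp}A(i,k)$ and an error term $i(\gamma_k,\gamma_i)\stackrel{*}{\asymp}A(i,k)$ of the \emph{same} order, which must nearly cancel; the inequality only yields $i(\gamma_k',\gamma_i)=O(A(i,k))$, i.e.\ the same size as the main term of your outer induction, so it gives no smallness at all. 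Moreover the claimed order is wrong: $\gamma_k'$ differs from $\gamma_k$ by twisting about $\gamma_{k-m}$, which is disjoint from $\gamma_{k-m-1}$, so $d_{\gamma_{k-m-1}}(\gamma_i,\gamma_k')\stackrel{+}{\asymp}d_{\gamma_{k-m-1}}(\gamma_i,\gamma_k)\stackrel{+}{\asymp}e_{k-m-1}$ by Proposition~\ref{prop : anncoeff + coeffbd}, whence $i(\gamma_i,\gamma_k')\gtrsim e_{k-m-1}\,i(\gamma_i,\gamma_{k-m-1})\stackrel{*}{\asymp}A(i,k-1)$, and $A(i,k-1)/\bigl(b\,A(i,k-m)\bigr)\to\infty$ as $k-i\to\infty$ because the indices in the product for $A(i,k-1)$ each exceed the corresponding ones for $A(i,k-m)$ by $m-1$ and $e_{j+1}\ge ae_j$. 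So your error term is genuinely larger than you assert, and your bookkeeping ``$\kappa\to\kappa(1+O(1/e_{k-m}))$, summable since $\sum 1/e_k<\infty$'' does not survive; the correct relative size of the error is $O\bigl(a^{1-\lfloor(k-i)/m\rfloor}\bigr)$, which is summable in the index difference, not in $e_{k-m}$.

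The missing idea is how the paper bounds $i(\gamma_i,\gamma_k')$ without any auxiliary induction: by condition (ii) of $\mathcal P$ the curves $\gamma_{k-2m},\dots,\gamma_{k-1}$ fill $S$, and $\gamma_k'$ meets each of them at most $b_2$ times, so between consecutive intersections of $\gamma_i$ with this filling collection there are at most $2B$ intersections with $\gamma_k'$; this gives Proposition~\ref{prop : application of flp}, $i(\gamma_i,\gamma_k')\le 2B\sum_{l=k-2m}^{k-1}i(\gamma_i,\gamma_l)$, and the previously established terms $i(\gamma_i,\gamma_l)$ are then absorbed using Lemma~\ref{lem : A ratio bound}, $A(i,l)/A(i,k)\le a^{1-\lfloor(k-i)/m\rfloor}$. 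Keeping the constants uniform is itself nontrivial: the paper runs explicit recursions $K(i,k)$ (upper bound, controlled by a convergent infinite product, valid for all $a>1$) and $K'(i,k)$ (lower bound), and the hypothesis ``$a$ sufficiently large'' enters precisely through \eqref{eq : restriction on a}, $8mBK_1\sum_{j\ge1}a^{-j}<b_1$, so that the accumulated losses stay below the base-case intersection number $b_1$. Your base case and the use of the recursion $A(i,k)=be_{k-m}A(i,k-m)$ are fine; it is the treatment of $i(\gamma_i,\gamma_k')$ and the resulting constant-tracking that need to be replaced by the filling-curve argument.
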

Recall that for $i \leq k < i+m$, $i(\gamma_i,\gamma_k) = 0$.  Combining this with the theorem gives estimates on all intersection numbers $i(\gamma_i,\gamma_k)$, up to a uniform multiplicative error.

\bigskip

Throughout all that follows, we will assume that a sequence of curves $\{\gamma_k\}_{k=0}^\infty$ satisfies $\mathcal P = \mathcal P(a)$ for $a > 1$.

\bigskip

\noindent
{\bf Outline of the proof:}  The proof is rather complicated involving multiple induction arguments, so we sketch the approach before diving into the details.   The upper bound on $i(\gamma_i,\gamma_k)$ is proved first, and is valid for any $a > 1$.  We start by recursively defining a function $K(i,k)$ for all nonnegative integers $i \leq k$.  By induction, we will prove that
\[ i(\gamma_i,\gamma_k) \leq K(i,k) A(i,k). \]
By a second induction, we will bound $K(i,k) \leq K_1 = K_1(a)$, with the bound $K_1(a)$ a decreasing function of $a$.   Next, we will recursively define a function $K'(i,k) = K'(i,k,a)$.  By another induction, we prove that
\[ i(\gamma_i,\gamma_k) \geq K'(i,k) A(i,k). \]
For $a$ sufficiently large, we prove that $K'(i,k,a) \geq K_2 = K_2(a) > 0$.  Setting $\kappa = \max\{K_1,\frac1{K_2}\}$ will prove the theorem. 

\bigskip

\noindent
{\bf Upper bound.}  Recall from $\mathcal P$ (Definition \ref{def : P}) that for any $k \geq 2m$, the set of curves $\{\gamma_l\}_{l=k-2m}^{k-1}$ fill the surface, and the curve $\gamma_k'$ intersects each of these at most $b_2$ times.  Consequently, all complementary components of $S \setminus (\gamma_{k-2m} \cup \ldots \gamma_{k-1})$ are either disks or once-punctured disks containing at most $2mb_2$ pairwise disjoint arcs of $\gamma_k'$.  In examples we may have many fewer than $2mb_2$ such arcs, and it is useful to keep track of this constant on its own.  Consequently, we set
\begin{equation} \label{eq : definition of B}
B \leq 2mb_2
\end{equation}
to be the maximum number of arcs in any complementary component (over all configurations in minimal position).

We are now ready for a recursive definition which will be used in the bounds on intersection numbers (it is useful again to picture the indices as in \eqref{eq:how we view indices}):
\[ K(i,k) = \left\{ \begin{array}{ll} 0 & \mbox{for } i \leq k < i+m \\
b_2 & \mbox{for } i+m \leq k < i+2m\\
K(i,k-m) + 2B \displaystyle{\sum_{l = k-2m}^{k-1} \frac{A(i,l)}{A(i,k)}} K(i,l) & \mbox{for } i+2m \leq k \end{array} \right. \]

\begin{lem} \label{lem : K(i,k)A(i,k) bound}
For all $i\leq k$, we have $i(\gamma_i,\gamma_k) \leq K(i,k)A(i,k)$.
\end{lem}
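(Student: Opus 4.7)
The plan is to proceed by induction on $k-i$, handling all base indices $i$ simultaneously.

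For the base cases, when $i \leq k < i+m$ condition $\mathcal P(i)$ forces $i(\gamma_i,\gamma_k)=0$, matching $K(i,k)=0$. When $i+m \leq k < i+2m$, Lemma~\ref{lem : length 2m intervals} gives $i(\gamma_i,\gamma_k) \leq b_2 = K(i,k)$, and the index set defining $A(i,k)$ is empty (any $j \equiv k \pmod m$ with $i+m \leq j < k$ would force $j \leq k-m < i+m$), so $A(i,k)=1$ and the bound holds.

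For the inductive step, suppose $k \geq i+2m$. Condition $\mathcal P(iii)$ gives $\gamma_k = \mathcal D_{\gamma_{k-m}}^{e_{k-m}}(\gamma_k')$, so the standard upper bound for intersections with a power of a Dehn twist yields
\[ i(\gamma_i,\gamma_k) \;\leq\; e_{k-m}\,i(\gamma_i,\gamma_{k-m})\,i(\gamma_{k-m},\gamma_k') + i(\gamma_i,\gamma_k'). \]
Since $i(\gamma_{k-m},\gamma_k')=b$ by $\mathcal P(iii)$ and $i(\gamma_i,\gamma_{k-m}) \leq K(i,k-m)\,A(i,k-m)$ by induction, the first term is bounded by $be_{k-m}\,K(i,k-m)\,A(i,k-m) = K(i,k-m)\,A(i,k)$, using the recursion $A(i,k) = be_{k-m}\,A(i,k-m)$ valid when $k \geq i+2m$.

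The main obstacle is the bound on $i(\gamma_i,\gamma_k')$. Here I would exploit condition $\mathcal P(ii)$: the filling multicurve $\gamma_{k-2m} \cup \cdots \cup \gamma_{k-1}$ cuts $S$ into disks and once-punctured disks, and because $\gamma_k'$ meets each boundary curve at most $b_2$ times it decomposes into at most $2mb_2$ arcs in the complement, with at most $B$ in any single region by (\ref{eq : definition of B}). Placing everything in minimal position and using that two properly embedded arcs in a disk or once-punctured disk meet in a bounded number of points, one counts the arcs of $\gamma_i$ in each complementary region via their boundary endpoints (each intersection point of $\gamma_i$ with some $\gamma_l$ lying on the boundary of exactly two regions) to obtain
\[ i(\gamma_i,\gamma_k') \;\leq\; 2B \sum_{l=k-2m}^{k-1} i(\gamma_i,\gamma_l). \]
Applying the inductive hypothesis to each $i(\gamma_i,\gamma_l) \leq K(i,l)\,A(i,l)$ and combining with the Dehn twist estimate gives
\[ i(\gamma_i,\gamma_k) \;\leq\; K(i,k-m)\,A(i,k) \;+\; 2B\sum_{l=k-2m}^{k-1} A(i,l)\,K(i,l) \;=\; K(i,k)\,A(i,k), \]
which matches the recursive definition of $K(i,k)\,A(i,k)$ and closes the induction.
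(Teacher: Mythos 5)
Your proof is correct and essentially identical to the paper's: the same induction on $k$ with the same base cases, the same Dehn twist estimate (the paper packages it as Proposition~\ref{prop : application of flp}, itself proved from Proposition~\ref{prop : intdtflp} plus the same arc-counting bound $i(\gamma_i,\gamma_k') \leq 2B\sum_{l=k-2m}^{k-1} i(\gamma_i,\gamma_l)$), and the same use of the recursion $A(i,k)=be_{k-m}A(i,k-m)$ to match the definition of $K(i,k)$.
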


The proof takes advantage of the following well-known estimate on the intersection of two curves after applying a power of a Dehn twist on one proved in the Appendix A of \cite[Expos\'{e} 4]{FLP}, see also Lemma 4.2 in \cite[\S 4]{ivanovsubgpmcg}.  
\begin{prop}{\sc (Intersection number after Dehn twist)}\label{prop : intdtflp}

Let $\delta$, $\delta'$, and $\beta$ be curves in $\mathcal{C}(S)$.  Then for any integer $e$
\begin{equation}\label{eq : intdtflp}\Big|i(\mathcal{D}_{\beta}^{e}\delta,\delta')-|e|i(\beta,\delta)i(\beta,\delta')\Big|\leq i(\delta,\delta').\end{equation}
\end{prop}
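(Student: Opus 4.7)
The plan is to localize the computation in a tubular annular neighborhood of $\beta$, where the Dehn twist is supported, and then count intersections inside and outside this annulus separately. First I would fix geodesic representatives of $\beta$, $\delta$, and $\delta'$ in some auxiliary hyperbolic metric on $S$, so that all three curves are in pairwise minimal position, and choose a closed annular neighborhood $A$ of $\beta$ so small that it is disjoint from any intersection points of $\delta$ with $\delta'$ that lie away from $\beta$. After a small isotopy I may arrange that $\delta \cap A$ consists of exactly $p = i(\beta,\delta)$ essential arcs crossing $A$ and that $\delta' \cap A$ consists of exactly $q = i(\beta,\delta')$ essential arcs.

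Since $\mathcal{D}_\beta^e$ is supported in $A$, the curve $\mathcal{D}_\beta^e \delta$ agrees with $\delta$ outside $A$, and inside $A$ each essential arc of $\delta$ is replaced by an arc winding $|e|$ times around $\beta$ before exiting $A$. Intersections of $\mathcal{D}_\beta^e \delta$ with $\delta'$ then split into two groups: those outside $A$ are in bijection with $\delta \cap \delta'$ outside $A$, and those inside $A$ can be counted by passing to the universal cover of $A$ (a strip). In the strip, the twisted arcs and the lifts of $\delta' \cap A$ form two families of nearly straight segments of quite different slopes, and a direct count shows that each winding arc of $\mathcal{D}_\beta^e \delta$ meets each arc of $\delta' \cap A$ in exactly $|e|$ or $|e|\pm 1$ points, with the $\pm 1$ correction occurring precisely when the two original arcs of $\delta$ and $\delta'$ inside $A$ were interleaved on $\partial A$, i.e.\ already contributed an intersection of $\delta \cap \delta'$ inside $A$. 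Summing over all $pq$ pairs gives
\[
\bigl|\, \#\bigl((\mathcal{D}_\beta^e \delta)\cap \delta' \cap A\bigr) - |e|\, i(\beta,\delta)\, i(\beta,\delta') \,\bigr|
\leq \#(\delta\cap\delta'\cap A),
\]
and adding the outside contribution $\#(\delta\cap\delta')\setminus A$ produces a representative of $\mathcal{D}_\beta^e \delta$ whose total transverse intersection count with $\delta'$ differs from $|e|\, i(\beta,\delta)\, i(\beta,\delta')$ by at most $i(\delta,\delta')$.

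The main obstacle is to promote this count to the geometric intersection number $i(\mathcal{D}_\beta^e \delta, \delta')$, rather than merely an upper bound for it. I would do this via the bigon criterion: any innermost bigon between the representative $\mathcal{D}_\beta^e \delta$ and $\delta'$ would restrict inside $A$ to a bigon between two transverse families of nearly parallel strip segments of distinct slopes, which cannot exist, and outside $A$ to a bigon between the originally minimal-position curves $\delta$ and $\delta'$, which also cannot exist; a mixed bigon crossing $\partial A$ can be ruled out by examining its boundary arcs on $\partial A$ together with the behavior of the twisted arcs. Once minimality is established, the count above is exactly $i(\mathcal{D}_\beta^e \delta, \delta')$, and the desired two-sided inequality \eqref{eq : intdtflp} follows.
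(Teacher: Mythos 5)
First, note that the paper does not actually prove Proposition~\ref{prop : intdtflp}; it cites Appendix A of Expos\'e 4 of \cite{FLP} and Lemma 4.2 of \cite{ivanovsubgpmcg}. So your sketch is being measured against those classical arguments, which it superficially resembles (localize the twist in an annulus $A$ about $\beta$ and count), but there is a genuine gap at the decisive step. Your upper bound is fine: the naive representative of $\mathcal{D}_\beta^e\delta$ (geodesic $\delta$ twisted inside $A$) meets $\delta'$ in at most $|e|\,i(\beta,\delta)\,i(\beta,\delta') + i(\delta,\delta')$ points, and geometric intersection number is a minimum over representatives. The problem is the lower bound, which you obtain by asserting via the bigon criterion that this naive representative is in minimal position with $\delta'$; in particular you claim that ``mixed'' bigons crossing $\partial A$ can be ruled out. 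That claim is false in general. Take $\delta' = \mathcal{D}_\beta^e\delta$ (realized as a geodesic). Then $i(\mathcal{D}_\beta^e\delta,\delta') = 0$, while the naive representative intersects $\delta'$ in roughly $2|e|\,i(\beta,\delta)^2$ points: about $|e|\,i(\beta,\delta)^2$ inside $A$ and $i(\delta,\delta') = |e|\,i(\beta,\delta)^2$ outside $A$ (apply the proposition itself, or a direct count, to see $i(\delta,\mathcal{D}_\beta^e\delta)=|e|\,i(\beta,\delta)^2$). All of these intersections are removable; the bigons are exactly of the mixed type you dismiss, arising when the twisting inside $A$ cancels against pre-existing intersections of $\delta$ with $\delta'$ outside $A$. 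Your argument would conclude $i(\mathcal{D}_\beta^e\delta,\delta')\approx 2|e|\,i(\beta,\delta)^2$, which is wrong, and this is precisely the regime in which the error term $i(\delta,\delta')$ in \eqref{eq : intdtflp} is sharp.

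The missing idea is that the lower bound must be proved without assuming the naive representative is minimal. The standard routes are: (i) as in \cite{ivanovsubgpmcg}, pass to the annular cover $Y_\beta$, where every pair of geodesic lifts is automatically in minimal position, and bound from below the number of translates of a lift of $\delta'$ that are forced to cross a lift of $\mathcal{D}_\beta^e\delta$ by the linking of their endpoint pairs on $\partial Y_\beta$; or (ii) as in \cite{FLP}, count intersections that are \emph{essential} in the sense that the corresponding endpoint configurations are linked, so they persist under isotopy, and show there are at least $|e|\,i(\beta,\delta)\,i(\beta,\delta') - i(\delta,\delta')$ of them. Either of these replaces your bigon-criterion step; without such an argument the proposal only establishes the upper half of the inequality.
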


As above, $\mathcal D_\beta$ is a Dehn twist in $\beta$.  This proposition has the following general application to intersection numbers of curves with the curves in our sequence.
\begin{prop} \label{prop : application of flp}
For any curve $\delta$ and any $k \geq 2m$, we have
\[ |i(\delta,\gamma_k)-be_{k-m}i(\delta,\gamma_{k-m})|\leq 2B \sum_{l=k-2m}^{k-1} i(\delta,\gamma_l). \]
\end{prop}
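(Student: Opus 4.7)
The plan is to reduce the statement to a bound on $i(\gamma_k',\delta)$ via the Dehn-twist identity from $\mathcal{P}(iii)$, and then to estimate that quantity using the filling property from $\mathcal{P}(ii)$ together with a topological count of arcs in complementary disks.

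\medskip

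First, by $\mathcal{P}(iii)$ we have $\gamma_k=\mathcal{D}_{\gamma_{k-m}}^{e_{k-m}}(\gamma_k')$ with $i(\gamma_{k-m},\gamma_k')=b$. Applying Proposition \ref{prop : intdtflp} with $\beta=\gamma_{k-m}$, twist exponent $e_{k-m}>0$, twisted curve $\gamma_k'$, and test curve $\delta$ yields
\[
\bigl|\, i(\gamma_k,\delta) - e_{k-m}\, b\, i(\gamma_{k-m},\delta)\,\bigr| \;\leq\; i(\gamma_k',\delta).
\]
So the proposition is reduced to establishing
\[
i(\gamma_k',\delta) \;\leq\; 2B\sum_{l=k-2m}^{k-1}i(\delta,\gamma_l).
\]

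\medskip

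To obtain this bound, fix a hyperbolic structure on $S$ and take every curve in sight as its geodesic representative, so that every pair of curves is automatically in minimal position. By $\mathcal{P}(ii)$, the multicurve $\Gamma=\gamma_{k-2m}\cup\cdots\cup\gamma_{k-1}$ fills $S$, hence every complementary component $P$ of $\Gamma$ is an open disk or an open once-punctured disk. By the definition of $B$ in \eqref{eq : definition of B}, the intersection $\gamma_k'\cap P$ consists of at most $B$ pairwise disjoint simple arcs with endpoints on $\partial P$, while $\delta \cap P$ is a collection of simple arcs. A bigon-criterion argument shows that two simple arcs with boundary endpoints in minimal position intersect at most once in a disk and at most twice in a once-punctured disk. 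Therefore in each component
\[
i_P(\gamma_k'\cap P,\ \delta\cap P) \;\leq\; 2B\cdot\#\{\text{arcs of }\delta\cap P\}.
\]
Summing over $P$, the total number of arcs of $\delta$ equals the number of transverse intersections of $\delta$ with $\Gamma$, which under the geodesic representation is precisely $\sum_{l=k-2m}^{k-1} i(\delta,\gamma_l)$, yielding the desired bound.

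\medskip

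The main subtlety will be justifying the bound of $2$ for arc pairs in a once-punctured disk: any excess intersection must produce a bigon disjoint from the puncture (since at most one innermost bigon can enclose it), which can be removed by an ambient isotopy, contradicting minimality. Once this topological lemma is in hand, the rest of the proof is routine accounting with arcs in complementary regions.
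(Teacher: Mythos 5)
Your proposal is correct and follows essentially the same route as the paper: apply Proposition~\ref{prop : intdtflp} with $\beta=\gamma_{k-m}$ and the twisted curve $\gamma_k'$ to reduce the statement to $i(\delta,\gamma_k')\leq 2B\sum_{l=k-2m}^{k-1} i(\delta,\gamma_l)$, and then bound this by counting arcs of $\gamma_k'$ and $\delta$ in the disk and once-punctured disk components of the complement of the filling multicurve $\gamma_{k-2m}\cup\cdots\cup\gamma_{k-1}$, using the definition of $B$ and the fact that two simple arcs meet at most once in a disk and at most twice in a once-punctured disk. The only difference is cosmetic: the paper phrases the count as "at most $2B$ intersections with $\gamma_k'$ between consecutive intersections of $\delta$ with the multicurve," while you sum over complementary components, which is the same accounting.
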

\begin{proof}
Since $\gamma_k = \mathcal D_{\gamma_{k-m}}^{e_{k-m}}(\gamma_k')$, Proposition~\ref{prop : intdtflp} implies
\[  |i(\delta,\gamma_k)-be_{k-m}i(\delta,\gamma_{k-m})|\leq i(\delta,\gamma_k'). \]
Assume all curves intersect minimally transversely and that there are no triple points of intersection.  From the definition of $B$, all complementary components of $S \setminus (\gamma_{k-2m} \cup \ldots \gamma_{k-1})$ contain at most $B$ pairwise disjoint arcs of $\gamma_k'$.  Therefore, between any two consecutive intersection points of $\delta$ with $\gamma_{k-2m} \cup \ldots \cup \gamma_{k-1}$, there are at most $2B$ intersections points with $\gamma_k'$ (any two arcs in a disk component can intersect at most once, and in a once-punctured disk component can intersect in at most two points).  Therefore, 
\[ i(\delta,\gamma_k') \leq 2B \sum_{l = k-2m}^{k-1} i(\delta,\gamma_l).\]
Combining this with the above inequality proves the proposition.
\end{proof}
We are now ready for the
\begin{proof}[Proof of Lemma~\ref{lem : K(i,k)A(i,k) bound}.]
Fix $i$.  The proof is by induction on $k$.  For $i \leq k < i+m$, $i(\gamma_i,\gamma_k) = 0$, $K(i,k) = 0$ and $A(i,k) = 1$, so the lemma follows.  Similarly, for $i+m \leq k < i+ 2m$, $i(\gamma_i,\gamma_k) \leq b_2$, $K(i,k) = b_2$, and $A(i,k) = 1$, so again the lemma follows.  Now suppose that $k \geq i+2m$, and assuming that $i(\gamma_i,\gamma_l) \leq K(i,l)A(i,l)$ for all $i \leq l < k$, we must prove $i(\gamma_i,\gamma_k) \leq K(i,k) A(i,k)$.

Applying Proposition~\ref{prop : application of flp} to the case $\delta = \gamma_i$, we have

\[ |i(\gamma_i,\gamma_k)-be_{k-m}i(\gamma_i,\gamma_{k-m})|\leq 2B \sum_{l=k-2m}^{k-1} i(\gamma_i,\gamma_l). \]
Therefore, we have
\[ i(\gamma_i,\gamma_k) \leq be_{k-m}i(\gamma_i,\gamma_{k-m}) + 2B \sum_{l=k-2m}^{k-1} i(\gamma_i,\gamma_l). \]
Applying the inductive hypothesis and the definitions of $A$ and $K$ to this inequality we obtain
\begin{eqnarray*} i(\gamma_i,\gamma_k) & \leq & be_{k-m}i(\gamma_i,\gamma_{k-m}) + 2B \sum_{l=k-2m}^{k-1} i(\gamma_i,\gamma_l)\\
& \leq & be_{k-m}K(i,k-m)A(i,k-m) + 2B \sum_{l=k-2m}^{k-1} K(i,l)A(i,l)\\
& = & A(i,k) K(i,k-m) + A(i,k) 2B \sum_{l = k-2m}^{k-1} \frac{A(i,l)}{A(i,k)} K(i,l)\\
& = & A(i,k)\left(K(i,k-m) + 2B \sum_{l = k-2m}^{k-1} \frac{A(i,l)}{A(i,k)} K(i,l) \right)\\
& = & A(i,k)K(i,k),\\
\end{eqnarray*}
as required.
\end{proof}

Next we prove that $K(i,k)$ is uniformly bounded, and in particular:
\begin{prop} \label{prop : uniform upper bound int}
There exists $K_1 = K_1(a)> 0$ so that for all $i \leq k$, $K(i,k) \leq K_1$ and in particular, $i(\gamma_i,\gamma_k) \leq K_1A(i,k)$.  As a function of $a$, $K_1(a)$ is decreasing.
\end{prop}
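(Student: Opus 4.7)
The plan is to prove the inequality $K(i,k) \leq P(i,k)$ by induction on $k$, where I set
\[ P(i,k) := b_2 \prod_{j = i+2m}^{k} \bigl(1 + 2B\, S(i,j)\bigr), \qquad S(i,k) := \sum_{l=k-2m}^{k-1} \frac{A(i,l)}{A(i,k)}, \]
with the convention $P(i,k) = b_2$ for $k < i+2m$. Because $P(i,\cdot)$ is non-decreasing, the inductive step will follow directly from the recursion defining $K(i,k)$: assuming $K(i,l) \leq P(i,l)$ for all $l < k$, I will obtain
\[ K(i,k) \leq P(i,k-m) + 2B\, P(i,k-1)\, S(i,k) \leq P(i,k-1)\bigl(1 + 2B\, S(i,k)\bigr) = P(i,k). \]
It will then suffice to establish a bound $\sum_{k \geq i+2m} S(i,k) \leq C(a)$ with $C(a)$ independent of $i$ and decreasing in $a$, whereupon $K_1(a) := b_2\exp\bigl(2B\,C(a)\bigr)$ will have the required properties.

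The main obstacle will be estimating $S(i,k)$ efficiently. Writing $k = i+rm+c$ with $r \geq 2$ and $c \in \{0,\ldots,m-1\}$, the formula for $A(i,\cdot)$ gives $A(i,k) = \prod_{s=1}^{r-1} be_{i+sm+c}$, a product along ``column $c$'' of the grid in (\ref{eq:how we view indices}). For each $l \in [k-2m, k-1]$ I would likewise express $A(i,l)$ as a product along column $c(l)$, and then bound $A(i,l)/A(i,k)$ case-by-case according to whether $l$ lies in row $r$, $r-1$, or $r-2$. Pairing factors column-wise and using $e_{j+1} \geq a\, e_j$ should give bounds of the form $a^{-u(r-1)}$ when $l$ is in the same row as $k$, and bounds of the form $a^{-u(r_l - 1)}/(be_j)$ with at most two unpaired factors of $1/(be_j)$ (all with $j \geq i+m$) when $l$ lies in a higher row $r_l \in \{r-1, r-2\}$; here $u$ is a positive integer in $\{1,\ldots,m-1\}$ determined by the shift $s = k - l$. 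The special cases $s = m$ and $s = 2m$ give exact expressions $A(i,l)/A(i,k) = 1/(be_{k-m})$ and $1/(be_{k-m}\cdot be_{k-2m})$ respectively.

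With these pointwise bounds in hand, I will organize the double sum $\sum_{k} S(i,k)$ by the shift $s = k-l \in [1,2m]$. The same-row contributions produce geometric series $\sum_{r \geq 2} a^{-u(r-1)} = 1/(a^u - 1)$, which after summing over the relevant $s$ and $c$ will total at most $\tfrac{m(m-1)/2}{a-1}$, hence decreasing in $a$. The higher-row contributions and the two special cases each carry an extra factor $1/(be_j)$ for some $j \geq i+m$; using the coarse lower bound $e_j \geq a^j e_0$, these will sum to $O\bigl(1/(ba^m e_0)\bigr)$, uniformly in $i$ and decreasing in $a$. Combining the two types of contributions yields $\sum_{k \geq i+2m} S(i,k) \leq C(a)$ with $C(a)$ independent of $i$ and decreasing in $a$, and setting $K_1(a) = b_2\exp(2B\, C(a))$ will complete the proof.
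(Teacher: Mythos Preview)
Your proposal is correct and follows the same overall architecture as the paper: bound $K(i,k)$ inductively by a product of factors $(1+\text{something})$, then show the product converges by bounding the sum of those somethings. Your induction step is in fact slightly cleaner than the paper's---you use monotonicity of $P(i,\cdot)$ to absorb all $K(i,l)$ at once, whereas the paper (Lemma~\ref{lem : final K(i,k) bound}) picks $l_0$ maximizing $K(i,l)$ over $[k-2m,k-1]$ and bounds $K(i,k) \le K(i,l_0)\bigl(1+4mBa^{1-\lfloor(k-i)/m\rfloor}\bigr)$.

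Where you work harder than necessary is in estimating $S(i,k)$. The paper isolates the single uniform bound (Lemma~\ref{lem : A ratio bound})
\[
\frac{A(i,l)}{A(i,k)} \le a^{\,1-\lfloor (k-i)/m \rfloor}\qquad\text{for all } i\le l<k,
\]
proved by pairing the top factor of the numerator with the top factor of the denominator, etc. This immediately gives $S(i,k)\le 2m\,a^{1-\lfloor(k-i)/m\rfloor}$ and hence $\sum_{k\ge i+2m} S(i,k)\le 2m^2/(a-1)$, independent of $i$ and decreasing in $a$. Your column-by-column case analysis arrives at the same conclusion and even yields somewhat sharper constants, but at the cost of tracking several sub-cases (same row, row $r-1$ with $c'>c$ versus $c'<c$, row $r-2$, the two special shifts). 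Either route is fine; the paper's single ratio lemma is the cleaner packaging.
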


For the proof of this proposition, we will need the following bound.  

\begin{lem} \label{lem : A ratio bound}
For all $i \leq l < k$, we have
\[ \frac{A(i,l)}{A(i,k)} \leq a^{1-\lfloor \tfrac{k-i}m \rfloor}.\]
\end{lem}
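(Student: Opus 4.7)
The plan is to write out the products explicitly and compare them term by term. Set $q = \lfloor (k-i)/m \rfloor$ and $q' = \lfloor (l-i)/m \rfloor$, and rewrite
$$A(i,n) = \prod_{s=1}^{\lfloor (n-i)/m \rfloor - 1} be_{n-sm},$$
with the convention that an empty product equals $1$; in particular $A(i,n) = 1$ whenever $n < i+2m$. The case $q \leq 1$ is then immediate, since both $A(i,k)$ and $A(i,l)$ equal $1$ (as $l < k < i+2m$) and $a^{1-q} \geq 1$.

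Assume now $q \geq 2$. If $q' \leq 1$, so $A(i,l) = 1$, it suffices to show $A(i,k) \geq a^{q-1}$: each factor $be_{k-sm}$ with $s \in \{1,\dots,q-1\}$ satisfies $k - sm \geq k - (q-1)m \geq i + m$, hence $e_{k-sm} \geq a^m e_0 \geq a$, using the growth condition $e_{j+1} \geq a e_j$ together with $e_0 \geq 1$ (from the standing assumption $e_0 > E+G$) and $m \geq 2$; multiplying the $q-1$ such factors gives the claim.

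If instead $q' \geq 2$, I would split
$$A(i,k) \;=\; \prod_{s=1}^{q'-1} be_{k-sm} \;\cdot\; \prod_{s=q'}^{q-1} be_{k-sm}$$
and pair the first factor against $A(i,l)$. For each paired index $s$, iterating $e_{j+1} \geq a e_j$ yields $e_{k-sm} \geq a^{k-l} e_{l-sm}$, so $\frac{be_{l-sm}}{be_{k-sm}} \leq a^{-(k-l)}$; for each unpaired $s \in \{q',\dots,q-1\}$ the estimate of the previous paragraph gives $be_{k-sm} \geq a$. Multiplying these bounds produces
$$\frac{A(i,l)}{A(i,k)} \;\leq\; a^{-(k-l)(q'-1) \,-\, (q-q')},$$
and the required conclusion $a^{1-q}$ follows once one checks the arithmetic inequality $(k-l)(q'-1) + (q-q') \geq q - 1$. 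This rearranges to $(k-l-1)(q'-1) \geq 0$, which holds because $k - l \geq 1$ and $q' \geq 2$.

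I anticipate no real obstacle; the argument is essentially combinatorial once the products are laid out. The only care required is bookkeeping around the boundary cases where one or both products are empty, handled uniformly by the empty-product convention and the auxiliary bound $be_{k-sm} \geq a$ that recurs in both subcases.
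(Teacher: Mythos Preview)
Your proof is correct and follows essentially the same approach as the paper's: write out the products explicitly, pair the $q'-1$ numerator factors against the first $q'-1$ denominator factors, and bound the remaining $q-q'$ denominator factors below by $a$. The only difference is that the paper uses the weaker bound $e_{k-sm} \geq a\,e_{l-sm}$ (rather than your $a^{k-l}$) on each paired term, which yields $a^{-1}$ from every one of the $q-1$ denominator factors and gives $a^{1-q}$ immediately without the auxiliary arithmetic inequality $(k-l-1)(q'-1)\geq 0$.
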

\begin{proof}
If $k < i+2m$, then $A(i,l),A(i,k) =1$ and $a^{1-\lfloor \tfrac{k-i}m \rfloor } \geq 1$, so the inequality follows.

Now assume $k \geq i+2m$.
By definition, we have
\[ \frac{A(i,l)}{A(i,k)} = \frac{\displaystyle{\prod_{\tiny\substack{i+m \leq j'< l \; and\\ j' \equiv l \mod m}}be_{j'}}}{\displaystyle{\prod_{\tiny \substack{i+m \leq j<k \; and\\ j\equiv k \mod m}}be_{j}}} \]
(where $A(i,l)$ is $1$ if $l < i + 2m$).
Observe that the denominator has $r = \lfloor \frac{k-(i+m)}m \rfloor = \lfloor \frac{k-i}m \rfloor -1 > 0$ terms in the product, indexed by $j \in \{k-m,k-2m,\ldots,k-rm \}$, while the numerator has $s = \max\{0,\lfloor \frac{l - i}m \rfloor - 1\} \geq 0$ terms, indexed by $j' \in \{ l-m,l-2m,\ldots,l-sm\}$ (possibly the empty set).  Since $l < k$, $s\leq r$. Moreover, we have $k-pm > l-pm$, and thus $e_{k-pm} > a e_{l-pm}$ by \eqref{eq : ek}, for all $p = 1,\ldots,s$.  Since \eqref{eq : ek} also implies $e_j > a$ for all $j \geq 1$, combining these bounds with the equation above gives
\[ \frac{A(i,l)}{A(i,k)} = \prod_{p=1}^s \frac{e_{l-pm}}{e_{k-pm}} \prod_{p=s+1}^r \frac1{e_{k-pm}} < \prod_{p=1}^s a^{-1} \prod_{p=s+1}^r a^{-1} = a^{-r} = a^{1-\lfloor \tfrac{k-i}m \rfloor },\]
as required.
\end{proof}

As an application, of Lemma~\ref{lem : A ratio bound}, we prove
\begin{lem} \label{lem : final K(i,k) bound}
For all $i \leq k$ we have
\[ K(i,k) \leq b_2 \prod_{i+m \leq j < k} (1 + 4mB a^{1-\lfloor \tfrac{j-i+1}m \rfloor}). \]
\end{lem}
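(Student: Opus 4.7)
The proof is by induction on $k$ (with $i$ fixed), using the recursive definition of $K(i,k)$ and Lemma~\ref{lem : A ratio bound} to control the ratios $A(i,l)/A(i,k)$ that appear in the recursion.

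For the base cases $i \le k < i+2m$, the right-hand side is either $b_2$ (when the product is empty, i.e.\ $k \le i+m$) or $b_2$ times a product of factors each $\ge 1$; since $K(i,k)$ equals $0$ or $b_2$ in this range, the bound is immediate.

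For the inductive step $k \ge i+2m$, write $P(i,k) := \prod_{i+m \le j < k}\bigl(1 + 4mB\, a^{1-\lfloor (j-i+1)/m\rfloor}\bigr)$, so the claim is $K(i,k)\le b_2 P(i,k)$. The factorization $P(i,k) = P(i,k-1)\cdot \bigl(1 + 4mB\, a^{1-\lfloor(k-i)/m\rfloor}\bigr)$ and the monotonicity $P(i,k-m)\le P(i,l)\le P(i,k-1)$ for $k-2m\le l\le k-1$ will do the work. Applying the inductive hypothesis to each $K(i,l)$ appearing in the recursion gives
\[ K(i,k) \le b_2 P(i,k-m) + 2B\, b_2\, P(i,k-1) \sum_{l=k-2m}^{k-1} \frac{A(i,l)}{A(i,k)}. \]
By Lemma~\ref{lem : A ratio bound}, each ratio $A(i,l)/A(i,k)$ is bounded by $a^{1-\lfloor(k-i)/m\rfloor}$ (note $\lfloor(k-i)/m\rfloor = \lfloor(l-i+1)/m\rfloor$ at the top of the relevant range of $l$, but the crude uniform bound suffices), so the sum is at most $2m\, a^{1-\lfloor(k-i)/m\rfloor}$. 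Combining with $P(i,k-m)\le P(i,k-1)$ yields
\[ K(i,k)\le b_2 P(i,k-1)\bigl(1 + 4mB\, a^{1-\lfloor(k-i)/m\rfloor}\bigr) = b_2 P(i,k), \]
completing the induction.

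The only mildly subtle point is making sure the exponent produced by Lemma~\ref{lem : A ratio bound} matches the exponent in the new factor of $P(i,k)/P(i,k-1)$; this is a routine bookkeeping check on the floor functions and is not a genuine obstacle. The real content is the recursive inequality: each new factor in the product has to absorb both the ``diagonal'' contribution $K(i,k-m)$ and the sum of twist-induced contributions, which is exactly what Lemma~\ref{lem : A ratio bound} makes possible once we pull the largest $P(i,l)$ out of the sum.
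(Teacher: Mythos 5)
Your proof is correct and follows essentially the same route as the paper: induction on $k$, the uniform bound $A(i,l)/A(i,k)\le a^{1-\lfloor (k-i)/m\rfloor}$ from Lemma~\ref{lem : A ratio bound}, and monotonicity of the partial products; the paper merely factors out the maximal term $K(i,l_0)$ over $k-2m\le l\le k-1$ before invoking the inductive hypothesis, whereas you apply the hypothesis termwise, which is an immaterial difference. (The parenthetical claim $P(i,k-m)\le P(i,l)$ for $l<k-m$ is not true in general, but you never use that half of the inequality, only $P(i,l)\le P(i,k-1)$.)
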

As above, the empty product is declared to be $1$.
\begin{proof}
The proof is by induction on $k$.  Since $K(i,k) \leq b_2$ for $i \leq k < i+2m$, the lemma clearly holds for all such $k$.  Now assume that $k \geq i+2m$, and assume that the lemma holds for all integers less than $k$ and at least $i$.  Let $l_0$ be such that $k-2m \leq l_0 \leq k-1$ and
\[ K(i,l_0) = \max \{ K(i,l) \mid k-2m \leq l \leq k-1 \}.\]
From this, the definition of $K(i,k)$, and from Lemma~\ref{lem : A ratio bound} we have
\begin{eqnarray*} K(i,k) & = & K(i,k-m) + 2B \sum_{l = k-2m}^{k-1} \frac{A(i,l)}{A(i,k)} K(i,l) \\
& \leq & K(i,l_0)(1 + 2B \sum_{l = k-2m}^{k-1} a^{1-\lfloor \frac{k-i}m \rfloor} )\\
& = & K(i,l_0)(1+4mB a^{1-\lfloor \frac{k-i}m \rfloor})
\end{eqnarray*}
Since $l_0 < k$, the proposed bound on $K(i,l_0)$ holds by the inductive assumption.  Next, observe that the proposed upper bound is an increasing function of $k$.  Indeed, the required bound for $K(i,k)$ is obtained from the one for $K(i,k-1)$ by multiplying by a number greater than or equal to $1$.  By this monotonicity, the above bound implies
\begin{eqnarray*}
 K(i,k) & \leq & K(i,l_0)(1+4mBa^{1-\lfloor \frac{k-i}m \rfloor})\\
 & \leq & \left( b_2 \prod_{i+m \leq j < k-1} (1 + 4mB a^{1-\lfloor \tfrac{j - i + 1}m \rfloor}) \right)(1+4mBa^{1-\lfloor \frac{k-i}m \rfloor})\\
 & = & b_2 \prod_{i+m \leq j < k} (1 + 4mB a^{1-\lfloor \tfrac{j-i + 1}m \rfloor}).
\end{eqnarray*}
This completes the proof.
\end{proof}

We are now ready for the
\begin{proof}[Proof of Proposition~\ref{prop : uniform upper bound int}.]  The upper bound on $K(i,k)$ in Lemma~\ref{lem : final K(i,k) bound} is itself bounded above by the infinite product
\[ K_1(a) = b_2 \prod_{j = i+m}^\infty (1 + 4mB a^{1-\lfloor \tfrac{j-i+1}m \rfloor}) = b_2 \prod_{l = 0}^\infty (1 + 4mB a^{-\lfloor \tfrac{l + 1}m \rfloor}), \]
where we have substituted $l = j-i-m$.
We will be done if we prove that this product is convergent, for all $a > 1$, since the product then clearly defines a decreasing function of $a$.

The infinite product converges if and only if the infinite series obtained by taking logarithms does.  Since $\log(1+x) \leq x$
we have
\begin{eqnarray*} \log\left(b_2 \prod_{l = 0}^\infty (1 + 4mB a^{-\lfloor \tfrac{l + 1}m \rfloor})\right) & = & \log(b_2) + \sum_{l = 0}^{\infty}\log(1+4mB a^{-\lfloor \tfrac{l + 1}m \rfloor})\\
& \leq & \log(b_2) + 4mB \sum_{l = 0}^\infty a^{-\lfloor \tfrac{l + 1}m \rfloor} \\
\end{eqnarray*}
The last expression is essentially a geometric series, and hence converges for all $a > 1$, completing the proof.
\end{proof}

\noindent {\bf Lower bound.}  Let $b_{1}$ be the constant in $\mathcal P$ (Definition \ref{def : P}). We assume $a > 1$ is sufficiently large so that 
\begin{equation} \label{eq : restriction on a} C = 8mBK_1 \sum_{j=1}^\infty a^{-j}  < b_1.
\end{equation}
(which is possible since $K_1 = K_1(a)$ is decreasing by Proposition~\ref{prop : uniform upper bound int}).
For all $k \geq i+m$, define the function $K'(i,k)$ by the recursive formula for all $k \geq i + m$
\[ K'(i,k) = \left\{ \begin{array}{ll}
C & \mbox{for } i+m \leq k < i+2m\\
\displaystyle{K'(i,k-m)- 2B \!\!\!\! \sum_{l=k-2m}^{k-1} \!\!\! \tfrac{A(i,l)}{A(i,k)}K(i,l)} & \mbox{for } i+2m \leq k \end{array} \right. \]

\begin{lem} \label{lem : K'(i,k)A(i,k) bound}
For all $k \geq i + m$, we have $i(\gamma_i,\gamma_k) \geq K'(i,k)A(i,k)$.
\end{lem}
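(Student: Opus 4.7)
My plan is to mirror the proof of the upper bound (Lemma~\ref{lem : K(i,k)A(i,k) bound}), inducting on $k$ with the inequality reversed and feeding the already-proved upper bound into the error terms.

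For the base case $i+m \leq k < i+2m$, I note that $A(i,k) = 1$ and $K'(i,k) = C$, so the desired inequality reduces to $i(\gamma_i, \gamma_k) \geq C$. Lemma~\ref{lem : length 2m intervals} yields $i(\gamma_i, \gamma_k) \geq b_1$, and the restriction (\ref{eq : restriction on a}) on the parameter $a$ was set up precisely so that $C < b_1$; the base case follows immediately.

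For the inductive step $k \geq i + 2m$, I apply Proposition~\ref{prop : application of flp} with $\delta = \gamma_i$ to obtain
$$i(\gamma_i, \gamma_k) \geq be_{k-m}\, i(\gamma_i, \gamma_{k-m}) - 2B \sum_{l=k-2m}^{k-1} i(\gamma_i, \gamma_l).$$
Since $k-m \geq i+m$, the inductive hypothesis lower-bounds $i(\gamma_i, \gamma_{k-m})$ by $K'(i,k-m) A(i,k-m)$, while Lemma~\ref{lem : K(i,k)A(i,k) bound} upper-bounds each $i(\gamma_i, \gamma_l)$ in the sum by $K(i,l) A(i,l)$. Using the identity $A(i,k) = be_{k-m} A(i, k-m)$ (valid for $k \geq i + 2m$) to factor $A(i,k)$ out of both contributions, what remains in parentheses is exactly $K'(i,k)$ by its defining recursion, completing the induction.

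The main subtlety is that the recursion for $K'$ was set up with the \emph{same} upper bound $K(i,l)$ that Lemma~\ref{lem : K(i,k)A(i,k) bound} produces, rather than the cruder uniform constant $K_1$, so that the inequalities match cleanly after factoring; the only genuinely delicate numerical input is the inequality $C < b_1$ closing the base case. To then finish Theorem~\ref{thm : intgkgi}, I would show $K'(i,k) \geq K_2 > 0$ uniformly by telescoping the recursion: each step drops $K'$ by at most $4mBK_1 a^{-j}$ thanks to Lemma~\ref{lem : A ratio bound} combined with $K(i,l) \leq K_1$, so the total decrease from the initial value $C$ is at most $4mBK_1 \sum_{j \geq 1} a^{-j} = C/2$, giving $K_2 = C/2$ and finally $\kappa = \max\{K_1, 2/C\}$.
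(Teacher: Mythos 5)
Your proposal is correct and follows essentially the same route as the paper: induction on $k$, the base case closed by $C < b_1$ from (\ref{eq : restriction on a}) together with $i(\gamma_i,\gamma_k)\geq b_1$, and the inductive step obtained by applying Proposition~\ref{prop : application of flp} with $\delta=\gamma_i$, lower-bounding $i(\gamma_i,\gamma_{k-m})$ by the inductive hypothesis, upper-bounding the error sum via Lemma~\ref{lem : K(i,k)A(i,k) bound}, and factoring out $A(i,k)=be_{k-m}A(i,k-m)$ to recover the recursion defining $K'(i,k)$. Your concluding remarks on bounding $K'(i,k)\geq K_2=C/2$ also match the paper's subsequent lemma.
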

\begin{proof}
Fix integer $i\geq 0$.  The proof is by induction on $k$.  For the base case, we let $i +m \leq k < i+2m$. Then $A(i,k) = 1$ and $K'(i,k) = C < b_1$, while $i(\gamma_i,\gamma_k) \geq b_1$, and hence $i(\gamma_i,\gamma_k) \geq K'(i,k)A(i,k)$.  We assume therefore that $k \geq i+2m$ and that the lemma is true for all $i+m \leq l < k$.  

Applying Proposition~\ref{prop : application of flp} to the curve $\delta = \gamma_i$, together with Lemma~\ref{lem : K(i,k)A(i,k) bound} and the inductive hypothesis we have
\begin{eqnarray*}
i(\gamma_i,\gamma_k)& \geq & e_{k-m}bi(\gamma_i,\gamma_{k-m})- 2B \sum_{l=k-2m}^{k-1} i(\gamma_{i},\gamma_{l})\\
&\geq&e_{k-m}bK'(i,k-m)A(i,k-m)-2B \sum_{l=k-2m}^{k-1}K(i,l)A(i,l)\\
&=&A(i,k)\left( K'(i,k-m)-2B \sum_{l=k-2m}^{k-1}\tfrac{A(i,l)}{A(i,k)}K(i,l)\right)\\
&=&A(i,k)K'(i,k)\\
\end{eqnarray*}
as required.
\end{proof}

\begin{lem}
Setting $K_2 = C/2 >0$, then whenever $k \geq i+m$, $K'(i,k)\geq K_2$.
\end{lem}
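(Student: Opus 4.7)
The plan is to iterate the recursion defining $K'(i,k)$ back to the base case and bound the total subtracted error by a convergent geometric series whose sum is precisely $C/2$, by the choice of $C$ in \eqref{eq : restriction on a}.

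\textbf{Base case.} If $i+m \leq k < i+2m$, then by definition $K'(i,k) = C > C/2 = K_2$, so there is nothing to prove.

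\textbf{Iteration/telescoping.} For $k \geq i+2m$, let $p = \lfloor (k-i)/m \rfloor - 1 \geq 1$, so that $k - pm \in [i+m, i+2m)$. Iterating the recursive definition of $K'(i,k)$ gives
\[
K'(i,k) \; = \; K'(i,k-pm) \; - \; \sum_{j=0}^{p-1} 2B \sum_{l = k-jm-2m}^{k-jm-1} \frac{A(i,l)}{A(i,k-jm)}\, K(i,l),
\]
and $K'(i,k-pm) = C$ by the base case.

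\textbf{Bounding each correction term.} For each index $j \in \{0,\dots,p-1\}$ and each $l$ appearing in the inner sum, we use $K(i,l) \leq K_1$ from Proposition~\ref{prop : uniform upper bound int}, and we use Lemma~\ref{lem : A ratio bound} to estimate
\[
\frac{A(i,l)}{A(i,k-jm)} \leq a^{\,1 - \lfloor (k - jm - i)/m \rfloor} = a^{\,j-p},
\]
since $\lfloor (k - jm - i)/m \rfloor = p+1-j$. The inner sum has $2m$ terms, so each correction term is at most $4mB K_1\, a^{j-p}$.

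\textbf{Summing and applying the choice of $a$.} Summing over $j = 0,\dots,p-1$ and substituting $s = p-j$ yields
\[
\sum_{j=0}^{p-1} 4mB K_1\, a^{j-p} \; = \; 4mB K_1 \sum_{s=1}^{p} a^{-s} \; \leq \; 4mB K_1 \sum_{s=1}^{\infty} a^{-s} \; = \; \tfrac{C}{2},
\]
where the last equality is the defining relation \eqref{eq : restriction on a} for $C$. Therefore
\[
K'(i,k) \; \geq \; C - \tfrac{C}{2} \; = \; \tfrac{C}{2} \; = \; K_2,
\]
completing the proof.

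The only nontrivial point, and the place to be careful, is the bookkeeping of the exponent $j-p$ in the $A$-ratio bound and the verification that the resulting geometric sum matches the constant $C/2$ exactly; this is why \eqref{eq : restriction on a} was set with the factor $8mBK_1$ rather than $4mBK_1$, leaving precisely the slack needed to absorb the accumulated error over the (possibly unbounded) number of iterations $p$.
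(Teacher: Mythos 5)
Your proof is correct and follows essentially the same route as the paper's: unwind the recursion for $K'(i,k)$ down to the base case, bound each subtracted term using $K(i,l)\leq K_1$ (Proposition~\ref{prop : uniform upper bound int}) and the ratio estimate of Lemma~\ref{lem : A ratio bound}, and absorb the resulting geometric series into $C/2$ via the choice \eqref{eq : restriction on a}. The only differences are cosmetic (an exact telescoping identity and the index bookkeeping via $p=\lfloor (k-i)/m\rfloor-1$ instead of the paper's decomposition $k=p+sm$).
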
 

\begin{proof}
If $i+m \leq k < i+2m$, then $K'(i,k) = C > C/2 = K_2 > 0$.  Suppose now that $k \geq i+2m$, and let $k = p + sm$, where $s$ and $p$ are positive integers with $i+m \leq p < i+2m$ and $p \equiv k$ mod $m$.  Note that
\[ \lfloor \frac{k-i}m \rfloor = \lfloor \frac{p+sm-i}m \rfloor = s + \lfloor \frac{p-i}m \rfloor = s + 1.\]
By Lemma~\ref{lem : A ratio bound}, it follows that for all $l < k$, we have $\frac{A(i,l)}{A(i,k)} \leq a^{-s}$.
Then from the definition of $K'$ and Proposition~\ref{prop : uniform upper bound int} we have
\begin{eqnarray*} K'(i,k) & = & K'(i,k-m) - 2B \sum_{l=k-2m}^{k-1} \tfrac{A(i,l)}{A(i,k)} K(i,l) \\
 & \geq & K'(i,k-m) - 2B \sum_{l=k-2m}^{k-1} a^{-s} K_1 \\
 & \geq & K'(i,k-m) - 2B (2m)a^{-s}K_1 = K'(i,k-m) - 4mB K_1 a^{-s}\\
\end{eqnarray*}
Iterating this inequality $s$ times implies
\[ K'(i,k) \geq K'(i,p) - 4mB K_1 \sum_{q=1}^s a^{-q}.\]
Since $i+m \leq p < i+2m$, $K'(i,p) = C = 8mBK_1 \sum_{j=1}^\infty a^{-j}$ and hence
\[ K'(i,k) \geq 4mBK_1 (2\sum_{j=1}^\infty a^{-j} - \sum_{q=1}^s a^{-q}) \geq 4mBK_1 \sum_{j=1}^\infty a^{-j} = C/2 = K_2.\]
This completes the proof.
\end{proof}

\begin{proof}[Proof of Theorem~\ref{thm : intgkgi}.] For $a > 1$ satisfying \eqref{eq : restriction on a}, we have proved that for all $k \geq i+m$,
\[ K_2 A(i,k) \leq i(\gamma_i,\gamma_k) \leq K_1 A(i,k).\]
Since $K_1,K_2 > 0$, setting $\kappa = \max\{K_1,\frac1{K_2} \}$ finishes the proof.
\end{proof}

\noindent {\bf Convention.}  From this point forward, we will assume that $\mathcal P = \mathcal P(a)$ always has $a > 1$ sufficiently large so that (\ref{eq : restriction on a}) is satisfied, and consequently the intersection numbers of curves in any sequence $\{\gamma_k\}_{k=0}^\infty$ satisfies (\ref{eq : intgkgi}) in Theorem~\ref{thm : intgkgi}.  For concreteness, we note that from \eqref{eq : restriction on a}, $a \geq 16  > 2$ (though in fact, it is much larger).

\subsection{Convergence in $\mathcal{ML}(S)$} 

Consider again a sequence of curves $\{\gamma_{k}\}_{k=0}^{\infty}$ which satisfies the conditions of Theorem~\ref{thm : intgkgi}. Let $\nu\in \mathcal{EL}(S)$ be the lamination from Proposition~\ref{prop : P implies loc-to-global}.  In this section we will prove this sequence naturally splits into $m$ convergent subsequences in $\PML(S)$.

For each $h=0,...,m-1$ and $i\in\mathbb{N}$ let
 \begin{equation}\label{eq : cih} c^{h}_{i}=A(0,im+h) = \prod_{j=1}^{i-1} b e_{jm + h} .\end{equation}
where $A$ is defined in (\ref{eq : Aik}).

For each $h=0,1,...,m-1$, define the subsequence $\gamma^{h}_{i}$ of the sequence $\{\gamma_k\}_{k=0}^{\infty}$ by 
\begin{equation} \label{eq : gammaih} \gamma^{h}_{i}=\gamma_{im+h}.\end{equation}
  The main result of this section is the following theorem.
\begin{thm}\label{thm : MLlimitgi} 
Suppose $\{\gamma_k\}_{k=0}^\infty$ satisfies $\mathcal P$.  Then for each $h = 0,1,\ldots,m-1$, there exists a transverse measure $\bar \nu^h$ on $\nu$ so that 
\[ \lim_{i \to \infty} \frac{\gamma_i^h}{c_i^h} = \bar \nu^h \]
in $\ML(S)$, where $\gamma_i^h$ and $c_i^h$ are as above.
\end{thm}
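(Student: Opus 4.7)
The plan is to prove convergence of $\gamma_i^h/c_i^h$ in $\ML(S)$ by showing that for every simple closed curve $\beta$ on $S$, the normalized intersection $a_i^\beta := i(\beta, \gamma_i^h)/c_i^h$ is a Cauchy sequence. Since convergence in $\ML(S)$ is characterized by convergence of intersection numbers against simple closed curves, this suffices.

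First I would extend the upper-bound half of Theorem~\ref{thm : intgkgi} to a general test curve $\beta$: for each $i_0 \geq 0$ and each curve $\beta$, there is a constant $C_{\beta, i_0}$ so that $i(\beta, \gamma_l) \leq C_{\beta, i_0}\, A(i_0, l)$ for all $l \geq i_0 + m$. This is obtained by the same inductive scheme used in Proposition~\ref{prop : uniform upper bound int}: start from the finitely many initial values $i(\beta, \gamma_l)$ with $l \in [i_0 + m, i_0 + 2m - 1]$, and iterate the recursion supplied by Proposition~\ref{prop : application of flp}; the same geometric-series argument that bounds $K(i_0, k) \leq K_1$ produces a uniform bound $C_{\beta, i_0}$.

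Next, applying Proposition~\ref{prop : application of flp} with $\delta = \beta$ and $k = im + h$, and dividing by $c_i^h = be_{(i-1)m+h}\, c_{i-1}^h$, yields
\begin{equation*}
|a_i^\beta - a_{i-1}^\beta| \;\leq\; \frac{2B}{c_i^h}\sum_{l=(i-2)m+h}^{im+h-1} i(\beta, \gamma_l).
\end{equation*}
Using the bound from the previous paragraph, together with Lemma~\ref{lem : A ratio bound} applied to $A(i_0, l)/A(i_0, im+h)$ and the observation that $A(i_0, im+h)/A(0, im+h)$ equals a positive constant depending only on $i_0$ and $h$ (as the missing factor in its product consists of the finitely many terms with index $\ell \leq \lceil (i_0 + m - h)/m \rceil - 1$), the right-hand side is dominated by $C(\beta, i_0, h)\cdot a^{1 - \lfloor(im+h - i_0)/m\rfloor}$. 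With $a > 1$ sufficiently large by~(\ref{eq : restriction on a}), this is geometrically summable in $i$, so $\{a_i^\beta\}$ is Cauchy and converges. Thus $\bar \nu^h := \lim_i \gamma_i^h/c_i^h$ exists in $\ML(S)$, and it is supported on $\nu$ by Theorem~\ref{thm : bdrycc} (Klarreich), since the subsequence $\{\gamma_i^h\}$ still converges in $\overline{\mathcal{C}(S)}$ to $\nu \in \EL(S)$, and so any $\PML$-accumulation point of $\gamma_i^h$, and hence any $\ML$-accumulation point of $\gamma_i^h/c_i^h$, is supported on $\nu$.

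The main obstacle is the Cauchy estimate, which requires careful bookkeeping since the sum over $l$ runs across all residue classes modulo $m$, and cross-residue ratios such as $c_i^{h'}/c_i^h$ can be arbitrarily large depending on the sequence $\{e_k\}$. What makes the bound work is applying Lemma~\ref{lem : A ratio bound} to $A(i_0, l)/A(i_0, im+h)$, which treats mixed residues uniformly, and absorbing the $i$-independent factor $A(i_0, im+h)/A(0, im+h)$ into the overall constant; no further assumption on $\{e_k\}$ beyond $e_{k+1} \geq a e_k$ is needed.
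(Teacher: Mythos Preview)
Your argument is correct and follows the same overall strategy as the paper: show that $\{i(\delta,\gamma_i^h)/c_i^h\}$ is Cauchy via Proposition~\ref{prop : application of flp}, an upper bound $i(\beta,\gamma_l)\prec A(0,l)$, and the ratio estimate of Lemma~\ref{lem : A ratio bound}. The one substantive difference is how that upper bound is obtained. The paper proves it as Lemma~\ref{lem : int estimate general} by a compactness argument: the normalized curves $\gamma_k/A(0,k)$ lie in a compact subset of $\ML(S)$, and since every accumulation point is supported on the filling lamination $\nu$, intersection with any fixed curve $\beta$ is eventually bounded above and below. You instead rerun the induction of Lemmas~\ref{lem : K(i,k)A(i,k) bound} and \ref{lem : final K(i,k) bound} with $\beta$ in place of $\gamma_i$, replacing the initial constant $b_2$ by $\max_{0\le l<2m}i(\beta,\gamma_l)$; taking $i_0=0$ suffices throughout, so the introduction of a general basepoint $i_0$ and the subsequent conversion factor $A(i_0,im+h)/A(0,im+h)$ are harmless but unnecessary. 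Your route is more self-contained and avoids invoking the topology of $\ML(S)$ and the filling property of $\nu$ at this intermediate stage; the paper's route is shorter and yields a two-sided estimate, though only the upper bound is needed for the Cauchy computation. One small correction: the initial data for your induction must cover $l\in[i_0,i_0+2m-1]$, not just $[i_0+m,i_0+2m-1]$, since the sum in Proposition~\ref{prop : application of flp} reaches down to $k-2m$.
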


We will need the following generalization of Theorem~\ref{thm : intgkgi}.  

\begin{lem} \label{lem : int estimate general}
For any curve $\delta$, there exists $\kappa(\delta) > 0$ and $N(\delta)>0$ so that for all $k \geq N(\delta)$
\begin{equation}\label{eq : int estimate general} i(\delta,\gamma_k)\stackrel{*}{\asymp}_{\kappa(\delta)} A(0,k).\end{equation}
\end{lem}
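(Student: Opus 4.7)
The plan is to establish the upper and lower halves of the estimate separately, both relying on Proposition~\ref{prop : application of flp} applied with the curve $\delta$ in place of $\gamma_i$. Throughout, the identity $be_{k-m}A(0,k-m)=A(0,k)$ lets me rewrite the recursion in terms of the ratios $g(k):=i(\delta,\gamma_k)/A(0,k)$.

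For the upper bound, I would mimic Lemma~\ref{lem : K(i,k)A(i,k) bound} and Proposition~\ref{prop : uniform upper bound int}. Fix $N_0=N_0(\delta)$ large, set $K_0(\delta):=\max\{g(l) : N_0\le l < N_0+2m\}$, and define $K(\delta,k)$ by taking it equal to $K_0(\delta)$ on the base range and, for $k\ge N_0+2m$,
\[
K(\delta,k)=K(\delta,k-m)+2B\sum_{l=k-2m}^{k-1}\frac{A(0,l)}{A(0,k)}K(\delta,l).
\]
Induction via the upper half of Proposition~\ref{prop : application of flp} gives $i(\delta,\gamma_k)\le K(\delta,k)A(0,k)$. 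Using Lemma~\ref{lem : A ratio bound} to bound $A(0,l)/A(0,k)\le a^{1-\lfloor k/m\rfloor}$ and repeating the infinite product argument from Proposition~\ref{prop : uniform upper bound int}, one concludes $K(\delta,k)\le\kappa_1(\delta)$ uniformly for all $k\ge N_0$.

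Feeding this upper bound back into the full two-sided inequality of Proposition~\ref{prop : application of flp} and dividing by $A(0,k)$ yields, for $k\ge N_0+2m$,
\[
|g(k)-g(k-m)|\;\le\;2B\kappa_1(\delta)\sum_{l=k-2m}^{k-1}\frac{A(0,l)}{A(0,k)}\;\le\;4mB\kappa_1(\delta)\,a^{1-\lfloor k/m\rfloor}.
\]
This error is geometrically summable in $k$, so the subsequence $\{g(im+h)\}_i$ is Cauchy for each residue $h\in\{0,\ldots,m-1\}$ and converges to some $L_h(\delta)\ge 0$.

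The main obstacle is ruling out $L_h(\delta)=0$. The upper bound shows that $\{\gamma_{im+h}/c_i^h\}_i$ is uniformly bounded in $\ML(S)$, since for every curve $\alpha$ we have $i(\alpha,\gamma_{im+h})/c_i^h\le\kappa_1(\alpha)$; hence the set is precompact. Extract a convergent subsequence with limit $\bar\mu\in\ML(S)$. By Theorem~\ref{thm : bdrycc} together with Proposition~\ref{prop : P implies loc-to-global}, $\bar\mu$ is supported on the filling ending lamination $\nu$. Theorem~\ref{thm : intgkgi} applied to $\gamma_{i_0}$ and $\gamma_{im+h}$ gives $i(\gamma_{i_0},\gamma_{im+h})\stackrel{*}{\asymp}A(i_0,im+h)$, and since $A(i_0,im+h)/A(0,im+h)$ is a fixed positive constant, $i(\gamma_{i_0},\bar\mu)>0$ and in particular $\bar\mu\neq 0$. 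Because $\nu$ is filling, the simple closed curve $\delta$ transversely meets $\nu$, forcing $i(\delta,\bar\mu)>0$; by continuity of $i(\cdot,\cdot)$ on $\ML(S)$, $L_h(\delta)=i(\delta,\bar\mu)>0$. Combining this positivity with the Cauchy convergence yields $i(\delta,\gamma_{im+h})\ge\tfrac12 L_h(\delta)\,A(0,im+h)$ for all $i$ sufficiently large, which together with the upper bound completes the estimate with $\kappa(\delta)=\max\{\kappa_1(\delta),\,2/\min_h L_h(\delta)\}$.
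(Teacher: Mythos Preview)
Your argument is correct, but it takes a longer and somewhat redundant route compared to the paper's proof. The paper does not redo any recursive estimate for a general curve $\delta$. Instead it observes that Theorem~\ref{thm : intgkgi} already gives $i(\gamma_j,\gamma_k)\stackrel{*}{\asymp}A(0,k)$ for each $j=0,\ldots,2m-1$ (absorbing $A(0,k)/A(j,k)$ into the constant), so that the normalized curves $\gamma_k/A(0,k)$ all lie in a fixed compact set $\Delta\subset\ML(S)$. Every accumulation point is then a (nonzero) measure on the filling lamination $\nu$; since $i(\delta,\cdot)$ is continuous and positive on that compact set of measures, it is uniformly bounded above and below on a neighborhood, hence on $\gamma_k/A(0,k)$ for all large $k$. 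This yields both inequalities simultaneously with no recursion and no Cauchy argument.

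Your route rederives the upper bound for a general $\delta$ by replaying the $K(i,k)$ recursion, then uses the two-sided estimate from Proposition~\ref{prop : application of flp} to show that each residue-class sequence $g(im+h)$ is Cauchy, and finally invokes precompactness to see the limit is positive. That last step is essentially the same compactness idea the paper uses, so the extra recursion and Cauchy-ness are not really needed for this lemma. On the other hand, your approach does buy something: the Cauchy estimate you prove is exactly the content of the subsequent Lemma~\ref{lem : convint} (convergence of $i(\delta,\gamma^h_i/c^h_i)$), so you have in effect merged the two lemmas into one argument. If you are writing this up, it would be cleaner to follow the paper's compactness shortcut for Lemma~\ref{lem : int estimate general} and keep the Cauchy estimate for Lemma~\ref{lem : convint}, where it is actually needed.
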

\begin{remark}Note that in Theorem~\ref{thm : intgkgi}, we estimate $i(\gamma_i,\gamma_k)$ with a uniform multiplicative constant $\kappa$ that works for any two curves $\gamma_i$ and $\gamma_k$, but the comparison is with $A(i,k)$ rather than $A(0,k)$.  On the other hand, the ratio of $A(0,k)$ and $A(i,k)$ is bounded by a constant depending on $i$, and not $k$, so the lemma for $\delta = \gamma_i$ is an immediate consequence of that theorem.
\end{remark}
\begin{proof}
First we note that by Theorem~\ref{thm : intgkgi}, we have
\[ i(\gamma_i,\gamma_k) \stackrel*{\asymp}_\kappa A(i,k).\]
From the definition of $A$, and the fact that $\{e_j\}_{j=0}^\infty$ is an increasing sequence, it follows that for each $i=0,\ldots,2m-1$, and all $k \geq i$, we have the bound
\[ 1 \leq \frac{A(0,k)}{A(i,k)} \leq b^2e_{2m}e_{3m}.\]
Setting $\kappa_0 = \kappa b^2e_{2m}e_{3m}$, for each $i=0,\ldots,2m-1$, we have
\begin{equation} \label{eq : temp i(i,k) bound} i(\gamma_i,\gamma_k) \stackrel*{\asymp}_{\kappa_0} A(0,k).
\end{equation}

Next, let $d = 2m\kappa_0$.  Note that since $\gamma_0,\ldots,\gamma_{2m-1}$ fills $S$, the set of measured laminations
\[ \Delta = \{ \bar{\lambda} \mid \sum_{j=0}^{2m-1} i(\gamma_j,\bar{\lambda}) \stackrel{*}{\asymp}_d 1 \} \subset \ML(S) \]
is compact. From \eqref{eq : temp i(i,k) bound} we have $\{\tfrac{\gamma_k}{A(0,k)}\}_{k=3m}^\infty \subset \Delta$. 

Let $\nu \in \EL(S)$ be the lamination from Proposition~\ref{prop : P implies loc-to-global}.  Since $\nu$ is an ending lamination, the set of measures $\bar \nu \in \Delta$ supported on $\nu$ is a compact subset.  By continuity of the intersection number $i$, there exists $c(\delta) > 0$ so that $i(\delta,\bar \nu) \stackrel{*}\asymp_{c(\delta)} 1$ for all such $\bar \nu$.  

Let $K(\delta) \subset \mathcal{ML}(S)$ be a compact neighborhood which contains the set of measures $\bar{\nu}$ which are supported on $\nu$ and are in $\Delta$. By continuity of $i$ again, we can take $K(\delta)$ sufficiently small so that there exists $\kappa(\delta) > 0$ such that $i(\delta,\bar{\lambda}) \stackrel*\asymp_{\kappa(\delta)} 1$ for all $\bar{\lambda} \in K(\delta)$.  Since every accumulation point of $\{\tfrac{\gamma_k}{A(0,k)}\}_{k=3m}^\infty$ is a measure $\bar \nu \in \Delta$ supported on $\nu$, it follows that there exists $N(\delta)$ so that for all $k \geq N(\delta)$, $\tfrac{\gamma_k}{A(0,k)} \in K(\delta)$.  Consequently, for all $k \geq N(\delta)$, we have $i(\delta,\gamma_k) \stackrel*\asymp_{\kappa(\delta)} A(0,k)$, which completes the proof.
\end{proof}


Using the estimates from Lemma~\ref{lem : int estimate general}, we prove the next lemma.  Theorem~\ref{thm : MLlimitgi} will then follow easily.
\begin{lem}\label{lem : convint}
For any curve $\delta$ and any $h=0,...,m-1$, the sequence $\{i(\delta,\frac{\gamma^{h}_{i}}{c^{h}_{i}})\}_{i=0}^\infty$ converges.
 \end{lem}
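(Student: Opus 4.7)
My plan is a Cauchy-sequence argument driven by the telescoping estimate from Proposition~\ref{prop : application of flp}. Fix $\delta$ and $h \in \{0,\ldots,m-1\}$, and set $k = im+h$. Since $k-m = (i-1)m + h$, applying Proposition~\ref{prop : application of flp} gives
\[
\bigl| i(\delta,\gamma_{im+h}) - be_{(i-1)m+h}\, i(\delta,\gamma_{(i-1)m+h}) \bigr| \leq 2B \sum_{l=k-2m}^{k-1} i(\delta,\gamma_l).
\]
From the definition \eqref{eq : cih}, one checks that $c_i^h = be_{(i-1)m+h}\, c_{i-1}^h$, so dividing the inequality by $c_i^h$ yields
\[
\left| \frac{i(\delta,\gamma_i^h)}{c_i^h} - \frac{i(\delta,\gamma_{i-1}^h)}{c_{i-1}^h} \right| \leq \frac{2B}{A(0,k)} \sum_{l=k-2m}^{k-1} i(\delta,\gamma_l).
\]

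Next, I would bound the right-hand side using the two estimates already proved. First, by Lemma~\ref{lem : int estimate general}, for all $l \geq N(\delta)$ we have $i(\delta,\gamma_l) \leq \kappa(\delta) A(0,l)$. Second, Lemma~\ref{lem : A ratio bound} applied with $i=0$ gives
\[
\frac{A(0,l)}{A(0,k)} \leq a^{1 - \lfloor k/m \rfloor} = a^{1-i}
\]
for every $l < k$, using $\lfloor (im+h)/m \rfloor = i$ since $0 \leq h < m$. Combining these for $i$ large enough that $k-2m \geq N(\delta)$, I obtain
\[
\left| \frac{i(\delta,\gamma_i^h)}{c_i^h} - \frac{i(\delta,\gamma_{i-1}^h)}{c_{i-1}^h} \right| \leq 2B \kappa(\delta) \cdot (2m) \cdot a^{1-i} = 4mB\kappa(\delta)\, a^{1-i}.
\]

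Since $a > 1$, the sequence $\{a^{1-i}\}$ is summable (geometric), so the sequence of consecutive differences is absolutely summable. Hence $\{i(\delta,\gamma_i^h/c_i^h)\}_{i=0}^\infty$ is Cauchy in $\mathbb{R}$ and therefore converges. The main potential obstacle is ensuring that Lemma~\ref{lem : int estimate general} is applicable uniformly in $l$; but only finitely many indices fall below the threshold $N(\delta)$, so these finitely many terms do not affect the Cauchy conclusion. No other subtlety arises, and the existence of the limit is exactly what the lemma claims.
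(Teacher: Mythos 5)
Your proposal is correct and follows essentially the same route as the paper: the telescoping estimate from Proposition~\ref{prop : application of flp} divided by $c_i^h$, the upper bound $i(\delta,\gamma_l) \stackrel{*}{\prec}_{\kappa(\delta)} A(0,l)$ from Lemma~\ref{lem : int estimate general}, and the ratio bound $A(0,l)/A(0,im+h) \leq a^{1-i}$ from Lemma~\ref{lem : A ratio bound}, yielding the same bound $4mB\kappa(\delta)a^{1-i}$ on consecutive differences and hence a Cauchy sequence. Your remark about discarding the finitely many indices below the threshold $N(\delta)$ is exactly the ``$i$ sufficiently large'' caveat implicit in the paper's argument.
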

 \begin{proof}
By Proposition~\ref{prop : application of flp} we have that
$$\Big|i(\delta,\gamma_{im+h})-e_{(i-1)m+h}bi(\delta,\gamma_{(i-1)m+h})\Big|\leq 2B \sum_{l=(i-2)m+h}^{im+h-1}i(\delta,\gamma_l).$$
Dividing both sides by $c^h_i=A(0,im+h) = be_{(i-1)m+h}A(0,(i-1)m+h)$, and letting $\kappa(\delta)$ be the constant from Lemma~\ref{lem : int estimate general}, it follows that for all $h=0,\ldots,m-1$, and $i$ sufficiently large
\begin{eqnarray*}
\Big|i(\delta,\frac{\gamma_{im+h}}{c^{h}_i})-i(\delta,\frac{\gamma_{(i-1)m+h}}{c^{h}_{i-1}})\Big|&\leq&\frac{2B}{A(0,im+h)}\Big(\sum_{l=(i-2)m+h}^{im+h-1}i(\delta,\gamma_{l})\Big)\\
&\leq&\frac{2B}{A(0,im+h)}\Big(\sum_{l=(i-2)m+h}^{im+h-1}\kappa(\delta) A(0,l)\Big)\\
& = & \sum_{l=(i-2)m+h}^{im+h-1}2B\kappa(\delta) \frac{A(0,l)}{A(0,im+h)}
\end{eqnarray*}
Lemma~\ref{lem : A ratio bound} implies that the expressions in the final sum admit the following bounds:
\[ \frac{A(0,l)}{A(0,im+h)} \leq a^{1-\lfloor \tfrac{im+h-0}m \rfloor} = a^{1-i}\]
Since $\gamma_i^h = \gamma_{im+h}$, we have
\[ \Big|i(\delta,\frac{\gamma_i^h}{c^{h}_i})-i(\delta,\frac{\gamma_{i-1}^h}{c^{h}_{i-1}})\Big| \leq 4mB\kappa(\delta)a^{1-i}. \]
Consequently, for all $i> j$ sufficiently large, applying this inequality and the triangle inequality we have
\[ \Big|i(\delta,\frac{\gamma_i^h}{c^h_i})-i(\delta,\frac{\gamma_j^h}{c^h_j}) \Big| \leq 4mB\kappa(\delta) \sum_{l = j+1 }^i a^{1-l}.\]
By taking $i$ and $j$ sufficiently large, the (partial) sum of the geometric series on the right can be made arbitrarily small.  In particular, $\{i(\delta,\tfrac{\gamma_i^h}{c_i^h}) \}$ is a Cauchy sequence, hence converges.
\end{proof}

\begin{proof}[Proof of Theorem~\ref{thm : MLlimitgi}.]  Fix $h\in\{ 0,\ldots,m-1\}$.  Since the intersection numbers $\{i(\delta,\tfrac{\gamma_i^h}{c_i^h})\}_{i=0}^\infty$ converge for all simple closed curves $\delta$, it follows that $\{\tfrac{\gamma_i^h}{c_i^h} \}_{i=0}^\infty$ converges to some lamination $\bar \nu^h$ in $\ML(S)$ (since $\ML(S)$ is a closed subset of $\mathbb R^{\mathcal C(S)}$).   By Proposition~\ref{prop : P implies loc-to-global}, $\bar \nu^h$ is supported on $\nu$.
\end{proof}

\section{Ergodic measures}\label{sec : ergmeas}

We continue to assume throughout the rest of this section that $\{\gamma_k\}_{k=0}^\infty$ satisfies $\mathcal P$ and that $\{\frac{\gamma^h_i}{c^h_i}\}_{i=0}^\infty$ for $h = 0,\ldots,m-1$ are the subsequences defined in the previous section limiting to $\bar \nu^h$ supported on $\nu$ by Theorem~\ref{thm : MLlimitgi} for each $h = 0,\ldots,m-1$.  We say that $\bar \nu^h$ and $\bar \nu^{h'}$ are {\em not absolutely continuous} if neither is absolutely continuous with respect to the other one.  Note that this is weaker than requiring that the measures be mutually singular.

Recall from the introduction that the space of measures supported on $\nu$ is the cone on the simplex of measure $\Delta(\nu)$.   We denote (choices of) the ergodic measures representing the vertices by $\bar \mu^0,\ldots,\bar \mu^{d-1}$, where $0 \leq d \leq \xi(S)$ is the dimension of the space of measure on $\nu$.  The ergodic measures are mutually singular since the generic points are disjoint.  It follows that if we write $\bar \nu^h$ and $\bar \nu^{h'}$ as nonnegative linear combinations of $\bar \mu^0,\ldots,\bar \mu^{d-1}$, then $\bar \nu^h$ and $\bar \nu^{h'}$ are not absolutely continuous if and only if there exists $\bar \mu^j,\bar \mu^{j'}$ so that $\bar \mu^j$ has positive coefficient for $\bar \nu^h$ and zero coefficient for $\bar \nu^{h'}$, while $\bar \mu^{j'}$ has positive coefficient for $\bar \nu^{h'}$ and zero coefficient for $\bar \nu^h$.

The aim of this section is to show that $d = m$, and in particular, $\nu$ is nonuniquely ergodic.  In fact, we will prove that up to scaling and reindexing we have $\bar \mu^h = \bar \nu^h$.

Using the estimates on the intersection numbers from Theorem \ref{thm : intgkgi} we first show that the measures $\bar \nu^h$ for $h=0,...,m-1,$ are pairwise not absolutely continuous.
 \begin{thm}\label{thm : mutualsing}
Let $h,h'\in\{0,...,m-1\}$ and $h\neq h'$. Then
\[ \lim_{i \to \infty} \frac{i(\gamma^h_i,\bar \nu^h)}{i(\gamma^h_i,\bar \nu^{h'})} = \infty \quad \mbox{ and } \quad \lim_{i \to \infty} \frac{i(\gamma^{h'}_i,\bar \nu^{h'})}{i(\gamma^{h'}_i,\bar \nu^h)} = \infty.\]
In particular, the measures $\bar \nu^{h}$ and $\bar \nu^{h'}$ are not absolutely continuous with respect to each other.
 \end{thm}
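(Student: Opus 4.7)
The plan is to reduce each of the two limits to an explicit ratio of the constants $c^{h}_{i}$ from \eqref{eq : cih}, and then exploit the growth condition \eqref{eq : ek} to show that these ratios blow up. Non--absolute--continuity then follows formally from the first and second limits.

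By continuity and bi--homogeneity of the intersection form together with the convergence $\gamma^{h'}_j/c^{h'}_j\to\bar\nu^{h'}$ from Theorem \ref{thm : MLlimitgi}, I would first write
\[ i(\gamma^h_i,\bar\nu^{h'})=\lim_{j\to\infty}\frac{i(\gamma_{im+h},\gamma_{jm+h'})}{c^{h'}_j}. \]
Theorem \ref{thm : intgkgi} replaces the numerator by $A(im+h,\,jm+h')$ up to the universal multiplicative constant $\kappa$, and the definition \eqref{eq : Aik} of $A$ is a product over indices $l\equiv h'\pmod{m}$ lying in $[im+h+m,\,jm+h')$. A short case analysis identifies the smallest such $l$ as $(i+1)m+h'$ when $h'\geq h$, and as $(i+2)m+h'$ when $h'<h$. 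After canceling against $c^{h'}_j=\prod_{p=1}^{j-1}be_{pm+h'}$, the ratio becomes independent of $j$, yielding
\[ i(\gamma^h_i,\bar\nu^{h'})\stackrel{*}{\asymp}_{\kappa}\begin{cases}1/c^{h'}_{i+1}, & h'\geq h,\\ 1/c^{h'}_{i+2}, & h'<h.\end{cases} \]

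Assume without loss of generality $h<h'$. Then the first ratio in the theorem simplifies to
\[ \frac{i(\gamma^h_i,\bar\nu^h)}{i(\gamma^h_i,\bar\nu^{h'})}\stackrel{*}{\asymp}\frac{c^{h'}_{i+1}}{c^h_{i+1}}=\prod_{p=1}^{i}\frac{e_{pm+h'}}{e_{pm+h}}\geq a^{i(h'-h)}, \]
which diverges by \eqref{eq : ek}. For the second ratio the case split forces a shift by one extra block, giving
\[ \frac{i(\gamma^{h'}_i,\bar\nu^{h'})}{i(\gamma^{h'}_i,\bar\nu^{h})}\stackrel{*}{\asymp}\frac{c^{h}_{i+2}}{c^{h'}_{i+1}}=be_{m+h}\prod_{p=1}^{i}\frac{e_{(p+1)m+h}}{e_{pm+h'}}, \]
and since $m+h-h'\geq 1$, each factor of the product is at least $a^{m+h-h'}\geq a>1$, so this expression also diverges.

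The non--absolute--continuity conclusion is then a formality. Expanding $\bar\nu^{h}=\sum_j\alpha_j\bar\mu^j$ and $\bar\nu^{h'}=\sum_j\beta_j\bar\mu^j$ in the ergodic decomposition, the hypothesis $\bar\nu^h\ll\bar\nu^{h'}$ would force $\alpha_j>0\Rightarrow\beta_j>0$ and thus yield a constant $C>0$ with $\bar\nu^h\leq C\bar\nu^{h'}$ as transverse measures; pairing with $\gamma^h_i$ contradicts the first limit, and by symmetry the second limit rules out $\bar\nu^{h'}\ll\bar\nu^h$. The main obstacle I anticipate is the first step, where one must carefully track the residue classes modulo $m$ at the endpoints of the index set defining $A(im+h,jm+h')$; the resulting shift by one or two blocks depending on the sign of $h'-h$ is exactly what distinguishes the two ratios and must be bookkept correctly all the way through.
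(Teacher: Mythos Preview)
Your proposal is correct and follows essentially the same approach as the paper: pass to the limit in $j$ using continuity of $i$ together with Theorem~\ref{thm : MLlimitgi}, replace intersection numbers by $A(\cdot,\cdot)$ via Theorem~\ref{thm : intgkgi}, reduce to a product of ratios of $e$'s, and conclude from the growth condition \eqref{eq : ek}. Your presentation is in fact slightly more direct than the paper's: the paper first multiplies $i(\gamma^h_i,\bar\nu^{h'})$ by the normalizing factor $i(\gamma^h_0,\gamma^h_{i+1})\stackrel{*}{\asymp}A(h,(i+1)m+h)$ and then simplifies the resulting three-factor expression (their equations \eqref{eq : ighighi+1}--\eqref{eq : estimates for products}), whereas you observe immediately that $A(im+h,jm+h')/c^{h'}_j$ is independent of $j$ and equals $1/c^{h'}_{i+1}$ or $1/c^{h'}_{i+2}$ according to the sign of $h'-h$, which lets you take the ratio of the two limits without any auxiliary normalization. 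The endpoint bookkeeping you flagged as the main obstacle is exactly the same case split that appears in the paper, and your identification of the smallest admissible index is correct in both cases. Your deduction of non--absolute--continuity from the two limits via the ergodic decomposition is also correct and spells out what the paper states in one sentence.
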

 The last statement is a consequence of the two limits, for if $\bar \nu^h$ and $\bar \nu^{h'}$ were positive linear combinations of the same set of ergodic measures, then these ratios would have to be bounded. 
\begin{proof}
For $h \neq h'$, we will calculate that 
\begin{equation}\label{eq : ighighi+1}
 i(\gamma^{h}_{0},\gamma^{h}_{i+1})i(\gamma^{h}_{i},\bar \nu^{h})\stackrel{*}{\asymp} 1 \quad \mbox{ and } \quad \lim_{i\to\infty}i(\gamma^{h}_{0},\gamma^{h}_{i+1})i(\gamma^{h}_{i},\bar \nu^{h'})= 0. 
 \end{equation}
Dividing the first equation by the second and taking limit (and doing the same with the roles of $h$ and $h'$ reversed) gives the desired limiting behavior.

To treat the two estimates simultaneously, we suppose for the time being that $h,h' \in \{0,\ldots,m-1\}$, but we do not assume $h \neq h'$.  From Theorem~\ref{thm : MLlimitgi} together with (\ref{eq : cih}) and (\ref{eq : gammaih}) we have
\[ \bar \nu^h = \lim_{k \to \infty} \frac{\gamma^h_k}{c^h_k} = \lim_{k \to \infty} \frac{\gamma_{km+h}}{A(0,km + h)}.\]
Combining this with (\ref{eq : Aik}), (\ref{eq : gammaih}), and the estimate in Theorem~\ref{thm : intgkgi}, we see that for any $i$ we may take $k$ sufficiently large so that
\begin{eqnarray} i(\gamma^{h}_{0},\gamma^{h}_{i+1})i(\gamma^{h}_{i},\bar \nu^{h'}) & \stackrel{*}{\asymp} & i(\gamma_h,\gamma_{(i+1)m+h})i(\gamma_{im+h},\frac{\gamma_{km+h'}}{A(0,km+h')}) \notag \\
 & \stackrel{*}{\asymp} & \frac{A(h,(i+1)m+h)A(im+h,km+h')}{A(0,km+h')} \label{eq : lots of As}
\end{eqnarray}

We will simplify the expression on the right, but the precise formula depends on whether $h' \geq h$ or $h' < h$.  From the definition (\ref{eq : Aik}), the right hand side of (\ref{eq : lots of As}) can be written as
$$\frac{\prod_{r=1}^i b e_{rm+h} \prod_{r = j_0}^{k-1} b e_{rm + h'}}{\prod_{r = 1}^{k-1} be_{r m + h' }}= \frac{\prod_{r=1}^i b e_{rm+h}}{ \prod_{r =1}^{j_0-1} be_{rm + h'}}$$
 where $j_0 = i+1$ if $h' \geq h$ and $j_0 = i+2$ if $h' < h$.  Therefore, from (\ref{eq : lots of As}) we can write
\begin{equation} \label{eq : estimates for products} i(\gamma^{h}_{0},\gamma^{h}_{i+1})i(\gamma^{h}_{i},\bar \nu^{h'}) \stackrel{*}{\asymp} \left\{ \begin{array}{cc} \displaystyle{ \prod_{r=1}^i  \tfrac{e_{rm+h}}{e_{rm + h'}} }& h' \geq h\\ \displaystyle{\tfrac{1}{be_{m+h'}}\prod_{r=1}^i  \tfrac{e_{rm+h}}{e_{(r+1)m + h'}} } & h' < h \end{array} \right.
\end{equation}
Now observe that when $h' = h$, this becomes 
\[ i(\gamma^{h}_{0},\gamma^{h}_{i+1})i(\gamma^{h}_{i},\bar \nu^h) \stackrel{*}{\asymp} 1,\]
proving the first of the two required equations.  So, suppose $h \neq h'$.  Then each of the $i$ terms in the product is bounded above by $a^{-1}$ since the index for the denominator is greater than that of the numerator, and $e_l \geq a e_{l - 1}$ for all $l\geq 1$.  Thus we have
\[ i(\gamma^{h}_{0},\gamma^{h}_{i+1})i(\gamma^{h}_{i},\bar \nu^{h'}) \stackrel{*}{\prec}
 a^{-i} \] 
where when $h' < h$, we have absorbed the constant $b e_{m + h'}$  into the multiplicative error since $m+h' < 2m$.  Letting $i$ tend to infinity, we arrive at the second of our required estimates, and have thus completed the proof.
\end{proof}

We immediately obtain the following
\begin{cor}\label{cor : nue}
The lamination $\nu$ is nonuniquely ergodic.
\end{cor}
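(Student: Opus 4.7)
The plan is to deduce Corollary 6.2 directly from Theorem 6.1 (\texttt{thm : mutualsing}). Recall that the hypothesis $\mathcal P$ carries with it the subsurface counter $m$, and throughout Section 4 onward we have $m \geq 2$ (the case $m=1$ would correspond to the classical uniquely ergodic setting and is excluded by the local-to-global setup of Theorem 4.1). By Theorem 5.3 (\texttt{thm : MLlimitgi}), each of the subsequences $\{\gamma_i^h/c_i^h\}_{i=0}^\infty$, for $h=0,\ldots,m-1$, converges to a nonzero transverse measure $\bar\nu^h$ supported on the common topological lamination $\nu$.

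Since $m \geq 2$, in particular the measures $\bar\nu^0$ and $\bar\nu^1$ both exist and are both supported on $\nu$. By Theorem 6.1 applied with $h=0$, $h'=1$, these two measures are not absolutely continuous with respect to each other. I would then observe that if $\nu$ were uniquely ergodic, every transverse measure on $\nu$ would be a positive scalar multiple of any other, and so any two nonzero transverse measures would be mutually absolutely continuous. This contradicts the non-absolute-continuity just cited, so $\nu$ cannot be uniquely ergodic.

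There is essentially no obstacle here: the work is entirely contained in Theorem 6.1, which has already been proved. The corollary is just a one-line logical consequence of the existence of two non-absolutely-continuous transverse measures on $\nu$, together with the definition of nonunique ergodicity (at least two projectively distinct transverse measures). The only subtlety worth mentioning explicitly is that $\bar\nu^0$ and $\bar\nu^1$ are both nonzero and supported on $\nu$ (both facts are recorded in Theorem 5.3), so the appeal to Theorem 6.1 is legitimate and immediately yields the conclusion.
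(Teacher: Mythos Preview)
Your proposal is correct and matches the paper's approach exactly: the paper states the corollary with the phrase ``We immediately obtain the following'' and gives no further proof, treating it as an immediate consequence of Theorem~\ref{thm : mutualsing}. Your explicit unpacking---that unique ergodicity would force $\bar\nu^0$ and $\bar\nu^1$ to be scalar multiples and hence mutually absolutely continuous, contradicting Theorem~\ref{thm : mutualsing}---is precisely the intended one-line deduction.
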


In fact, Theorem~\ref{thm : mutualsing} implies the main desired result of this section in a special case.  To prove this we first prove a lemma which will be useful in the general case as well.
\begin{lem} \label{lem : dimension suffices}  If $m \geq d$, then $m = d$, the measures $\bar \nu^0,\cdots, \bar \nu^{m-1}$ are distinct and ergodic, and these can be taken as the vertices of $\Delta(\nu)$.
\end{lem}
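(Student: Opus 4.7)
The plan is to argue in three stages, corresponding to the three claims of the lemma: distinctness and linear independence of $\bar{\nu}^0,\ldots,\bar{\nu}^{m-1}$; the equality $m=d$; and the identification of each $\bar{\nu}^h$ with an ergodic measure.

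First, I would note that Theorem~\ref{thm : mutualsing} immediately gives pairwise distinctness, since two measures one of which is absolutely continuous with respect to the other cannot be ``not absolutely continuous'' in the paper's sense. To upgrade this to linear independence, suppose $\sum_{h=0}^{m-1}c_h\bar{\nu}^h=0$ in $\ML(S)$ for scalars $c_h$. Fix $h_0$ and pair both sides with $\gamma^{h_0}_k$:
\[
\sum_h c_h\, i(\gamma^{h_0}_k,\bar{\nu}^h)=0.
\]
The computation in the proof of Theorem~\ref{thm : mutualsing} (in particular \eqref{eq : ighighi+1}) gives $i(\gamma^{h_0}_k,\bar{\nu}^{h_0})>0$ for all sufficiently large $k$, so dividing yields
\[
c_{h_0}+\sum_{h\neq h_0} c_h\,\frac{i(\gamma^{h_0}_k,\bar{\nu}^h)}{i(\gamma^{h_0}_k,\bar{\nu}^{h_0})}=0.
\]
By Theorem~\ref{thm : mutualsing} each ratio in the sum tends to $0$ as $k\to\infty$, forcing $c_{h_0}=0$. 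Since $h_0$ was arbitrary, the $\bar{\nu}^h$ are linearly independent.

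Second, the space of transverse measures on $\nu$ has dimension $d$ by definition, so linear independence of $m$ vectors in it gives $m\leq d$. Combined with the hypothesis $m\geq d$, this gives $m=d$.

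Third, to establish ergodicity of each $\bar{\nu}^h$, I would write the ergodic decompositions $\bar{\nu}^h=\sum_j a^h_j\bar{\mu}^j$ with $a^h_j\geq 0$, forming a square nonnegative matrix $A=(a^h_j)$ which is invertible by the previous step. The goal is to show $A$ is a generalized permutation matrix; by a classical fact, it suffices to check that $A^{-1}$ is also nonnegative, equivalently that every ergodic $\bar{\mu}^j$ lies in the nonnegative cone spanned by the $\bar{\nu}^h$. Once done, each $\bar{\nu}^h$ is a positive scalar multiple of a single ergodic $\bar{\mu}^{\sigma(h)}$, and after reindexing the $\bar{\nu}^h$ themselves serve as the vertices of $\Delta(\nu)$. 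The main obstacle is precisely this nonnegativity of $A^{-1}$: $d$ linearly independent vectors inside a $d$-dimensional simplicial cone need \emph{not} generate the whole cone, so the classical fact does not apply for free. To rule out that some support $S_h=\{j:a^h_j>0\}$ has size at least two, one must combine the pairwise incomparability of the $S_h$'s (the reformulation of pairwise not absolute continuity given in the paper) with the sharp multiplicative rates from the proof of Theorem~\ref{thm : mutualsing} via \eqref{eq : estimates for products} to derive a contradiction.
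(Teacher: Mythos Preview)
Your first two stages are correct: the linear-independence argument using the ratio limits from Theorem~\ref{thm : mutualsing} is valid and yields $m\le d$, hence $m=d$. This is a legitimate alternative to the paper's route to $m=d$.

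The gap is in your third stage, and you essentially acknowledge it yourself. Pairwise incomparability of the supports $S_h$ together with linear independence of the $\bar\nu^h$ in an $m$-dimensional cone does \emph{not} force each $S_h$ to be a singleton. For instance, with $m=d=3$ the vectors $(1,1,0)$, $(0,1,1)$, $(1,0,1)$ in the positive orthant are linearly independent with pairwise incomparable supports, yet none is extremal. So the ``classical fact'' you invoke about $A^{-1}$ being nonnegative is genuinely unavailable, and your closing sentence (``one must combine \ldots\ to derive a contradiction'') is a description of work still to be done, not a proof.

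The paper's argument sidesteps this entirely by extracting more from the ratio limits than mere incomparability. For each fixed $h$, choose $j_h$ with $c_{j_h}^h\neq 0$ and a subsequence of $\{\gamma_i^h\}$ along which $i(\gamma_i^h,\bar\mu^{j_h})\ge i(\gamma_i^h,\bar\mu^j)$ for every $j$ with $c_j^h\neq 0$ (possible since there are finitely many such $j$). If some $h'\neq h$ had $c_{j_h}^{h'}\neq 0$, then along this subsequence
\[
\frac{i(\gamma_i^h,\bar\nu^h)}{i(\gamma_i^h,\bar\nu^{h'})}
\;\le\;
\frac{\sum_j c_j^h\, i(\gamma_i^h,\bar\mu^j)}{c_{j_h}^{h'}\, i(\gamma_i^h,\bar\mu^{j_h})}
\;\le\;
\sum_{j:\,c_j^h\neq 0}\frac{c_j^h}{c_{j_h}^{h'}}
\;<\;\infty,
\]
contradicting Theorem~\ref{thm : mutualsing}. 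Thus $c_{j_h}^{h'}=0$ for all $h'\neq h$, so $h\mapsto j_h$ is injective. Combined with $m\ge d$ this map is a bijection, and then for every $h'$ the only possibly nonzero coefficient in $\bar\nu^{h'}=\sum_j c_j^{h'}\bar\mu^j$ is $c_{j_{h'}}^{h'}$, so $\bar\nu^{h'}=c_{j_{h'}}^{h'}\bar\mu^{j_{h'}}$ is ergodic. Note that this single argument simultaneously proves $m\le d$ and ergodicity; your linear-independence step becomes redundant once you have it.
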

\begin{proof}  Recall that $\bar \mu^0,\ldots, \bar \mu^{d-1}$ are ergodic measures spanning the ($d$--dimensional) space of measures on $\nu$.  For each $0 \leq h < m$, write
\[ \bar \nu^h = \sum_{j=0}^{d-1} c_j^h \bar \mu^j, \]
where $c_j^h \geq 0$ for all $j,h$.  Then for each $i$, $h$, and $h'$, we have
\[ i(\gamma_i^h, \bar \nu^{h'}) = \sum_{j=0}^{d-1} c_j^{h'} i(\gamma_i^h,\bar \mu^j).\]

Next, fix $h$ and let $j_h \in \{0,\ldots,m-1\}$ be such that $c_{j_h}^h \neq 0$ and so that there exists a subsequence of $\gamma_i^h$, so that if $0 \leq j < m-1$ and $c_j^h \neq 0$, then
\begin{equation} \label{eqn : h-special subsequence} i(\gamma_i^h,\bar \mu^{j_h}) \geq i(\gamma_i^h,\bar \mu^j). \end{equation}

Now suppose that for some $h' \neq h$, $c_{j_h}^{h'} \neq 0$.  On the subsequence of $\{\gamma_i^h\}$ above where (\ref{eqn : h-special subsequence}) holds, Theorem~\ref{thm : mutualsing} implies
\begin{eqnarray*} \infty & = & \lim_{i \to \infty} \frac{\displaystyle{\sum_j c_j^h i(\gamma_i^h,\bar \mu^j)}}{\displaystyle{\sum_j c_j^{h'} i(\gamma_i^h,\bar \mu^j)}} \leq \limsup_{i \to \infty} \frac{\displaystyle{\sum_j c_j^h i(\gamma_i^h,\bar \mu^j)}}{c_{j_h}^{h'} i(\gamma_i^h,\bar \mu^{j_h})}\\
& = & \limsup_{i \to \infty} \sum_j \frac{c_j^h}{c_{j_h}^{h'}} \frac{i(\gamma_i^h,\bar \mu^j)}{i(\gamma_i^h,\bar \mu^{j_h})} \leq \sum_j \frac{c_j^h}{c_{j_h}^{h'}} < \infty.\end{eqnarray*}
This contradiction shows that $c_{j_h}^{h'} = 0$ for all $h' \neq h$.  Since $c_{j_h}^h \neq 0$, it follows that $h \mapsto j_h$ defines an injective function $\{0,\ldots,m-1\} \to \{0,\ldots,d-1\}$.  Since $m \geq d$, this function is a bijection, $m = d$, and $\bar \nu^h = c_{j_h}^h \bar \mu^{j_h}$.  Since $\bar \mu^0,\ldots,\bar \mu^{d-1}$ are distinct ergodic measures spanning the simplex of measures on $\nu$, the lemma follows.
\end{proof}

\begin{cor} \label{cor : special case m = xi} If $m = \xi(S)$, then the measures $\bar \nu^0,\cdots, \bar \nu^{m-1}$ are distinct and ergodic and can be taken as the vertices of $\Delta(\nu)$.
\end{cor}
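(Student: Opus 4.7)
The plan is to deduce the corollary as an immediate consequence of Lemma~\ref{lem : dimension suffices}, combined with the general upper bound on the dimension of the space of measures on an ending lamination that is recalled in the introduction.

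First I would recall that, as noted in the discussion following Theorem~1.2 in the introduction, the dimension $d$ of the space of measures supported on any lamination is at most $\xi(S)$. This follows from the observation that the Thurston symplectic form on $\mathcal{ML}(S)$ (a space of real dimension $2\xi(S)$) is bounded above by the geometric intersection number and hence vanishes on the cone on $\Delta(\nu)$; consequently this cone is an isotropic subspace of a symplectic vector space and therefore at most half-dimensional, giving $d \leq \xi(S)$. (By Corollary~\ref{cor : nue} we also have $d \geq 2$, but we do not need this here.)

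Now under the hypothesis $m = \xi(S)$, the bound in the previous paragraph yields $m = \xi(S) \geq d$, so the hypothesis of Lemma~\ref{lem : dimension suffices} is met. That lemma then gives $m = d$ and asserts that the measures $\bar \nu^0, \ldots, \bar \nu^{m-1}$ coming from Theorem~\ref{thm : MLlimitgi} are distinct, ergodic, and may be taken as the vertices of $\Delta(\nu)$.

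In short, no new work is needed: the corollary is just the extremal case of Lemma~\ref{lem : dimension suffices}, and the only ingredient beyond that lemma is the dimension bound $d \leq \xi(S)$ from Thurston symplectic geometry. The possible obstacle is purely expository, namely making sure the reader is pointed to the dimension bound in the introduction so that the hypothesis $m \geq d$ of Lemma~\ref{lem : dimension suffices} is clearly verified.
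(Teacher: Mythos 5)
Your proof is correct and is essentially identical to the paper's: both verify $m = \xi(S) \geq d$ via the general bound $d \leq \xi(S)$ on the dimension of the space of measures and then invoke Lemma~\ref{lem : dimension suffices}. No issues.
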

\begin{proof} Since the dimension of the space of ergodic measures $d$ is at most $\xi(S)$, it follows that $m \geq d$, and hence Lemma~\ref{lem : dimension suffices} implies the result.
\end{proof}

\subsection{The general case} \label{sec : ergodic limits general case}

In \cite{lenmasdivergence} Lenzhen and Masur prove that for any nonuniquely ergodic lamination $\nu$ the ergodic measures are ``reflected'' in the geometric limit of a Teichm\"uller geodesic whose vertical foliation is topologically equivalent to $\nu$.  We will use this to prove the following generalization of Corollary~\ref{cor : special case m = xi} we need.

\begin{thm}  \label{thm : all ergodic measures} Suppose that $\{\gamma_l\}_{l=0}^\infty$ satisfies $\mathcal P$ and that $\{\gamma_k^h\}_{k=0}^\infty\;,h=0,...,m-1,$ is the partition into $m$ subsequences with $\displaystyle{\lim_{k \to \infty} \gamma_k^h = \bar \nu^h}$, all supported on $\nu$.  Then the measures $\bar \nu^0,\cdots, \bar \nu^{m-1}$ are distinct and ergodic and can be taken as the vertices of $\Delta(\nu)$.
\end{thm}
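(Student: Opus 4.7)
The plan is to show that the dimension $d$ of the space of measures on $\nu$ equals $m$. From Theorem~\ref{thm : mutualsing}, the argument in the proof of Lemma~\ref{lem : dimension suffices}---producing the map $h \mapsto j_h$---requires only that $\bar{\nu}^0,\ldots,\bar{\nu}^{m-1}$ be pairwise not absolutely continuous, and not the auxiliary hypothesis $m \geq d$. That argument shows $h \mapsto j_h$ is injective, so $m \leq d$. In view of Lemma~\ref{lem : dimension suffices}, it therefore suffices to establish the reverse inequality $d \leq m$.

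First I would identify the accumulation set of $\{\gamma_k\}_{k=0}^\infty$ in $\PML(S)$. Given any convergent subsequence $\{\gamma_{k_j}\}$, pigeonhole on residues modulo $m$ lets me pass to a further subsequence on which $k_j \equiv h \pmod{m}$ for a fixed $h \in \{0,\ldots,m-1\}$. Then $\{\gamma_{k_j}\}$ is a subsequence of $\{\gamma_i^h\}_{i=0}^\infty$, and Theorem~\ref{thm : MLlimitgi} gives $\gamma_i^h / c_i^h \to \bar{\nu}^h$ in $\ML(S)$. Since $\bar{\nu}^h \neq 0$ (for instance by applying Lemma~\ref{lem : int estimate general} to a curve $\delta$ with $i(\delta,\gamma_i^h) \asymp c_i^h$), the projective limit of $\{\gamma_{k_j}\}$ must be $[\bar{\nu}^h]$. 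Hence the accumulation set of $\{\gamma_k\}$ in $\PML(S)$ is precisely $\{[\bar{\nu}^0],\ldots,[\bar{\nu}^{m-1}]\}$.

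Next I would fix a basepoint $X \in \Teich(S)$ and consider a Teichm\"{u}ller geodesic ray $r \colon [0,\infty) \to \Teich(S)$ based at $X$ whose vertical measured lamination is a positive-weight convex combination of the $\bar{\nu}^h$ (and hence is supported on $\nu$). The key input is the result of Lenzhen-Masur \cite{lenmasdivergence}: every projective class $[\bar{\mu}^j]$ of an ergodic measure on $\nu$ is realized as a projective limit of a sequence of simple closed curves that become short along $r(t)$. By Proposition~\ref{prop : anncoeff + coeffbd}, the only proper essential subsurfaces $W$ with large subsurface coefficient of $\nu$ from a fixed marking are the annular neighborhoods of the curves $\gamma_k$; consequently, by Rafi's combinatorial description of the Teichm\"{u}ller metric \cite{rcombteich}, up to finitely many exceptions the only simple closed curves that can become short along $r(t)$ are the $\gamma_k$. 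Therefore each $[\bar{\mu}^j]$ is a projective accumulation point of $\{\gamma_k\}$, and by the previous paragraph equals some $[\bar{\nu}^{h(j)}]$. This yields $d \leq m$, so $m = d$, and Lemma~\ref{lem : dimension suffices} completes the proof.

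The main obstacle is the last step: extracting from \cite{lenmasdivergence} the precise statement that every ergodic measure $[\bar{\mu}^j]$ is realized as a projective accumulation point of the short curves along $r(t)$. The informal description ``reflected in the geometric limit'' strongly suggests this interpretation, but a rigorous deployment is likely to require tracking how the Thurston-boundary limit points of $r(t)$ arise from its sequence of short curves---perhaps via a Rafi-style analysis of the thin parts that $r(t)$ visits---together with some care in handling ergodic measures that are not themselves among the weights defining the vertical lamination of $r$.
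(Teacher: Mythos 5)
Your first two steps are sound: the injectivity argument inside Lemma~\ref{lem : dimension suffices} really does use only Theorem~\ref{thm : mutualsing}, so $m\le d$ holds unconditionally, and by Theorem~\ref{thm : MLlimitgi} the accumulation set of $\{\gamma_k\}$ in $\PML(S)$ is exactly $\{[\bar\nu^0],\dots,[\bar\nu^{m-1}]\}$; so the whole content is the inequality $d\le m$. The gap is exactly where you flag it, and it is genuine, not a citation formality. The statement you want to import from \cite{lenmasdivergence} --- that \emph{every} ergodic class $[\bar\mu^j]$ on $\nu$ occurs as a projective accumulation point of simple closed curves that become short along the ray --- is not proved there, and combined with your second paragraph it would instantly identify each $[\bar\mu^j]$ with some $[\bar\nu^h]$, i.e.\ it is essentially as strong as the theorem you are trying to prove. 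What Lenzhen--Masur actually provide is the area statement recorded here as Theorem~\ref{thm : Lenzhen Masur}: along a subsequence of times there is an $(\epsilon_k,\epsilon)$--thick decomposition $Y_0(t_k),\dots,Y_{d-1}(t_k)$ on which the measures $\Area_j$ concentrate. The substantive bridge from ``$Y_j(t_k)$ carries definite $\Area_j$--mass'' to curve-level information is Lemma~\ref{lem : always contain our cylinders}: via the horizontal/vertical strip decompositions and the bounds of Proposition~\ref{prop : anncoeff + coeffbd} and Theorem~\ref{thm : i=dY}, any thick piece containing no $\gamma_l$ nonperipherally (and not a cylinder over some $\gamma_l$) has area tending to $0$, so it cannot be a piece of the decomposition. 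Each $Y_j(t_k)$ therefore contains a sequence curve, the pieces have disjoint interiors, and pairwise disjoint curves of the sequence have indices within $m$ of each other, giving $d\le m$ directly --- at no point does one need (or obtain) the statement that $[\bar\mu^j]$ is a limit of the short curves. Note also that the later limit-set analysis of \S 8--9, which does eventually show the vertices lie in the limit set, takes Theorem~\ref{thm : all ergodic measures} as an input, so it cannot be invoked to repair this step without circularity.

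A secondary issue is your choice of ray. The paper does not run the Lenzhen--Masur machinery on a geodesic with vertical measure $\sum_h x_h\bar\nu^h$; it uses Gardiner--Masur to produce a quadratic differential whose vertical measure is $\bar\mu=\sum_{j=0}^{d-1}\bar\mu^j$ (all ergodic measures, with positive weight) and whose horizontal foliation is $\bar\gamma=\sum_h\gamma_0^h$. This matters for your plan: the Lenzhen--Masur decomposition is naturally indexed by the ergodic components of the vertical measure of the ray, and a priori --- before the theorem is proved --- a combination of the $\bar\nu^h$ might fail to charge some ergodic component $\bar\mu^j$, in which case there is no reason for that component to be ``reflected'' along your chosen ray at all. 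So even with a corrected curve-level statement in hand, you would want to set up the geodesic as the paper does, with every ergodic measure present in the vertical foliation.
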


Let $\bar \mu^0,\ldots,\bar \mu^{d-1}$ be the ergodic measures on $\nu$ and set
\[ \bar \mu = \sum_{j=0}^{d-1} \bar \mu^j \quad \mbox{ and } \quad \bar \gamma = \sum_{j=0}^{m-1} \gamma_j = \sum_{h=0}^{m-1} \gamma_0^h.\]
Here we are viewing the curves in the sum on the right as measured laminations with transverse counting measure on each curve.  We choose a normalization for the measures $\bar \mu^j$ so that $i(\bar \gamma,\bar \mu) = 1$.
According to \cite{gardinermasur}, there is a unique complex structure on $S$ from a marked Riemann surface $S \to X$ and unit area holomorphic quadratic differential $q$ on $X$ with at most simple poles at the punctures, so that the vertical foliation $|dx|$ is $\bar \mu$ and the horizontal foliation $|dy|$ is $\bar \gamma$.
Area in the $q$--metric is computed by integrating $d\bar \mu|dy|$.
We will also be interested in the measure obtained by integrating $d \bar \mu_j |dy|$ for each $j = 0,\ldots, d-1$, which we denote by $\Area_j$.  Of course, $\Area = \sum_j \Area_j$.

Next let $g$ denote the Teichm\"uller geodesic defined by $q$.  We will write $g(t) = [f_t \colon X \to X(t)]$ where $X(t)$ is the terminal Riemann surface, or $g(t) = [f_t \colon (X,q) \to (X(t),q(t))]$, where $q(t)$ is the terminal quadratic differential. Note that since $\nu$ is a nonuniquely ergodic lamination by Masur's criterion \cite{masurcriterion} the geodesic $g$ is divergent in the moduli space.
The vertical and horizontal measure of a curve $\gamma$ is denoted $v_{q(t)}(\gamma)$ and $h_{q(t)}(\gamma)$, which are precisely the intersection numbers with the horizontal and vertical foliations of $q(t)$, respectively.  These are given by
\[ v_{q(t)}(\gamma) = e^{-t} i(\gamma, |dy|) = e^{-t} i(\gamma,\bar \gamma) \quad \mbox{and} \quad h_{q(t)}(\gamma) = e^ti(\gamma,|dx|) = e^t i(\gamma,\bar \mu).\]
From this it follows that the natural area measure from $q(t)$ is the push forward of the area measure from $q$.   Likewise, this area naturally decomposes as the push forward of the measures $\Area_j$, for $j = 0,\ldots, d-1$.   Consequently, we will often confuse a subset of $X$ and its image in $X(t)$ and will simply write $\Area$ and $\Area_j$ in either $X$ or $X(t)$.

Given $\epsilon > \epsilon' > 0$, an {\em $(\epsilon',\epsilon)$--thick subsurface} of $(X(t),q(t))$ is a compact surface $Y$ and a continuous map $Y \to X(t)$, injective on the interior of $Y$ with the following properties.
\begin{enumerate}
\item The boundary of $Y$ is sent to a union of $q(t)$--geodesics, each with extremal length less than $\epsilon'$ in $X(t)$.
\item If $Y$ is not an annulus, then every non-peripheral curve in $Y$ has $q(t)$--length at least $\epsilon$ and $Y$ has no peripheral Euclidean cylinders.
\item If $Y$ is an annulus, then it is a maximal Euclidean cylinder.  
\end{enumerate}
\begin{remark}
We will be interested in the case that $\epsilon' \ll \epsilon$.  In this case, $\partial Y$ has a large collar neighborhood in $Y$, which does not contain a Euclidean cylinder (i.e.~a large modulus expanding annulus; see \cite{rshteich}).  Consequently, $\partial Y$ will have short hyperbolic and extremal length.
\end{remark}
As an abuse of notation, we will write $Y \subset X$, although $Y$ is only embedded on its interior.  An {\em $(\epsilon',\epsilon)$--decomposition of $(X(t),q(t))$} is a union of $(\epsilon',\epsilon)$--thick subsurfaces $Y_1(t),\ldots,Y_r(t) \subset X(t)$ with pairwise disjoint interiors.  We note that $X(t)$ need not be the union of these subsurfaces.  For example, suppose $(X(t),q(t))$ is obtained from two flat tori by cutting both open along a very short segment, and gluing them together along the exposed boundary component.   If the area of one torus is very close to $1$ and the other very close to $0$, then an $(\epsilon',\epsilon)$--decomposition would consist of the larger slit torus, $Y(t)$, while $X(t) - Y(t)$ would be the (interior of the) smaller slit torus.

The key results from \cite{lenmasdivergence} we will need are summarized in the following theorem.

\begin{thm} [Lenzhen-Masur]  \label{thm : Lenzhen Masur}
With the assumptions on the Teichm\"uller geodesic $g$ above, there exists $\epsilon > 0$ and $B > 0$ with the following property.  Given any sequence of times $t_k \to \infty$, there exists a subsequence (still denoted $\{t_k\}$), a sequence of subsurfaces $Y_0(t_k),\ldots,Y_{d-1}(t_k)$ in $X(t_k)$, and a sequence $\epsilon_k \to 0$, so that for all $k\geq 1$
\begin{enumerate}
\item $Y_0(t_k),\ldots,Y_{d-1}(t_k)$ is an $(\epsilon_k,\epsilon)$--thick decomposition,
\item $\Area_j(Y_j^0(t_k)) > B$ for all $0 \leq j \leq d-1$ and for any component $Y_j^0(t_k) \subset Y_j(t_k)$,
\item $\Area_j(Y_i(t_k)) < \epsilon_k$ for all $0 \leq i,j \leq d-1$ with $i \neq j$, and
\item $\Area(X(t_k) - (Y_0(t_k) \cup \ldots \cup Y_{d-1}(t_k)) < \epsilon_k$.
\end{enumerate}
\end{thm}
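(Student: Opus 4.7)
The plan is to read this off from the body of \cite{lenmasdivergence}, so the proof proposal is primarily a matter of organizing their results into the four clauses above.  The setup of the theorem --- a unit-area quadratic differential $q$ with vertical foliation $\bar\mu$ a sum of $d$ mutually singular ergodic transverse measures and with horizontal foliation $\bar\gamma$ a multicurve of positive transverse intersection with every $\bar\mu^j$ --- is exactly the setup in which Lenzhen--Masur analyze the limiting behavior of $(X(t),q(t))$.  Because $\bar\mu$ is nonuniquely ergodic, Masur's criterion \cite{masurcriterion} forces $g(t)$ to diverge in moduli space, so along any sequence $t_k\to\infty$ a subsequence of $(X(t_k),q(t_k))$ develops arbitrarily short curves.

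I would first apply, along the subsequence, Rafi's thick--thin decomposition for quadratic differentials to extract maximal $q(t_k)$-subsurfaces $Y_1(t_k),\ldots,Y_{r(t_k)}(t_k)$ whose boundary is a union of $q(t_k)$-geodesic representatives of the curves whose extremal length drops below a chosen threshold $\epsilon_k\to 0$, together with the maximal Euclidean cylinders of $q(t_k)$, all of whose non-peripheral curves have $q(t_k)$-length bounded below by a fixed $\epsilon>0$.  This is clause (1).  The content of (4) is then that, in the hypothesis of nonunique ergodicity, the total area lost to the expanding annuli and shrinking collars around the short curves tends to $0$; this is the "most of the area is thick" part of Lenzhen--Masur's analysis, obtained by a compactness argument in moduli space of flat surfaces together with the fact that the limit must be supported on a union of flat surfaces of strictly smaller complexity.

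The key dynamical input is clause (3), paired with (2).  Here one uses the ergodic decomposition $\bar\mu=\sum_j\bar\mu^j$ and the mutual singularity of the $\bar\mu^j$ together with the observation that the horizontal measure $d\bar\mu^j\,|dy|$ is invariant under the Teichm\"uller flow, hence $\Area_j$ of any subset is preserved as we push forward from $X$ to $X(t_k)$.  For each thick piece $Y_i(t_k)$, the flat structure restricted to $Y_i(t_k)$ is a subsurface of bounded topology and geometry, so after passing to a further subsequence it converges (in a Gromov--Hausdorff / moduli-space sense) to a flat surface on which the vertical foliation becomes \emph{uniquely ergodic} --- this is where Lenzhen--Masur use the thick estimate of $\epsilon$ on non-peripheral curves, ensuring that the limiting flat surface is non-degenerate.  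Consequently the limit of $\Area_j|_{Y_i(t_k)}$ is either essentially all of $\Area(Y_i(t_k))$ for one distinguished $j=j(i)$ or asymptotically $0$, and because the $d$ ergodic measures are mutually singular the assignment $i\mapsto j(i)$ can be arranged (by relabeling) to be surjective onto $\{0,\ldots,d-1\}$ and to identify, for each $j$, a non-empty sub-collection which together carries all but $o(1)$ of $\Area_j$.  After relabeling $Y_j(t_k)$ to be the union of the pieces assigned to $j$, one obtains (3).

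The main obstacle --- and the real theorem in \cite{lenmasdivergence} --- is (2), the uniform lower bound $B>0$ on the area of each component $Y_j^0(t_k)$ of $Y_j(t_k)$.  Without this, some ergodic measure could distribute itself across pieces of arbitrarily small area, ruining the later use of Theorem~\ref{thm : Lenzhen Masur} to detect all $d$ ergodic measures.  The uniform lower bound is obtained by contradiction: if $\Area(Y_j^0(t_k))\to 0$, then rescaling $Y_j^0(t_k)$ to unit area produces, in the limit, a flat surface of strictly smaller topological complexity than $(X,q)$ whose vertical foliation still supports an ergodic measure in the image of $\bar\mu^j$; a descent argument on complexity, using Masur's criterion again on the smaller-genus limit, yields the contradiction.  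In the writeup I would simply quote the form of Lenzhen--Masur's main theorem that supplies (2) and verify that the conclusions (1)--(4) stated here are a direct repackaging.
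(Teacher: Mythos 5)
Your overall route is the same as the paper's: both treat this as a repackaging of Proposition 1 of \cite{lenmasdivergence}, whose proof already produces thick decompositions $Y(t_k)$ with components of area uniformly bounded below, complements of vanishing area, and (via their inequality (16)) the concentration statement that $\Area_j(Y_i(t_k))\to 0$ for $i\neq j$; that gives clauses (1), (3), (4) directly, and your sketch of (3) via geometric limits and mutual singularity is just a gesture at what (16) actually proves, which is fine since you, like the paper, ultimately cite it. Where you genuinely diverge is clause (2), and there your proposal is both misdirected and heavier than necessary. In the paper, (2) is a one-line consequence: since $\Area=\sum_j \Area_j$, any component $Y_j^0(t_k)$ of $Y_j(t_k)$ has total area at least some uniform $B'>0$ by the Lenzhen--Masur construction, while by (3) the contributions $\Area_i(Y_j^0(t_k))$ for $i\neq j$ total at most $(d-1)\epsilon_k$, so $\Area_j(Y_j^0(t_k))>B$ for $k$ large. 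Your contradiction argument instead assumes $\Area(Y_j^0(t_k))\to 0$, i.e.\ it addresses the \emph{total} flat area of a component --- a degeneration already ruled out by the thick-decomposition construction --- rather than the quantity $\Area_j(Y_j^0(t_k))$ that clause (2) is about, and the rescaling/descent-on-complexity argument you invoke for it is not established (nor needed). So (2) is not ``the real theorem'' requiring new dynamical input; the real content is (3), i.e.\ inequality (16) of \cite{lenmasdivergence}, and once you have that, (2) follows by subtraction as above. Adjusting your writeup to make this deduction explicit (and dropping the descent argument) brings it in line with a correct proof.
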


The bulk of this theorem comes from Proposition 1 of \cite{lenmasdivergence}.  More precisely, in the proof of Proposition 1 given in \cite{lenmasdivergence}, the authors produce a sequence of subsurface $\{Y(t_k)\}$ whose components give an $(\epsilon_k,\epsilon)$--thick decomposition so that each component has area uniformly bounded away from zero, so that the areas of the complements tend to zero.  For each ergodic measure $\bar \mu^j$ the authors then find subsurfaces $Y_i(t_k)$ so that $\Area_j(Y_i(t_k)) \to 0$ as $k \to \infty$ if $i \neq j$ (see inequality (16) from \cite{lenmasdivergence} and its proof).  This proves (1), (3), and (4).  Since $\Area = \sum_j \Area_j$, (2) follows as well.

To apply this construction, we will need the following lemma.   First, for a curve $\gamma$ and $t\geq 0$, let $\cyl_t(\gamma) \subset X(t)$ denote the (possibly degenerate) maximal Euclidean cylinder foliated by $q(t)$--geodesic representatives of $\gamma$.  We note that $\cyl_t(\gamma) = f_t(\cyl_0(\gamma))$.

\begin{lem} \label{lem : always contain our cylinders} Given any sequence $t_k \to \infty$, let $Y_0(t_k),\ldots,Y_{d-1}(t_k) \subset X(t_k)$ denote the $(\epsilon_k,\epsilon)$--thick decomposition from Theorem~\ref{thm : Lenzhen Masur} (obtained after passing to a subsequence).  Then for all $k$ sufficiently large, each $Y_j(t_k)$ contains a curve from the sequence $\{\gamma_l\}$ as a non-peripheral curve, or else contains a component which is a cylinder with core curve in the sequence $\{\gamma_l\}$.
\end{lem}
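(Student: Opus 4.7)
The plan has two stages. First, I identify every curve short enough to bound an $(\epsilon_k,\epsilon)$-thick subsurface of $X(t_k)$ as some $\gamma_l$, using the subsurface-projection bounds of Proposition \ref{prop : anncoeff + coeffbd}. Second, I use this to place a $\gamma_l$ inside each non-annular component of $Y_j(t_k)$.

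For the first stage, by Corollary \ref{cor : nue} the lamination $\nu$ is nonuniquely ergodic, so Masur's criterion ensures $g$ diverges in moduli space. Since the vertical foliation is minimal with support $\nu \in \EL(S)$, Theorem \ref{thm : bdrycc} (Klarreich) gives $g(t)\to\nu$ in $\partial\mathcal{C}(S)$, so the base curves of short markings $\mu_{t_k}$ at $X(t_k)$ converge to $\nu$ in $\mathcal{C}(S)\cup\partial\mathcal{C}(S)$. Standard estimates relating extremal length to subsurface projections \cite{rshteich, rk1mcg, rcombteich} then show that any curve $\alpha$ whose extremal length in $X(t_k)$ tends to zero has $d_\alpha(\mu_0,\mu_{t_k})\to\infty$, hence $d_\alpha(\mu_0,\nu)=\infty$; by (\ref{eq : not gi small proj}) of Proposition \ref{prop : anncoeff + coeffbd} this forces $\alpha = \gamma_l$ for some $l$. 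Consequently, for $k$ sufficiently large, $\partial Y_j(t_k)\subseteq\{\gamma_l\}$, and the core curve of any cylinder component of $Y_j(t_k)$ is some $\gamma_l$, settling the cylindrical case.

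For the second stage, let $Y_j^0 := Y_j^0(t_k)$ be a non-annular component, so $\Area_j(Y_j^0)>B>0$; the vertical lamination restricted to $Y_j^0$ carries positive $\bar\mu^j$-mass, and hence the topological support of $\bar\mu^j$ meets the interior of $Y_j^0$ essentially. Since $\bar\nu^0,\dots,\bar\nu^{m-1}$ together span the cone of measures on $\nu$, the ergodic measure $\bar\mu^j$ appears with positive coefficient in some $\bar\nu^h$, and the convergence $\gamma_l^h/c_l^h\to\bar\nu^h$ in $\ML(S)$ from Theorem \ref{thm : MLlimitgi} forces the geodesic representatives of $\gamma_l^h$ in $(X(t_k),q(t_k))$ to accumulate large horizontal $\bar\mu^j$-measure inside $Y_j^0$ for large $l$. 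By stage one, $\partial Y_j^0 \subseteq \{\gamma_{l_i}\}$, and by condition (i) of $\mathcal P$ the indices $l_i$ lie in a window of length $m$, so the candidate interior curves are the $\gamma_l$ disjoint from every $\gamma_{l_i}$, namely those with $l$ in or near that window.

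The main obstacle is upgrading this geometric concentration to a topological statement. Since $\partial Y_j^0$ lies in the very thin part of $X(t_k)$ (extremal length $<\epsilon_k$) while $Y_j^0$ is $\epsilon$-thick, the Collar Lemma (Lemma \ref{lem : collar lemma}) forces each transverse crossing of $\gamma_l^h$ with $\partial Y_j^0$ to contribute length growing like $\log(1/\epsilon_k)$; balancing this against the intersection-number estimates of Theorem \ref{thm : intgkgi} should force some candidate $\gamma_l^h$ to lie entirely inside $Y_j^0$, and its positive transverse intersection with the restriction of $\bar\mu^j$ to $Y_j^0$ then makes it non-peripheral there, completing the proof. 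Carrying out this quantitative balance, and verifying that the resulting $\gamma_l$ is not forced into a different $Y_{j'}(t_k)$ or into the small-area complement, is the main technical content of the argument.
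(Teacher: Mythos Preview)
Your two-stage plan has a genuine gap in Stage~1 and is, by your own admission, only an outline in Stage~2.

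In Stage~1 you assert that if $\Ext_{X(t_k)}(\alpha)\to 0$ then $d_\alpha(\mu_0,\mu_{t_k})\to\infty$.  This is not what Rafi's short-curve theorem says: a curve $\alpha$ becomes short along a Teichm\"uller geodesic exactly when the quantity $K_\alpha$ is large, and $K_\alpha$ involves \emph{all} subsurfaces $Y$ with $\alpha\subseteq\partial Y$, not just the annular one.  So one cannot conclude that $d_\alpha(\mu_0,\nu)$ is large; it may instead be that some non-annular $Y$ with $\alpha\subseteq\partial Y$ has a large quasi-parallel constant $K(Y)$.  Concretely, if a collection of curves $\gamma_l$ happens to fill a subsurface $Y$ whose boundary contains a curve $\alpha\notin\{\gamma_l\}$, then the large twisting in the $\gamma_l$'s can make $K(Y)$, hence $K_\alpha$, large and force $\alpha$ short even though $d_\alpha(\bar\gamma,\nu)$ is bounded by Proposition~\ref{prop : anncoeff + coeffbd}.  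Ruling this out is precisely what condition~$\mathcal{P}(iv)$ does (compare the proof of Lemma~\ref{lem : lower bound on sequence curves}), but $\mathcal{P}(iv)$ is \emph{not} assumed at this point in the paper; the present lemma is needed for Theorem~\ref{thm : all ergodic measures}, which must hold under $\mathcal{P}$ alone.  Thus your conclusion that $\partial Y_j(t_k)\subseteq\{\gamma_l\}$ is not available here.

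The paper avoids this entirely by a direct area argument that never identifies the boundary curves.  One argues by contradiction: take a component $Y(t_k)$ of some $Y_j(t_k)$ which (in the non-annular case) contains no $\gamma_l$ as a non-peripheral curve.  Then for every subsurface $W\subseteq Y(t_k)$ one has $W\neq\gamma_l$, so Proposition~\ref{prop : anncoeff + coeffbd} bounds $d_W(\bar\gamma,\nu)$ uniformly; by Theorem~\ref{thm : i=dY} this bounds $i(\pi_{Y(t_k)}(\bar\gamma),\pi_{Y(t_k)}(\nu))$ uniformly.  The strip-decomposition area estimate~(\ref{eq : area bound nonannular strips}) then gives
\[
\Area(Y(t_k))\ \prec\ h_{q(t_k)}(\partial Y(t_k))\,v_{q(t_k)}(\partial Y(t_k))\cdot i(\pi_{Y(t_k)}(\bar\gamma),\pi_{Y(t_k)}(\nu)).
\]
Since $\Ext_{X(t_k)}(\partial Y(t_k))<\epsilon_k\to 0$, both the horizontal and vertical variations of $\partial Y(t_k)$ tend to~$0$, so $\Area(Y(t_k))\to 0$, contradicting the lower bound $\Area_j(Y_j^0(t_k))>B$ from Theorem~\ref{thm : Lenzhen Masur}.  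The annular case is the same, using~(\ref{eq : area bound annular strip}) and the bound on $d_{\alpha_k}(\bar\gamma,\nu)$ for $\alpha_k\notin\{\gamma_l\}$.  This argument uses only that $\partial Y(t_k)$ is $q(t_k)$-short, not that it lies in $\{\gamma_l\}$, and needs no quantitative balance of collar widths against intersection numbers.
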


We postpone the proof of this lemma temporarily and use it to easily prove the main result of this section.
\begin{proof}[Proof of Theorem~\ref{thm : all ergodic measures}.]
Let $t_k \to \infty$ be any sequence and $Y_0(t_k),\ldots,Y_{d-1}(t_k)$ the $(\epsilon_k,\epsilon)$--thick decomposition obtained from Theorem~\ref{thm : Lenzhen Masur} after passing to a subsequence.  Let $k$ be large enough so that the conclusion of Lemma~\ref{lem : always contain our cylinders} holds.  For each $j \in \{0,\ldots,d-1\}$ let $\gamma_{l_j}$ be one of the curves in our sequence so that $\gamma_{l_j}$ is either a nonperipheral curve in $Y_j(t_k)$, or else $Y_j(t_k)$ contains a cylinder component with core curve $\gamma_{l_j}$.   Since $Y_0(t_k),\ldots,Y_{d-1}(t_k)$ have disjoint interiors, it follows that $\gamma_{l_0},\ldots,\gamma_{l_{d-1}}$ are pairwise disjoint, pairwise nonisotopic curves.  By Theorem~\ref{thm : intgkgi}, for example, the difference in indices of disjoint curves in our sequence is at most $m$, and consequently $\{\gamma_{l_0},\ldots,\gamma_{l_{d-1}}\}$ consists of at most $m$ curves.  That is, $m \geq d$.   By Lemma~\ref{lem : dimension suffices}, $d=m$, and $\bar \nu^0,\ldots,\bar \nu^{m-1}$ are ergodic measures spanning the space of all measures on $\nu$, proving the theorem.
\end{proof}

\subsection{Areas and extremal lengths.}\label{subsec : area extlength}
 
The proof of the Lemma~\ref{lem : always contain our cylinders} basically follows from the results of \cite{rshteich}, together with the estimates on intersection numbers described at the beginning of this section and subsurface coefficient bounds in $\S$\ref{subsec : subsurfbd}.
Let $g(t) = [f_t \colon (X,q) \to (X(t),q(t)))]$ be the Teichm\"{u}ller geodesic described above with vertical foliation $\bar \mu = \sum \bar \mu_i$, the sum of the ergodic measures on $\nu$, and horizontal foliation $|dy| = \bar \gamma$. 

Suppose $Y \to X(t)$ is a map of a connected surface into $X(t)$ which is an embedding on the interior, sends the boundary to $q(t)$--geodesics, and has no peripheral Euclidean cylinders unless $Y$ is itself a Euclidean cylinder (in which case we assume it is maximal).   As in the case of thick subsurfaces, we write $Y \subset X(t)$, though we are not assuming that $Y$ is thick.  Suppose $Y \subset X(t)$ is a subsurface so that the leaves of the vertical and horizontal foliations intersect $Y$ in arcs.  This is the case for $Y = \cyl_t(\gamma_k)$ for all $k$ sufficiently large, as well as any $Y$ for which $\Ext_{X(t)}(\partial Y)$ is small when $t$ is large, and these will be the main cases of interest for us.

As in \cite{rshteich}, the surface $Y$ decomposes into a union of {\em horizontal strips} $Y = H_1(Y) \cup \ldots \cup H_r(Y)$ and {\em vertical strips} $Y = V_1(Y) \cup \ldots \cup V_{r'}(Y)$.  Each horizontal strip $H_i(Y)$ is the image of map $f^H_i \colon [0,1] \times [0,1] \to Y$ which is injective on the interior, sends $[0,1] \times \{s\}$ to an arc of a horizontal leaf with endpoints on $\partial Y$.  Furthermore, the images of the interiors of $f^H_1,\ldots,f^H_r$ are required to be pairwise disjoint.  Let $\ell^H_i = f^H_i([0,1] \times \{ \tfrac12 \})$ be a ``core arc'' of the strip.
Vertical strips are defined similarly (and satisfy the analogous properties for the vertical foliation) as are the core arcs $\ell^V_1,\ldots,\ell^V_{r'}$.
\begin{remark}  This is a slight variation on the strip decompositions in\cite{rshteich}.
\end{remark}

The {\em width} of a horizontal strip $H_i(Y)$, denoted $w(H_i(Y))$ is the vertical variation of any (or equivalently, every) arc $H_i(\{s\} \times [0,1])$.  The width of a vertical strip, $w(V_i(Y))$, is similarly defined in terms of the horizontal variation.  An elementary, but important property of these strips is the following.
\begin{prop}\label{prop : stripslengthbd} Let $Y \subset X(t)$ be as above. If
\[ Y = H_1(Y) \cup \ldots \cup H_r(Y) = V_1(Y) \cup \ldots V_{r'}(Y) \]
is a decomposition into maximal horizontal and vertical strips, then
\[ v_{q(t)}(\partial Y)=2\sum_{i=1}^r w(H_i(Y)) \quad \mbox{ and } \quad h_{q(t)}(\partial Y) = 2 \sum_{i=1}^{r'} w(V_i(Y)).\]
\end{prop}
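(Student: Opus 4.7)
The plan is to interpret $v_{q(t)}$ as integration against the transverse measure $|dy|$ of the horizontal foliation (and similarly $h_{q(t)}$ against $|dx|$), and to decompose $\partial Y$ according to the horizontal strip structure of $Y$. In the natural $q(t)$--coordinates, each horizontal strip $H_i(Y) = f_i^H([0,1]\times[0,1])$ is a flat rectangle whose first coordinate parametrizes along horizontal leaves and whose second coordinate $s$ indexes the leaves transversally. By definition, $w(H_i(Y))$ is the vertical variation of an arc $f_i^H(\{s\}\times[0,1])$, equivalently the total $|dy|$-length across the strip from $s=0$ to $s=1$.

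The hypothesis that every arc $f_i^H([0,1]\times\{s\})$ has its endpoints on $\partial Y$ forces both ``vertical sides'' $f_i^H(\{0\}\times[0,1])$ and $f_i^H(\{1\}\times[0,1])$ of the rectangle to be subarcs of $\partial Y$. Each of these two sides has vertical variation $w(H_i(Y))$, so they jointly contribute $2w(H_i(Y))$ to $v_{q(t)}(\partial Y)$. Summing over all horizontal strips will yield the desired identity, provided we can argue that $\partial Y$ is covered, up to a negligible set, by the vertical sides of the strips together with subarcs of $\partial Y$ lying along horizontal leaves (the latter contributing $0$ since $|dy|$ vanishes on horizontal leaves).

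This last structural claim is the main obstacle, and I would establish it using maximality of the strip decomposition: any non-singular $p\in\partial Y$ transverse to the horizontal foliation lies on a horizontal leaf that extends maximally to an arc in $Y$ with both endpoints on $\partial Y$, and such an arc belongs to a unique horizontal strip of which $p$ is a vertical-side endpoint. The residual part of $\partial Y$ consists of the horizontal subarcs, which carry no vertical measure, together with the finite set of singular points of $q(t)$ on $\partial Y$, which also have measure zero. This is a standard consequence of the structure theory for strip decompositions of subsurfaces of a quadratic differential (cf.~\cite{rshteich}); given the hypothesis that horizontal leaves meet $Y$ in arcs with endpoints on $\partial Y$, no boundary portion is missed and no strip is spuriously subdivided. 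Once this is in place, integration of $|dy|$ over $\partial Y$ gives
\[ v_{q(t)}(\partial Y) \;=\; \int_{\partial Y} |dy| \;=\; \sum_{i=1}^{r} 2\,w(H_i(Y)). \]
The second identity follows by applying the same argument verbatim to the vertical strip decomposition $Y = V_1(Y)\cup\cdots\cup V_{r'}(Y)$, with the roles of horizontal and vertical, and of $|dy|$ and $|dx|$, interchanged throughout.
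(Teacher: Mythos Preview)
The paper does not actually supply a proof of this proposition: it is introduced as ``an elementary, but important property of these strips'' and stated without argument. Your write-up is a correct and complete justification of that elementary fact. The key points you identify --- that the two vertical sides $f_i^H(\{0\}\times[0,1])$ and $f_i^H(\{1\}\times[0,1])$ of each horizontal strip lie on $\partial Y$ and each carry vertical variation $w(H_i(Y))$, and that by maximality of the decomposition every non-singular point of $\partial Y$ transverse to the horizontal foliation lies on exactly one such side --- are precisely what is needed, and the remaining boundary (horizontal subarcs and finitely many singular points) contributes zero $|dy|$-measure as you note. There is nothing to compare against in the paper beyond its assertion that the result is elementary, and your argument confirms that assessment.
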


The area of $Y$ can be estimated from this by the inequalities
\begin{equation} \label{eq : initial area bound strips}
\begin{array}{l}
\displaystyle{\sum_{ij} w(H_i(Y))w(V_j(Y)) (i(\ell^H_i,\ell^V_j) - 2) \leq \Area(Y)}\\
\hspace{3cm} \displaystyle{\leq \sum_{i,j} w(H_i(Y))w(V_j(Y))(i(\ell^H_i,\ell^V_j)+2)} \end{array}
\end{equation}
To see this, we note that the area of $Y$ is the sum of the areas of the horizontal (or vertical) strips.  Every time $V_j(Y)$ crosses $H_i(Y)$, it does so in a rectangle, which contains a unique point of intersection $\ell^H_i \cap \ell^V_j$, {\bf except}, near the ends of $H_i(Y)$ where we might not see an entire rectangle (and consequently we may or may not see a point of $\ell^H_i \cap \ell^V_j$).  We may also have an intersection point in $\ell^H_i \cap \ell^V_j$ that does not come in a complete rectangle (but only part of a rectangle).  Adding and subtracting $2$ to the intersection number accounts for the ends of $H_i(Y)$, and summing gives the bounds.

If $Y$ is non-annular, then note that
\[ \sum i(\ell^H_i,\ell^V_j) + 2 \prec i(\pi_Y(\bar \gamma),\pi_Y(\nu)).\]
To see this, we note that the horizontal foliation (for example) is $\bar \gamma$ and $\pi_Y(\bar \gamma)$ is basically obtained from the arcs $\ell^H_i$ by surgering with arcs from the boundary (see also Lemma 3.8 from \cite{rshteich}).
Combining this inequality with the upper bound in (\ref{eq : initial area bound strips}) and Proposition~\ref{prop : stripslengthbd} we obtain
\begin{equation} \label{eq : area bound nonannular strips}
\Area(Y) \prec h_{q(t)}(\partial Y)v_{q(t)}(\partial Y) i(\pi_Y(\bar \gamma),\pi_Y(\nu)).
\end{equation}

Now suppose that $Y = \cyl_t(\gamma)$ is a maximal Euclidean cylinder with core curve $\gamma$.  Then there is a decomposition into strips with just one horizontal strip $H(Y)$ and one vertical strip $V(Y)$ and core arcs $\ell^H$ and $\ell^V$, respectively.  In this case, the intersection number $i(\ell^H,\ell^V)$ is just $d_Y(\bar \gamma,\nu)$ up to an additive constant (of at most $4$---again, see Lemma 3.8 of \cite{rshteich}).
Therefore, the bounds in (\ref{eq : initial area bound strips}) together with Proposition~\ref{prop : stripslengthbd} implies
\begin{equation} \label{eq : area bound annular strip}
\frac{4\Area(\cyl_0(\gamma))}{h_{q(t)}(\gamma)v_{q(t)}(\gamma)} = \frac{4\Area(\cyl_0(\gamma))}{i(\gamma,\bar \gamma)i(\gamma,\bar \mu)} \stackrel+\asymp d_\gamma(\bar \gamma,\nu).
\end{equation}
In particular, if $d_{\gamma}(\bar \gamma,\nu)$ is large, then
\[ \Area(\cyl_0(\gamma_k^h)) \stackrel*\asymp h_{q(t)}(\gamma)v_{q(t)}(\gamma) d_\gamma(\bar \gamma, \nu) =i(\gamma,\bar \gamma)i(\gamma,\bar \mu) \gamma(\bar \gamma, \nu). \]

The balance time of $\gamma$ along the Teichm\"uller geodesic $g$ is the unique $t \in \mathbb R$ 
so that 
\[ v_{q(t)}(\gamma)=h_{q(t)}(\gamma). \]
Consider $Y = \cyl_{t(\gamma)}(\gamma)$ at the balance time of $\gamma$, together with the horizontal and vertical strips $H(Y)$ and $V(Y)$, respectively.  In this situation, the rectangles of intersections between $H(Y)$ and $V(Y)$ are actually squares.  We can estimate the modulus of $Y$, which is the ratio of the length to the circumference using these squares.  Specifically, we note that the circumference of $Y$ is precisely the length of the diagonal of a square, while the length of $Y$ is approximately half the number of squares, times the length of a diagonal.  Since the number of squares is $|\ell^H \cap \ell^V| \stackrel{+}{\asymp} d_\gamma(\bar \gamma,\nu)$, we see that the modulus is $2d_\gamma(\bar \gamma,\nu)$, up to a uniform additive error.  When $d_\gamma(\bar \gamma,\nu)$ is sufficiently large, the reciprocal of this modulus provides an upper bound for the extremal length
\begin{equation} \label{eq : extremal length bound} \Ext_{t(\gamma)}(\gamma) \stackrel{*}{\prec} \frac{1}{d_\gamma(\bar \gamma,\nu)}.
\end{equation}
We note that this estimate was under the assumption that $\cyl_0(\gamma)$ was a non-degenerate annulus.  In fact, if $d_\gamma(\bar \gamma,\nu)$ is sufficiently large (e.g.~at least $5$), then $\cyl_0(\gamma)$ is indeed nondegenarate.

\begin{proof}[Proof of Lemma~\ref{lem : always contain our cylinders}.]
Suppose that $t_k \to \infty$ is a sequence of times, $Y(t_k) \subset X(t_k)$ is a sequence of subsurfaces with $q(t)$--geodesic boundary, embedded on the interior and having no peripheral Euclidean cylinders, unless $Y$ is itself a Euclidean cylinder in which case we assume it is a maximal Euclidean cylinder.  We further assume that $\Ext_{X(t_k)}(\partial Y(t_k)) \to 0$.  We pass to a subsequence, also denoted $\{t_k\}$, and assume that either $Y(t_k)$ is nonannular and no nonperipheral curve lies in the sequence $\{\gamma_l\}$, or that $Y(t_k)$ is a cylinder whose core is not a curve from our sequence $\{\gamma_l\}$.
To prove the lemma, it suffices to prove that $\Area(Y(t_k)) \to 0$, for this implies that such subsurfaces $Y(t_k)$ cannot be a component of any $Y_j(t_k)$ from Theorem~\ref{thm : Lenzhen Masur}.

Decompose the sequence into an annular subsequence and non-annular subsequence, and we consider each case separately.  For the non-annular subsurfaces, we bound the area of $Y(t_k)$ using the inequality (\ref{eq : area bound nonannular strips}).  Specifically, we note that since no $\gamma_l$ is homotopic to a nonperipheral curve in $Y(t_k)$, Proposition~\ref{prop : anncoeff + coeffbd} provides a uniform bound for $d_W(\bar \gamma,\nu)$ for all subsurfaces $W \subset Y(t_k)$.  By  Theorem~\ref{thm : i=dY}, follows that $i(\pi_Y(\bar \gamma),\pi_Y(\nu))$ is uniformly bounded.  Since the extremal length of $\partial Y(t_k)$ is tending to zero, so is the $q(t_k)$--length, and so also the horizontal and vertical variations: 
\[ \lim_{k \to \infty} v_{q(t_k)}(\partial Y(t_k)) = 0 \quad \mbox{ and } \quad \lim_{k \to \infty}h_{q(t_k)}(\partial Y(t_k)) = 0. \]
Combining this with (\ref{eq : area bound nonannular strips}) proves $\Area(Y(t_k)) \to 0$, as required.

The annular case is similar: Again by Proposition~\ref{prop : anncoeff + coeffbd} since the core curve $\alpha_k$ of $Y(t_k)$ is not any curve from the sequence $\{\gamma_l\}$, we have that $d_{\alpha_k}(\bar \gamma,\bar \nu)$ is uniformly bounded, while the horizontal and vertical variations of $\alpha_k$ tend to zero (since the extremal length, and hence $q(t_k)$--length, tends to $0$).  Appealing to (\ref{eq : area bound annular strip}) proves that $\Area(Y(t_k)) \to 0$ as $k \to \infty$ in this case, too.
\end{proof}

\section{Constructions}\label{sec : constructions}

In this section we provide examples of sequences of curves satisfying $\mathcal P$, and hence to which the results of Sections 3-6 apply.
 
\subsection{Basic setup} \label{sec : basic setup for examples}

Consider a surface $S$ and pairwise disjoint, non-isotopic curves $\gamma_0,\ldots,\gamma_{m-1}$.  For each $k$, let $\Gamma_k = (\gamma_0 \cup \ldots \cup \gamma_{m-1}) - \gamma_k$, and let $X_k$ be the component of $S$ cut along $\Gamma_k$ containing $\gamma_k$.  For each $k$ we assume
\begin{enumerate}
\item $\partial X_k$ contains both $\gamma_{k+1}$ and $\gamma_{k-1}$ (with indices taken modulo $m$), 
\item we have chosen $f_k \colon S \to S$ a {\em fixed} homeomorphism which is the identity on $S \setminus X_k$, and pseudo-Anosov on $X_k$,
\item the composition of $f_k$ and the Dehn twist $\mathcal D_{\gamma_k}^r$, denoted $\mathcal D_{\gamma_k}^r f_k$, has translation distance at least $16$ on the arc and curve graph ${\mathcal AC}(X_k)$ for any $r \in \mathbb Z$, and
\item there is some $b > 0$ so that $i(\gamma_k,f_k(\gamma_k)) = b$, independent of $k$.
\end{enumerate}

For $0 \leq k,h \leq m-1$, let $\mathcal J(k,h)$ be the interval from $k$ to $h$, mod $m$. This means that if $k <h$ then $\mathcal J(k,h) = \{k,k+1,\ldots,h\}$ is the interval in $\mathbb Z$ from $k$ to $h$, while if $h < k$, then 
\[ \mathcal J(k,h) = \{k,k+1,\ldots, m-1,0,\ldots, h\}. \]
If $k = h$, then $\mathcal J(k,h) = \{k\} = \{h\}$.

For any $0 \leq k,h \leq m-1$, set
\[ X_{k,h} = \bigcup_{l \in \mathcal J(k,h)} X_l. \]
If $k = h$, note that $X_{k,h} = X_k = X_h$.  In general, $X_{k,h}$ is the component of $S$ cut along $\Gamma_{k,h} = \gamma_{h+1} \cup \ldots \cup \gamma_{k-1}$ containing all the curves $\gamma_k,\ldots,\gamma_h$.  That there is such a component follows inductively from the fact that $\gamma_{l \pm 1} \subseteq \partial X_l$, with indices taken mod $m$.

We also define
\[ F_{k,h} = f_k \circ f_{k+1} \circ \cdots \circ f_h.\]
where we are composing $f_l$ over $l \in \mathcal J(k,h)$.
Because $f_l$ is supported on $X_l$, it follows that for all $0 \leq k,h \leq m-1$,
\[ \gamma_k,\ldots,\gamma_h,F_{k,h}(\gamma_h) \subset X_{k,h}.\]
In fact, the first and last curves in this sequence fill $X_{k,h}$.
\begin{lem} \label{L : example filling}
For each $0 \leq k,h \leq m-1$
\[ \{ \gamma_k,F_{k,h}(\gamma_h) \} \]
fills $X_{k,h}$.  In particular, $i(\gamma_l,F_{k,h}(\gamma_h)) \neq 0$ for all $l \in \mathcal J(k,h)$.
\end{lem}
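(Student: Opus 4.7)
I plan to prove the lemma by induction on $n = |\mathcal{J}(k,h)|$.

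\emph{Base case} ($n = 1$): Here $k = h$, so $X_{k,h} = X_k$ and $F_{k,h} = f_k$. Condition (3) with $r = 0$ says $f_k$ has translation distance at least $16$ on $\mathcal{AC}(X_k)$, hence $d_{\mathcal{AC}(X_k)}(\gamma_k, f_k(\gamma_k)) \geq 16 \geq 3$, which forces $\{\gamma_k, f_k(\gamma_k)\}$ to fill $X_k$ by the standard characterization of filling pairs.

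\emph{Inductive step}: Set $k' = k+1 \pmod{m}$ and $\alpha := F_{k',h}(\gamma_h)$, so $|\mathcal{J}(k',h)| = n-1$; by induction $\{\gamma_{k'}, \alpha\}$ fills $X_{k',h}$. Since $F_{k,h} = f_k \circ F_{k',h}$, the task is to show $\{\gamma_k, f_k(\alpha)\}$ fills $X_{k,h} = X_k \cup X_{k',h}$. First I would verify that this pair fills the subsurface $X_k$: because $\alpha$ lies in the interior of $X_{k',h}$ while $\gamma_k \subseteq \partial X_{k',h}$, we have $i(\alpha, \gamma_k) = 0$, and therefore $d_{\mathcal{AC}(X_k)}(\gamma_k, \pi_{X_k}(\alpha)) \leq 1$. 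Since $f_k$ is supported on $X_k$ it commutes with $\pi_{X_k}$, and combining the translation distance bound with the triangle inequality yields
\[
d_{\mathcal{AC}(X_k)}\bigl(\gamma_k, \pi_{X_k}(f_k(\alpha))\bigr) \;\geq\; 16 - 1 \;=\; 15 \;\geq\; 3,
\]
so $\{\gamma_k, f_k(\alpha)\}$ fills $X_k$.

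To finish, I would argue by contradiction: suppose $\delta \subset X_{k,h}$ is an essential simple closed curve with $i(\delta, \gamma_k) = i(\delta, f_k(\alpha)) = 0$. Disjointness from $\gamma_k$ places $\delta$ on one side of $\gamma_k$ in $X_{k,h}$. If $\delta$ lies on the side away from the chain $\gamma_{k+1}, \ldots, \gamma_h$, then $\delta$ is essential in $X_k$, contradicting the filling of $X_k$ established above. Otherwise $\delta$ can be realized in the component of $X_{k,h} \setminus \gamma_k$ containing $X_{k',h}$; the filling of $X_k$ together with an innermost-arc argument implies that every arc of $\delta \cap X_k$ is boundary-parallel into $\gamma_{k'}$ in $X_k$, so a collar isotopy (disjoint from $\gamma_k \cup f_k(\alpha)$) pushes $\delta$ off $X_k$. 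Then $\delta$ is essential in $X_{k',h}$ and disjoint from its boundary curve $\gamma_{k'}$, so the inductive filling of $X_{k',h}$ forces $i(\delta, \alpha) > 0$. Since $f_k$ is the identity on the complement of $X_k$, and this complement contains $\delta$ together with all of its intersections with $\alpha$, we conclude $i(\delta, f_k(\alpha)) = i(\delta, \alpha) > 0$, the desired contradiction. The main obstacle is this combining step, in particular the innermost-arc argument needed to isotope arcs of $\delta \cap X_k$ off $X_k$ while keeping $\delta$ disjoint from $\gamma_k \cup f_k(\alpha)$; this uses the filling of $X_k$ in an essential way.
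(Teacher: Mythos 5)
Your proof is correct and follows essentially the same route as the paper's: the same induction on the length of $\mathcal{J}(k,h)$, the same $16-1=15$ estimate in $\mathcal{AC}(X_k)$ obtained by projecting $\alpha=F_{k',h}(\gamma_h)$ into $X_k$ and applying $f_k$, and the same endgame in which a curve $\delta$ disjoint from both $\gamma_k$ and $f_k(\alpha)$ is pushed off $X_k$, viewed as an essential curve of $X_{k',h}$, and ruled out using the inductive hypothesis together with the fact that $f_k$ is supported away from $\delta$ (your innermost-arc/collar-isotopy step is just a hands-on version of the paper's one-line triangle inequality giving $d_{\mathcal{AC}(X_k)}(\gamma_k,F_{k,h}(\gamma_h))\leq 2$, contradicting $\geq 15$). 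Two small points the paper spells out that you should add: before applying the translation-distance bound you need $\pi_{X_k}(\alpha)\neq\emptyset$, which follows because $i(\gamma_{k'},\alpha)\neq 0$ by the inductive filling (this is exactly the ``in particular'' clause of the lemma, which your induction should carry along but never proves) and $\gamma_{k'}\subseteq\partial X_k$; and after isotoping $\delta$ off $X_k$ you should rule out that $\delta$ is peripheral in $X_{k',h}$, the only candidate being $\gamma_k$, which cannot occur since $\gamma_k$ intersects $f_k(\alpha)$ while $\delta$ does not.
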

\begin{remark} \label{rem : general example fills S}
In the case $k = h+1$ (mod $m$), we note that $X_{h+1,h} = S$ and the lemma states that
\[ \{\gamma_{h+1},F_{h+1,h}(\gamma_h)\} = \{\gamma_k,f_kf_{k+1}\cdots f_h(\gamma_h)\} \]
fills $S$.  We also observe that for all $j \in \mathcal J(k,h)$, $X_{k,j} \subset X_{k,h}$.  It follows that $\gamma_k,\gamma_{k+1},\ldots,\gamma_h$ and $F_{k,k}(\gamma_k),\ldots,F_{k,h}(\gamma_h)$ are contained in $X_{k,h}$.
\end{remark}

In the following proof, we write $\pi_{k,h}(\delta)$ for the arc-projection to ${\mathcal AC}(X_{k,h})$ of a curve $\delta$.  This is just the isotopy class of arcs/curves of $\delta$ intersected with $X_{k,h}$.  Likewise, $d_{k,h}(\delta,\delta')$ is the distance between $\pi_{h,k}(\delta)$ and $\pi_{h,k}(\delta')$ in ${\mathcal AC}(X_{k,h})$.  We similarly define $\pi_k$ and $d_k$ for the case $k = h$.
\begin{proof}
The last statement follows from the first assertion since, for all $l \in \mathcal J(k,h)$, $i(\gamma_l,\gamma_k) = 0$, and so assuming $\{ \gamma_k,F_{k,h}(\gamma_h) \}$ fills, we must have $i(\gamma_l,F_{k,h}(\gamma_h)) \neq 0$.

The conditions on the curves and homeomorphisms are symmetric under cyclic permutation of the indices, so it suffices to prove the lemma for $h = m-1$ and $0 \leq k \leq h$ (which is slightly simpler notationally).  We write $j = h-k$ and must prove that $\{\gamma_{h-j},F_{h-j,h}(\gamma_h) \}$ fills $X_{h-j,h}$.  We prove this by induction on $j$.

The base case is $j=0$, in which case we are reduced to proving that $\{\gamma_h,f_h(\gamma_h)\}$ fills $X_h$.  This follows from the fact that $f_h$ has translation distance at least $16$ on ${\mathcal AC}(X_h)$, and hence $d_h(\gamma_h,f_h(\gamma_h)) \geq 16$.

Suppose that for some $0 < j \leq h$, $\{\gamma_{h+1-j},F_{h+1-j,h}(\gamma_h)\}$ fill $X_{h+1-j,h}$, and we must prove that $\{\gamma_{h-j},F_{h-j,h}(\gamma_h)\}$ fills $X_{h-j,h}$.

Note that since $\gamma_{h-j+1} \subset \partial X_{h-j}$, and $i(\gamma_{h-j+1},F_{h+1-j,h}(\gamma_h)) \neq 0$ (because they fill $X_{h+1-j,h}$), it follows that $F_{h+1-j,h}(\gamma_h)$ has nontrivial projection to $X_{h-j}$.  On the other hand, because $\gamma_{h-j}$ is disjoint from $X_{h+1-j,h}$ (it is in fact a boundary component), it follows that $i(\gamma_{h-j},F_{h+1-j,h}(\gamma_h)) = 0$, hence $d_{h-j}(\gamma_{h-j},F_{h+1-j,h}) = 1$.  Since $f_{h-j}$ translates by at least $16$ on ${\mathcal AC}(X_{h-j})$, it follows that
\begin{eqnarray*}
d_{h-j}(F_{h-j,h}(\gamma_h),\gamma_{h-j}) & = & d_{h-j}(f_{h-j}(F_{h+1-j,h}(\gamma_h)),\gamma_{h-j})\\
& \geq & d_{h-j}(f_{h-j}(F_{h+1-j,h}(\gamma_h)),F_{h+1-j,h}(\gamma_h))\\
& & \quad - d_{h-j}(F_{h+1-j,h}(\gamma_h),\gamma_{h-j})\\
& \geq & 16-1 = 15.
\end{eqnarray*}

Now suppose $\{\gamma_{h-j},F_{h-j,h}(\gamma_h)\}$ does not fill $X_{h-j,h}$.  Let $\delta$ be an essential curve in $X_{h-j,h}$ which is disjoint from both $\gamma_{h-j}$ and $F_{h-j,h}(\gamma_h)$.  Observe that $\delta$ cannot intersect the subsurface $X_{h-j}$ essentially, for otherwise
\[ d_{h-j}(\gamma_{h-j},F_{h-j,h}(\gamma_h)) \leq d_{h-j}(\gamma_{h-j},\delta) + d_{h-j}(\delta,F_{h-j,h}(\gamma_h)) \leq 2\]
a contradiction.

Therefore, $\delta$ is contained in $X_{h-j,h}-X_{h-j} \subset X_{h+1-j,h}$.  We first claim that $\delta$ must be an essential curve in $X_{h+1-j,h}$.  If not, then it is contained in the boundary.  However, any boundary component of $X_{h+1-j,h}$ which is essential in $X_{h-j,h}$ is contained (and essential) in $X_{h-j}$.  This is a contradiction.

Now since $\delta$ is essential in $X_{h+1-j,h}$, by the hypothesis of the induction we have
\[ 0 \neq i(\delta,\gamma_{h+1-j}) + i(\delta,F_{h+1-j,h}(\gamma_h)) = i(\delta,\gamma_{h+1-j}) + i(\delta,F_{h-j,h}(\gamma_h)).\]
The last equality follows from the fact that $F_{h-j,h}$ differs from $F_{h+1-j,h}$ only in $X_{h-j}$, which is disjoint from $\delta$.  Finally, we note that $\gamma_{h+1-j} \subseteq \partial X_{h-j}$, and hence $i(\delta,\gamma_{h+1-j}) =0$.  Consequently,
\[ i(\delta,F_{h-j,h}(\gamma_h)) \neq 0 \]
contradicting our choice of $\delta$.
Therefore, $\{\gamma_{h-j},F_{h-j,h}(\gamma_h) \}$ fills $X_{h-j,h}$.  This completes the induction, and hence the proof of the lemma.
\end{proof}

\begin{lem} \label{L : example filling2} For all $0 \leq k \leq m-1$,
\[ i(\gamma_k,F_{k,k-1}f_k(\gamma_k)) = i(\gamma_k,f_kf_{k+1}\cdots f_{k-1}f_k(\gamma_k)) \neq 0.\]
\end{lem}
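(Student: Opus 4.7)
The plan is to derive this non-vanishing intersection number as a direct corollary of Lemma~\ref{L : example filling} applied to the fully cyclic pair of indices, combined with the observation that $f_k$ fixes the boundary curve $\gamma_{k+1}$ of $X_k$. Specifically, I would apply Lemma~\ref{L : example filling} with $(k,h)$ replaced by $(k+1,k)$; since $\mathcal J(k+1,k)=\{0,1,\ldots,m-1\}$, Remark~\ref{rem : general example fills S} gives $X_{k+1,k}=S$, so the lemma yields that
\[
\{\gamma_{k+1},\ F_{k+1,k}(\gamma_k)\} \;=\; \{\gamma_{k+1},\ f_{k+1}f_{k+2}\cdots f_{k-1}f_k(\gamma_k)\}
\]
fills $S$.

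Next I would push this filling pair forward by the self-homeomorphism $f_k$ of $S$. Since $f_k$ is supported on $X_k$ and $\gamma_{k+1}$ is a boundary component of $X_k$, continuity forces $f_k$ to fix $\gamma_{k+1}$ (as a curve, and in particular as an isotopy class). On the other hand, the elementary rewriting $f_k\circ F_{k+1,k}=F_{k,k-1}\circ f_k$ (immediate from the definitions $F_{k+1,k}=f_{k+1}\cdots f_{k-1}f_k$ and $F_{k,k-1}=f_k f_{k+1}\cdots f_{k-1}$) shows that $f_k$ carries $F_{k+1,k}(\gamma_k)$ to $F_{k,k-1}f_k(\gamma_k)$. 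Because the filling property is preserved by self-homeomorphisms of $S$, the pair
\[
\{\gamma_{k+1},\ F_{k,k-1}f_k(\gamma_k)\}
\]
also fills $S$.

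The final step is to note that $\gamma_k$ is an essential simple closed curve on $S$ disjoint from $\gamma_{k+1}$ (the curves $\gamma_0,\ldots,\gamma_{m-1}$ are pairwise disjoint by the setup). Hence the filling property of the pair above forces $i(\gamma_k,\,F_{k,k-1}f_k(\gamma_k))>0$, which is the desired conclusion. I do not anticipate any real obstacle: the only point that demands even a moment of care is the identification $f_k(\gamma_{k+1})=\gamma_{k+1}$, and this is immediate from $\gamma_{k+1}\subset\partial X_k$ together with $f_k$ being supported on $X_k$.
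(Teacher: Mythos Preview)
Your proof is correct and is more elementary than the paper's. Both arguments begin from the same consequence of Lemma~\ref{L : example filling} (and Remark~\ref{rem : general example fills S}), namely that $\{\gamma_{k+1},F_{k+1,k}(\gamma_k)\}$ fills $S$. From there the paper takes a quantitative route: it combines the inequality $d_{k+1}(\gamma_{k+1},F_{k+1,k}(\gamma_k))\ge 15$ (established inside the proof of Lemma~\ref{L : example filling}) with the Behrstock inequality for the overlapping subsurfaces $X_k,X_{k+1}$ to deduce $d_k(\gamma_k,F_{k+1,k}(\gamma_k))\le 4$, and then uses that $f_k$ translates by at least $16$ on $\mathcal{AC}(X_k)$ to obtain $d_k(\gamma_k,F_{k,k-1}f_k(\gamma_k))\ge 12$, which of course forces nonzero intersection. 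You instead push the filling pair forward by $f_k$, using only that $f_k$ fixes $\gamma_{k+1}\subset\partial X_k$ and the rewriting $f_kF_{k+1,k}=F_{k,k-1}f_k$; since $\gamma_k$ is essential and disjoint from $\gamma_{k+1}$, it must hit the other curve of the filling pair. This is shorter and avoids Behrstock entirely. The paper's argument does yield the stronger conclusion $d_k\ge 12$, but that extra strength is not part of the lemma's statement and is not cited elsewhere (the analogous distance estimates needed in Lemma~\ref{lem : P(iv)} are re-derived there), so nothing is lost by your approach.
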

\begin{proof}
We recall from the previous proof that $\{\gamma_{k+1},F_{k+1,k}(\gamma_k)\}$ not only fills $S$, but satisfies
\[ d_{k+1}(\gamma_{k+1},F_{k+1,k}(\gamma_k)) \geq 15.\]
Since $\gamma_{k+1} \subseteq \partial X_k$ and $\gamma_k \subseteq \partial X_{k+1}$ and $X_k$ and $X_{k+1}$ overlap, Theorem~\ref{thm : behineq} (see also Remark~\ref{rem : Behrstock sharp}) implies
\[ d_k(\gamma_k,F_{k+1,k}(\gamma_k)) \leq 4.\]
Since $f_k$ translates at least $16$ on ${\mathcal AC}(X_k)$, it follows that
\begin{eqnarray*}
d_k(\gamma_k,f_kF_{k+1,k}(\gamma_k)) & \geq & d_k(F_{k+1,k}(\gamma_k),f_kF_{k+1,k}(\gamma_k)) - d_k(\gamma_k,F_{k+1,k}(\gamma_k))\\
& \geq & 16 - 4 \geq 12.
\end{eqnarray*}
Since $f_kF_{k+1,k} = F_{k,k-1}f_k$, the lemma follows.
\end{proof}

\subsection{General construction}

Let $\{e_k\}_{k=0}^\infty$ be a sequence of integers satisfying Inequality  \eqref{eq : ek} for $a > 2$ sufficiently large as so as to satisfy (\ref{eq : restriction on a}) and hence (\ref{eq : intgkgi}) in Theorem~\ref{thm : intgkgi} (see the convention at the end of Section \ref{S : intersection number estimates}).


For $k \geq 0$, let $\bar{k}\in\{ 0,\ldots,m-1\}$ be the residue mod $m$, and for $k \geq m$ define
\[ \mathcal D_k = \mathcal D_{\gamma_{\bar k }}^{e_{k-m}} \mbox{ and } \phi_k = \mathcal D_k f_{\bar k}. \]
The sequence of curves $\{ \gamma_k \}_{k=0}^\infty$ is defined as follows.
\begin{enumerate}
\item The first $m$ curves are $\gamma_0,\ldots,\gamma_{m-1}$, as above.
\item For $k \geq m$, set
\[ \gamma_k = \phi_m \phi_{m+1}\cdots \phi_k(\gamma_{\bar k}).\]
\end{enumerate}
\begin{remark} \label{rem : annoying initial case} We could have avoided having the first $m$ curves as special cases and alternatively defined a sequence $\{\delta_k\}_{k \geq 0}$ by $\delta_k = \phi_0 \cdots \phi_k(\gamma_{\bar k})$ for all $k \geq 0$.  This sequence differs from ours by applying the homeomorphism $\phi_0\cdots \phi_{m-1}$.  This is a useful observation when it comes to describing consecutive elements in the sequence, but our choice allows us to keep $\gamma_0,\ldots,\gamma_{m-1}$ as the first $m$ curves.
\end{remark}
\begin{prop} \label{prop : general construction} With the conditions above, the sequence $\{\gamma_k\}_{k=0}^\infty$ satisfies $\mathcal P$ for some $0 < b_1 \leq b \leq b_2$ (where $b$ is the constant assumed from the start).
\end{prop}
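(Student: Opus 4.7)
The plan is to verify the three conditions of $\mathcal P$ in Definition~\ref{def : P} for the constructed sequence $\{\gamma_k\}$. The central observation is: for any $l$ and any $i \neq \bar l$, the mapping class $\phi_l = \mathcal D^{e_{l-m}}_{\gamma_{\bar l}} f_{\bar l}$ fix
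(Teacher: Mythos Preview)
Your proposal is not a proof: it is a single-sentence plan followed by a truncated observation (``$\phi_l$ fix\ldots''). There is nothing here to evaluate beyond the intention to check (i)--(iii) directly and the partial remark that $\phi_l$ fixes $\gamma_i$ for $i \neq \bar l$. That remark is correct and is indeed one of the basic commutation facts the paper exploits, but by itself it gets you almost nothing. Let me indicate where the real content lies.

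Condition (i) does follow easily from the observation you began: since $\phi_l$ is supported on $X_{\bar l}$ and fixes every $\gamma_i$ with $i \neq \bar l$, one checks that for $k-m \leq j \leq k-1$ the curve $\gamma_j$ is the image of $\gamma_{\bar j}$ under a fixed homeomorphism, so any $m$ consecutive curves are simultaneously the image of $\gamma_0,\ldots,\gamma_{m-1}$ and hence pairwise disjoint.

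Condition (ii), however, is not a formality. You must show that any $2m$ consecutive curves fill $S$, and this requires Lemma~\ref{L : example filling} (together with Remark~\ref{rem : general example fills S}): the pair $\{\gamma_{\bar k}, F_{\bar k,\overline{k-1}}(\gamma_{\overline{k-1}})\}$ fills $S$. That lemma is proved by an induction using the translation-length-$16$ hypothesis on ${\mathcal AC}(X_k)$ and the Behrstock inequality; nothing in your proposal addresses this.

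Condition (iii) is the most delicate. You must produce the curve $\gamma_{k+m}'$ with $\gamma_{k+m} = \mathcal D_{\gamma_k}^{e_k}(\gamma_{k+m}')$ and then verify three sets of intersection conditions. The paper does this by constructing a homeomorphism $H_k$ (Lemma~\ref{lem : standard form for example}) carrying the $2m+1$ curves $\gamma_{k-m},\ldots,\gamma_{k+m}$ to a fixed ``standard'' configuration; this reduces all the intersection computations to a finite list independent of $k$. The equality $i(\gamma_k,\gamma_{k+m}') = b$ comes from assumption (4), the vanishing $i(\gamma_j,\gamma_{k+m}') = 0$ for $k+1 \leq j \leq k+m-1$ from the support/commutation facts, and the crucial \emph{nonvanishing} $i(\gamma_j,\gamma_{k+m}') \neq 0$ for $k-m \leq j \leq k-1$ from Lemmas~\ref{L : example filling} and \ref{L : example filling2}. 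The constants $b_1,b_2$ then arise because only finitely many residue pairs $(\bar j,\bar k)$ occur. None of this is visible in your proposal.

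In short: the commutation observation you started is a necessary bookkeeping tool, but the substance of the proof is the filling argument (Lemma~\ref{L : example filling}) and the reduction to a standard form (Lemma~\ref{lem : standard form for example}). You need to supply both.
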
 
To simplify the proof, we begin with
\begin{lem} \label{lem : standard form for example} For any $2m$ consecutive curves $\gamma_{k-m},\ldots,\gamma_{k+m-1}$, there is a homeomorphism $H_{k}:S \to S$ taking these curves to the curves
\[ \gamma_{\bar k} \, , \, \ldots \, , \, \gamma_{\overline{k+m-1}} \, , \, f_{\bar k}(\gamma_{\bar k}) \, , \, \ldots \, , \, f_{\bar k}\cdots f_{\overline{k+m-1}}(\gamma_{\overline{k+m-1}})\]
(in the same order).
Furthermore, the homeomorphism can be chosen to take $\gamma_{k+m}$ to
\[ \mathcal D_{f_{\bar k}(\gamma_{\bar k})}^{e_k}(f_{\bar k} \cdots f_{\overline{k+m-1}}f_{\bar k}(\gamma_{\bar k})). \]
\end{lem}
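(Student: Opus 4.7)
The plan is to set $H_k = T_k^{-1} \circ \Phi_{k-1}^{-1}$, where $\Phi_l = \phi_m \phi_{m+1} \cdots \phi_l$ (with $\Phi_{m-1}=\mathrm{id}$), so that $\gamma_l = \Phi_l(\gamma_{\bar l})$ for every $l \geq m-1$, and
\[ T_k \;=\; \prod_{j=0}^{m-1} \mathcal D_{\gamma_{\overline{k+j}}}^{e_{k+j-m}} \]
is the commuting product of Dehn twists along the $m$ pairwise disjoint base curves with the appropriate exponents.

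The technical engine is the commutation identity: for $0 \leq i \leq m-1$,
\[ \phi_k \phi_{k+1} \cdots \phi_{k+i} \;=\; \Bigl(\prod_{j=0}^{i} \mathcal D_{\gamma_{\overline{k+j}}}^{e_{k+j-m}}\Bigr) \circ \bigl(f_{\bar k} f_{\overline{k+1}} \cdots f_{\overline{k+i}}\bigr). \]
This follows by moving each $f_{\bar l}$ to the right past every Dehn twist $\mathcal D_{\gamma_{\bar s}}$ with $\bar s \neq \bar l$; this commutation is valid because $f_{\bar l}$ is supported in $\mathrm{int}(X_{\bar l})$ while $\gamma_{\bar s} \subset \Gamma_{\bar l}$ is disjoint from $\mathrm{int}(X_{\bar l})$, so $f_{\bar l}$ fixes $\gamma_{\bar s}$ and hence commutes with $\mathcal D_{\gamma_{\bar s}}$. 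The $m$ residues $\bar k,\overline{k+1},\ldots,\overline{k+m-1}$ are distinct and the Dehn twists in disjoint curves commute among themselves, making every rearrangement available.

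To verify the images of the first $m$ curves: for each $0 \leq j \leq m-1$, a short index count confirms that the $\phi_l$'s appearing effectively in $\Phi_{k-1}^{-1}(\gamma_{k-m+j})$ (either all of $\Phi_{k-1}$ when $k-m+j<m$, or the tail $\phi_{k-m+j+1}\cdots \phi_{k-1}$ when $k-m+j\geq m$) have residues distinct from $\overline{k+j}$, so each fixes $\gamma_{\overline{k+j}}$; hence $\Phi_{k-1}^{-1}(\gamma_{k-m+j}) = \gamma_{\overline{k+j}}$, and $T_k^{-1}$ then fixes this base curve. For the second $m$ curves, the commutation identity gives $\Phi_{k-1}^{-1}(\gamma_{k+i}) = T_{k,i} \cdot F_{k,i}(\gamma_{\overline{k+i}})$ where $T_{k,i}$ is the partial twist product through index $i$ and $F_{k,i} = f_{\bar k}\cdots f_{\overline{k+i}}$. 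Applying $T_k^{-1}$ cancels $T_{k,i}$ and leaves $\mathcal D_{\gamma_{\overline{k+i+1}}}^{-e_{k+i+1-m}} \cdots \mathcal D_{\gamma_{\overline{k+m-1}}}^{-e_{k-1}}$; each remaining twist fixes $F_{k,i}(\gamma_{\overline{k+i}})$ because $F_{k,i}^{-1}(\gamma_{\overline{k+j}}) = \gamma_{\overline{k+j}}$ for $j>i$ (no $f$-factor of $F_{k,i}$ has index $\overline{k+j}$) and distinct base curves are disjoint. Hence $H_k(\gamma_{k+i}) = f_{\bar k}\cdots f_{\overline{k+i}}(\gamma_{\overline{k+i}})$ as claimed.

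For the ``furthermore'' clause, one extra factor gives $\phi_k\cdots \phi_{k+m}(\gamma_{\bar k}) = T_k F_k \mathcal D_{\gamma_{\bar k}}^{e_k} f_{\bar k}(\gamma_{\bar k})$, where $F_k = F_{k,m-1}$; the conjugation $F_k \mathcal D_{\gamma_{\bar k}}^{e_k} = \mathcal D_{F_k(\gamma_{\bar k})}^{e_k} F_k$, together with $F_k(\gamma_{\bar k}) = f_{\bar k}(\gamma_{\bar k})$ (since $f_{\overline{k+j}}$ fixes $\gamma_{\bar k}$ for $1\leq j\leq m-1$), yields the stated formula after the $T_k^{\pm 1}$ factors cancel. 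The main obstacle throughout is rigorously establishing $f_{\bar l}(\gamma_{\bar s}) = \gamma_{\bar s}$ for $\bar l \neq \bar s$; once extracted from the definitions of $X_{\bar l}$ and $\Gamma_{\bar l}$ and the support condition on $f_{\bar l}$, every remaining step is a routine mapping class group manipulation.
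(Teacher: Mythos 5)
Your proposal is correct and follows essentially the same route as the paper: your $T_k$ is exactly the product $\mathcal D_k\mathcal D_{k+1}\cdots\mathcal D_{k+m-1}$ used there, so your $H_k = T_k^{-1}\Phi_{k-1}^{-1}$ coincides with the paper's homeomorphism, and the commutation of each $f_{\bar l}$ with twists about the other base curves is the same key ingredient. The only difference is cosmetic: you verify the images of $\gamma_{k-m},\ldots,\gamma_{k+m}$ in the forward direction (with the small extra observation that the leftover twists fix $F_{k,i}(\gamma_{\overline{k+i}})$), whereas the paper applies $H_k^{-1}$ to the claimed targets.
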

\begin{proof}
We prove the lemma assuming $k \geq 2m$ to avoid special cases (the general case can be easily derived from Remark~\ref{rem : annoying initial case}, for example).  We define
\[ H_k = (\phi_m \cdots \phi_{k-1} \mathcal D_k \mathcal D_{k+1} \cdots \mathcal D_{k+m-1})^{-1}.\]

Let $h,h'\in\{0,...,m-1\}$ and note that since $i(\gamma_h,\gamma_{h'}) = 0$, $\mathcal D_{\gamma_{h'}}(\gamma_h) = \gamma_h$.  Furthermore, if $h \neq h'$, from the fact that $f_h$ is supported on $X_h$ and $\gamma_{h'}$ is disjoint from $X_h$ we easily deduce $\mathcal D_{\gamma_{h'}}$ and $f_h$ commute, and $\phi_h(\gamma_{h'}) = \gamma_{h'}$.

From these facts we observe that for $k - m \leq j \leq k-1$, we have
\begin{eqnarray*} H_k^{-1}(\gamma_j) & = & \phi_m \cdots \phi_{k-1} \mathcal D_k \mathcal D_{k+1} \cdots \mathcal D_{k+m-1}(\gamma_{\bar j})\\
& = & \phi_m \cdots \phi_{k-1}(\gamma_{\bar j})\\
& = & \phi_m \cdots \phi_j(\gamma_{\bar j}) = \gamma_j,
\end{eqnarray*}
while for $k \leq j \leq k+m-1$, we have
\begin{eqnarray*}
H_k^{-1}(f_{\bar k} \cdots f_{\bar j}(\gamma_{\bar j})) & = & \phi_m \cdots \phi_{k-1} \mathcal D_k \cdots \mathcal D_{k+m-1} f_{\bar k} \cdots f_{\bar j}(\gamma_j)\\
& = & \phi_m \cdots \phi_{k-1} \mathcal D_k f_{\bar k} \cdots \mathcal D_j f_{\bar j} \mathcal D_{j+1} \cdots \mathcal D_{k+m-1}(\gamma_{\bar j})\\
& = & \phi_m \cdots \phi_j \mathcal D_{j+1} \cdots \mathcal D_{k+m-1}(\gamma_{\bar j})\\
& = & \phi_m \cdots \phi_j(\gamma_{\bar j}) = \gamma_j.
\end{eqnarray*}
This completes the proof of the first statement.

Next, since $\mathcal D_{k+m} = \mathcal D_{\gamma_{\bar k}}^{e_k}$, we have
\begin{eqnarray} \label{eq : identifying the dehn twist} \notag f_{\bar k} \cdots f_{\overline{k+m-1}} \phi_{k+m}(\gamma_{\bar k}) & = & f_{\bar k} \cdots f_{\overline{k+m-1}} \mathcal D_{k+m} f_{\bar k}(\gamma_k)\\ \notag
& = & f_{\bar k } \mathcal D_{k+m} f_{\overline{k+1}} \cdots f_{\overline{k+m-1}} f_{\bar k}(\gamma_{\bar k })\\
& = & f_{\bar k} \mathcal D_{k+m} f_{\bar k}^{-1} f_{\bar k} \cdots f_{\overline{k+m-1}} f_{\bar k}(\gamma_{\bar k})\\ \notag
& = & f_{\bar k} \mathcal D_{\gamma_{\bar k}}^{e_k}f_{\bar k}^{-1} f_{\bar k} \cdots f_{\overline{k+m-1}} f_{\bar k}(\gamma_{\bar k})\\ \notag
& = & \mathcal D_{f_{\bar k}(\gamma_{\bar k})}^{e_k} f_{\bar k } \cdots f_{\overline{k+m-1}}f_{\bar k}(\gamma_{\bar k}).
\end{eqnarray}
Applying $H_k^{-1}$ to the left hand side gives $\gamma_{k+m}$, proving the last statement.
\end{proof}

\begin{proof}[Proof of Proposition~\ref{prop : general construction}.]
For any $2m$ consecutive curves in our sequence, $\gamma_{k-m},\ldots,\gamma_{k+m-1}$, let $H_k \colon S \to S$ be the homeomorphism from Lemma~\ref{lem : standard form for example} putting these curves into the standard form described by that lemma.  Since $H_k$ sends the first $m$ to $\gamma_{\bar k},\ldots,\gamma_{\overline{k+m-1}}$, it follows that these curves are pairwise disjoint.  Moreover, the set of all $2m$ curves fills $S$ by Lemma~\ref{L : example filling} and Remark~\ref{rem : general example fills S}  (in fact, the first and last alone fill $S$).
Therefore, the sequence satisfies conditions (i) and (ii) of $\mathcal P$.  

To prove that condition (iii) is also satisfied we need to define $\gamma_{k+m}'$ so that $\gamma_{k+m} = \mathcal D_{\gamma_k}^{e_k}(\gamma_{k+m}')$, and verify the intersection conditions.
We fix $k \geq 2m$ and define
\[ \gamma_{k+m}' = \phi_m \cdots \phi_{m+k-1} f_{\bar k}(\gamma_{\bar k})\]
(the case of general $k \geq m$ is handled by special cases or by appealing to Remark~\ref{rem : annoying initial case}).
Note that by definition, $\gamma_{k+m} = \phi_m \cdots \phi_{m+k-1} \phi_{m+k}(\gamma_{\bar k})$ and applying $H_k$ to $\gamma_k$ and $\gamma_{k+m}$, Lemma~\ref{lem : standard form for example} gives us
\[ H_k(\gamma_k) = f_{\bar k}(\gamma_{\bar k}) \quad \mbox{ and } \quad H_k(\gamma_{k+m}) = \mathcal D_{f_{\bar k}(\gamma_{\bar k})}^{e_k}(f_{\bar k} \cdots f_{\overline{k+m-1}}f_{\bar k}(\gamma_{\bar k})) \]
Then, as in the proof of Lemma~\ref{lem : standard form for example} (compare \eqref{eq : identifying the dehn twist}), we have
\[ H_k(\gamma_{k+m}') =  f_{\bar k} \cdots f_{\overline{k+m-1}}f_{\bar k}(\gamma_{\bar k})
\]
Therefore,
\[H_k(\gamma_{k+m}) = \mathcal D_{H_k(\gamma_k)}^{e_k}(H_k(\gamma_{k+m}')) = H_k(\mathcal D_{\gamma_k}^{e_k}(\gamma_{k+m}')),\]
so $\gamma_{k+m} = \mathcal D_{\gamma_k}^{e_k}(\gamma_{k+m}')$.

To prove the intersection number conditions on $i(\gamma_{k+m}',\gamma_j)$ from property (iii) of $\mathcal P$, it suffices to prove them for the $H_k$--images.  Thus, for $j \in \{k+1,\ldots,k+m-1 \}$ we note that by Lemma~\ref{lem : standard form for example}, $H_k(\gamma_j) = f_{\bar k} \cdots f_{\bar j}(\gamma_{\bar j})$, and hence
\begin{eqnarray*} i(\gamma_j,\gamma_{k+m}') & = & i(f_{\bar k} \cdots f_{\bar j}(\gamma_{\bar j}),f_{\bar k} \cdots f_{\overline{k+m-1}}f_{\bar k}(\gamma_{\bar k}))\\
& = & i(\gamma_{\bar j},f_{\overline{j+1}}\cdots f_{\overline{k+m-1}}f_{\bar k}(\gamma_{\bar k}))\\
& = & i(\gamma_{\bar j},\gamma_{\bar k}) = 0.
\end{eqnarray*}
The second-to-last equality is obtained by applying $(f_{\overline{j+1}} \cdots f_{\overline{k+m-1}}f_{\bar k})^{-1}$ to both entries, and observing that this fixes $\gamma_{\bar j}$ (c.f.~the proof of Lemma~\ref{lem : standard form for example}).

On the other hand, for $j=k$, the same basic computation shows
\[ i(\gamma_k,\gamma_{k+m}') = i(\gamma_{\bar k},f_{\bar k}(\gamma_{\bar k})) = b\]
by assumption (4).

Finally, similar calculations show that for $j \in \{k-m,\ldots,k-1\}$, by Lemmas~\ref{L : example filling} and \ref{L : example filling2}, we have
\[ i(\gamma_j,\gamma_{k+m}') = i(\gamma_{\bar j},f_{\bar k} \cdots f_{\overline{k+m-1}}f_{\bar k}(\gamma_{\bar k})) \neq 0.\]
There are only finitely many possible choices of $\bar j$ and $\bar k$, so the values are uniformly bounded between two constants $b_1 < b_2$.  Without loss of generality, we may assume $b_1 \leq b \leq b_2$.  This completes the proof.
\end{proof}

While any sequence of curves as above satisfies the conditions in sections in $\mathcal P$ from Definition~\ref{def : P}, we will need one more condition when analyzing the limits of Teichm\"uller geodesics.  It turns out that any construction as above also satisfies this property.  We record this property here for later use.

\begin{lem}\label{lem : P(iv)}  Suppose the sequence $\{\gamma_k\}_{k =0}^\infty$ is constructed as above.  If $\gamma_k,\gamma_h$ are any two curves with $m \leq h-k < 2m-1$, then $\gamma_k$ and $\gamma_h$ fill a subsurface whose boundary consists entirely of curves in the sequence.   Furthermore, for any $k \leq j \leq h$, $\gamma_j$ is either contained in this subsurface, or is disjoint from it.  If $h-k \geq 2m-1$, then $\gamma_k$ and $\gamma_h$ fill $S$.
\end{lem}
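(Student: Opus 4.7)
The plan is to reduce the general case to the concrete setup via the standard-form homeomorphism of Lemma~\ref{lem : standard form for example} and then invoke the filling assertion of Lemma~\ref{L : example filling} for the subsurfaces $X_{k,h}$. Throughout, I set $K = k+m$, $j_0 = h-k-m$, and let $H = H_K$.

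First, for $m \leq h-k < 2m-1$ (so $j_0 \in \{0,\ldots,m-2\}$), Lemma~\ref{lem : standard form for example} gives $H(\gamma_k) = \gamma_{\bar k}$ and $H(\gamma_h) = F_{\bar k,\overline{k+j_0}}(\gamma_{\overline{k+j_0}})$. By Lemma~\ref{L : example filling} these two curves fill the subsurface $X_{\bar k,\overline{k+j_0}}$, whose boundary is a subset of the base curves $\gamma_l$ with $l \notin \mathcal J(\bar k,\overline{k+j_0})$. Pulling back by $H^{-1}$ yields a subsurface $Y = H^{-1}(X_{\bar k,\overline{k+j_0}})$ of $S$ filled by $\gamma_k,\gamma_h$; its boundary consists of $H^{-1}$--images of the relevant $\gamma_l$, and the explicit description in Lemma~\ref{lem : standard form for example} identifies each such image as one of the curves $\gamma_k,\ldots,\gamma_{k+m-1}$ in the sequence.

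For the ``furthermore'' statement I would next track $H(\gamma_j)$ for each $k \leq j \leq h$. Curves with $j \in [k+m,h]$ map to $F_{\bar k,\overline{k+i}}(\gamma_{\overline{k+i}})$ with $i = j-K \in [0,j_0]$; since $F_{\bar k,\overline{k+i}}$ is supported on $X_{\bar k,\overline{k+i}} \subseteq X_{\bar k,\overline{k+j_0}}$, its image lies in $X_{\bar k,\overline{k+j_0}}$ and hence $\gamma_j \subset Y$. Curves with $j \in [k,k+m-1]$ map to base curves $\gamma_{\overline{k+(j-k)}}$: if $j-k \leq j_0$ the image sits inside $X_{\overline{k+(j-k)}} \subseteq X_{\bar k,\overline{k+j_0}}$, so $\gamma_j \subset Y$; if $j-k > j_0$ the image is either a boundary component of or lies in a different component of $S \setminus \Gamma_{\bar k,\overline{k+j_0}}$, giving $\gamma_j$ essentially disjoint from $Y$.

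For the final statement with $h-k \geq 2m-1$, the anchor case is $h-k = 2m-1$, where $j_0 = m-1$ and $\mathcal J(\bar k,\overline{k-1}) = \{0,\ldots,m-1\}$, so $X_{\bar k,\overline{k-1}} = S$ by Remark~\ref{rem : general example fills S}. Lemma~\ref{L : example filling} then shows $\gamma_{\bar k}$ and $F_{\bar k,\overline{k-1}}(\gamma_{\overline{k-1}})$ fill $S$, and applying $H^{-1}$ proves $\gamma_k,\gamma_h$ fill $S$. For $h-k > 2m-1$, I would invoke Proposition~\ref{prop : anncoeff + coeffbd} to obtain $d_{\gamma_{k+m}}(\gamma_k,\gamma_h) \stackrel{+}{\asymp} e_{k+m}$; any $\delta$ essentially disjoint from $\gamma_k \cup \gamma_h$ with $\delta \pitchfork \gamma_{k+m}$ would then yield the contradiction $d_{\gamma_{k+m}}(\gamma_k,\gamma_h) \leq d_{\gamma_{k+m}}(\gamma_k,\delta) + d_{\gamma_{k+m}}(\delta,\gamma_h) \leq 2$ by the triangle inequality in the annular complex. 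The main obstacle will be the residual case $\delta \not\pitchfork \gamma_{k+m}$, which I would handle by iterating the projection argument at other curves $\gamma_{k+jm}$ (using the additional large subsurface coefficients supplied by Proposition~\ref{prop : anncoeff + coeffbd}) together with the anchor case and the quasi-geodesic property of the sequence from Proposition~\ref{prop : P implies loc-to-global}.
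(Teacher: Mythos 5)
Your handling of the range $m \leq h-k \leq 2m-1$, the ``furthermore'' statement, and the anchor case $h-k = 2m-1$ is correct and is essentially the paper's own argument: transport by the standard-form homeomorphism of Lemma~\ref{lem : standard form for example} (your index shift $H=H_{k+m}$ is the right reading), fill $X_{\bar k,\bar h}$ by Lemma~\ref{L : example filling}, note its boundary pulls back to curves among $\gamma_k,\ldots,\gamma_{k+m-1}$, and locate the intermediate $\gamma_j$ exactly as in Remark~\ref{rem : general example fills S}.

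The gap is in the final assertion, for $h-k > 2m-1$. The triangle-inequality argument at $\gamma_{k+m}$ (or at any $\gamma_l$ with $k+m \leq l \leq h-m$) only excludes a curve $\delta$ disjoint from $\gamma_k\cup\gamma_h$ that actually crosses one of these middle curves, and the residual case you defer is precisely where all the content lies. When $h-k \geq 4m-1$ that residual case can indeed be excluded at the level of $\mathcal P$ (as in Claim~\ref{claim : new ends of I}: two indices not met by $\delta$ differ by at most $4m-2$), but in the range $2m \leq h-k \leq 4m-2$ nothing you invoke rules out a $\delta$ disjoint from $\gamma_k$, $\gamma_h$, and every $\gamma_l$ with $k+m\le l\le h-m$ (e.g.\ $h=k+2m$ and $\delta$ missing $\gamma_k,\gamma_{k+m},\gamma_{k+2m}$). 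Indeed Proposition~\ref{prop : anncoeff + coeffbd}(2) says all \emph{non-annular} subsurface coefficients of the sequence are uniformly bounded, so once $\delta$ avoids the relevant annuli there is no large coefficient left to feed into a projection argument, and the quasi-geodesic property of Proposition~\ref{prop : P implies loc-to-global} only forces $d_S(\gamma_k,\gamma_h)\geq 3$ once $h-k$ is a large multiple of $m$. So ``iterating the projection argument together with the anchor case'' does not close this case.

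The paper closes it by an argument specific to the construction, not to $\mathcal P$: a double induction proving the stronger statement that $\{\gamma_k,\gamma_h\}$ fills $S$ \emph{and} $d_{\Phi_{k+m-1}(X_{\bar k})}(\gamma_k,\gamma_h)\geq 12$, the latter maintained using the hypothesis that $\phi_{k+m}$ translates by at least $16$ on $\mathcal{AC}(X_{\bar k})$ together with the Behrstock inequality (Theorem~\ref{thm : behineq}). A putative $\delta$ disjoint from both curves then either projects essentially to $X_{\bar k}$, contradicting that bound, or misses $X_{\bar k}$; in the latter case, since $\phi_{k+m}$ is supported in $X_{\bar k}$ and $\gamma_{\overline{k+1}}\subset\partial X_{\bar k}$, one gets $i(\delta,\phi_{k+m}\cdots\phi_h(\gamma_{\bar h}))=i(\delta,\phi_{k+m+1}\cdots\phi_h(\gamma_{\bar h}))\neq 0$ from the inductive filling of $S$ by the pair one step in. Your sketch contains neither the inductive non-annular projection bound nor this support argument, so the case $h-k>2m-1$ (specifically $2m\le h-k\le 4m-2$) remains unproven as written.
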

\begin{proof}
First assume $m \leq h-k \leq 2m-1$.  Applying the homeomorphism $H_k \colon S \to S$ from Lemma~\ref{lem : standard form for example}, $\gamma_k$ and $\gamma_h$ are sent to $\gamma_{\bar k}$ and $f_{\bar k} \cdots f_{\bar h}(\gamma_{\bar h}) = F_{\bar k, \bar h}(\gamma_{\bar h})$, respectively.
This fills the surface $X_{\bar k,\bar h}$ which has boundary contained in $\gamma_0 \cup \ldots \cup \gamma_{m-1}$.   By Lemma~\ref{L : example filling} it follows that $H_k^{-1}(X_{\bar k,\bar h})$ is filled by $\{\gamma_k,\gamma_h\}$ and has boundary in $H_k(\gamma_0) \cup \ldots \cup H_k(\gamma_{m-1})$.  All the components of this multicurve are in our sequence, as required for the first statement.

For each $k \leq j \leq h-m$ and $k+m \leq j \leq h$, $\bar j \in \mathcal J(\bar k,\bar h)$, and as pointed out in Remark~\ref{rem : general example fills S}, $\gamma_{\bar j}$ and $F_{\bar k,\bar j}(\gamma_{\bar j})$ are contained in $X_{\bar k,\bar h}$.   Consequently, for these values of $j$, $\gamma_j \in H_k(X_{\bar k,\bar h})$.   On the other hand, if $k < j < h$, and $j$ does not fall into one of the above two cases, then $h-m+1 \leq j \leq k+m-1$, which implies $0 \leq j-k,h-j \leq m-1$ and hence $i(\gamma_j,\gamma_k) = i(\gamma_j,\gamma_h) = 0$, and hence $\gamma_j$ is disjoint from $H_k(X_{\bar k,\bar h})$.  This completes the proof of the second statement.

When $h-k = 2m-1$, $X_{\bar k,\bar h} = S$, and hence $\{\gamma_{\bar k},F_{\bar k,\bar h}(\gamma_{\bar h})\}$ fills $S$.   Consequently $\{\gamma_k,\gamma_h\}$ also fills $S$.

Now we must prove that for $h-k \geq 2m-1$, that $\gamma_k$ and $\gamma_h$ fill $S$.  The proof is by induction, but we need a little more information in the induction.  For simplicity, we assume that $k \geq m+1$ to avoid special cases.  

To describe the additional conditions, for $k < l$,  let $\Phi_l = \phi_m \cdots \phi_l$, so that $\Phi_{k+m-1}^{-1}$ sends the curves $\gamma_k,\ldots,\gamma_h$ (in order) to the curves
\[ \gamma_{\bar k},\ldots,\gamma_{\overline{k+m-1}},\phi_{k+m}(\gamma_{\overline{k+m}}),\ldots,\phi_{k+m}\cdots\phi_h(\gamma_{\bar h}).\]
With this notation, we now wish to prove by double induction (on $k$ and $h-k$) that for all $m+1 \leq k < h$ with $h-k \geq 2m-1$ we have
\[ \{\gamma_k,\gamma_h\} \mbox{ fills } S \mbox{ and } d_{\Phi_{k+m-1}(X_{\bar k})}(\gamma_k,\gamma_h) \geq 12. \]
The base case is $h-k = 2m-1$ and any $k \geq m+1$.  We have already pointed out that $\{\gamma_k,\gamma_h\}$ fills $S$.  We note that applying $\Phi_{k+m}^{-1}$ takes $\gamma_{k+1},\ldots,\gamma_h$ to
\[ \gamma_{\overline{k+1}},\ldots,\gamma_{\overline{k+m}},\phi_{k+m+1}(\gamma_{\overline{k+m+1}}),\ldots, \phi_{k+m}\cdots\phi_{k+2m-1}(\gamma_{\overline{k+2m-1}}).
\]
For the first and last curves $\{\gamma_{\overline{k+1}},\phi_{k+m+1}\cdots\phi_{k+2m-1}(\gamma_{\overline{k+2m-1}})\}$ we see that these fill $X_{\overline{k+1},\overline{k+2m-1}} = X_{\overline{k+1},\overline{k-1}}$ which has $\gamma_{\bar k}$ as a boundary component. Since $\pi_{\bar k}(\phi_{k+m+1}\cdots\phi_{k+2m-1}(\gamma_{\overline{k+2m-1}}))$ is disjoint from $\gamma_{\bar k}$ it follows that applying $\phi_{k+m}$ to this last curve $\phi_{k+m+1}\cdots\phi_{k+2m-1}(\gamma_{\overline{k+2m-1}})$ we have
\[ d_{\bar k}(\gamma_{\bar k},\phi_{k+m}\phi_{k+m+1}\cdots \phi_{k+2m-1}(\gamma_{\overline{k+2m-1}})) \geq 14 > 12.\]
But notice that $\Phi_{k+m-1}^{-1}(\gamma_{k+2m-1}) = \phi_{k+m}\cdots \phi_{k+2m-1}(\gamma_{\overline{k+2m-1}})$ while on the other hand $\Phi_{k+m-1}^{-1}(\gamma_k) = \gamma_{\bar k}$, hence
\[ d_{\Phi_{k+m-1}(X_{\bar k})}(\gamma_k,\gamma_{k+2m-1}) \geq 12,\]
as required for the base case.

For the induction step, the proof is quite similar. We assume that the statement holds for all $k \geq m+1$ and all $2m-1 \leq h-k \leq N$, and prove it for $h-k=N+1$.  Since $h-(k+1) = N$ and $k+1 \geq m+2 \geq m+1$, by the inductive assumption it follows that $\{\gamma_{k+1},\gamma_h\}$ fills $S$ and that 
\[ d_{\Phi_{k+m}(X_{\overline{k+1}})}(\gamma_{k+1},\gamma_h) \geq 12.\]
Therefore, applying $\Phi_{k+m}^{-1}$, we have
\[ d_{\overline{k+1}}(\gamma_{\overline{k+1}},\phi_{k+m+1} \cdots \phi_h(\gamma_{\bar h})) \geq 12.\]
The homeomorphism $\Phi_{k+m}^{-1}$ sends $\gamma_k,\ldots,\gamma_h$ to the sequence
\[ \phi_{k+m}^{-1}(\gamma_{\bar k}),\gamma_{\overline{k+1}},\ldots,\gamma_{\overline{k+m}},\phi_{k+m+1}(\gamma_{\overline{k+1}}),\ldots,\phi_{k+m+1} \cdots \phi_h(\gamma_{\bar h}).\]
Since $\gamma_{\bar k} \subset \partial X_{\overline{k+1}}$ and $\gamma_{\overline{k+1}} \subset \partial X_{\bar k}$, Theorem~\ref{thm : behineq} (see also Remark~\ref{rem : Behrstock sharp}) ensures that we have
\[ d_{\bar k}(\gamma_{\bar k},\phi_{k+m+1} \cdots \phi_h(\gamma_{\bar h})) \leq 4.\]
Applying $\phi_{k+m}$ (which translates by at least $16$ on $\mathcal C(X_{\bar k})$) to the second curve  we get
\[ d_{\bar k}(\gamma_{\bar k},\phi_{k+m}\phi_{k+m+1}\cdots \phi_h(\gamma_{\bar h}) \geq 12.\]
In particular, we have
\[ d_{\Phi_{k+m-1}(X_{\bar k})}(\gamma_k,\gamma_h) \geq 12.\]
This proves part of the requirement on $\gamma_k,\gamma_h$.  

We must also show that $\{\gamma_k,\gamma_h\}$ fills $S$.  We will show that the $\Phi_{k+m-1}$--image, $\{\gamma_{\bar k},\phi_{k+m} \cdots \phi_h(\gamma_{\bar h})\}$ fills $S$, which will suffice.  To see this, take any essential curve $\delta$ and suppose it is disjoint from both $\gamma_{\bar k}$ and $\phi_{k+m} \cdots \phi_h(\gamma_{\bar h})$.  Then note that $\delta$ must have empty projection to $X_{\bar k}$, for otherwise the triangle inequality implies that the distance from $\pi_{\bar k}(\gamma_{\bar k})$ to $\pi_{\bar k}(\phi_{k+m}\cdots \phi_h(\gamma_{\bar h}))$ is at most $4$, a contradiction to the fact that
\[ d_{\bar k}(\gamma_{\bar k},\phi_{k+m+1} \cdots \phi_h(\gamma_{\bar h})) = d_{\Phi_{k+m-1}(X_{\bar k})}(\gamma_k,\gamma_h) \geq 12.\]  
Since $\{\gamma_{\overline{k+1}},\phi_{k+m+1}\cdots \phi_h(\gamma_{\bar h})\}$ fills $S$, $\delta$ must intersect one of these curves.  However, $\gamma_{\overline{k+1}}$ is contained in the boundary of $X_{\bar k}$, and hence $\delta$ is disjoint from this.  Consequently, $\delta$ must intersect $\phi_{k+m+1} \cdots \phi_h(\gamma_{\bar h})$.  Since $\phi_{k+m}$ is supported on $X_{\bar k}$ which is disjoint from $\delta$ we have
\begin{eqnarray*}
0 \neq i(\delta,\phi_{k+m+1}\cdots \phi_h(\gamma_{\bar h})) &=& i(\phi_{k+m}^{-1}(\delta),\phi_{k+m+1}\cdots \phi_h(\gamma_{\bar h})) \\
&=& i(\delta,\phi_{k+m}\cdots \phi_h(\gamma_{\bar h})).
\end{eqnarray*}
This contradicts our initial assumption on $\delta$, and hence no such $\delta$ exists and $\{\gamma_{\bar h},\phi_{k+m}\cdots \phi_h(\gamma_{\bar h})\}$ fills $S$ as required.  This completes the proof.
\end{proof}

\begin{figure} [htb]
\labellist
\small\hair 2pt
\pinlabel $\gamma_0$ [b] at 161 325
\pinlabel $\gamma_1$ [b] at 161 219
\pinlabel $\gamma_2$ [b] at 210 200
\pinlabel $\gamma_3$ [b] at 335 156
\pinlabel $\gamma_4$ [b] at 225 156
\pinlabel $\gamma_5$ [b] at 210 112
\pinlabel $\gamma_6$ [b] at 163 -15
\pinlabel $\gamma_7$ [b] at 163 95
\pinlabel $\gamma_8$ [b] at 117 112
\pinlabel $\gamma_9$ [b] at -10 156
\pinlabel $\gamma_{10}$ [b] at 105 156
\pinlabel $\gamma_{11}$ [b] at 120 200
\endlabellist
\begin{center}
\includegraphics[width=8cm]{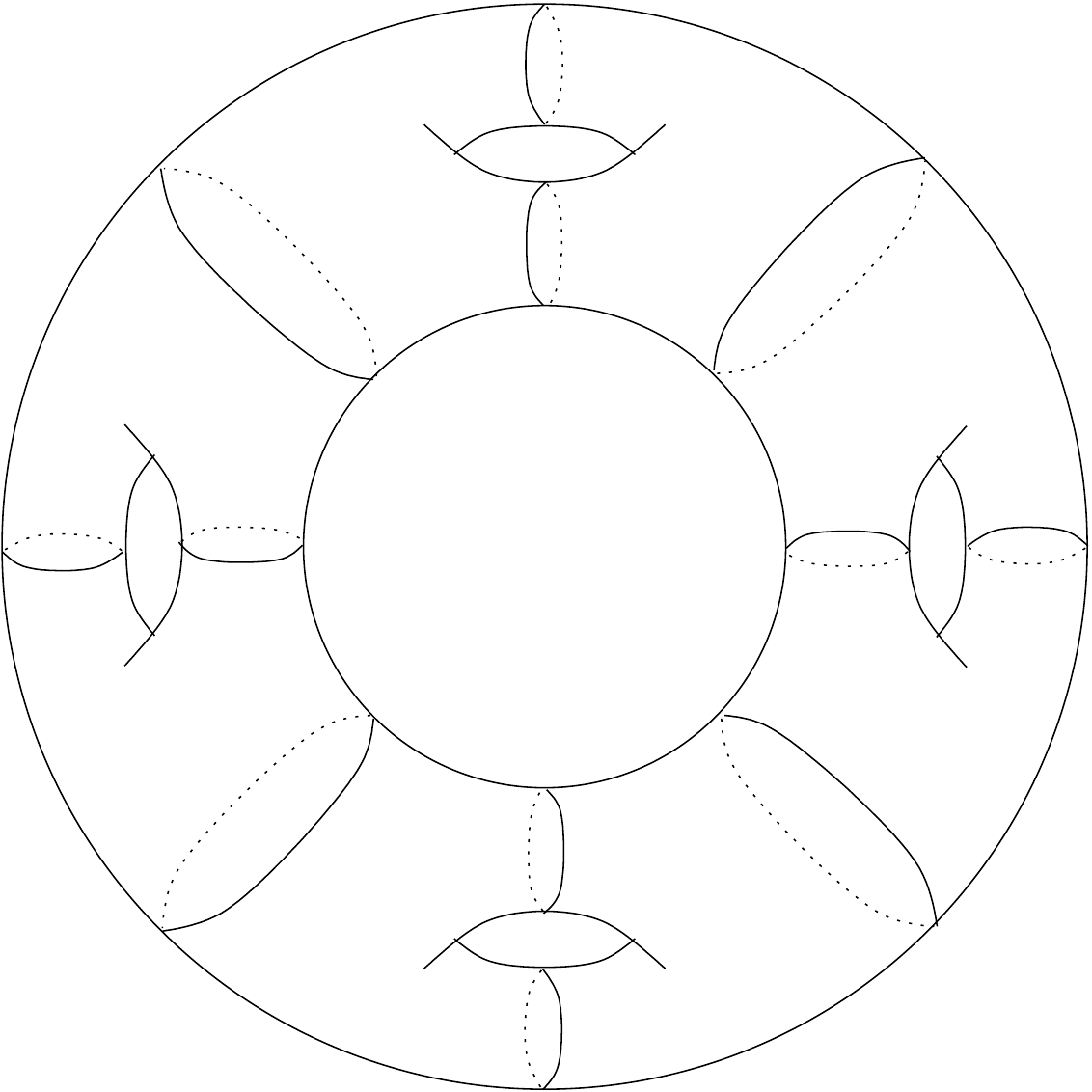} \caption{The pairwise disjoint curves $\gamma_0,\ldots,\gamma_{m-1}$ for the first family of examples in the case of genus $5$ (and hence $m = 12$).}
\label{F : genus5example3}
\end{center}
\end{figure}

\subsection{Specific examples}
Here we provide two specific families of examples of the general construction, but it is quite flexible and easy to build many more examples.  We need to describe $\gamma_0,\ldots,\gamma_{m-1}$, together with the rest of the data from the beginning of Section~\ref{sec : basic setup for examples}.   For this, we will first ensure that all of our subsurfaces $X_k$ have the property that $\gamma_{k\pm 1} \subseteq \partial X_k$ (indices mod $m$).  This is the first of the four conditions required.  For the other three conditions, it will be enough to choose the sequence so that for any $0 \leq k,h \leq m-1$, there is a homeomorphism of pairs $(X_k,\gamma_k) \cong (X_h,\gamma_h)$.  For then, we can choose $f_0 \colon S \to S$ any homeomorphism which is the identity on $S \setminus X_k$, pseudo-Anosov on ${\mathcal AC}(X_k)$ with translation distance at least $15$, and then use the homeomorphisms $(X_0,\gamma_0) \cong (X_k,\gamma_k)$ to conjugate $f_0$ to homeomorphisms $f_k \colon S \to S$.

\subsubsection{Maximal dimensional simplices}\label{subsec : max}
For the first family of examples, we can choose a pants decomposition on $S_{g,0}$ a closed genus $g \geq 3$ surface as shown in Figure~\ref{F : genus5example3}.  Each $X_k$ is homeomorphic to a $4$--holed sphere, and $\gamma_k \subset X_k$ is an essential curve.  Any two $(X_k,\gamma_k)$ and $(X_h,\gamma_h)$ are clearly homeomorphic pairs.  In this case $m = 3g-3$, and the limiting lamination $\nu$ from Proposition~\ref{prop : P implies loc-to-global} defines a simplex of measures with maximal possible dimension in $\PML(S)$ by Theorem~\ref{thm : all ergodic measures}.  One can also construct examples in genus $2$ by taking $\gamma_0,\gamma_1,\gamma_2$ to be a pants decomposition of non-separating curves.

\subsubsection{Non-maximal examples}\label{subsec : nonmax}
For our second family, we choose $m = g-1$, and take a sequence $\gamma_0,\ldots,\gamma_{m-1}$ as shown in Figure~\ref{F : genus5example2}.  Here each $X_k$ is homeomorphic to a surface of genus $2$ with two boundary components and $\gamma_k$ is a curve that cuts $X_k$ into two genus $1$ surfaces with two boundary components.

\begin{figure} [htb]
\labellist
\small\hair 2pt
\pinlabel $\gamma_0$ [b] at 210 200
\pinlabel $\gamma_1$ [b] at 210 112
\pinlabel $\gamma_2$ [b] at 117 112
\pinlabel $\gamma_3$ [b] at 120 200
\endlabellist
\begin{center}
\includegraphics[width=8cm]{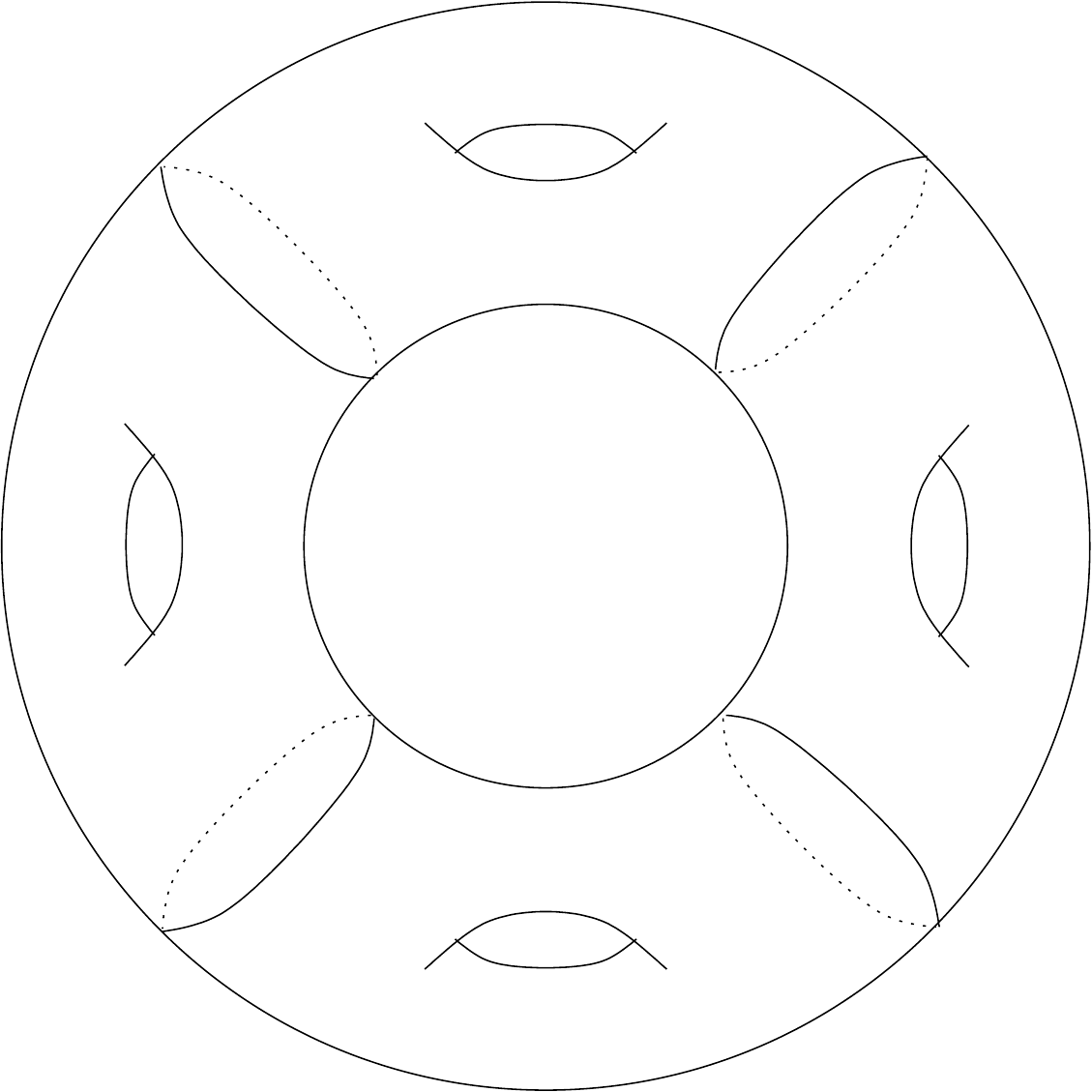} \caption{The pairwise disjoint curves $\gamma_0,\ldots,\gamma_{m-1}$ for the second family in the case of genus $5$ (and hence $m = 4$).}
\label{F : genus5example2}
\end{center}
\end{figure}

\section{Teichm\"{u}ller geodesics and active intervals}\label{sec : actinterval} 

In \cite{rshteich},\cite{rcombteich} and \cite{rteichhyper} the fourth author has developed techniques to control the length-functions and twist parameters along Teichm\"{u}ller geodesics in terms of subsurface coefficients.  In \cite{nonuniqueerg} this control was used to study the limit sets of Teichm\"{u}ller geodesics in the Thurston compactification of Teichm\"{u}ller space. Here we also appeal to this control. Most of the estimates in this section are similar to the ones in $\S 6$ of \cite{nonuniqueerg}. 

For the remainder of this section and the next we assume that $\{\gamma_k\}_{k=0}^\infty$ is a sequence of curves satisfying the condition $\mathcal{P}$ from Definition~\ref{def : P} with $a > 1$ large enough to satisfy (\ref{eq : restriction on a}) and consequently so that (\ref{eq : intgkgi}) in Theorem~\ref{thm : intgkgi} holds, and the sequence of powers $\{e_{k}\}_{k=0}^\infty$ satisfy the growth condition (\ref{eq : ek}) for this $a$.  For $h = 0,\ldots,m-1$, let $\gamma_i^h = \gamma_{im+h}$, as usual.

Let $\nu$ be the nonuniquely ergodic lamination determined by the sequence (see Theorem \ref{thm : minfill} and Corollary \ref{cor : nue}). Furthermore let $\bar{\nu}^{h}$, for $h=0,...,m-1$, be the ergodic measures from Theorems \ref{thm : MLlimitgi} and \ref{thm : all ergodic measures}, so that $\gamma_i^h \to \bar \nu^h$ in $\mathcal{PML}(S)$, for each $h$.  Let
\[ \bar{\nu}=\sum_{h=0}^{m-1}x_{h}\bar{\nu}^{h}, \]
for any $x_{h}>0$ for each $h=0,...,m-1$. 
 
Let $X\in \Teich(S)$ and $\mu$ be a short marking at $X$.  By \cite{hubmas}, there is a unique Teichm\"{u}ller geodesic ray starting at $X$ with vertical foliation $\bar{\nu}$, and we let $\bar \eta$ be the horizontal foliation (with support $\eta$). Denote the Teichm\"{u}ller geodesic ray by $r:[0,\infty)\to\Teich(S)$. For a $t\in\mathbb{R}$, we sometimes denote $r(t)=X_{t}$ and denote the quadratic differential at $X_{t}$ by $q_{t}$.  We write $v_t(\alpha),h_t(\alpha),\ell_t(\alpha)$ for the $q_t$--vertical variation, $q_t$--horizontal variation, and $q_t$--length of $\alpha$, respectively.  In particular, $v_t(\alpha) = \exp(-t)i(\alpha,\bar \eta)$, $h_t(\alpha) = \exp(t)i(\alpha,\bar \nu)$, and $\ell_t(\alpha) \stackrel * \asymp v_t(\alpha) + h_t(\alpha)$.  

We write $\hyp_t(\alpha) = \hyp_{X_t}(\alpha)$, the $X_t$--hyperbolic length of $\alpha$ and $w_t(\alpha) = w_{X_t}(\alpha)$ for the $X_t$--width, and recall from (\ref{eq : w}) that
\[ w_t(\alpha)\stackrel{+}{\asymp} 2\log\Big(\frac{1}{\hyp_t(\alpha)}\Big). \]
We also recall that $\epsilon_0 > 0$ is the Margulis constant, and that any two hyperbolic geodesics of length at most $\epsilon_0$ must be embedded and disjoint.
 
For any curve $\alpha$ let $\cyl_t(\alpha)$ be the maximal flat cylinder foliated by all geodesic representatives of $\alpha$ in the $q_t$ metric, as in \S\ref{sec : ergodic limits general case}, and let $mod(\cyl_t(\alpha))$ denote its modulus.  Fix $M>0$ sufficiently large so that for any curve $\alpha$ with $mod(\cyl_t(\alpha)) \geq M$, for some $t \in \mathbb R$, then $\hyp_t(\alpha) \leq \epsilon_0$.
For any $k\in\mathbb{N}$, let $J_{\gamma_k}$, also denoted $J_k$, be the {\it active interval of $\gamma_k$}
\[ J_k = \{t \in [0,\infty) \mid mod(\cyl_t(\gamma_k)) \geq M \}.\]

Write $J_{k}=[\underline{a}_{k},\bar{a}_{k}]$ and denote the midpoint of $J_{k}$ by $a_{k}$ (the balance time of $\gamma_k$ along the geodesic, i.e.~the unique $t$ when $v_t(\gamma_k) = h_t(\gamma_k)$). For each $h\in\{0,...,m-1\}$ and $i \geq 0$, we also write $J_{im+h}=J^{h}_{i}$, $a_i^h = a_{im+h}$, $\underline{a}^{h}_{i}=\underline{a}_{im+h}$, and $\bar{a}^{h}_{i}=\bar{a}_{im+h}$, to denote the data associated to $\gamma_i^h = \gamma_{im+h}$.

\begin{prop}\label{prop : activeint}\textnormal{(Active intervals of curves in the sequence)}
With the assumptions and notation as above, we have the following.

\begin{enumerate}[(i)]
\item For $k$ sufficiently large, $J_{k}\neq \emptyset$. Moreover $J_k \cap J_l = \emptyset$ whenever $i(\gamma_k,\gamma_l) \neq 0$.
\item For $0 \leq f < k$ sufficiently large with $k-f \geq m$, $J_f$ occurs before $J_k$.  Consequently, some tail of each subsequence $\{J_i^h\}_{i=0}^\infty$ appears in order.
\item For $k$ sufficiently large and a multiplicative constant depending only on $\nu$ and $X$
$$\hyp_{a_k}(\gamma_k)\stackrel{*}{\asymp}\frac{1}{d_{\gamma_k}(\mu,\nu)} \stackrel * \asymp \frac 1{e_k}.$$
\item   For an additive constant depending only on $\nu$, $X$, and $M$, we have \[|J_k|\stackrel{+}{\asymp}\log d_{\gamma_k}(\mu,\nu) \stackrel+\asymp \log(e_k).\]
\end{enumerate}
\end{prop}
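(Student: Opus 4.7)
The plan is to use the dictionary between the modulus of the maximal Euclidean cylinder $\cyl_t(\gamma_k)$ and the annular subsurface projection $d_{\gamma_k}(\bar{\eta},\nu)$ developed in Section~\ref{subsec : area extlength}. The central input is equation~\eqref{eq : area bound annular strip}: at the balance time $a_k$, since $h_{a_k}(\gamma_k) = v_{a_k}(\gamma_k)$, one has
\[
\operatorname{mod}(\cyl_{a_k}(\gamma_k)) \stackrel{+}{\asymp} \tfrac{1}{4}\, d_{\gamma_k}(\bar{\eta},\nu).
\]
Because the horizontal foliation $\bar{\eta}$ of $q_0$ has annular projection to $\gamma_k$ differing from that of a short marking $\mu$ at $X$ by a bounded amount (via standard Teichm\"uller-theoretic estimates), Proposition~\ref{prop : anncoeff + coeffbd} yields $d_{\gamma_k}(\bar{\eta},\nu) \stackrel{+}{\asymp} d_{\gamma_k}(\mu,\nu) \stackrel{+}{\asymp} e_k$, and hence $\operatorname{mod}(\cyl_{a_k}(\gamma_k)) \stackrel{+}{\asymp} e_k$.

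Given this, parts (iii), (i) and (iv) will fall out quickly. For (iii), equation~\eqref{eq : extremal length bound} gives $\Ext_{a_k}(\gamma_k) \stackrel{*}{\asymp} 1/e_k$, and hyperbolic and extremal length are comparable for short curves, so $\hyp_{a_k}(\gamma_k) \stackrel{*}{\asymp} 1/e_k$. For (i), the modulus at $a_k$ exceeds $M$ once $e_k$ is sufficiently large, so $a_k \in J_k$ and $J_k \neq \emptyset$; and if $t \in J_k \cap J_l$ with $i(\gamma_k,\gamma_l) \neq 0$, both curves would have $X_t$--hyperbolic length at most $\epsilon_0$, contradicting $i(\gamma_k,\gamma_l) \neq 0$ via the Margulis lemma. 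For (iv), the area of $\cyl_t(\gamma_k)$ is invariant under the Teichm\"uller flow, while at $t = a_k + s$ the circumference squared satisfies $\ell_t(\gamma_k)^2 \stackrel{*}{\asymp} e^{2s} h_{a_k}^2 + e^{-2s} v_{a_k}^2 \stackrel{*}{\asymp} (e^s + e^{-s})^2 h_{a_k}^2$ (using $h_{a_k} = v_{a_k}$), so $\operatorname{mod}(\cyl_t(\gamma_k)) \stackrel{*}{\asymp} e^{-2|s|}\operatorname{mod}(\cyl_{a_k}(\gamma_k))$; solving $\operatorname{mod} \geq M$ yields $|J_k| \stackrel{+}{\asymp} \log e_k$.

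For (ii), I would write $a_k = \tfrac{1}{2}\log(v_0(\gamma_k)/h_0(\gamma_k))$ with $v_0 = i(\cdot,\bar{\eta})$ and $h_0 = i(\cdot,\bar{\nu})$. For $k = im+h$, the convergence $\gamma_i^h/c_i^h \to \bar{\nu}^h$ of Theorem~\ref{thm : MLlimitgi} gives $v_0(\gamma_k) \stackrel{*}{\asymp} c_i^h$ (using $i(\bar{\nu}^h,\bar{\eta}) > 0$, which follows from the transversality of the horizontal and vertical foliations of $q_0$ and the filling property of $\nu$), while Theorem~\ref{thm : intgkgi} together with $i(\bar{\nu}^h,\bar{\nu}) = 0$ gives $h_0(\gamma_k) \stackrel{*}{\asymp} 1/c_{i+1}^h$. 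Hence $a_k \stackrel{+}{\asymp} \log c_i^h + \tfrac{1}{2}\log e_k$, which grows strictly with $k$ in the following sense: for $f < k$ with $k-f \geq m$ and both $f,k$ sufficiently large, $a_f < a_k$. Combined with the disjointness from (i), $J_f$ precedes $J_k$, and the subsequence claim follows on specializing $k-f = m$. The main obstacle will be precisely this borderline case $k-f = m$, in which $a_k - a_f$ and $(|J_k|+|J_f|)/2$ are both asymptotic to $\tfrac{1}{2}\log(e_k e_f)$: the strict ordering $\bar{a}_f < \underline{a}_k$ then must be extracted from a careful tracking of the additive constants in equation~\eqref{eq : area bound annular strip}, or more cleanly obtained as a consequence of the disjointness $J_f \cap J_k = \emptyset$ already established in (i).
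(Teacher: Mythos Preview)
Your arguments for (i) and (iv) match the paper's, and your approach to (ii) via an explicit computation of $a_k$ is correct; the paper argues (ii) differently, using Proposition~\ref{prop : anncoeff + coeffbd} and the Behrstock inequality to show $d_{\gamma_f}(\gamma_k,\nu)\stackrel{+}{\asymp}0$, which forces the $q_{a_f}$--representative of $\gamma_k$ to be mostly vertical and hence $a_f<a_k$, after which disjointness from (i) gives $\bar a_f<\underline a_k$ exactly as you propose. Your computation is essentially the content of the later Lemma~\ref{lem : a midpoint location}, so both routes work.

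The genuine gap is in part (iii). Equation~\eqref{eq : extremal length bound} is a one-sided inequality: the modulus of the maximal flat cylinder bounds $1/\Ext$ from \emph{below}, giving only $\Ext_{a_k}(\gamma_k)\stackrel{*}{\prec}1/e_k$ and hence only $\hyp_{a_k}(\gamma_k)\stackrel{*}{\prec}1/e_k$. The reverse inequality is not automatic: $\gamma_k$ could be much shorter than $1/e_k$ if some non-annular subsurface $Y$ with $\gamma_k\subset\partial Y$ carries a large expanding annulus adjacent to $\cyl_{a_k}(\gamma_k)$. Ruling this out is the real work in the paper's proof. It invokes Theorem~6.1 of \cite{rshteich}, which asserts $\hyp_{a_k}(\gamma_k)\stackrel{*}{\asymp}1/K_{\gamma_k}$ where $K_{\gamma_k}=\max_Y K(Y)$ over all $Y$ with $\gamma_k\subset\partial Y$, and then shows that for every non-annular such $Y$ one has $K(Y)\prec e_k$: a quasi-parallel arc is produced from $\pi_Y(\gamma_{k+m})$, and its intersections with $\pi_Y(\eta)$ and $\pi_Y(\nu)$ are bounded via Theorem~\ref{thm : i=dY} together with the subsurface coefficient bounds of Proposition~\ref{prop : anncoeff + coeffbd}. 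The annular $Y$ then wins with $K(Y_{\gamma_k})\stackrel{+}{\asymp}e_k$, yielding the two-sided estimate. Without this step your argument for (iii) establishes only half of the claimed comparability.
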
 

The following will be convenient for the proof of Proposition~\ref{prop : activeint}.
\begin{lem} \label{lem : horizontal like marking} With notation and assumptions above, there exists $k_0 \geq 0$ sufficiently large so that if $Y \subseteq S$ is a subsurface such that for some $k \geq k_0$, $d_S(\gamma_k,\partial Y) \leq 2$, then
\[ d_Y(\mu,\nu) \stackrel + \asymp_{G+1} d_Y(\eta,\nu).\]
where $G$ is the constant from Theorem~\ref{thm : bddgeod} (for a geodesic).
\end{lem}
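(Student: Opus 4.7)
The plan is to reduce the claimed comparison to the single bound $d_{Y}(\mu,\eta) \leq G+1$, which suffices by the elementary inequality $|d_{Y}(\mu,\nu)-d_{Y}(\eta,\nu)| \leq d_{Y}(\mu,\eta)$ (valid because $\pi_{Y}(\eta) \neq \emptyset$ by hypothesis). To prove this bound I would use Bounded Geodesic Image (Theorem~\ref{thm : bddgeod}) applied to a short $\mathcal{C}(S)$-geodesic between data associated with the base point $X_{0}=X$, exploiting that $\gamma_{k}$, and hence $\partial Y$, is arbitrarily far from any $X_{0}$-data in $\mathcal{C}(S)$ for $k \geq k_{0}$.

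First I would bound $d_{S}(\mu,\eta) \leq C_{0}$ by a constant $C_{0}=C_{0}(X,\bar\nu)$. Since $\mu$ is a short marking at $X_{0}$, each curve in $\base(\mu)$ has bounded hyperbolic length, hence bounded $q_{0}$-length, hence bounded horizontal variation $i(\,\cdot\,,\bar\eta)$; summing over $\mu$ gives a uniform bound on $i(\mu,\bar\eta)$. Lemma~\ref{lem : id} then bounds $d_{S}(\mu,\beta_{\eta})$ for any auxiliary simple closed curve $\beta_{\eta}$ with uniformly bounded intersection with $\bar\eta$, and such a $\beta_{\eta}$ can be chosen so that additionally $d_{Y}(\beta_{\eta},\eta) \leq 1$ (for instance by a regular-neighborhood construction on an $\bar\eta$-arc crossing $Y$, capped off along $\partial Y$). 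Next, by Proposition~\ref{prop : P implies loc-to-global}, the sequence $\{\gamma_{k}\}$ is a quasi-geodesic in $\mathcal{C}(S)$ converging to $\nu$, so $d_{S}(\mu,\gamma_{k}) \to \infty$; I would choose $k_{0}$ large enough that $d_{S}(\mu,\gamma_{k}) > C_{0}+4$ for all $k \geq k_{0}$.

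Given $Y$ as in the hypothesis and $k \geq k_{0}$ with $d_{S}(\partial Y,\gamma_{k})\leq 2$, let $g$ be a $\mathcal{C}(S)$-geodesic from a base curve $\alpha_{\mu} \in \base(\mu)$ to $\beta_{\eta}$. Its length is at most $C_{0}$, so for every vertex $\alpha$ of $g$,
\[
d_{S}(\alpha,\partial Y) \;\geq\; d_{S}(\alpha,\gamma_{k}) - 2 \;\geq\; d_{S}(\mu,\gamma_{k}) - C_{0} - 2 \;>\; 2,
\]
and in particular $\alpha \pitchfork Y$. Applying Theorem~\ref{thm : bddgeod} to $g$ then yields $\diam_{Y}(g) \leq G$. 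Combining this with $\pi_{Y}(\alpha_{\mu}) \subseteq \pi_{Y}(\mu)$ and $d_{Y}(\beta_{\eta},\eta) \leq 1$ produces the desired bound $d_{Y}(\mu,\eta) \leq G+1$.

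The main technical obstacle will be the construction of the auxiliary curve $\beta_{\eta}$, particularly when $\eta$ fills $S$ (so $\pi_{S}(\eta)=\emptyset$ and there is no canonical simple closed curve disjoint from $\eta$). The expected fix is a local surgery: taking a short $\bar\eta$-arc through $Y$ and capping it along $\partial Y$ produces $\beta_{\eta}$ with $d_{Y}(\beta_{\eta},\eta) \leq 1$ by construction, while the $q_{0}$-length of $\beta_{\eta}$ is controlled by the geometry of $Y$ inside $X_{0}$, so standard length-versus-intersection estimates bound $i(\beta_{\eta},\bar\eta)$ and hence $d_{S}(\mu,\beta_{\eta})$ uniformly in terms of $X$ and $\bar\nu$.
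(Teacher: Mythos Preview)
Your overall strategy --- reduce to $d_{Y}(\mu,\eta)\le G+1$ via the triangle inequality and then apply Bounded Geodesic Image to a $\mathcal{C}(S)$--geodesic all of whose vertices cut $Y$ --- is exactly the paper's strategy. The gap is in the auxiliary object you use as the endpoint of that geodesic.

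You try to manufacture a curve $\beta_{\eta}$ (depending on $Y$) with both $d_{Y}(\beta_{\eta},\eta)\le 1$ and $d_{S}(\mu,\beta_{\eta})\le C_{0}$ uniformly. Your justification for the second bound is that $i(\mu,\bar\eta)$ and $i(\beta_{\eta},\bar\eta)$ are both bounded, and you invoke Lemma~\ref{lem : id}. But Lemma~\ref{lem : id} bounds $d_{S}(\alpha,\beta)$ by $i(\alpha,\beta)$, not by intersections with a third lamination; two curves can each have small intersection with $\bar\eta$ and yet be arbitrarily far apart in $\mathcal{C}(S)$. Your proposed fix --- control the $q_{0}$--length of $\beta_{\eta}$ via ``the geometry of $Y$ inside $X_{0}$'' --- does not work either: the only hypothesis on $Y$ is $d_{S}(\partial Y,\gamma_{k})\le 2$, which places no constraint whatsoever on the $q_{0}$--geometry of $\partial Y$. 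So the arc-plus-boundary surgery produces a curve whose $q_{0}$--length, and hence whose $\mathcal{C}(S)$--distance to $\mu$, is uncontrolled.

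The paper avoids this entirely by taking $g$ to be a \emph{single} $\mathcal{C}(S)$--geodesic independent of $Y$: a ray from a curve of $\mu$ limiting to $\eta\in\partial\mathcal{C}(S)$ when $\eta$ is filling, or a geodesic from $\mu$ to a fixed curve disjoint from $\eta$ otherwise. Since $\eta$ and $\nu$ together fill $S$ and $\gamma_{k}\to\nu$, the distance from $\gamma_{k}$ (hence from $\partial Y$) to $g$ tends to infinity with $k$; for $k\ge k_{0}$ every vertex of $g$ overlaps $Y$, Theorem~\ref{thm : bddgeod} gives $\diam_{Y}(g)\le G$, and letting the vertex run out to $\eta$ (or to the curve disjoint from $\eta$) yields $d_{Y}(\mu,\eta)\le G+1$. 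The point you are missing is that one can go all the way to $\eta$ in $\overline{\mathcal{C}(S)}$ rather than stopping at a surrogate curve.
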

\begin{proof}  Let $g$ be a geodesic in $\mathcal C(S)$ from (any curve in) $\mu$ limiting to $\eta$ if $\eta$ is an ending lamination, or from $\mu$ to any curve $\alpha$ with $i(\alpha,\bar \eta) =0$ otherwise.  Since $\eta$ and $\nu$ fill $S$, and $\gamma_k \to \nu \in \partial \mathcal C(S)$, the distance from $\gamma_k$ to $g$ tends to infinity with $k$.  For $Y$ and $\gamma_k$ as in the statement of the lemma, $d_S(\partial Y,\gamma_k) \leq 2$, and hence for $k$ sufficiently large, $\partial Y$ has distance at least $4$ from $g$.  Consequently, $\partial Y$ intersects every curve on $g$, and Theorem~\ref{thm : bddgeod} guarantees that $\diam_{Y}(g)\leq G$. Thus for all $\beta\in g$, $d_Y(\beta,\mu) \leq G$.  Since $g$ limits to $\eta$ (or one of it is curves is disjoint from $\eta$), it follows that $d_Y(\eta,\mu) \leq G+1$, and so the lemma follows from the triangle inequality in $\mathcal C(Y)$.
\end{proof}

\begin{proof}[Proof of Proposition~\ref{prop : activeint}.]  From \cite{rshteich}, if $d_{\gamma_k}(\eta,\nu)$ is sufficiently large, then at the balance time $a_k$, $\cyl_{a_k}(\gamma_k)$ has modulus at least $M$.  For all $k$ sufficiently large, Proposition \ref{prop : anncoeff + coeffbd}(\ref{eq : mu gi big proj}) and Lemma~\ref{lem : horizontal like marking} imply
\[ d_{\gamma_k}(\eta,\nu) \stackrel + \asymp d_{\gamma_k}(\mu,\nu) \stackrel + \asymp e_k. \]
By construction, $e_k \to \infty$ as $k \to \infty$, and hence $J_k \neq \emptyset$ for all sufficiently large $k$.
Furthermore, for all $t \in J_k$, we have $\hyp_t(\gamma_k) \leq \epsilon_0$.  Since two curves with length bounded by $\epsilon_0$ are disjoint, part (i) follows.

By Proposition \ref{prop : anncoeff + coeffbd}(\ref{eqn : gamma_i, big projection}) we have 
\[d_{\gamma_k}(\gamma_f,\gamma_l) \stackrel + \asymp e_k \]
for all $0\leq f < k < l$ with $l-k, k-f \geq m$.
Let $K \geq 0$ be such that for all $k \geq K$, $e_k > B_0$, where $B_0$ is the constant from Proposition~\ref{thm : behineq}.  Thus for all $K < f < k < l$ with $l-k,k-f \geq m$ we have
\[ d_{\gamma_f}(\gamma_k,\gamma_l) \leq B_0.\]
Since $\gamma_k \to \nu \in \partial \mathcal C(S)$, the triangle inequality in $\mathcal C(\gamma_k)$ implies that
\[ d_{\gamma_f}(\gamma_k,\nu) \stackrel + \asymp 0\]
for all $K \leq f < k$ with $k-f \geq m$.  Let $K_0 \geq K$ be sufficiently large so that if $f \geq K_0$ then $d_{\gamma_f}(\eta,\nu) \stackrel + \asymp e_f$.  Thus, for $k-f \geq m$, $f \geq K_0$, at the balance time $t=a_f$ of $\gamma_f$, the $q_t$--geodesic representative of $\gamma_k$ is more vertical than horizontal, and hence $a_f < a_k$.   By part (i), the intervals $J_f$ and $J_k$ are disjoint, so part (ii) holds.
(See also the discussion in Proposition 5.6 of \cite{rshteich}.)

For part (iv), observe that by \cite{rshteich}, the modulus of $\cyl_t(\gamma_k)$ satisfies
\begin{equation} \label{eq : modulus estimate} mod(\cyl_t(\gamma_k)) \stackrel * \asymp \frac{d_{\gamma_k}(\eta,\nu)}{\cosh^2(t-a_k)}
\end{equation}
For $k$ is sufficiently large, Lemma~\ref{lem : horizontal like marking} implies $d_{\gamma_k}(\eta,\nu) \stackrel + \asymp d_{\gamma_k}(\mu,\nu) \stackrel + \asymp e_k$.  At the endpoint $\bar{a}_k$ of $J_k$,  $mod(\cyl_{\bar{a}_k}(\gamma_k)) = M$.  Since $|J_k| = 2 (\bar{a}_k - a_k)$, we have
\[ M \stackrel * \asymp \frac{e_k}{\cosh^2(\frac12 |J_k|)}.\]
Taking logarithms we obtain $\log(e_k) - |J_k| \stackrel + \asymp \log(M)$,
proving part (iv).

We proceed to the proof of part (iii). Following Rafi in \S6 of \cite{rshteich}, we introduce the following constants associated to a curve $\alpha\in\mathcal{C}(S)$ and an essential subsurface $Y\subseteq S$ with $\alpha\subseteq\partial{Y}$ (when $Y$ is an annulus, recall that $\alpha\subseteq\partial{Y}$ means that $\alpha$ is the core curve of $Y$).
\begin{itemize}
\item If $Y$ is a non-annular subsurface, an arc $\beta$ in $Y$ is a {\em common $K$--quasi-parallel of $\pi_Y(\eta)$ and $\pi_Y(\nu)$ for $\alpha$ and $Y$} if $\beta$ transversely intersects $\alpha$ and 
$$\max\{i(\beta,\pi_Y(\eta)),i(\beta,\pi_Y(\nu))\}\leq K.$$
Here $\pi_Y(\eta)$ denotes the arc-and-curve projection of $\eta$: the union of arcs and curves obtained by intersecting $\eta$ with $Y$ (likewise for $\nu$).
Define $K(Y) = \log K$, where $K$ is the smallest number so that $\eta$ and $\nu$ have a common $K$--quasi-parallel.
\item If $Y$ is an annular subsurface, let $K(Y)=d_{Y}(\eta,\nu)$. 
\end{itemize}
Now define $K_{\alpha}$ to be the largest $K(Y)$ where $\alpha\subseteq \partial{Y}$.  Then Theorem 6.1 of \cite{rshteich} implies that $\hyp_a(\alpha)\stackrel{*}{\asymp}\frac{1}{K_{\alpha}}$, where $a$ is the balance time of $\alpha$ along the geodesic ray $r$.

In what follows we show that for all sufficiently large $k$, $K_{\gamma_k}$ is approximately equal to $e_k$. Since we will be interested in subsurfaces $Y$ with $\gamma_k \subseteq \partial Y$  (or subsurfaces of those, $Z \subseteq Y$), we can apply to Lemma~\ref{lem : horizontal like marking} deducing that $d_Y(\eta,\nu) \stackrel + \asymp d_Y(\mu,\nu)$.  We will assume that $k$ is sufficiently large for this to hold, and will use this without further mention.

First suppose $Y$ is the annulus with core curve $\gamma_k$, and observe that by Proposition~\ref{prop : anncoeff + coeffbd} and Lemma~\ref{lem : horizontal like marking}, $d_Y(\eta,\nu) \stackrel + \asymp d_{Y}(\mu,\nu) \stackrel + \asymp e_{k}$, thus $K(Y) \stackrel + \asymp e_k$.  So we consider the case that $Y$ is a non-annular subsurface with $\gamma_k \subseteq \partial Y$, and prove that for sufficiently large $k$, $K(Y) \prec e_k$.

If $Y$ contains no curves $\gamma_k$ from the sequence as essential curves, then for every subsurface $Z \subseteq Y$, by Proposition \ref{prop : anncoeff + coeffbd} and Lemma~\ref{lem : horizontal like marking} we have $d_Z(\eta,\nu) \stackrel + \asymp d_Z(\mu,\nu) \stackrel + \asymp 0$.   Then choosing the threshold $A$ in Theorem \ref{thm : i=dY} larger than the upper bound on these projections, and applying the theorem to $\pi_Y(\eta),\pi_Y(\nu)$, we see that $i(\pi_Y(\eta),\pi_Y(\nu)) \stackrel + \asymp 0$.  In this case we have $K(Y) \stackrel + \asymp 0$, and so $K(Y) \prec e_k$ for all sufficiently large $k$.

Next we suppose there are curves from our sequence contained in $Y$.  Let $\{\gamma_{l}\}_{l\in\mathcal{L}}\subseteq \{\gamma_f\}_{f=0}^\infty$ where $\mathcal{L}$ is an ordered subset of $\mathbb{N}$ which is the set of curves from our sequence which are contained in $Y$.  From (\ref{eq : int}) in Theorem \ref{thm : locglob} we see that $\mathcal{L}\subseteq\{k-m+1,...,k+m-1\}$ since any other curve in the sequence intersects $\gamma_{k}$. We proceed to find an upper bound for the factor $K(Y)$. For this purpose let $\beta \subseteq \pi_Y(\gamma_{k+m})$ be any component arc of the projection.  Then from Theorem \ref{thm : i=dY} and Lemma~\ref{lem : horizontal like marking} we have
\[ i(\beta,\pi_Y\nu)\asymp \sum_{\substack{W\subseteq Y,\\ \text{non-annular}}} \{d_{W}(\gamma_{k+m},\nu)\}_{A}+\sum_{\substack{W\subseteq Y,\\ \text{annular}}} \log\{ d_{W}(\gamma_{k+m},\nu)\}_{A}.
\]
and
\begin{eqnarray*} i(\beta,\pi_Y\eta) & \asymp & \sum_{\substack{W\subseteq Y,\\ \text{non-annular}}} \{d_{W}(\gamma_{k+m},\eta)\}_{A}+\sum_{\substack{W\subseteq Y,\\ \text{annular}}} \log\{ d_{W}(\gamma_{k+m},\eta)\}_{A}\\
& \asymp & \sum_{\substack{W\subseteq Y,\\ \text{non-annular}}} \{d_{W}(\gamma_{k+m},\mu)\}_{A}+\sum_{\substack{W\subseteq Y,\\ \text{annular}}} \log\{ d_{W}(\gamma_{k+m},\mu)\}_{A}.
\end{eqnarray*}
Choose the threshold constant $A$ from Theorem~\ref{thm : i=dY} larger than the constant $R(\mu)$ from Proposition~\ref{prop : anncoeff + coeffbd}.   Appealing to that proposition and the fact that any $l\in\mathcal{L}$ is less than $k+m$, the first of these equations implies that $i(\beta,\pi_Y\nu) \asymp 0$.  For the second set of equations, note that any $l\in\mathcal{L}$ with $\gamma_{l}\pitchfork\gamma_{k+m}$ has $l \leq k$.  Therefore, by Theorem~\ref{thm : i=dY} and the fact that $\{e_f\}$ is increasing,  we have
\begin{eqnarray*}
i(\beta,\pi_Y\mu) & \asymp & \sum_{l \in \mathcal L} \log\{d_{\gamma_l}(\gamma_{k+m},\mu) \}_A  \prec \sum_{l = k-m+1}^{k} \log(d_{\gamma_l}(\gamma_{k+m},\mu))\\
& \asymp & \sum_{l=k-m+1}^{k} \log(e_l) \prec m \log(e_k) \prec e_k.\\
\end{eqnarray*}
Therefore, $\beta$ is a $K$--quasi-parallel with $K \prec e_k$.  Consequently, $K(Y) \leq \log(K) \prec \log(e_k) \prec e_k$.  This completes the proof of part (iii), and hence the proposition.
\end{proof}

Next we list some estimates for the locations of the intervals $J^{h}_{i} \subseteq [0,\infty)$, and provide more information on the relative positions of the intervals.

\medskip

Let $h\in\{0,...,m-1\}$. From part (i) and (iv) of Proposition \ref{prop : activeint}, together with the definitions, we have that for $i$ sufficiently large
\begin{eqnarray}\underline{a}^{h}_{i}&\stackrel{+}{\asymp}& a^{h}_{i}-\frac{\log e^{h}_{i}}{2}\;\;\text{and}\label{eq : lJ}\;\; \\
\bar{a}^{h}_{i}&\stackrel{+}{\asymp}& a^{h}_{i}+\frac{\log e^{h}_{i}}{2}.\label{eq : rJ}
\end{eqnarray}
Together with these estimates, the next lemma tells us the location of the active intervals, up to an additive error.
\begin{lem} \label{lem : a midpoint location} For any $h = \{0,\ldots,m-1\}$ and $i$ sufficiently large 
\begin{equation}\label{eq : mJ}a^{h}_{i}\stackrel{+}{\asymp}\sum_{j=0}^{i-1}\log be^{h}_{j}+\frac{\log e^{h}_{i}}{2}-\frac{\log x_{h}}{2}.\end{equation} 
The additive error depends on $X$, $\gamma_0^h$, and $\nu$.
\end{lem}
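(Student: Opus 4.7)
The plan is to use the formula
\[a_i^h = \frac{1}{2}\log\frac{i(\gamma_i^h,\bar\eta)}{i(\gamma_i^h,\bar\nu)},\]
which is just the balance condition $v_{a_i^h}(\gamma_i^h) = h_{a_i^h}(\gamma_i^h)$ rewritten using $v_t(\alpha)=e^{-t}i(\alpha,\bar\eta)$ and $h_t(\alpha)=e^t i(\alpha,\bar\nu)$. The bulk of the work is estimating the numerator and denominator up to multiplicative constants, which become additive constants after taking the logarithm.

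For the denominator, decompose $i(\gamma_i^h,\bar\nu)=\sum_{h'=0}^{m-1}x_{h'}\,i(\gamma_i^h,\bar\nu^{h'})$ and recall the computation inside the proof of Theorem~\ref{thm : mutualsing}: equation \eqref{eq : ighighi+1} combined with the Theorem~\ref{thm : intgkgi} estimate $i(\gamma_0^h,\gamma_{i+1}^h)\stackrel{*}{\asymp}c_{i+1}^h$ gives $i(\gamma_i^h,\bar\nu^h)\stackrel{*}{\asymp}1/c_{i+1}^h$, while the inequalities in \eqref{eq : estimates for products} give $i(\gamma_i^h,\bar\nu^{h'})\stackrel{*}{\prec}a^{-i}/c_{i+1}^h$ for $h'\neq h$. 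Thus for $i$ sufficiently large the $h'=h$ term dominates the sum and
\[i(\gamma_i^h,\bar\nu)\stackrel{*}{\asymp}\frac{x_h}{c_{i+1}^h}.\]
For the numerator, Theorem~\ref{thm : MLlimitgi} gives $\gamma_i^h/c_i^h\to\bar\nu^h$ in $\ML(S)$, so continuity of the intersection number yields $i(\gamma_i^h,\bar\eta)/c_i^h\to i(\bar\nu^h,\bar\eta)$. The one nontrivial point here is to verify that the limit is strictly positive: this follows from the fact that $\bar\eta$ and $\bar\nu$ are the horizontal and vertical foliations of the same quadratic differential and hence transverse, so $\bar\eta$ cannot share a leaf with the minimal lamination $\nu=\supp\bar\nu^h$. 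We conclude $i(\gamma_i^h,\bar\eta)\stackrel{*}{\asymp}c_i^h$ for all $i$ sufficiently large.

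Substituting into the formula for $a_i^h$ and using $c_{i+1}^h=be_i^h\,c_i^h$ together with $\log c_i^h=\sum_{j=1}^{i-1}\log be_j^h$, we get
\[a_i^h\stackrel{+}{\asymp}\tfrac{1}{2}\bigl(\log c_i^h+\log c_{i+1}^h-\log x_h\bigr)=\sum_{j=1}^{i-1}\log be_j^h+\tfrac{1}{2}\log be_i^h-\tfrac{1}{2}\log x_h.\]
This differs from the stated expression $\sum_{j=0}^{i-1}\log be_j^h+\tfrac{1}{2}\log e_i^h-\tfrac{1}{2}\log x_h$ by the single constant $\log e_0^h+\tfrac{1}{2}\log b$, which depends only on $\gamma_0^h$ and on the fixed constant $b$, so it is absorbed into the additive error. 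The multiplicative constants that become the additive error come from: the Theorem~\ref{thm : intgkgi} estimate on $i(\gamma_0^h,\gamma_{i+1}^h)$ (depending on $\nu$), the limit $i(\bar\nu^h,\bar\eta)$ (depending on $X$ through $\bar\eta$ and on $\nu$), and the initial-term constant above (depending on $\gamma_0^h$), matching the claim on dependencies. The main obstacle is the transversality/positivity step $i(\bar\eta,\bar\nu^h)>0$; once that is established, everything else is bookkeeping with the known intersection-number estimates.
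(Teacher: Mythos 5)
Your proof is correct, and its skeleton is the same as the paper's: the exact balance identity (your $a_i^h=\tfrac12\log\bigl(i(\gamma_i^h,\bar\eta)/i(\gamma_i^h,\bar\nu)\bigr)$ is just the paper's $\exp(a_i^h)=v_0/v_{a_i^h}$ with $v_{a_i^h}^2=v_0h_0$), and the vertical estimate $i(\gamma_i^h,\bar\nu)\stackrel{*}{\asymp}x_h/c_{i+1}^h$ via (\ref{eq : ighighi+1}) is exactly what the paper does. Where you genuinely diverge is the horizontal term: the paper never touches $i(\gamma_i^h,\bar\eta)$ directly, but instead uses that the basepoint is fixed and $a_i^h>0$ to write $v_0(\gamma_i^h)\stackrel{*}{\asymp}\ell_0(\gamma_i^h)\stackrel{*}{\asymp}\hyp_0(\gamma_i^h)\stackrel{*}{\asymp}i(\gamma_i^h,\mu)\stackrel{*}{\asymp}A(0,im+h)$, invoking Lemma~\ref{lem : int estimate general} for the short marking; you instead use Theorem~\ref{thm : MLlimitgi} and continuity of $i$ to get $i(\gamma_i^h,\bar\eta)/c_i^h\to i(\bar\nu^h,\bar\eta)$, which forces you to prove the positivity $i(\bar\eta,\bar\nu^h)>0$ --- a step the paper's route sidesteps entirely. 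Your positivity argument is right in substance, though the phrase ``cannot share a leaf'' is slightly off target: the clean statement is that $i(\bar\eta,\bar\nu^h)=0$ would mean the supports $\eta$ and $\nu$ have no transverse intersection, and since $\bar\nu^h$ has full support $\nu$ (minimality) this would give $i(\bar\eta,\bar\nu)=0$, contradicting that this intersection number is the (positive) area of $q$. The trade-off: the paper's route gets uniform constants from the fixed hyperbolic surface $X$ and a compactness argument already packaged in Lemma~\ref{lem : int estimate general}, while yours is a bit more direct but leans on the quadratic-differential transversality fact; both yield the claimed additive error depending on $X$, $\gamma_0^h$, and $\nu$, and your bookkeeping of the residual constant $\log e_0^h+\tfrac12\log b$ is fine.
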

\begin{proof} The proof of this lemma is similar to that of \cite[Lemma 6.3]{nonuniqueerg}, so we just sketch the proof.
Choose $i$ sufficiently large so that $J_i^h \neq \emptyset$ and $a^h_i > 0$, and so that we may estimate $i(\gamma_i^h,\mu)$ using Lemma~\ref{lem : int estimate general} (since $\mu$ is a finite set of curves).  Then appealing to the fact that $X$ is a fixed surface and $\mu$ a short marking, we have
\begin{equation} \label{eq : estimate to find balance} v_0(\gamma_i^h) \stackrel * \asymp l_0(\gamma_i^h) \stackrel * \asymp \hyp_0(\gamma_i^h) \stackrel * \asymp i(\gamma_i^h,\mu) \stackrel * \asymp A(0,h+im) = \prod_{j=0}^{i-1} b e_j^h.
\end{equation}
Since $v_t(\gamma_i^h)h_t(\gamma_i^h)$ is constant in $t$, and $v_{a_i^h}(\gamma_i^h) = h_{a_i^h}(\gamma_i^h)$, we have, for $i$ sufficiently large
\begin{eqnarray*}
v_{a_i^h}^2(\gamma_i^h) & = & v_{a_i^h}(\gamma_i^h)h_{a_i^h}(\gamma_i^h)\\
& = & v_0(\gamma_i^h) h_0(\gamma_i^h) \\
& \stackrel * \asymp & i(\gamma_i^h,\mu) i(\gamma_i^h,\bar \nu)\\
& \stackrel * \asymp & i(\gamma_i^h,\mu) \Bigl( \sum_{d =0}^{m-1} x_d i(\gamma_i^h,\bar \nu^d) \Bigr)
\end{eqnarray*}
Since $\mu$ is a fixed set of curves and $\gamma_0^h$ a fixed curve, $i(\gamma_0^h,\gamma_i^h) \stackrel * \asymp i(\mu,\gamma_i^h)$ for all $i$ sufficiently large.  Thus from (\ref{eq : ighighi+1}), for $h \neq d$, $d \in \{0,\ldots,m-1\}$, we have
\[ i(\gamma_i^h,\bar \nu^h) \stackrel * \asymp \frac{1}{i(\gamma^h_{i+1},\mu)} \mbox{ and } i(\gamma_i^h,\bar \nu^d)i(\gamma^h_{i+1},\mu) \to 0\]
The above estimates and Lemma~\ref{lem : int estimate general} imply that for $i$ sufficiently large
\[v_{a_i^h}^2(\gamma_i^h) \stackrel * \asymp x_h \frac{i(\gamma_i^h,\mu)}{i(\gamma_{i+1}^h,\mu)} \stackrel * \asymp \frac{x_h}{be_i^h}. \]
Combining this with (\ref{eq : estimate to find balance}) we have
\[ \exp(a_i^h) = \frac{v_0(\gamma_i^h)}{\exp(-a_i^h)v_0(\gamma_i^h)} = \frac{v_0(\gamma_i^h)}{v_{a_i^h}(\gamma_i^h)} \stackrel * \asymp \frac{\prod_{j=0}^{i-1} b e_j^h}{\sqrt{x_h/be_i^h}}.\]
Solving for $a_i^h$ and taking logarithms (discarding a constant $\log b$) proves (\ref{eq : mJ}), completing the proof.
\end{proof}

\begin{lem}\label{lem : af af+m}
For any $k$ sufficiently large, $\bar{a}_{k}\stackrel{+}{\asymp} \underline{a}_{k+m}$, with additive error depends on $X$, $M$, $\gamma_0^h$, and $\nu$.
\end{lem}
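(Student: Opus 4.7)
The plan is to derive the statement as a direct computational consequence of Lemma~\ref{lem : a midpoint location} together with the elementary estimates \eqref{eq : lJ} and \eqref{eq : rJ} for the endpoints of the active intervals relative to their midpoints. Write $k = im + h$ for some $h \in \{0,\ldots,m-1\}$ and $i \geq 0$; then $k+m = (i+1)m + h$, so both $\gamma_k$ and $\gamma_{k+m}$ belong to the same subsequence $\{\gamma_j^h\}_{j=0}^\infty$, namely $\gamma_k = \gamma_i^h$ and $\gamma_{k+m} = \gamma_{i+1}^h$.

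First I would use \eqref{eq : rJ} and Lemma~\ref{lem : a midpoint location} to write, up to a uniform additive error,
\[
\bar{a}_k = \bar{a}_i^h \stackrel{+}{\asymp} a_i^h + \tfrac{\log e_i^h}{2} \stackrel{+}{\asymp} \sum_{j=0}^{i-1}\log(be_j^h) + \log e_i^h - \tfrac{\log x_h}{2}.
\]
Similarly, applying \eqref{eq : lJ} and Lemma~\ref{lem : a midpoint location} to the next term in the subsequence gives
\[
\underline{a}_{k+m} = \underline{a}_{i+1}^h \stackrel{+}{\asymp} a_{i+1}^h - \tfrac{\log e_{i+1}^h}{2} \stackrel{+}{\asymp} \sum_{j=0}^{i}\log(be_j^h) - \tfrac{\log x_h}{2} = \sum_{j=0}^{i-1}\log(be_j^h) + \log b + \log e_i^h - \tfrac{\log x_h}{2}.
\]
Subtracting, the telescoping sums cancel and all $i$-dependence disappears, leaving $\underline{a}_{k+m} - \bar{a}_k \stackrel{+}{\asymp} \log b$, which is a uniform constant. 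This gives $\bar{a}_k \stackrel{+}{\asymp} \underline{a}_{k+m}$, with additive error controlled by the three uses of the preceding estimates (hence depending on $X$, $M$, $\gamma_0^h$, and $\nu$, as claimed).

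There is essentially no obstacle in this argument: the lemma is a bookkeeping consequence of the explicit formula in Lemma~\ref{lem : a midpoint location}, and the two endpoint estimates \eqref{eq : lJ}, \eqref{eq : rJ} which follow directly from Proposition~\ref{prop : activeint}(i),(iv). The only point requiring care is to ensure $k$ is large enough that all three inputs are valid simultaneously for both indices $i$ and $i+1$ (equivalently, for both $\gamma_k$ and $\gamma_{k+m}$); this is automatic by choosing $k$ past the thresholds appearing in Proposition~\ref{prop : activeint} and Lemma~\ref{lem : a midpoint location}. Geometrically, the upshot is that the active intervals $\{J_i^h\}_{i \geq 0}$ tile a tail of $[0,\infty)$ up to bounded overlaps/gaps, which is exactly the property needed in the next section to analyze the limit set of the Teichm\"uller ray.
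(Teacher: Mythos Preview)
Your proof is correct and is essentially identical to the paper's: both write $k=im+h$, invoke \eqref{eq : lJ}, \eqref{eq : rJ}, and \eqref{eq : mJ}, and compute $\underline{a}_{k+m}-\bar a_k \stackrel{+}{\asymp} \log b$. The only cosmetic difference is that the paper computes the difference directly in one display, whereas you expand $\bar a_k$ and $\underline{a}_{k+m}$ separately and then subtract.
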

\begin{proof} Let $k=im+h$ where $h\in\{0,...,m-1\}$. 
From (\ref{eq : lJ}), (\ref{eq : rJ}) and (\ref{eq : mJ}) we calculate 
\begin{eqnarray*}
\underline{a}_{k+m}-\bar{a}_{k}&=&\underline{a}^{h}_{i+1}-\bar{a}^{h}_{i}\\
&\stackrel{+}{\asymp}& \sum_{j=0}^{i}\log be^{h}_{j}+\frac{\log e^{h}_{i+1}}{2}-\frac{\log x_{h}}{2}-\frac{\log e^{h}_{i+1}}{2}\\
&-&\Big( \sum_{j=0}^{i-1}\log be^{h}_{j}+\frac{\log e^{h}_{i}}{2}-\frac{\log x_{h}}{2}+\frac{\log e^{h}_{i}}{2}\Big)\\
&=&\log be^{h}_{i}-\log e^{h}_{i}=\log b.
\end{eqnarray*}
Therefore $\bar{a}_{k}\stackrel{+}{\asymp} \underline{a}_{k+m}$ since $\log b$ is a constant.
\end{proof}

Let $k,l\in\mathbb{N}$ and $0< l-k\leq m$. Suppose that $k\equiv h \mod m$ and $l\equiv d \mod m$ where $h,d\in\{0,...,m-1\}$. Then for the pair $(k,l)$ one of the following two hold: 
\begin{eqnarray}
 h&<&d \textnormal{ and $\exists i\in\mathbb{N}$, so that $k=mi+h$ and $l=mi+d$, or}\label{fl : 1} \\
 h&>&d \textnormal{ and $\exists i\in\mathbb{N}$, so that $k=mi+h$ and $l=m(i+1)+d$}\label{fl : 2}.
\end{eqnarray}
\begin{notation}
Let $\{x_{i}\}_{i=0}^{\infty}$ and $\{y_{i}\}_{i=0}^{\infty}$ be sequences of real numbers. We write $x_{i}\ll y_{i}$ if $x_{i}< y_{i}$ for all $i$ sufficiently large and $y_i - x_i \to\infty$ as $i\to\infty$. 
\end{notation}
\begin{lem}\label{lem : alaf}
For $k,l\in\mathbb{N}$ sufficiently large where $0\leq l-k <m$ the following holds:
\begin{equation}\label{eq : lara}\bar{a}_{k-m}<\underline{a}_{l}\ll\bar{a}_{k}.\end{equation}
\end{lem}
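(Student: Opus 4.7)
The plan is to split the conclusion into its two halves and handle each by a different mechanism: the left inequality is structural (disjointness of active intervals), while the right relation $\underline{a}_l\ll\bar{a}_k$ requires a direct computation with the explicit estimates for midpoints and endpoints from Lemma~\ref{lem : a midpoint location} together with~(\ref{eq : lJ}) and~(\ref{eq : rJ}).

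For $\bar{a}_{k-m}<\underline{a}_l$: since $0\leq l-k<m$, we have $l-(k-m)\in[m,2m)$, so Lemma~\ref{lem : length 2m intervals} gives $i(\gamma_{k-m},\gamma_l)\in[b_1,b_2]$, in particular nonzero. By Proposition~\ref{prop : activeint}(i) the closed intervals $J_{k-m}$ and $J_l$ are disjoint, and by Proposition~\ref{prop : activeint}(ii) the interval $J_{k-m}$ occurs before $J_l$ (as $l-(k-m)\geq m$). Since two disjoint closed intervals on the line with one preceding the other must be strictly separated, $\bar{a}_{k-m}<\underline{a}_l$.

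For $\underline{a}_l\ll\bar{a}_k$: we need both strict inequality and $\bar{a}_k-\underline{a}_l\to\infty$. Write $k=jm+h$ and $l=im+d$ with $h,d\in\{0,\ldots,m-1\}$. The hypothesis $0\leq l-k<m$ leaves three possibilities: either $l=k$ (so $i=j$, $d=h$); or the pattern (\ref{fl : 1}) with $i=j$ and $h<d$; or the pattern (\ref{fl : 2}) with $i=j+1$ and $h>d$. When $l=k$, Proposition~\ref{prop : activeint}(iv) already gives $\bar{a}_k-\underline{a}_k=|J_k|\stackrel{+}{\asymp}\log e_k\to\infty$. In the remaining two cases, combine (\ref{eq : lJ}), (\ref{eq : rJ}) and the formula (\ref{eq : mJ}) to obtain
\[
\bar{a}_k-\underline{a}_l \stackrel{+}{\asymp}
\sum_{s=0}^{j-1}\log\!\tfrac{e_{sm+h}}{e_{sm+d}} + \log e_{jm+h} -\varepsilon\log be_{jm+d} + \tfrac{\log x_d-\log x_h}{2},
\]
where $\varepsilon\in\{0,1\}$ according to whether $i=j$ or $i=j+1$. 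Then:

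In the pattern (\ref{fl : 2}) with $h>d$, every ratio $e_{sm+h}/e_{sm+d}$ satisfies $e_{sm+h}\geq a^{h-d}e_{sm+d}$ by the growth condition (\ref{eq : ek}), so each summand is at least $(h-d)\log a\geq \log a$ and $\bar a_k-\underline a_l\geq(j+1)(h-d)\log a-\log b+O(1)\to\infty$. In the pattern (\ref{fl : 1}) with $h<d$ the summands may be negative, and the main obstacle is to recover enough positivity from the term $\log e_{jm+h}$ to compensate; here the trick is to pair each $-\log e_{sm+d}$ with the next index in the $h$-subsequence using $e_{(s+1)m+h}\geq a^{m+h-d}e_{sm+d}$, obtaining
\[
\sum_{s=0}^{i-1}\log\tfrac{e_{sm+h}}{e_{sm+d}} + \log e_{im+h}
\;\geq\; \log e_h + i(m+h-d)\log a,
\]
which, since $h<d\leq m-1$ forces $m+h-d\geq 1$, grows linearly in $i$. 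Thus $\bar{a}_k-\underline{a}_l\to\infty$ as $k\to\infty$ in every case, establishing $\underline{a}_l\ll\bar{a}_k$.

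The main obstacle is exactly the telescoping step in the pattern (\ref{fl : 1}) above: the naive estimate on $\sum\log(e_{sm+h}/e_{sm+d})$ can be arbitrarily negative since the sequence $\{e_k\}$ has no upper growth bound, so one cannot simply bound the sum term by term but must re-index one of the sequences by one step in $s$ to pair it against $\log e_{im+h}$ and harvest the growth rate $a$.
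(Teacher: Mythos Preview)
Your proof is correct and follows essentially the same approach as the paper: the first inequality via disjointness and ordering of active intervals from Proposition~\ref{prop : activeint}(i)--(ii), and the second via direct computation with (\ref{eq : lJ}), (\ref{eq : rJ}), (\ref{eq : mJ}), including the same re-indexing trick that pairs $-\log e_{sm+d}$ with $\log e_{(s+1)m+h}$ in the case $h<d$. The only differences are cosmetic (your use of separate indices $j,i$ and the unified formula with the $\varepsilon$ switch).
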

\begin{proof}
The proof is similar to the proof of Lemma 7.3 of \cite{nonuniqueerg}. For the first inequality, note that $l-(k-m) \geq m$. By Proposition~\ref{prop : activeint} parts (i) and (ii), $J_{k-m}$ occurs before $J_l$, and so we have $\bar{a}_{k-m}<\underline{a}_{l}$.

Now we show that $\underline{a}_{l}\ll \bar{a}_{k}$. If $l=k$, then since $|J_k| \to \infty$ as $k \to \infty$, we have $\underline{a}_k\ll \bar{a}_k$.  Now assume $k < l$ and let $k\equiv h \mod m$ and $l\equiv d \mod m$ with $h,d\in\{0,...,m-1\}$. First, suppose that (\ref{fl : 1}) holds so $h  < d$.  Using (\ref{eq : ek}), (\ref{eq : lJ}), (\ref{eq : rJ}) and (\ref{eq : mJ}), and the fact that $e_k \geq a^{k-f} e_f$ for $k > f$, we have
\begin{eqnarray*}
\bar{a}_{k}-\underline{a}_{l}&=&\bar{a}^{h}_{i}-\underline{a}^{d}_{i}\\
&\stackrel{+}{\asymp}&\sum_{j=0}^{i-1}\log be^{h}_{j}+\log e^{h}_{i}-\frac{1}{2}\log x_{h}-\sum_{j=0}^{i-1}\log be^{d}_{j}+\frac{1}{2}\log x_{d}\\
&=&\sum_{j=0}^{i-1}\log\frac{e^{h}_{j}}{e^{d}_{j}}+\log e^{h}_{i}+\frac{1}{2}\log \frac{x_{d}}{x_{h}}\\
&=&\sum_{j=1}^{i}\log\frac{e^{h}_{j}}{e^{d}_{j-1}} + \log e^{h}_{0}+\frac{1}{2}\log\frac{x_{d}}{x_{h}}\\
&\geq& \sum_{j=1}^{i}(m+h-d)\log a+\frac{1}{2}\log\frac{x_{d}}{x_{h}}\\
&=&i(m+h-d)\log a+\frac{1}{2}\log\frac{x_{d}}{x_{h}}.
\end{eqnarray*}
Now since $m+h-d>0$, the last term goes to $\infty$ as $i\to\infty$. 

Next suppose that (\ref{fl : 2}) holds so $h > d$.  Then we similarly have
\begin{eqnarray*}
\bar{a}_{k}-\underline{a}_{l}&=&\bar{a}^{h}_{i}-\underline{a}^{d}_{i+1}\\
&\stackrel{+}{\asymp}&\sum_{j=1}^{i-1}\log be^{h}_{j}+\log e^{h}_{i}-\sum_{j=1}^{i}\log be^{d}_{j}+\frac{1}{2}\log\frac{x_{d}}{x_{h}}\\
&=&\sum_{j=1}^{i}\log\frac{e^{h}_{j}}{e^{d}_{j}}+\frac{1}{2}\log \frac{x_{d}}{x_{h}}-\log b\\
&=&\sum_{j=1}^{i}\log\frac{e^{h}_{j}}{e^{d}_{j}}+\frac{1}{2}\log\frac{x_{d}}{x_{h}}-\log b\\
&=&i(h-d)\log a+\frac{1}{2}\log\frac{x_{d}}{x_{h}}-\log b.
\end{eqnarray*}
Now since $h-d>0$, the last term goes to $\infty$ as $i\to\infty$. 
\end{proof}

To obtain a greater control over the arrangement of intervals $J_{k}$ along the Teichm\"{u}ller geodesic ray (see Lemma \ref{lem : af+al} below) we consider the following growth conditions, in addition to (\ref{eq : ek}): 
\begin{equation}\label{eq : G1}
e_{k+1}\geq (\prod_{j=0}^{k} e_{j})^{2}.
\end{equation}
Such sequences exist simply by setting $e_0  \geq a$ and defining $e_k$ recursively, ensuring at every step that (\ref{eq : G1}) is satisfied.

Condition (\ref{eq : G1}) has the following consequence.
\begin{lem} \label{lem : case consequence of super exp growth}
Suppose a sequence $\{e_{k}\}_{k}$ satisfies (\ref{eq : ek}) and (\ref{eq : G1}). 
\begin{equation}\label{eq : G1sp}
\begin{array}{l} \text{If (\ref{fl : 1}) holds, then}\;\;
 \displaystyle{\tfrac{(e^{d}_{i})^\frac{1}{2}}{e^h_i}\prod_{j=0}^{i-1}\tfrac{e^{d}_{j}}{e^{h}_{j}} \to \infty},\text{ and}\\
\text{If (\ref{fl : 2}) holds, then}\;\; \displaystyle{(e^{d}_{i+1})^\frac{1}{2}\prod_{j=0}^{i}\tfrac{e^{d}_{j}}{e^{h}_{j}} \to \infty}.
\end{array}
\end{equation}
\end{lem}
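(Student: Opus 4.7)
The plan is to deduce both limits directly from the super-exponential growth condition (\ref{eq : G1}), which after taking square roots reads $(e_{k+1})^{1/2}\geq \prod_{j=0}^{k}e_{j}$. The mechanism in each case is the same: the single ``top'' factor $(e_{i}^{d})^{1/2}$ or $(e_{i+1}^{d})^{1/2}$ is already large enough to swallow the entire denominator $\prod_{j} e_{j}^{h}$, while (\ref{eq : ek}) provides the divergence in what remains.

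For case (\ref{fl : 1}), with $h<d$, I would rewrite
\[
\tfrac{(e^{d}_{i})^{1/2}}{e^h_i}\prod_{j=0}^{i-1}\tfrac{e^{d}_{j}}{e^{h}_{j}} \;=\; \tfrac{(e_{mi+d})^{1/2}}{\prod_{j=0}^{i}e_{mj+h}}\cdot \prod_{j=0}^{i-1}e_{mj+d}.
\]
Since $h<d$ and $j\le i$, every index $mj+h$ is at most $mi+h\le mi+d-1$, so by (\ref{eq : G1}),
\[
(e_{mi+d})^{1/2}\;\ge\;\prod_{k=0}^{mi+d-1}e_{k}\;\ge\;\prod_{j=0}^{i}e_{mj+h}.
\]
Hence the first factor is bounded below by $1$, and it remains to note that by (\ref{eq : ek}) each $e_{mj+d}\ge a^{d-h}e_{mj+h}\ge a$, so $\prod_{j=0}^{i-1}e_{mj+d}\to\infty$.

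For case (\ref{fl : 2}), with $h>d$, the analogous rearrangement gives
\[
(e^{d}_{i+1})^{1/2}\prod_{j=0}^{i}\tfrac{e^{d}_{j}}{e^{h}_{j}} \;=\; \tfrac{(e_{m(i+1)+d})^{1/2}}{\prod_{j=0}^{i}e_{mj+h}}\cdot \prod_{j=0}^{i}e_{mj+d}.
\]
Since $h\le m-1$, for every $j\le i$ the index $mj+h$ is at most $mi+h\le m(i+1)+d-1$, so (\ref{eq : G1}) again gives
\[
(e_{m(i+1)+d})^{1/2}\;\ge\;\prod_{k=0}^{m(i+1)+d-1}e_{k}\;\ge\;\prod_{j=0}^{i}e_{mj+h},
\]
and the first factor is at least $1$. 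The remaining product $\prod_{j=0}^{i}e_{mj+d}$ again tends to infinity by (\ref{eq : ek}).

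No genuine obstacle arises; the only mild subtlety is the bookkeeping to check that all indices of the denominator are covered by the range of indices under the square-root appeal to (\ref{eq : G1}). The inequality $h<d$ (resp.\ $h\le m-1$) is precisely what makes $mi+h$ (resp.\ $mi+h$) fit into $\{0,\ldots,mi+d-1\}$ (resp.\ $\{0,\ldots,m(i+1)+d-1\}$), which is the one place where the case distinction between (\ref{fl : 1}) and (\ref{fl : 2}) genuinely matters.
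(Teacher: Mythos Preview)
Your proof is correct and takes essentially the same approach as the paper: use (\ref{eq : G1}) to show that the square-root factor absorbs the denominator, then observe that the remaining numerator diverges. In Case~(\ref{fl : 2}) your argument is identical to the paper's; in Case~(\ref{fl : 1}) the paper factors slightly differently, absorbing only $e_i^h$ into $(e_i^d)^{1/2}$ via (\ref{eq : G1}) and then using (\ref{eq : ek}) to see $\prod_{j=0}^{i-1} e_j^d/e_j^h \geq a^{i(d-h)} \to \infty$, but this is a cosmetic difference.
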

\begin{proof}
Let $k\equiv d \mod m$ and $l\equiv h \mod m$, where $d,h\in\{0,...,m-1\}$.  First suppose that (\ref{fl : 1}) holds so $h < d$. Since $\{e_{k}\}$ is increasing (more than) exponentially fast
\[ \prod_{j=0}^{i-1} \tfrac{e^{d}_{j}}{e^{h}_{j}} \to \infty.\]
Moreover, by (\ref{eq : G1}) we have $(e^{d}_{i})^{\frac{1}{2}}\geq e^{h}_{i}$, that is, $(e^{d}_{i})^{\frac{1}{2}}/e^{h}_{i} \geq 1$.  Thus (\ref{eq : G1sp}) follows.

Now suppose that (\ref{fl : 2}) holds so $h > d$. Then 
$$(e^{d}_{i+1})^{\frac{1}{2}}\geq \prod_{j=0}^{m(i+1)+d-1} e_{j}\geq \prod_{j=0}^{i} e^{h}_{j}.$$
 where the second inequality holds because $m(i+1)+d>mi+h$.  Therefore, condition (\ref{eq : G1sp}) easily follows in this case as well.
\end{proof}

\begin{lem} \label{lem : af+al}
Suppose that the growth condition (\ref{eq : G1}) holds.  Then for $k,l\in\mathbb{N}$ sufficiently large with $0< l-k <m$ we have
\begin{eqnarray}\label{eq : raa}\bar{a}_{k}& \ll & a_{l}  \end{eqnarray}
\end{lem}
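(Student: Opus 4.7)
The plan is to prove the inequality $\bar{a}_k \ll a_l$ by directly computing $a_l - \bar{a}_k$ using the asymptotic formulas already established for the midpoints (\ref{eq : mJ}) and the right endpoints (\ref{eq : rJ}) of the active intervals, and then invoking Lemma~\ref{lem : case consequence of super exp growth} to conclude that this difference tends to $\infty$.

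First I would write $k = mi + h$ and split the argument into two cases according to which of (\ref{fl : 1}) or (\ref{fl : 2}) holds, so that either $l = mi + d$ with $h<d$, or $l = m(i+1) + d$ with $h > d$. Combining (\ref{eq : mJ}) with (\ref{eq : rJ}), I get
\[
\bar{a}_k \stackrel{+}{\asymp} \sum_{j=0}^{i-1}\log(be_j^h) + \log e_i^h - \tfrac{1}{2}\log x_h.
\]
In case (\ref{fl : 1}), subtracting this from the formula for $a_l = a_i^d$ and cancelling the $\log b$ terms yields
\[
a_l - \bar{a}_k \stackrel{+}{\asymp} \sum_{j=0}^{i-1}\log\!\tfrac{e_j^d}{e_j^h} + \tfrac{1}{2}\log e_i^d - \log e_i^h + \tfrac{1}{2}\log\!\tfrac{x_h}{x_d} = \log\!\left(\tfrac{(e_i^d)^{1/2}}{e_i^h}\prod_{j=0}^{i-1}\tfrac{e_j^d}{e_j^h}\right) + \tfrac{1}{2}\log\!\tfrac{x_h}{x_d}.
\]
The first part of (\ref{eq : G1sp}) says exactly that the argument of this logarithm tends to infinity, so $a_l - \bar a_k \to \infty$ as required.

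In case (\ref{fl : 2}), a similar calculation using $a_l = a_{i+1}^d$ gives
\[
a_l - \bar{a}_k \stackrel{+}{\asymp} \sum_{j=0}^{i}\log\!\tfrac{e_j^d}{e_j^h} + \tfrac{1}{2}\log e_{i+1}^d + \tfrac{1}{2}\log\!\tfrac{x_h}{x_d} + \log b = \log\!\left((e_{i+1}^d)^{1/2}\prod_{j=0}^{i}\tfrac{e_j^d}{e_j^h}\right) + \text{const},
\]
after collecting the $\log(e_i^d/e_i^h)$ term into the product. The second part of (\ref{eq : G1sp}) then shows that this too tends to infinity. Since both cases give $a_l - \bar a_k \to \infty$ while $a_l > \bar a_k$ for $i$ large (this sign is automatic once the difference is eventually positive), we obtain $\bar a_k \ll a_l$.

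The computations themselves are essentially bookkeeping, closely parallel to the proofs of Lemma~\ref{lem : af af+m} and Lemma~\ref{lem : alaf}; the only genuine content is that the algebraic identities reduce exactly to the two expressions appearing in (\ref{eq : G1sp}), so the main step is recognizing that the growth hypothesis (\ref{eq : G1}) was designed precisely to force these expressions to diverge. I do not expect a serious obstacle; the delicate part is simply tracking the case distinction between (\ref{fl : 1}) and (\ref{fl : 2}) correctly, where in the first case the index $i$ is the same for $k$ and $l$ but the residues differ, and in the second case the index also shifts by one.
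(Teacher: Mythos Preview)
Your proposal is correct and follows essentially the same approach as the paper's own proof: the paper likewise splits into the two cases (\ref{fl : 1}) and (\ref{fl : 2}), computes $a_l - \bar a_k$ via (\ref{eq : rJ}) and (\ref{eq : mJ}), reduces to the two expressions in (\ref{eq : G1sp}), and then appeals to Lemma~\ref{lem : case consequence of super exp growth}. Your bookkeeping and the resulting formulas match the paper's exactly.
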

\begin{proof} Let $f\equiv h \mod m$ and $l\equiv d\mod m$ where $h,d\in\{0,...,m-1\}$. 

First suppose that (\ref{fl : 1}) holds so $h < d$.  Then from (\ref{eq : rJ}) and (\ref{eq : mJ})
 we calculate
\begin{eqnarray*}
a_{l}-\bar{a}_{k}&=&a^{d}_{i}-\bar{a}^{h}_{i}\\
&\stackrel{+}{\asymp}&\sum_{j=0}^{i-1}\log be^{d}_{j}+\frac{\log e^{d}_{i}}{2}-\frac{\log x_{d}}{2}-\left( \sum_{j=0}^{i-1}\log be^{h}_{j}+\log e^{h}_{i}-\frac{\log x_{h}}{2}\right)\\
& = & \log\Bigl(\tfrac{(e_i^d)^{\frac12}}{e_i^h}\prod_{j=0}^{i-1}\tfrac{e_j^d}{e_j^h}\Bigr)+\frac{1}{2}\log\frac{x_{h}}{x_{d}} \to \infty
\end{eqnarray*}
where the sequence tends to infinity as $i \to \infty$ by Lemma~\ref{lem : case consequence of super exp growth}.

Now suppose that (\ref{fl : 2}) holds so $h > d$.  Then we have
\begin{eqnarray*}
a_{l}-\bar{a}_{k}&=&a^{d}_{i+1}-\bar{a}^{h}_{i}\\
&\stackrel{+}{\asymp}&\sum_{j=0}^{i}\log be^{d}_{i}+\frac{\log e^{d}_{i+1}}{2}-\frac{\log x_{d}}{2}-\left(\sum_{j=0}^{i-1}\log be^{h}_{j}+\log e^{h}_{i}-\frac{\log x_{h}}{2}\right)\\
& = & \log\Bigl((e^{d}_{i+1})^\frac{1}{2}\prod_{j=0}^{i}\tfrac{e^{d}_{j}}{e^{h}_{j}}\Bigr) + \log b +\frac{1}{2}\log\frac{x_{h}}{x_{d}} \to \infty
\end{eqnarray*}
where again the convergence to infinity as $i \to \infty$ is by Lemma~\ref{lem : case consequence of super exp growth}.
\end{proof}

The following conveniently summarizes the relative positions of intervals for large indices.  See Figure~\ref{fig : actints}.
\begin{lem}\label{lem : af in Jl}
For $k < l$ sufficiently large and $l < k+m$, we have
\[ \underline{a}_k \ll \underline{a}_l \ll a_k \ll \bar a_k < \underline{a}_{k+m} \ll a_l \ll \bar a_l < \underline{a}_{l+m} \ll a_{k+m}.\]
Furthermore
\[ \bar a_k \stackrel + \asymp \underline{a}_{k+m}.\]
\end{lem}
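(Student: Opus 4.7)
The plan is to assemble the chain from Proposition~\ref{prop : activeint} together with Lemmas~\ref{lem : af af+m}, \ref{lem : alaf}, and \ref{lem : af+al}, supplementing them with two direct computations. First, the ``easy'' links follow from already-proven facts. The inequalities $a_k\ll\bar a_k$ and $a_l\ll\bar a_l$ come from Proposition~\ref{prop : activeint}(iv), since $|J_k|\stackrel{+}{\asymp}\log e_k\to\infty$. The strict inequalities $\bar a_k<\underline a_{k+m}$ and $\bar a_l<\underline a_{l+m}$ come from Proposition~\ref{prop : activeint}(i),(ii), because Lemma~\ref{lem : length 2m intervals} gives $i(\gamma_k,\gamma_{k+m})=b>0$, so $J_k$ and $J_{k+m}$ are disjoint with $J_k$ preceding $J_{k+m}$. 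The ``Furthermore'' clause $\bar a_k\stackrel{+}{\asymp}\underline a_{k+m}$ is precisely Lemma~\ref{lem : af af+m}. Combining Lemmas~\ref{lem : af af+m} and \ref{lem : af+al} (since $0<l-k<m$) gives $\underline a_{k+m}\stackrel{+}{\asymp}\bar a_k\ll a_l$, hence $\underline a_{k+m}\ll a_l$; applying the same combination to the pair $(l,k+m)$, whose gap $k+m-l$ also lies in $(0,m)$, gives $\underline a_{l+m}\stackrel{+}{\asymp}\bar a_l\ll a_{k+m}$, hence $\underline a_{l+m}\ll a_{k+m}$.

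Second, the two remaining inequalities $\underline a_k\ll\underline a_l$ and $\underline a_l\ll a_k$ I would establish by direct computation using the asymptotic formulas~\eqref{eq : lJ}, \eqref{eq : rJ}, and \eqref{eq : mJ}, treating separately the two cases \eqref{fl : 1} (so $k=im+h$, $l=im+d$ with $h<d$) and \eqref{fl : 2} (so $k=im+h$, $l=(i+1)m+d$ with $h>d$), mirroring the style of the proofs of Lemmas~\ref{lem : alaf} and \ref{lem : af+al}. For $\underline a_l-\underline a_k$ in case~\eqref{fl : 1}, the formulas collapse to $\sum_{j=0}^{i-1}\log(e^d_j/e^h_j)+O(1)$, which by~\eqref{eq : ek} is at least $i(d-h)\log a\to\infty$. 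In case~\eqref{fl : 2} one additional term $\log(be^d_i)$ appears, and the stronger growth~\eqref{eq : G1} (via $\log e^d_i\geq 2\sum_{f=0}^{im+d-1}\log e_f$) dominates the possibly negative sum $\sum_{j=0}^{i-1}\log(e^d_j/e^h_j)$: the index bound $(i-1)m+h\leq im+d-1$, which uses $h-d<m$, exhibits this sum as a subsum of $\sum_{f=0}^{im+d-1}\log e_f$. The computation for $a_k-\underline a_l$ is entirely parallel, with $\log(be^d_i)$ replaced by the half-interval term $\frac{\log e^h_i}{2}$ whose super-exponential lower bound again comes from~\eqref{eq : G1}.

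The main obstacle is case~\eqref{fl : 2} for each of these two direct inequalities: the cumulative ratio $\prod_{j=0}^{i-1}(e^h_j/e^d_j)$ is bounded by $1$ there and can contribute an unbounded negative log that has to be swallowed. The content of~\eqref{eq : G1} is precisely that the single super-exponentially growing term (either $\frac{\log e^h_i}{2}$ or $\log e^d_i$) outraces any such accumulated negative sum, an incarnation of the phenomenon captured abstractly in Lemma~\ref{lem : case consequence of super exp growth} and already exploited in the proof of Lemma~\ref{lem : af+al}. Once this super-exponential dominance is verified in both subcases, every remaining step of the chain is a routine triangle-inequality combination of the facts assembled in the first paragraph.
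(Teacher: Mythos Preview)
Your proof is correct, but it does more work than necessary for the two ``direct'' inequalities $\underline a_k\ll\underline a_l$ and $\underline a_l\ll a_k$. The paper simply says the whole chain is immediate from Lemmas~\ref{lem : af af+m}, \ref{lem : alaf}, and \ref{lem : af+al}, and indeed those two inequalities also follow from these lemmas once you apply them to the \emph{shifted} pair $(l-m,k)$, whose index gap $k-(l-m)=m-(l-k)$ again lies in $(0,m)$. Concretely, Lemma~\ref{lem : alaf} applied to $(l-m,k)$ gives $\underline a_k\ll\bar a_{l-m}$, and Lemma~\ref{lem : af+al} applied to the same pair gives $\bar a_{l-m}\ll a_k$; combining each with $\bar a_{l-m}\stackrel{+}{\asymp}\underline a_l$ from Lemma~\ref{lem : af af+m} yields $\underline a_k\ll\underline a_l$ and $\underline a_l\ll a_k$. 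So no fresh computation with the formulas \eqref{eq : lJ}--\eqref{eq : mJ} is needed.

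Your direct computations are sound in outline, but note one slip in the case analysis of the third paragraph: for $a_k-\underline a_l$ the ``bad'' case where the cumulative log-ratio is negative is actually case~\eqref{fl : 1} (there $h<d$, so $\sum\log(e_j^h/e_j^d)<0$), not case~\eqref{fl : 2}; and in case~\eqref{fl : 2} an extra negative term $-\log(be_i^d)$ also appears, which \eqref{eq : G1} handles via $\frac12\log e_i^h\ge\log e_i^d$. Both cases still go through with \eqref{eq : G1}, so this does not break your argument, but the shortcut above is cleaner and avoids the bookkeeping altogether.
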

\begin{proof}
This is immediate from Lemmas \ref{lem : af af+m}, \ref{lem : alaf} and \ref{lem : af+al}.
 \end{proof}
 
\begin{center}
\begin{figure}[htb]
\begin{tikzpicture}
\draw [line width = 2] [blue] (0,.5) -- (3,.5);
\draw [line width = 2] [blue] (3.5,.5) -- (12,.5);
\draw [line width = 2] [red] (.7,0) -- (9.7,0);
\draw [line width = 2] [red] (10.2,0) -- (12,0);
\draw [fill] (0,.5) circle [radius=.08];
\draw [fill] (1.5,.5) circle [radius=.08];
\draw [fill] (3,.5) circle [radius=.08];
\draw [fill] (3.5,.5) circle [radius=.08];
\draw [fill] (11.7,.5) circle [radius=.08];
\draw [fill] (.7,0) circle [radius=.08];
\draw [fill] (9.7,0) circle [radius=.08];
\draw [fill] (5.2,0) circle [radius=.08];
\draw [fill] (10.2,0) circle [radius=.08];
\node  at (0,.7) {$^{\underline{a}_k}$};
\node at (1.5,.7) {$^{a_k}$};
\node  at (3,.7) {$^{\bar a_k}$};
\node at (3.8,.7) {$^{\underline{a}_{k+m}}$};
\node at (11.5,.7) {$^{a_{k+m}}$};
\node at (.7,-.3) {$^{\underline{a}_l}$};
\node at (5.25,-.3) {$^{a_l}$};
\node at (9.7,-.3) {$^{\bar a_l}$};
\node at (10.5,-.3) {$^{\underline{a}_{l+m}}$};
\end{tikzpicture}
\caption{Relative positions of active intervals, $k < l < k+m$.}
\label{fig : actints}
\end{figure}
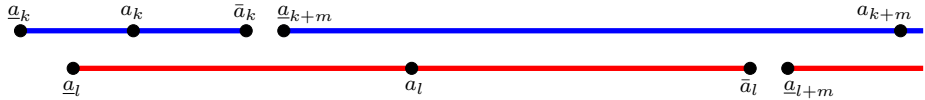
\end{center}

\section{Limit sets of Teichm\"{u}ller geodesics}\label{sec : limset}

In this section, we continue with the assumptions from the previous section on the sequences $\{\gamma_k\}_{k=0}^\infty$ and $\{e_k\}_{k=0}^\infty$ (including both (\ref{eq : ek}) and (\ref{eq : G1})), limiting lamination $\nu \in \partial C(S)$ of $\{\gamma_k\}_{k=0}^\infty$, Teichm\"uller geodesic ray $r(t) = X_t$ with quadratic differential $q_t$ at time $t \in [0,\infty)$, vertical foliations $\bar \nu = \sum_{h=0}^{m-1} x_i \bar \nu^h$ and horizontal foliation $\bar \eta$ for $(X,q) = (X_0,q_0)$, short marking $\mu$ for $X$, and active intervals $J_k = [\underline{a}_k,\bar{a}_k]$ with midpoint $a_k$.  We will also be appealing to all the estimates from the previous sections regarding this data.

In addition, we will need one more condition on $\{\gamma_k\}_{k=0}^\infty$, which we add to the properties $\mathcal P$ assumed already:
For any $k\geq 0$, let 
\[ \sigma_{k}= \gamma_k \cup \gamma_{k+1} \cup \ldots \gamma_{k+m-1}.\]
The additional condition is
\begin{enumerate}[${\mathcal P}(iv)$]
\item\label{con : P(iv)}  Let $\alpha$ be any essential curve in $S\backslash\sigma_{k}$. Then there is no subsurface $Y\subseteq S$ with $\alpha\subseteq\partial{Y}$ which is filled by a collection of the curves in the sequence $\{\gamma_{k}\}_{k=0}^\infty$.
\end{enumerate}
Recall that when $Y$ is an annular subsurface by $\alpha\subseteq\partial{Y}$ we mean that $\alpha$ is the core curve of $Y$. 

\begin{remark}
Note that when $\sigma_{k}$ is a pants decomposition of $S$ the condition $\mathcal P(iv)$ holds vacuously because there are no essential curves in $S\backslash\sigma_{k}$.  Together with the other conditions in $\mathcal P$, the new condition $\mathcal P(iv)$ is equivalent to requiring that any subsurface filled by a subset of $\{\gamma_k\}_{k=0}^\infty$ has as boundary a union of curves in $\{\gamma_k\}_{k=0}^\infty$.  According to Lemma \ref{lem : P(iv)} condition ${\mathcal P(iv)}$ holds for the sequences constructed in $\S$\ref{sec : constructions}.
\end{remark}

Under these assumptions, Theorem~~\ref{thm:main limit theorem} from the introduction, which describes the limit set of $r(t)$ in the Thurston compactification $\overline{\Teich}(S) = \Teich(S) \cup \mathcal{PML}(S)$, can be restated as follows.  Recall that the set of projective classes of measures on $\nu$ is a simplex $\Delta(\nu)$ spanned by the projective classes of the ergodic measures $[\bar \nu^0],\ldots,[\bar \nu^{m-1}]$.
\begin{thm}\label{thm : conv1-skeleton}
The accumulation set of $r(t)$ in $\mathcal{PML}(S)$ is the simple closed curve in the simplex $\Delta(\nu)$ that is the concatenation of edges 
$$\bigl[[\bar{\nu}^{0}],[\bar{\nu}^{1}] \bigr] \cup \bigl[ [\bar{\nu}^1,\bar{\nu}^2] \bigr] \cup \ldots \cup \bigl[ [\bar{\nu}^{m-1}],[\bar{\nu}^{0}] \bigr].$$
\end{thm}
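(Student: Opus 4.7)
The plan is to pin down $X_t$ in the Thurston boundary by identifying the dominant short curves at time $t$ and reading off the limiting projective class. By the ordering results of $\S\ref{sec : actinterval}$ (in particular Lemma~\ref{lem : af in Jl}), for each sufficiently large $t$ there is a unique index $k=k(t)$ with $a_k \leq t \leq a_{k+1}$; set $h=\bar k$, so $\overline{k+1}=(h+1) \bmod m$. The aim is to show that $\gamma_k$ and $\gamma_{k+1}$ are the dominant short curves at time $t$, and that $[X_t]$ is close to the projective class of a positive combination of $\bar{\nu}^h$ and $\bar{\nu}^{h+1}$ (indices mod $m$) whose weights are determined by $w_t(\gamma_k)$ and $w_t(\gamma_{k+1})$.

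The key input is a sharp length estimate. Combining the $q_t$-length formula $\ell_t^q(\gamma_j)=e^{-t}i(\gamma_j,\bar\eta)+e^t i(\gamma_j,\bar\nu)$ with the intersection estimates of $\S\ref{sec : measnue}$ gives $\hyp_t(\gamma_j)\stackrel{*}{\asymp} e_j^{-1}\cosh(t-a_j)$ whenever $\gamma_j$ is short. A calculation using Lemma~\ref{lem : case consequence of super exp growth} (the content of the growth hypothesis (\ref{eq : G1})) then shows that among the $\gamma_j$ lying in their active intervals at times $t\in[a_k,a_{k+1}]$, the pair $\gamma_k,\gamma_{k+1}$ are strictly shorter than all the others by a factor that tends to infinity with $k$. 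Condition $\mathcal P(iv)$, combined with Proposition~\ref{prop : anncoeff + coeffbd} and Rafi's characterization of short curves along Teichm\"uller geodesics in \cite{rshteich}, further ensures that no curve outside the sequence $\{\gamma_k\}$ becomes short along $r$. The Collar Lemma (Lemma~\ref{lem : collar lemma}) together with the fact that the complementary pieces remain $\epsilon_0$-thick then yields, for any simple closed curve $\alpha$,
\[
\hyp_t(\alpha) \stackrel{*}{\asymp}\; i(\alpha,\gamma_k)\,w_t(\gamma_k) + i(\alpha,\gamma_{k+1})\,w_t(\gamma_{k+1}) + O(1).
\]

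Next, using Theorem~\ref{thm : MLlimitgi} to substitute $\gamma_j/c_i^{\bar j}\to \bar\nu^{\bar j}$ rewrites the right-hand side as $i(\alpha,\bar\lambda_t)+O(1)$, where $\bar\lambda_t$ is a positive linear combination of $\bar\nu^h$ and $\bar\nu^{h+1}$. Forming ratios of $\hyp_t(\alpha)$ over pairs of curves $\alpha,\beta$ with $i(\beta,\bar\lambda_t)>0$, and noting that the main term grows with $k$ while $O(1)$ stays bounded, shows that $[X_t]$ is close to $[\bar\lambda_t]$, which lies on the edge $[[\bar\nu^h],[\bar\nu^{h+1}]]$. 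Conversely, as $t$ varies over $[a_k,a_{k+1}]$ the ratio $w_t(\gamma_{k+1})/w_t(\gamma_k)$ sweeps continuously over $[0,\infty]$ (it is near $0$ at $t=a_k$, where $\gamma_k$ is extremely short, and near $\infty$ at $t=a_{k+1}$), so by the intermediate value theorem every point of the edge is realized as a limit. Cycling $h$ through $0,\ldots,m-1$ yields the full concatenation of edges described in the theorem.

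The main obstacle will be the dominance estimate: at each $t\in[a_k,a_{k+1}]$ several other curves $\gamma_j$ with $|j-k|<m$ simultaneously lie in their active intervals, and one must show that their widths are negligibly small compared with $w_t(\gamma_k)$ and $w_t(\gamma_{k+1})$. This is precisely where the super-exponential growth condition (\ref{eq : G1}) enters, via Lemma~\ref{lem : case consequence of super exp growth}. A secondary but more routine difficulty is controlling the $O(1)$ error in the length formula uniformly in $k$; this reduces, by the fact that the surface stays in the $\epsilon_0$-thick part outside the collars of $\{\gamma_k\}$, to the subsurface coefficient bounds of $\S\ref{subsec : subsurfbd}$ (which are in turn guaranteed by $\mathcal P(iv)$).
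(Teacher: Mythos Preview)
There is a genuine gap. Your proposed length formula
\[
\hyp_t(\alpha) \stackrel{*}{\asymp}\; i(\alpha,\gamma_k)\,w_t(\gamma_k) + i(\alpha,\gamma_{k+1})\,w_t(\gamma_{k+1}) + O(1)
\]
omits the Dehn-twist contribution and, as a result, misidentifies both the dominant curves and the correct time parametrization. The contribution of a short pants curve $\gamma_j$ to $\hyp_t(\alpha)$ is $i(\alpha,\gamma_j)\bigl[w_t(\gamma_j)+\tw_{\gamma_j}(\alpha,X_t)\,\hyp_t(\gamma_j)\bigr]$ (Proposition~\ref{prop : hXgP}), and the twist term is essential. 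On the second half $[a_k,\bar a_k]$ of $J_k$ the twist about $\gamma_k$ has already jumped to $\asymp e_k$ (Theorem~\ref{thm : rafitw}); as $\hyp_t(\gamma_k)$ climbs from $\asymp 1/e_k$ back to $\asymp 1$, the twist term grows from bounded to $\asymp e_k$ while the width shrinks to bounded. This is why the paper parametrizes by the \emph{endpoints} $\bar a_k$ rather than the balance times $a_k$: near $\bar a_k$ the huge twist contribution from $\gamma_k$ alone forces the limit to the vertex $[\bar\nu^{\bar k}]$ (the paper's Case~1, Lemma~\ref{lem : Ui0}).

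Equally important, ``shortest curve'' is not the same as ``dominant contribution'': the weight is also proportional to $i(\alpha,\gamma_j)\asymp A(0,j)$, which increases rapidly in $j$. In the paper's Case~2 the dominant pair on $[\bar a_k,\bar a_{k+1}]$ (far from $\bar a_k$) is $\gamma_{k+1}$ together with $\gamma_{k+m}$, not $\gamma_k$. A direct computation at your proposed endpoint $t=a_k$ shows the failure concretely: this time lies in the paper's interval $[\bar a_{k-1},\bar a_k]$, where the dominant pair is $(\gamma_k,\gamma_{k-1+m})$; both $U$-factors there are $\asymp\log e_k$, while $A(0,k{-}1{+}m)/A(0,k)\to\infty$, so the limit along the sequence $t=a_k$ is $[\bar\nu^{\overline{k-1}}]$, not the $[\bar\nu^{\bar k}]$ your width-ratio argument predicts. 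For $m\geq 3$ this vertex lies off the edge $\bigl[[\bar\nu^{\bar k}],[\bar\nu^{\overline{k+1}}]\bigr]$ that you assign to $[a_k,a_{k+1}]$, so the decomposition by balance times cannot be repaired by adjusting constants. (Also, the error in the length estimate is not $O(1)$ but $O(c_i^{m-1})$; it is negligible only because the main term is larger still.)
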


We begin by reducing this theorem to a more manageable statement (Theorem~\ref{thm : reduction 1}), which also provides more information about how the sequence limits to the simple closed curve.  We then briefly sketch the idea of the proof, and describe some of the necessary estimates.  After that we reduce the theorem further to a technical version (Theorem~\ref{thm : reduction 2}), providing even more detailed information about what the limit looks like, and which allows for a more concise proof.  After supplying the final estimates necessary, we carry out the proof.

\subsection{First reduction and sketch of proof.}

By Proposition \ref{prop : activeint}, the intervals $J_k$ are nonempty for all $k$ sufficiently large.  Combining this with Lemma~\ref{lem : af+al}, it follows that for all $k< l$ sufficiently large, $\bar a_k < \bar a_l$, and that $\bar a_l \to \infty$ with $l$. Therefore, the set of intervals $[\bar a_k,\bar a_{k+1}]$ for all sufficiently large $k$, cover all but a compact subset of $[0,\infty)$, and consecutive segments intersect only in their endpoints.  Theorem~\ref{thm : conv1-skeleton} easily follows from
\begin{thm} \label{thm : reduction 1}
Fix $h,h' \in \{0,\ldots,m-1\}$ with $h' \equiv h+1$ mod $m$ and suppose $\{t_i\}$ is a sequence with $t_i \in [\bar a_{im+h},\bar a_{im+ h+ 1}]$ for all sufficiently large $i$.  Then $r(t_i) =X_{t_i}$ accumulates on the edge $\bigl[ [\bar \nu^h],[\bar \nu^{h'}] \bigr] \subset \Delta(\nu)$.  

Furthermore, if $\{t_i-\bar a_{im+h}\}$ is bounded independent of $i$, then
\[ \lim_{i \to \infty} X_{t_i} = [\bar \nu^h].\]
\end{thm}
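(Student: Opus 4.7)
The plan is to verify the Thurston convergence criterion: $X_{t_i} \to [\bar \lambda]$ in $\overline{\Teich(S)}$ if and only if, for all simple closed curves $\alpha, \beta$ with $i(\beta, \bar \lambda) \neq 0$, the ratio $\hyp_{t_i}(\alpha)/\hyp_{t_i}(\beta)$ converges to $i(\alpha, \bar \lambda)/i(\beta, \bar \lambda)$. The strategy is to estimate $\hyp_{t_i}(\alpha)$ for arbitrary test curves $\alpha$ using Rafi's length formula from \cite{rshteich,rteichhyper}, together with the active interval analysis developed in Section~\ref{sec : actinterval}.

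First I would locate the active short curves at time $t_i$. By Proposition~\ref{prop : activeint}(i) and Lemma~\ref{lem : af in Jl}, for $t_i \in [\bar a_{im+h}, \bar a_{im+h+1}]$ with $i$ large, the set $\{l : t_i \in J_l\}$ consists of exactly the $m$ indices $\{im+h+1, \dots, im+h+m-1, (i+1)m+h\}$---one representative of each residue class modulo $m$. Condition $\mathcal P(iv)$ together with the second part of Proposition~\ref{prop : anncoeff + coeffbd} constrains the geometry of $X_{t_i}$: all subsurfaces other than the annular neighborhoods of the $\gamma_l$ have bounded subsurface coefficient with respect to $\mu$ and $\nu$, so by Rafi's thick-thin decomposition only these annular neighborhoods contribute thin parts. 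Applying Rafi's hyperbolic length formula, $\hyp_{t_i}(\alpha)$ decomposes, up to multiplicative constants, as a thick contribution of order $e^{t_i} i(\alpha, \bar \nu)$ plus, for each active $\gamma_l$, a collar contribution of the form $i(\alpha,\gamma_l) \cdot \phi\bigl(\hyp_{t_i}(\gamma_l), \mathrm{tw}_{t_i}(\alpha,\gamma_l)\bigr)$, with $\hyp_{t_i}(\gamma_l)$ governed by Proposition~\ref{prop : activeint}(iii) and the modulus formula \eqref{eq : modulus estimate}. Substituting $i(\alpha,\gamma_{jm+d}) \stackrel{*}{\asymp} c_j^d\, i(\alpha,\bar\nu^d)$ from Lemma~\ref{lem : int estimate general} then expresses the projective class of $\alpha \mapsto \hyp_{t_i}(\alpha)$ as a positive combination of the intersection functionals $i(\cdot,\bar\nu^d)$, $d = 0,\dots,m-1$, with $t_i$-dependent weights.

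The heart of the argument is a dominance analysis of these weights. For $t_i$ within a bounded distance of $\bar a_{im+h}$, I would combine the balance-time estimates (Lemmas~\ref{lem : a midpoint location},~\ref{lem : af af+m},~\ref{lem : af+al}) with the super-exponential growth condition \eqref{eq : G1} to show that the weights for $d \neq h$ are asymptotically negligible against the weight for $d = h$; this gives $\hyp_{t_i}(\alpha)/\hyp_{t_i}(\beta) \to i(\alpha,\bar\nu^h)/i(\beta,\bar\nu^h)$ and so $X_{t_i} \to [\bar\nu^h]$, proving the ``furthermore'' statement. Applying the same argument to the next interval $[\bar a_{im+h+1}, \bar a_{im+h+2}]$ (whose leftmost residue is $h'\equiv h+1 \bmod m$) gives $X_{t_i} \to [\bar\nu^{h'}]$ for $t_i$ within a bounded distance of $\bar a_{im+h+1}$. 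For the general accumulation claim, continuity of $t\mapsto X_t$ into $\overline{\Teich(S)}$ forces the accumulation set over $[\bar a_{im+h},\bar a_{im+h+1}]$ to be a continuum joining $[\bar\nu^h]$ to $[\bar\nu^{h'}]$; applying the dominance analysis uniformly over $t$ in this range shows that only the $d=h$ and $d=h'$ weights remain projectively significant throughout (the other $m-2$ being asymptotically negligible against at least one of these two), so the continuum lies in the edge $\bigl[[\bar\nu^h],[\bar\nu^{h'}]\bigr]$ and traces it out. The main obstacle will be the dominance analysis itself: a careful bookkeeping of the interplay between the $m$ active short curves' hyperbolic lengths and twist parameters---both controlled by the subsurface coefficients of $\mu$ and $\nu$ via Proposition~\ref{prop : anncoeff + coeffbd} and the arithmetic of $\{e_k\}$ via \eqref{eq : G1}---to isolate exactly which two residue classes drive the projective limit.
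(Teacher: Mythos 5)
Your overall strategy is the paper's: estimate $\hyp_{t_i}(\alpha)$ by collar contributions of one bounded-length curve per residue class, run a dominance analysis using the active-interval positions and the growth of $\{e_k\}$, and conclude with the Thurston criterion together with $\gamma_i^d/c_i^d \to \bar\nu^d$. However, there is a genuine error in the length decomposition you propose, and it is not cosmetic: you posit a ``thick contribution of order $e^{t_i}\,i(\alpha,\bar\nu)$.'' That quantity is the full horizontal (flat) length of $\alpha$, almost all of which is accrued crossing the thin cylinders, not a thick-part hyperbolic term. Concretely, by Lemma~\ref{lem : a midpoint location} and (\ref{eq : rJ}), for $t_i$ within bounded distance of $\bar a_{im+h}$ one has $e^{t_i} \stackrel{*}{\asymp} c_i^h e_i^h \stackrel{*}{\asymp} c_{i+1}^h$, which is exactly the order of the dominant collar weight. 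So the term $e^{t_i} i(\alpha,\bar\nu) = e^{t_i}\sum_d x_d\, i(\alpha,\bar\nu^d)$ would inject a weight of comparable size into \emph{every} residue class $d$, the dominance analysis would not isolate $d=h$, and your ratios would converge to $i(\alpha,\bar\lambda)/i(\beta,\bar\lambda)$ for a measure $\bar\lambda$ with all coordinates positive --- an interior point of $\Delta(\nu)$, contradicting the statement; for $t_i$ deep inside $[\bar a_{im+h},\bar a_{im+h+1}]$ the spurious term would in fact dominate everything. The missing ingredient is the correct form of the non-collar error: Proposition~\ref{prop : hXgP} (Lemma 7.2 of \cite{lineminimateichgeod}) gives an error $O\bigl(\sum_{\alpha\in P} i(\delta,\alpha)\bigr)$ with no exponential factor, which the paper bounds by $O(c_i^{m-1})$, resp.\ $O(c_{i+1}^0)$, and shows is genuinely negligible against the dominant collar terms $\asymp e_i^0 c_i^0$, resp.\ $(t_i-\bar a_i^0)\,c_{i+1}^0$.

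Two smaller points. First, your list of ``active'' curves at such times omits $\gamma_{im+h}$, whose active interval has just ended; in the bounded-offset case the paper's dominant term is precisely the contribution of this curve, carried by its twist $\tw_{\gamma_{im+h}}(\delta,X_{t_i}) \asymp e_{im+h}$ (Theorem~\ref{thm : rafitw}), via the pants decomposition containing $\sigma_{im+h}$. One can instead work with the newly activated $\gamma_{(i+1)m+h}$ (its contribution has the same order $c_{i+1}^h\, i(\alpha,\bar\nu^h)$), but your bookkeeping as written does not make this choice or verify it. Second, the substitution $i(\alpha,\gamma_{jm+d}) \approx c_j^d\, i(\alpha,\bar\nu^d)$ should be justified by Theorem~\ref{thm : MLlimitgi} (convergence in $\ML(S)$), not Lemma~\ref{lem : int estimate general}, and to get exact limits of length ratios one also needs the weights to be asymptotically independent of the test curve (the content of Theorem~\ref{thm : rafitw} and Lemma~\ref{lem : all twists similar}), which your plan assumes implicitly.
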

\begin{proof}[Proof of Theorem~\ref{thm : conv1-skeleton} assuming \ref{thm : reduction 1}.]
From the second part of Theorem~\ref{thm : reduction 1} applied to $t_i = \bar a_{im+h}$, it follows that
\[ \lim_{i \to \infty} X_{\bar{a}_{im+h}} = [\bar \nu^h].\]
for all $h \in \{0,\ldots,m-1\}$.  If $h' \equiv h+1$ as in Theorem~\ref{thm : reduction 1}, then combining this with the first part of that theorem, we see that the accumulation set of the sequence of subsets $\{r([\bar a_{im+h},\bar a_{im+h+1}])\}_{i=0}^\infty \subset \Teich(S)$ is contained in $\bigl[ [\bar \nu^h],[\bar \nu^{h'}] \bigr]$ and contains the endpoints.  Consequently, any Hausdorff limit of this sequence of {\em connected} sets is a connected subset of $\bigl[ [\bar \nu^h],[\bar \nu^{h'}] \bigr]$ containing the endpoints, and hence is equal to $\bigl[ [\bar \nu^h],[\bar \nu^{h'}] \bigr]$.  The accumulation set of this sequence of sets therefore contains $\bigl[ [\bar \nu^h],[\bar \nu^{h'}] \bigr]$, and is thus equal to it.  Since this holds for every $h \in \{0,\ldots , m-1\}$, and the intervals $\{[\bar a_k,\bar a_{k+1}]\}$ cover all but a compact subset of $[0,\infty)$, this completes the proof.
\end{proof}

\begin{remark} \label{rem : simplify to h=0}
Before proceeding we note that the assumptions on $\{\gamma_k\}_{k=0}^\infty$ and $\{e_k\}_{k=0}^\infty$ are ``shift invariant'', meaning that if we start the sequence at any $k_0 \geq 0$, and reindex (without changing the order), the resulting sequence will also satisfy all the required conditions.  Consequently, it suffices to prove Theorem~\ref{thm : reduction 1} for $h =0$ and $h' = 1$.  This greatly simplifies the notation, and allows us to avoid duplicating essentially identical arguments.
\end{remark}

To sketch the proof, we recall that a sequence $\{Z_i\} \subset \Teich(S)$ converges to $[\bar \lambda] \in \mathcal{PML}(S)$ if and only if
\[ \lim_{i \to \infty} \frac{\hyp_{Z_i}(\delta)}{\hyp_{Z_i}(\delta')} = \frac{i(\bar \lambda,\delta)}{i(\bar \lambda,\delta')}\]
for all simple closed curves $\delta,\delta'$ with $i(\bar \lambda,\delta') \neq 0$; see \S \ref{sec : background}.  Thus we must provide sufficient control over the hyperbolic lengths of curves and relate these to intersection numbers with measures on $\nu$.
 
Now the idea of the proof of this theorem is as follows.  For any sufficiently large $t$, we estimate hyperbolic lengths $\hyp_{X_t}(\delta)$ in terms of ``contributions'' from the intersections of $\delta$ with the curves in a bounded length pants decomposition (Proposition~\ref{prop : hXgP}).  When $t$ is in the interval $[\bar a_k,\bar a_{k+1}]$ we choose a bounded length pants decomposition containing either $\sigma_k$ or $\sigma_{k+1}$, depending on more precise information about $t$.  The contributions from the curves in these sub-multicurves dominate the contributions from the other curves (the ratios tend to zero), and so the key is to understand these contributions.

On the active interval $J_l$, the contribution from $\gamma_l$ grows linearly in the first half of the interval (Lemma~\ref{lem : linear growth of width}), but during the second half, they speed up.  Thus near $\bar a_k$, the contribution from $\gamma_k$ will be greater than from the rest of $\sigma_k$, since $\bar a_k$ is still in the first half of $J_l$, for $l = k+1,\ldots,k+m-1$.  As we proceed far beyond $\bar a_k$, the bounded length pants decomposition eventually changes to become $\sigma_{k+1}$.  The contribution from $\gamma_k$ transitions to the contribution from $\gamma_{k+m}$ and until the contribution from $\gamma_{k+1}$ speeds up, this is the dominating term.  However, as the contribution from $\gamma_{k+1}$ speeds up, its contribution eventually takes over.  During the transition, the contribution from $\gamma_l$, for $2 \leq l \leq m-1$ is still dominated by either $\gamma_{k+m}$ or $\gamma_{k+1}$.

With this sketch in mind, we now start to discuss the details.

\subsection{General hyperbolic geometry estimates}
For a curve $\alpha \in \mathcal C(S)$ and $Z \in \Teich(S)$, we have the length and width $\hyp_Z(\alpha)$ and $w_Z(\alpha)$, respectively, as defined in \S\ref{sec : background}.  Given two curves $\alpha,\delta\in\mathcal{C}(S)$ and $Z \in \Teich(S)$ we will also need the {\em twist of $\delta$ about $\alpha$ with respect to $Z$}, denoted $\tw_{\alpha}(\delta,Z)$.  This is defined as
\[ \tw_{\alpha}(\delta,Z) = \diam_\alpha(\pi_\alpha(\delta) \cup \alpha^{\perp Z})\geq 0\]
where $\alpha^{\perp Z}$ is the set of $Z$--geodesics in the annular cover $Y_\alpha$ meeting (the lift of the geodesic representative of) $\alpha$ orthogonally.
\begin{remark} There are different definitions of $\tw_\alpha(\delta,Z)$ in the literature (see e.g.~\cite{minsky-extremal,compteichlip,lineminimateichgeod}).  Some of these come equipped with a sign which we have no need of, and our definition agrees with (the absolute values of) the other definitions, up to a uniformly bounded additive error (at least those we will be appealing to).
\end{remark}

For curves $\alpha,\delta\in\mathcal{C}(S)$ and $Z \in \Teich(S)$ define the {\em contribution to the $Z$--length of $\delta$ coming from $\alpha$} by 
\begin{equation}\label{eq : hypcollar}\hyp_{Z}(\delta,\alpha):=i(\delta,\alpha)\Big[w_{Z}(\alpha)+\tw_{\alpha}(\delta,Z)\hyp_{Z}(\alpha)\Big],\end{equation}  

The next fact, from \cite[Lemma 7.2]{lineminimateichgeod}, provides our primary means of control on hyperbolic lengths.
\begin{prop} \label{prop : hXgP}
Given $L>0$ and $Z\in\Teich(S)$, suppose that $P$ is an $L$--bounded length pants decomposition ( $\hyp_Z(\alpha) \leq L$ for all $\alpha \in P$). Then for any curve $\delta\in\mathcal{C}(S)$ we have
\begin{equation}\Big|\hyp_{Z}(\delta)-\sum_{\alpha\in P}\hyp_{Z}(\delta,\alpha)\Big|=O\Big(\sum_{\alpha\in P}i(\delta,\alpha)\Big)\end{equation}
where the constant of the $O-$notation depends only on $L$.  
\end{prop}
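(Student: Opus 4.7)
The plan is to estimate the hyperbolic length of the geodesic representative $\delta^*$ of $\delta$ by decomposing it into arcs using the standard collar neighborhoods of the curves in $P$. First, since $P$ is an $L$--bounded pants decomposition, the Collar Lemma (Lemma~\ref{lem : collar lemma}) produces for each $\alpha \in P$ an embedded collar $C_\alpha$ of width approximately $w_Z(\alpha)$ around $\alpha$, and these collars are pairwise disjoint. The complement $Z \setminus \bigcup_\alpha C_\alpha$ is contained in a union of pairs of pants whose boundary curves have $Z$--length at most $L$, hence each component has diameter bounded above by a constant $D = D(L)$ depending only on $L$ and the topology of $S$.

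Next I would estimate the length contribution from each collar. The geodesic $\delta^*$ meets $C_\alpha$ in exactly $i(\delta,\alpha)$ disjoint arcs. A direct hyperbolic trigonometry calculation, most naturally performed in the annular cover $Y_\alpha$, shows that an arc crossing $C_\alpha$ and winding $k$ times around $\alpha$ has length equal to $w_Z(\alpha) + k \cdot \hyp_Z(\alpha)$ up to an additive error depending only on $L$. Summing the winding counts over the $i(\delta,\alpha)$ arcs produces the total geometric twist, which matches $\tw_\alpha(\delta, Z)$ up to a bounded additive constant by the definition of the twisting coefficient via the perpendicular geodesics $\alpha^{\perp Z}$ (compare the discussion in \cite{compteichlip}). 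Consequently the total contribution from $C_\alpha$ to $\hyp_Z(\delta^*)$ is
\[
i(\delta,\alpha)\bigl[w_Z(\alpha) + \tw_\alpha(\delta,Z)\hyp_Z(\alpha)\bigr] + O\bigl(i(\delta,\alpha)\bigr) = \hyp_Z(\delta,\alpha) + O\bigl(i(\delta,\alpha)\bigr).
\]

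Then I would bound the contribution from the arcs of $\delta^*$ lying outside all the collars. Each such arc has length at most $D$. Since maximal arcs outside the collars are bordered by collar crossings of $\delta^*$, the total number of such arcs is at most $\sum_\alpha i(\delta,\alpha) + 1$, so their combined length is $O\bigl(\sum_\alpha i(\delta,\alpha)\bigr)$. Adding the contributions from all collars and from the exterior yields
\[
\Bigl|\hyp_Z(\delta) - \sum_{\alpha \in P} \hyp_Z(\delta,\alpha)\Bigr| = O\Bigl(\sum_{\alpha \in P} i(\delta,\alpha)\Bigr),
\]
which is the desired estimate.

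The main technical obstacle is verifying that the geometric winding number of $\delta^*$ inside $C_\alpha$ agrees, up to a bounded additive error depending only on $L$, with the combinatorial twisting coefficient $\tw_\alpha(\delta, Z)$ defined via the subsurface projection and $\alpha^{\perp Z}$. Although standard, this comparison requires passing to the annular cover $Y_\alpha$ and checking that replacing $\delta$ by a $Z$--perpendicular at the corresponding boundary changes the projection by at most a uniformly bounded amount, so that the error is indeed independent of $Z$ and $\delta$.
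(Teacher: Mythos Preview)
The paper does not prove this proposition; it simply cites it as \cite[Lemma~7.2]{lineminimateichgeod}. Your sketch is the standard argument for such estimates (decompose the geodesic representative of $\delta$ into collar arcs and complementary arcs, estimate each collar arc via hyperbolic trigonometry in the annular cover, and bound the complementary arcs by the bounded geometry of the thick part), and it is essentially the approach taken in the cited reference. The one point you correctly flag as needing care---that the geometric winding in $C_\alpha$ agrees with the combinatorial $\tw_\alpha(\delta,Z)$ up to an additive error depending only on $L$---is genuinely the heart of the matter, but it is handled in the cited paper and in the earlier literature (e.g.\ \cite{minsky-extremal,compteichlip}) exactly as you describe.
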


To effectively use this proposition to analyze lengths of curves in $X_t$ as $t \to \infty$ we must develop a better picture of the hyperbolic geometry of bounded length curves in $X_t$.

\subsection{Hyperbolic estimates for $\{\gamma_k\}$.} \label{subsec : hyp est 1}

As in \S\ref{sec : actinterval}, we will write $\hyp_t(\alpha) = \hyp_{X_t}(\alpha)$, $\hyp_t(\delta,\alpha) = \hyp_{X_t}(\delta,\alpha)$, and $w_t(\alpha) = w_{X_t}(\alpha)$.  By a result of Wolpert \cite{wolpert-length-spectra}, hyperbolic lengths change (grow/shrink) at most exponentially in Teichm\"uller distance, and hence we have
\begin{lem} \label{lem : 2 exponential growth} For any curve $\alpha$ and any $t,s \in \mathbb R$, we have
\[ \hyp_t(\alpha) \leq \exp(2(|t-s|)) \hyp_s(\alpha).\]
\end{lem}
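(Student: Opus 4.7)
The plan is to deduce the inequality directly from Wolpert's classical comparison of hyperbolic lengths under quasiconformal deformations, combined with the fact that the Teichm\"uller geodesic $r$ is parametrized by arclength in the Teichm\"uller metric. Recall Wolpert's lemma states that if $Y,Z \in \Teich(S)$ are at Teichm\"uller distance $d_T(Y,Z) = d$, then for every simple closed curve $\alpha$,
\[ e^{-2d}\, \hyp_Y(\alpha) \leq \hyp_Z(\alpha) \leq e^{2d}\, \hyp_Y(\alpha). \]
This is obtained from the fact that the extremal length and hyperbolic length are comparable on the thick part and that $K$-quasiconformal maps distort extremal lengths by at most a factor $K$, together with $d_T(Y,Z) = \tfrac{1}{2}\log K_{\min}$ where $K_{\min}$ is the minimal quasiconformal dilatation.

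First I would recall that $r \colon [0,\infty) \to \Teich(S)$ is a Teichm\"uller geodesic ray, so for any $t,s \geq 0$ the Teichm\"uller distance satisfies $d_T(X_t,X_s) = |t-s|$. Plugging $Y = X_s$ and $Z = X_t$ into the upper bound of Wolpert's lemma immediately yields
\[ \hyp_t(\alpha) \leq \exp(2|t-s|)\, \hyp_s(\alpha), \]
which is the claimed inequality. Note that although the lemma is stated only for $t,s \in [0,\infty)$ (the domain of $r$), the statement of the excerpt says $t,s \in \mathbb R$; this is harmless since one can extend $r$ to a bi-infinite geodesic, or simply restrict to nonnegative parameters where all the subsequent applications occur.

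There is essentially no obstacle: the lemma is a one-line consequence of a well-known theorem of Wolpert, inserted here purely as a convenient reference for later estimates. The only subtlety worth mentioning is the factor of $2$ in the exponent, which arises because $d_T = \tfrac{1}{2}\log K$ where $K$ is the quasiconformal dilatation, so that $K = e^{2d_T}$; this $K$ is the bound on extremal length distortion, and extremal length dominates hyperbolic length up to a controlled comparison (indeed, on curves, $\Ext$ and $\hyp$ satisfy $\hyp \leq \pi \Ext$ on one side, while on the other a similar bound holds after accounting for thin parts, and the net effect in Wolpert's formulation is precisely the factor $e^{2d_T}$ for hyperbolic length itself). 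Once this reference is invoked, the proof is complete.
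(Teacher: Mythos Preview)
Your proposal is correct and follows exactly the paper's approach: the paper simply cites Wolpert's result that hyperbolic lengths change at most exponentially in Teichm\"uller distance and states the lemma as an immediate consequence, using $d_T(X_t,X_s)=|t-s|$. Your parenthetical explanation of the factor $2$ via extremal length comparisons is a bit muddled and unnecessary---Wolpert's original argument works directly with hyperbolic length (a $K$--quasiconformal map lifts to the disk and distorts hyperbolic distance by at most $K=e^{2d_T}$)---but this does not affect the validity of your proof.
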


From Lemma~\ref{lem : af in Jl}, all sufficiently large $t$ are either contained in exactly $m$ intervals $J_k,\ldots,J_{k+m-1}$ or in exactly $m-1$ intervals $J_{k+1},\ldots,J_{k+m-1}$ and the {\em bounded length} interval $[\bar a_k,\underline{a}_{k+m}]$ (the interval after $J_k$ but before $J_{k+m}$).  In the former case, every curve in $\sigma_k$ has length at most $\epsilon_0$, the Margulis constant.  In the latter case, we can use Lemma~\ref{lem : 2 exponential growth} to bound the length of curves in $\sigma_k$.  It will be useful to have a slight generalization of that, which we state here.

\begin{lem} \label{lem : bounded length pants sigmak}  For any $W > 0$, if $t$ is sufficiently large (depending on $W$), is contained in $J_{k+1},J_{k+2},\ldots,J_{k+m-1}$, and satisfies $0 < t- \bar a_k < W$, then every curve in $\sigma_k$ has $X_t$--length at most $\exp(2W) \epsilon_0$.  
\end{lem}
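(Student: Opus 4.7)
The plan is to handle the curve $\gamma_k$ and the curves $\gamma_{k+1},\ldots,\gamma_{k+m-1}$ separately, since $t$ sits past the active interval of $\gamma_k$ but inside the active interval of each of the others. In both cases the bound on modulus of the maximal flat cylinder will give the bound on hyperbolic length, either directly or after propagating forward by Lemma \ref{lem : 2 exponential growth}.

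First, for each $l \in \{k+1,\ldots,k+m-1\}$, by hypothesis $t \in J_l$, so by definition of the active interval $\mathrm{mod}(\cyl_t(\gamma_l)) \geq M$. By the choice of the constant $M$ fixed at the start of Section \ref{sec : actinterval}, this forces $\hyp_t(\gamma_l) \leq \epsilon_0 \leq e^{2W}\epsilon_0$, which is exactly the desired bound on these $m-1$ curves.

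For the remaining curve $\gamma_k$, note that $\bar a_k$ is the right endpoint of $J_k$, so $\mathrm{mod}(\cyl_{\bar a_k}(\gamma_k)) = M$, and again the choice of $M$ gives $\hyp_{\bar a_k}(\gamma_k) \leq \epsilon_0$. Combining this with the hypothesis $0 < t - \bar a_k < W$ and Lemma \ref{lem : 2 exponential growth}, we obtain
\[
\hyp_t(\gamma_k) \leq e^{2(t - \bar a_k)}\hyp_{\bar a_k}(\gamma_k) \leq e^{2W}\epsilon_0,
\]
completing the estimate for $\gamma_k$.

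The "sufficiently large $t$" hypothesis enters only to ensure that the active intervals $J_k,J_{k+1},\ldots,J_{k+m-1}$ are all nonempty and ordered as described by Lemma \ref{lem : af in Jl}, so that the assumption $t \in J_{k+1} \cap \cdots \cap J_{k+m-1}$ together with $t > \bar a_k$ is consistent with our picture of the active intervals. No further obstacle is anticipated: the proof is essentially a one-step application of Wolpert's exponential-Lipschitz estimate plus the definition of $M$.
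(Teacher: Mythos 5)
Your proof is correct and follows essentially the same route as the paper: the definition of the active intervals via the modulus threshold $M$ gives the $\epsilon_0$ bound, and Lemma~\ref{lem : 2 exponential growth} (Wolpert's estimate) propagates it forward over a time at most $W$. The only cosmetic difference is that the paper bounds all of $\sigma_k$ at time $\bar a_k$ (using that $\bar a_k$ lies in every $J_k,\ldots,J_{k+m-1}$) and applies Wolpert to all the curves, whereas you apply Wolpert only to $\gamma_k$ and bound the others directly at time $t$; both are fine.
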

\begin{proof} Since $\bar a_k$ is in all the intervals $J_k,\ldots,J_{k+m-1}$, we have $\hyp_{\bar a_k}(\gamma_l) \leq \epsilon_0$ for $k \leq l \leq k+m-1$.  Now apply Lemma~\ref{lem : 2 exponential growth}.
\end{proof}
In particular, note that once $k$ is sufficiently large, Lemma~\ref{lem : af af+m} guarantees that $\underline{a}_{k+m}- \bar a_k$ is uniformly bounded by some constant $W_0$, and so setting $L_0 = \exp(2W_0)\epsilon_0$, we see that for any sufficiently large $t$, there is always some $k$ so that all curves of $\sigma_k$ have length at most $L_0$.  In addition, this gives us lower bounds on lengths as well.
\begin{lem} \label{lem : gammak bounded at under ak}  For all $k$ sufficiently large, $\hyp_{\underline{a}_k}(\gamma_k) \stackrel * \asymp 1 \stackrel * \asymp \hyp_{\bar a_k}(\gamma_k)$.
\end{lem}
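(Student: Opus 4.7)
The plan is to derive matching two-sided bounds $\hyp_{\bar a_k}(\gamma_k) \stackrel{*}{\asymp} 1$ (and symmetrically at $\underline{a}_k$) by controlling the extremal length of $\gamma_k$ at these endpoint times and then invoking the comparison between extremal length and hyperbolic length in the thin part of moduli space.

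First I would get the upper bound $\hyp_{\bar a_k}(\gamma_k) \leq \epsilon_0$: this is immediate from the definition of $J_k$ together with the choice of $M$, since $mod(\cyl_t(\gamma_k)) \geq M$ holds throughout $J_k$ and, by continuity of the modulus, at its endpoints, forcing $\hyp_t(\gamma_k) \leq \epsilon_0$ there. This bound is uniform in $k$.

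Next, to get the lower bound, I would compute the extremal length of $\gamma_k$ at $\bar a_k$. From equation (\ref{eq : modulus estimate}) used in the proof of Proposition~\ref{prop : activeint},
$$mod(\cyl_t(\gamma_k)) \stackrel{*}{\asymp} \frac{d_{\gamma_k}(\eta,\nu)}{\cosh^2(t - a_k)} \stackrel{*}{\asymp} \frac{e_k}{\cosh^2(t-a_k)},$$
using Lemma~\ref{lem : horizontal like marking} and Proposition~\ref{prop : anncoeff + coeffbd} for the second comparison. Since $mod(\cyl_{\bar a_k}(\gamma_k)) = M$ by definition and $|J_k|/2 \stackrel{+}{\asymp} (1/2)\log e_k$ by Proposition~\ref{prop : activeint}(iv), we have $\cosh^2(\bar a_k - a_k) \stackrel{*}{\asymp} e_k$. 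Invoking Rafi's characterization of short curves along a Teichm\"uller geodesic from \cite{rshteich}, together with the fact from Proposition~\ref{prop : anncoeff + coeffbd} that no proper subsurface $W \neq \gamma_k$ has large coefficient $d_W(\eta,\nu)$, the extremal length of $\gamma_k$ in $X_{\bar a_k}$ is controlled by the flat cylinder alone: $\Ext_{\bar a_k}(\gamma_k) \stackrel{*}{\asymp} 1/M \stackrel{*}{\asymp} 1$. I would then conclude with the Maskit-type comparison $\Ext_Z(\alpha) \stackrel{*}{\asymp} \hyp_Z(\alpha)$, valid whenever $\hyp_Z(\alpha) \leq \epsilon_0$: applied with $Z = X_{\bar a_k}$ and $\alpha = \gamma_k$, the upper bound $\hyp_{\bar a_k}(\gamma_k) \leq \epsilon_0$ together with $\Ext_{\bar a_k}(\gamma_k) \stackrel{*}{\asymp} 1$ forces $\hyp_{\bar a_k}(\gamma_k) \stackrel{*}{\asymp} 1$. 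The argument for $\underline{a}_k$ is identical.

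The main obstacle will be the lower bound on $\Ext_{\bar a_k}(\gamma_k)$: the upper bound $\Ext_{\bar a_k}(\gamma_k) \leq 1/M$ is automatic from the flat cylinder, but a matching lower bound requires ruling out that some competing annulus with core $\gamma_k$, for instance a large expanding annulus coming from short boundary curves of a surrounding subsurface, has much larger conformal modulus than the flat cylinder itself. This is exactly the situation controlled by Rafi's combinatorial formula for extremal length: the only curves that are short at $X_{\bar a_k}$ lie among $\gamma_{k-m+1},\ldots,\gamma_{k+m-1}$ (by the disjointness of active intervals for intersecting curves in Proposition~\ref{prop : activeint}(i)), and for every subsurface $W \neq \gamma_k$ with $\gamma_k \subseteq \partial W$ the coefficient $d_W(\eta,\nu)$ is uniformly bounded by Proposition~\ref{prop : anncoeff + coeffbd}, so expanding annuli contribute a uniformly bounded amount and the flat cylinder dominates.
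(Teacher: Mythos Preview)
Your approach is correct but takes a substantially more circuitous route than the paper. The paper's proof is a one-line collar argument: at time $\underline{a}_k$, the curve $\gamma_{k-m}$ has hyperbolic length at most $L_0=\exp(2W_0)\epsilon_0$ (since $\bar a_{k-m}\stackrel{+}{\asymp}\underline{a}_k$ by Lemma~\ref{lem : af af+m} and lengths grow at most exponentially by Lemma~\ref{lem : 2 exponential growth}), so its collar has width at least $2\sinh^{-1}(1/\sinh(L_0/2))$; since $i(\gamma_k,\gamma_{k-m})\geq b_1$, the curve $\gamma_k$ crosses this collar $b_1$ times, yielding $\hyp_{\underline{a}_k}(\gamma_k)\geq 2b_1\sinh^{-1}(1/\sinh(L_0/2))$. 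The argument at $\bar a_k$ is identical using $\gamma_{k+m}$.

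Your route through extremal length works, but it costs you Rafi's thick--thin machinery to rule out large expanding annuli, plus the Maskit comparison, whereas the paper needs only the Collar Lemma and the elementary observation that an \emph{intersecting} curve of bounded length forces a lower bound. One caution on your write-up: the phrase ``Rafi's characterization of short curves'' most naturally refers to Theorem~6.1 of \cite{rshteich}, which computes the \emph{minimum} length along the geodesic, not the length at a specific time; to get the lower bound on $\Ext_{\bar a_k}(\gamma_k)$ you really need the more detailed decomposition of the maximal annulus into the flat cylinder plus expanding annuli, and to bound the latter you need control on the sizes of the adjacent thick components at the specific time $\bar a_k$, not just the subsurface coefficients. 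This can be done, but it is more than a one-line citation.
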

The multiplicative constant here depends only on $W_0$, the constants in property $\mathcal P$, and the Margulis constant $\epsilon_0$.
\begin{proof} We already have $\hyp_{\underline{a}_k}(\gamma_k) \leq \epsilon_0$, so we need to prove a uniform lower bound.  Since $i(\gamma_k,\gamma_{k-m}) \in [b_1,b_2]$ from $\mathcal P$, and $\hyp_{\underline{a}_k}(\gamma_{k-m}) \leq L_0 = \exp(2W_0)\epsilon_0$, according to Lemma~\ref{lem : collar lemma} we have
\[ \hyp_{\underline{a}_k}(\gamma_k) \geq w_{\underline{a}_k}(\gamma_{k-m})i(\gamma_k,\gamma_{k-m}) \geq 2\sinh^{-1}(1/\sinh(L_0/2)) b_1.\]
A similar argument applies for the estimate on $\hyp_{\bar a_k}(\gamma_k)$.
\end{proof}

We will also need good estimates on $w_t(\gamma_k)$, especially on the first half of the interval when $\gamma_k$ initially becomes short.
\begin{lem} \label{lem : linear growth of width}
For all sufficiently large $k$ and $t \in [\underline{a}_k,a_k]$, we have
\[ w_t(\gamma_k) \stackrel + \asymp 4( t-\underline{a}_k).\]
\end{lem}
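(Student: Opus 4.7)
By (\ref{eq : w}), $w_t(\gamma_k) \stackrel{+}{\asymp} 2\log(1/\hyp_t(\gamma_k))$, so it suffices to establish the multiplicative estimate
\[ \hyp_t(\gamma_k) \stackrel{*}{\asymp} \exp\bigl(-2(t-\underline{a}_k)\bigr) \]
throughout $[\underline{a}_k,a_k]$. The plan is to pass through the flat modulus $mod(\cyl_t(\gamma_k))$ and the formula (\ref{eq : modulus estimate}), and then convert back to width via (\ref{eq : w}).

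First, for all $k$ sufficiently large and every $t \in J_k$, the defining inequality $mod(\cyl_t(\gamma_k))\ge M$ (with $M$ chosen large enough to force $\hyp_t(\gamma_k)\le\epsilon_0$) guarantees the standard comparison
\[ \hyp_t(\gamma_k) \stackrel{*}{\asymp} \frac{1}{mod(\cyl_t(\gamma_k))}, \]
with uniform multiplicative constant (compare \cite{rshteich}). Combining this with (\ref{eq : modulus estimate}) and the estimate $d_{\gamma_k}(\eta,\nu)\stackrel{+}{\asymp} d_{\gamma_k}(\mu,\nu)\stackrel{+}{\asymp} e_k$ (Lemma~\ref{lem : horizontal like marking} and Proposition~\ref{prop : anncoeff + coeffbd}) yields
\[ \hyp_t(\gamma_k)\stackrel{*}{\asymp}\frac{\cosh^2(a_k-t)}{e_k}. \]

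Applying (\ref{eq : w}) and taking logarithms, this gives
\[ w_t(\gamma_k)\stackrel{+}{\asymp} 2\log(e_k)-4\log\bigl(\cosh(a_k-t)\bigr). \]
The elementary identity $\log(\cosh(x))=x+\log\bigl((1+e^{-2x})/2\bigr)$ shows $\log(\cosh(x))\stackrel{+}{\asymp} x$ with a bounded additive error for all $x\ge 0$. Applying this with $x=a_k-t\ge 0$ we obtain
\[ w_t(\gamma_k)\stackrel{+}{\asymp} 2\log(e_k)-4(a_k-t). \]
Finally, by (\ref{eq : lJ}) and Proposition~\ref{prop : activeint}(iv), $a_k-\underline{a}_k\stackrel{+}{\asymp}\tfrac{1}{2}\log(e_k)$, so
\[ 2\log(e_k)-4(a_k-t)\stackrel{+}{\asymp} 4(a_k-\underline{a}_k)-4(a_k-t)=4(t-\underline{a}_k), \]
completing the argument. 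As a sanity check, the endpoints behave correctly: at $t=\underline{a}_k$ the right-hand side is bounded, consistent with Lemma~\ref{lem : gammak bounded at under ak}, and at $t=a_k$ both sides are approximately $2\log(e_k)$, consistent with Proposition~\ref{prop : activeint}(iii).

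The main obstacle is the first step, namely justifying $\hyp_t(\gamma_k)\stackrel{*}{\asymp}1/mod(\cyl_t(\gamma_k))$ with constants independent of $k$ and of $t\in J_k$. This comparison is standard once the flat-cylinder modulus exceeds the fixed threshold $M$ (so $\gamma_k$ is thin), but one must verify that the bound remains uniform all the way out to the boundary of $J_k$ where the modulus is exactly $M$; this is where the choice of $M$ enters and where Lemma~\ref{lem : gammak bounded at under ak} provides the matching boundary value.
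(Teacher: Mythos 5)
Your lower-bound half (the inequality $w_t(\gamma_k) \stackrel{+}{\succ} 4(t-\underline{a}_k)$) is essentially the paper's argument: it follows from (\ref{eq : modulus estimate}), the comparison $d_{\gamma_k}(\eta,\nu)\stackrel{+}{\asymp}e_k$, and the standard inequality $\hyp_t(\gamma_k)\stackrel{*}{\prec} 1/mod(\cyl_t(\gamma_k))$, which comes from monotonicity of modulus (the flat cylinder embeds in the annular cover $Y_{\gamma_k}$, whose modulus is $\pi/\hyp_t(\gamma_k)$). The problem is the other half. Your plan requires the \emph{two-sided} comparison $\hyp_t(\gamma_k)\stackrel{*}{\asymp}1/mod(\cyl_t(\gamma_k))$ uniformly on $J_k$, and the direction $1/mod(\cyl_t(\gamma_k))\stackrel{*}{\prec}\hyp_t(\gamma_k)$ is not a standard consequence of $mod(\cyl_t(\gamma_k))\geq M$. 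What is standard is only that the flat cylinder modulus is a lower bound for the reciprocal of extremal length; the hyperbolic (or extremal) length of $\gamma_k$ can a priori be much smaller than $1/mod(\cyl_t(\gamma_k))$ if the expanding annuli in Rafi's thick--thin decomposition of $(X_t,q_t)$ around $\gamma_k$ carry most of the modulus of the collar. Ruling that out is exactly the content of the upper bound $w_t(\gamma_k)\stackrel{+}{\prec}4(t-\underline{a}_k)$, so as written your argument assumes (in the guise of a "standard comparison") essentially what it is trying to prove. Knowing the correct boundary value at $t=\underline{a}_k$ from Lemma~\ref{lem : gammak bounded at under ak} does not by itself propagate into the interior of the interval without some control on the rate of change.

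The paper closes this gap by a different and more elementary mechanism for the upper bound: Wolpert's lemma (Lemma~\ref{lem : 2 exponential growth}) says hyperbolic lengths decay at most like $\exp(-2|t-s|)$ along the Teichm\"uller geodesic, and since $\hyp_{\underline{a}_k}(\gamma_k)\stackrel{*}{\asymp}1$ by Lemma~\ref{lem : gammak bounded at under ak}, one gets $\log(1/\hyp_t(\gamma_k))\stackrel{+}{\prec}2(t-\underline{a}_k)$ and hence $w_t(\gamma_k)\stackrel{+}{\prec}4(t-\underline{a}_k)$ via (\ref{eq : w}), with no reference to the flat cylinder at all. If you want to keep your route, you would need to supply the missing input, e.g.\ a bound showing the expanding annuli moduli are $O(mod(\cyl_t(\gamma_k)))$ for all $t\in[\underline{a}_k,a_k]$, which requires genuinely new estimates; the Wolpert argument is the cleaner fix.
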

The implicit constant depends on the constant from Lemma~\ref{lem : gammak bounded at under ak}.
\begin{remark}
There is a mistake in \cite[Lemma 8.3]{nonuniqueerg}, which claims that the width grows at most linearly with coefficient $1$ (instead of $4$).  This does not affect any of the proofs.  It is also worth noting that only an upper bound was proved there, whereas here there are both upper and lower bounds.
\end{remark}
\begin{proof}
We first prove the upper bound on $w_t(\gamma_k)$.  For this, we note that by Lemma~\ref{lem : 2 exponential growth}
\[ 1 \stackrel * \asymp \hyp_{\underline{a}_k}(\gamma_k) \leq \exp(2(t-\underline{a}_k)) \hyp_t(\gamma_k).\]
Dividing by $\hyp_t(\gamma_k)$ and taking logarithms, we get
\[ \log\Big(\frac{1}{\hyp_t(\gamma_k)}\Big) \stackrel + \prec 2(t-\underline{a}_k). \]
Multiplying by $2$ and applying (\ref{eq : w}) proves 
\[ w_t(\gamma_k) \stackrel + \prec 4(t-\underline{a}_k).\]

For the lower bound, we will appeal to (\ref{eq : modulus estimate}), which for $k$ sufficiently large implies
\[ mod(\cyl_t(\gamma_k)) \stackrel * \asymp \frac{e_k}{\cosh^2(t-a_k)} .\]
Lifting $\cyl_t(\gamma_k)$ to the annular cover $Y_{\gamma_k}$, the modulus of the former is bounded above by the modulus of the latter by monotonicity of modulus of annuli.  The latter on the other hand can be computed explicitly as $\pi/\hyp_t(\gamma_k)$ (see e.g.~\cite{maskit}).
Thus, taking logs and noting that 
$$\log(\cosh^2(t-a_k)) \stackrel + \asymp 2|t-a_k| = 2(a_k-t),$$
 we have
\[ \log(e_k) - 2 (a_k-t) \stackrel + \prec \log\Big(\frac{\pi}{\hyp_t(\gamma_k)} \Big). \]
Then by Proposition~\ref{prop : activeint} we have $\log(e_k) \stackrel + \asymp 2(a_k - \underline{a}_k)$ and hence
\[ 2(t-\underline{a}_k) \stackrel + \prec \log\Big(\frac{1}{\hyp_t(\gamma_k)} \Big). \]
Appealing to (\ref{eq : w}) again we have
\[ 4(t-\underline{a}_k) \stackrel + \prec w_t(\gamma_k).\]
\end{proof}

We will also want to estimate $\tw_{\gamma_k}(\delta,X_t)$, for an arbitrary curve $\delta$.  This is given by the following formula from \cite{rcombteich}.

\begin{thm}\label{thm : rafitw}
Given a curve $\delta\in\mathcal{C}_{0}(S)$ and large enough $k\in\mathbb{N}$ we have
$$\tw_{\gamma_{k}}(\delta,X_{t})=
\begin{cases}
0\pm O(\frac{1}{\hyp_{X_{t}}(\gamma_{k})}),\;\; t\leq a_{k}\\
e_{k}\pm O(\frac{1}{\hyp_{X_{t}}(\gamma_{k})}),\;\; t\geq a_{k}
\end{cases}$$
\end{thm}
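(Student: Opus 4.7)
The plan is to apply Rafi's combinatorial twist formula from \cite{rcombteich} and translate it into the language of our sequence $\{\gamma_k\}$. Recall that by definition $\tw_{\gamma_k}(\delta,X_t)=\diam_{\gamma_k}(\pi_{\gamma_k}(\delta)\cup\gamma_k^{\perp X_t})$, so the quantity to control is the distance, in $\mathcal{C}(\gamma_k)$, between $\pi_{\gamma_k}(\delta)$ and the $X_t$--perpendicular reference in the annular cover $Y_{\gamma_k}$.

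The first step is to establish the relevant annular projection distances for our curves. By Proposition~\ref{prop : anncoeff + coeffbd} combined with Lemma~\ref{lem : horizontal like marking}, for all $k$ sufficiently large we have $d_{\gamma_k}(\eta,\nu)\stackrel{+}{\asymp}e_k$. Since $\delta$ is a fixed curve and $\gamma_k\to\nu$ in $\partial\mathcal{C}(S)$, the same sort of Bounded Geodesic Image argument used in Lemma~\ref{lem : horizontal like marking} shows that $d_{\gamma_k}(\delta,\eta)$ is uniformly bounded (i.e.\ $\stackrel{+}{\asymp}0$) while $d_{\gamma_k}(\delta,\nu)\stackrel{+}{\asymp}e_k$, for $k$ sufficiently large.

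The second step is to invoke Rafi's description of how the perpendicular reference $\gamma_k^{\perp X_t}$ evolves along a Teichm\"uller geodesic. Passing to the annular cover $Y_{\gamma_k}$ and looking at lifts of horizontal and vertical trajectories, one checks that $\gamma_k^{\perp X_t}$ lies within $\mathcal{C}(\gamma_k)$--distance $O(1/\hyp_{X_t}(\gamma_k))$ of $\pi_{\gamma_k}(\eta)$ when $t\leq a_k$, and within the same distance of $\pi_{\gamma_k}(\nu)$ when $t\geq a_k$. The intuition is that whichever of the two foliations has become ``long'' in the annular cover controls the perpendicular direction, and the transition occurs precisely at the balance time, with the error governed by the tilt inherent in the flat cylinder $\cyl_t(\gamma_k)$, whose modulus is of order $\hyp_{X_t}(\gamma_k)^{-1}$.

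Combining these two steps via the triangle inequality in $\mathcal{C}(\gamma_k)$ yields, for $t\leq a_k$,
\[
\tw_{\gamma_k}(\delta,X_t)=d_{\gamma_k}(\delta,\eta)\pm O(1/\hyp_{X_t}(\gamma_k))=0\pm O(1/\hyp_{X_t}(\gamma_k)),
\]
and for $t\geq a_k$,
\[
\tw_{\gamma_k}(\delta,X_t)=d_{\gamma_k}(\delta,\nu)\pm O(1/\hyp_{X_t}(\gamma_k))=e_k\pm O(1/\hyp_{X_t}(\gamma_k)).
\]
The hard part is really just the bookkeeping to verify Rafi's formula applies verbatim: our geodesic has a nonuniquely ergodic vertical foliation $\bar\nu$, but the hypotheses of the twist formula in \cite{rcombteich} concern the local geometry of the quadratic differential near the short curve $\gamma_k$ (and the position of its balance time), not the global dynamics of $\bar\nu$, so there is no genuine obstacle.
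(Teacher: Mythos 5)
Your proposal is correct and follows exactly the route the paper intends: the paper gives no proof of this statement, simply quoting the twist formula from \cite{rcombteich}, and your derivation---Rafi's estimate that $\gamma_k^{\perp X_t}$ tracks the horizontal foliation before the balance time and the vertical foliation after it, up to $O(1/\hyp_{X_t}(\gamma_k))$, combined with $d_{\gamma_k}(\delta,\eta)=O(1)$ and $d_{\gamma_k}(\delta,\nu)\stackrel{+}{\asymp}e_k$ from Proposition~\ref{prop : anncoeff + coeffbd} and the bounded geodesic image argument of Lemma~\ref{lem : horizontal like marking}---is precisely the translation the citation presupposes. The only caveat, present already in the paper's formulation, is that the bounded additive errors from these projection estimates are absorbed into the $O(1/\hyp_{X_t}(\gamma_k))$ term only when $\gamma_k$ has bounded hyperbolic length, which is the regime in which the theorem is actually applied.
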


This Theorem shows, in particular, that the twisting is independent of $\delta$ (up to an error). In fact, arguing as in Lemma~\ref{lem : horizontal like marking}, we can easily prove that this is the case in general.
\begin{lem} \label{lem : all twists similar}  For any two curves $\delta,\delta'$ and constant $L$, there exists $T > 0$ with the following property.  If $\alpha \in \mathcal C(S)$ is a curve and $t_0 \geq T$ with $\hyp_{t_0}(\alpha) \leq L$, then for all $t$,
\[ \tw_\alpha(\delta,X_t) \stackrel{+}{\asymp}_G \tw_\alpha(\delta',X_t)\]
where $G$ is the constant from Theorem~\ref{thm : bddgeod} (for geodesics).
\end{lem}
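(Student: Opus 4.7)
The plan is to reduce the twist comparison to a subsurface projection estimate $d_\alpha(\delta,\delta')$, and then apply the bounded geodesic image theorem once $\alpha$ is forced far from any fixed curve in $\mathcal{C}(S)$.

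First, starting from the definition $\tw_\alpha(\beta,X_t)=\diam_\alpha(\pi_\alpha(\beta)\cup \alpha^{\perp X_t})$ and the fact that $\diam \pi_\alpha(\beta)\leq 2$, the triangle inequality in the annular complex $\mathcal{C}(\alpha)$ immediately yields
\[ |\tw_\alpha(\delta,X_t)-\tw_\alpha(\delta',X_t)| \stackrel{+}{\asymp} d_\alpha(\delta,\delta'), \]
with a uniform additive error. Thus the entire statement reduces to bounding the $t$-independent quantity $d_\alpha(\delta,\delta')$ by $G$ (up to a uniform additive constant) under the given hypothesis on $\alpha$.

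Next I would fix a geodesic $g\subseteq \mathcal{C}(S)$ from $\delta$ to $\delta'$ and observe that if every vertex of $g$ has $\mathcal{C}(S)$-distance $\geq 2$ from $\alpha$, then Theorem~\ref{thm : bddgeod} gives $\diam_\alpha(g)\leq G$, and the triangle inequality in $\mathcal{C}(\alpha)$ supplies $d_\alpha(\delta,\delta')\stackrel{+}{\asymp}_G 0$. So it suffices to show that under the hypotheses of the lemma, $\alpha$ lies at $\mathcal{C}(S)$-distance at least $2$ from every $\beta\in g$.

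The main step, which is where the setup $\mathcal{P}$ is used, is to show that curves of bounded $X_{t_0}$-length are forced far from a fixed bounded subset of $\mathcal{C}(S)$, provided $t_0$ is large. I would choose the largest index $k$ with $\underline{a}_k \leq t_0$; from Lemmas~\ref{lem : af af+m}, \ref{lem : bounded length pants sigmak}, and \ref{lem : gammak bounded at under ak}, the curve $\gamma_k$ satisfies $\hyp_{t_0}(\gamma_k)\leq L_0$ for a universal $L_0$ depending only on $W_0,\epsilon_0$. By the collar lemma, two curves of hyperbolic length at most $\max\{L,L_0\}$ at the same point of $\Teich(S)$ have uniformly bounded intersection number, so Lemma~\ref{lem : id} gives a constant $R_0=R_0(L)$ with $d_S(\alpha,\gamma_k)\leq R_0$. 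Because $\{\gamma_k\}$ is a $\mathcal{C}(S)$-quasi-geodesic converging to $\nu\in\partial\mathcal{C}(S)$ by Proposition~\ref{prop : P implies loc-to-global}, and $g$ is a bounded subset of $\mathcal{C}(S)$, we have $\min_{\beta\in g} d_S(\gamma_k,\beta)\to\infty$ as $k\to\infty$; and $k\to\infty$ as $t_0\to\infty$ by Lemma~\ref{lem : alaf}. Choosing $T$ so that $t_0\geq T$ forces this minimum to exceed $R_0+2$, the triangle inequality then yields $d_S(\alpha,\beta)\geq 2$ for every $\beta\in g$, completing the reduction.

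The only delicate point is choosing the threshold $T$ uniformly in $\delta$ and $\delta'$; however, once $\delta,\delta'$ are fixed, $g$ is a fixed finite set of vertices, and all the remaining estimates (the collar lemma, Lemma~\ref{lem : id}, and the fact that $d_S(\gamma_k,\cdot)\to\infty$) are uniform, so $T=T(\delta,\delta',L)$ works. This is entirely parallel to the proof of Lemma~\ref{lem : horizontal like marking}, with $\delta,\delta'$ playing the role of $\mu,\eta$ and $\alpha$ playing the role of $\partial Y$.
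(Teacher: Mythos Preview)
Your proof is correct and follows essentially the same route as the paper: reduce to bounding $d_\alpha(\delta,\delta')$, show that any $\alpha$ of bounded hyperbolic length at a large time lies within bounded $\mathcal{C}(S)$-distance of some $\gamma_k$ (hence far from the fixed geodesic $g$ between $\delta$ and $\delta'$), and conclude via Theorem~\ref{thm : bddgeod}. The paper's proof is a two-line sketch that points back to Lemma~\ref{lem : horizontal like marking}, and you have correctly expanded exactly those details.
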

\begin{proof}  For sufficiently large $t_0$, a curve $\alpha$ with bounded length must have bounded distance from some $\gamma_k$ in $\mathcal C(S)$.  As in the proof of Lemma~\ref{lem : horizontal like marking}, this can be assumed to be very far from the geodesic in $\mathcal C(S)$ between $\delta$ and $\delta'$ (by assuming $t_0$, and hence $k$, is very large).  Appealing to Theorem~\ref{thm : bddgeod}, we see that $d_\alpha(\delta,\delta') \leq G$.  Since $\tw_\alpha(\delta,X_t)$ is defined in terms of distance in $\mathcal C(\alpha)$, the lemma follows from the triangle inequality in $\mathcal C(\alpha)$.
\end{proof}

\subsection{Bounded length pants decompositions} \label{subsec : bdd length pants}

When $m = \xi(S)$, then for all sufficiently large times $t$, there exists $k$ so that $\sigma_k$ is a bounded length pants decomposition for $X_t$.  In this case, the estimates from the previous section then provide many of the necessary ingredients to apply Proposition~\ref{prop : hXgP} to control $\hyp_t(\delta)$, for an arbitrary curve $\delta$.

If $m < \xi(S)$, then a bounded length pants decompositions will contain other curves not in the sequence $\{\gamma_k\}$, and in this section, we describe the necessary estimates to handle the contribution to length from these.  The reader only interested in the case $m = \xi(S)$ may skip this subsection.

\medskip

We begin by bounding from below the length of the other curves in a bounded length pants decomposition.

\begin{lem} \label{lem : lower bound on sequence curves} There exists $\epsilon > 0$ depending on $R(\mu)$ from Proposition~\ref{prop : anncoeff + coeffbd} such that for all sufficiently large $t$, if $\hyp_t(\alpha) \leq \epsilon$, then $ \alpha \in \{\gamma_k\}_{k=0}^\infty$.
\end{lem}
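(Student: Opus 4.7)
I will prove the contrapositive: if $\alpha \notin \{\gamma_k\}_{k=0}^\infty$, then $\hyp_t(\alpha) \geq \epsilon$ for every sufficiently large $t$, where $\epsilon$ depends only on $R(\mu)$ and universal constants. First observe that since $\gamma_k \to \nu \in \EL(S)$ in $\overline{\mathcal{C}(S)}$ and $\nu$ is filling, infinitely many $\gamma_j$ transversely intersect $\alpha$, so Proposition~\ref{prop : anncoeff + coeffbd}(\ref{eq : not gi small proj}) applies and gives $d_\alpha(\mu,\nu) < R(\mu)$.

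Next I will show that $d_\alpha(\bar\eta,\nu)$ is uniformly bounded by a constant $\tilde R$ depending only on $R(\mu)$ and the usual universal constants. For this I would mimic the proof of Lemma~\ref{lem : horizontal like marking}: fix a $\mathcal{C}(S)$--geodesic $g$ from a curve in $\mu$ either to $[\bar\eta]$ (if $\eta \in \EL(S)$) or to a curve disjoint from $\bar\eta$. Since $\bar\eta$ and $\bar\nu$ fill $S$, $[\bar\eta] \neq [\nu]$, so $g$ has bounded Hausdorff distance from a neighborhood of $\mu$ and in particular $d_{\mathcal C(S)}(\gamma_k,g)\to\infty$. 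For $t$ sufficiently large a short curve $\alpha$ at $X_t$ must lie within a universally bounded distance of some $\gamma_k$ in $\mathcal C(S)$ (bounded length curves along a Teichm\"uller ray form a quasi-geodesic in $\mathcal C(S)$ converging to $[\bar\nu]$), hence $\alpha$ is also far from $g$. By Theorem~\ref{thm : bddgeod}, $\diam_\alpha(g) \leq G$, and since one end of $g$ limits to (or is disjoint from) $\bar\eta$, the triangle inequality in $\mathcal C(\alpha)$ gives $d_\alpha(\mu,\bar\eta) \leq G+2$. Combining with $d_\alpha(\mu,\nu) < R(\mu)$ yields $d_\alpha(\bar\eta,\nu) < R(\mu)+G+2 =: \tilde R$.

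Now I conclude via the cylinder area estimate \eqref{eq : area bound annular strip} applied at the balance time $t(\alpha)$ of $\alpha$:
\[
\operatorname{mod}(\cyl_{t(\alpha)}(\alpha)) \stackrel{*}{\asymp} \frac{\Area(\cyl_0(\alpha))}{4\,i(\alpha,\bar\eta)\,i(\alpha,\bar\nu)} \stackrel{+}{\asymp} \frac{d_\alpha(\bar\eta,\nu)}{4} < \frac{\tilde R}{4}.
\]
Since $h_t(\alpha)v_t(\alpha) = i(\alpha,\bar\eta)\,i(\alpha,\bar\nu)$ is constant in $t$ while the $q_t$--circumference $h_t(\alpha)+v_t(\alpha)$ is minimized at $t(\alpha)$, the modulus of $\cyl_t(\alpha)$ is maximized at $t(\alpha)$. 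Hence $\operatorname{mod}(\cyl_t(\alpha))$ is bounded above by a constant depending only on $\tilde R$ for every $t$. Since the flat cylinder is embedded in the annular cover, $\Ext_t(\alpha)^{-1} \geq \operatorname{mod}(\cyl_t(\alpha))$ cannot be too large; more precisely, the extremal length $\Ext_t(\alpha)$ is bounded below by a positive constant, and by Maskit's comparison between extremal and hyperbolic length this gives a positive lower bound $\epsilon > 0$ for $\hyp_t(\alpha)$, as required.

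The main obstacle in executing this plan is the second paragraph, namely establishing $d_\alpha(\bar\eta,\mu)\leq G+2$ for every short curve $\alpha$ at sufficiently large times. The subtle point is that Lemma~\ref{lem : horizontal like marking} is stated for subsurfaces $Y$ with $d_S(\gamma_k,\partial Y)\leq 2$; what is needed here is a uniform version applicable to every curve $\alpha$ which becomes short along $r$, regardless of its precise distance in $\mathcal C(S)$ to the sequence. This requires invoking the fact that the sequence of short curves along $r$ quasi-geodesically tracks $\{\gamma_k\}$ in $\mathcal C(S)$ (a consequence of Klarreich's theorem~\ref{thm : bdrycc} combined with Masur--Minsky hyperbolicity), which makes $\alpha$ far from the geodesic $g$ from $\mu$ to $\bar\eta$, so that the Bounded Geodesic Image Theorem applies.
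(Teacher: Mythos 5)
Your first two paragraphs are essentially the paper's opening step (Proposition~\ref{prop : anncoeff + coeffbd} plus Lemma~\ref{lem : horizontal like marking} give $d_\alpha(\bar\eta,\nu)\leq R(\mu)+G+1$ for any curve $\alpha$ not in the sequence), and the worry you flag about making Lemma~\ref{lem : horizontal like marking} apply to short curves at large times is a surmountable technical point. The genuine gap is in your third paragraph. Bounding the annular coefficient $d_\alpha(\bar\eta,\nu)$, and hence the modulus of the \emph{flat} cylinder $\cyl_t(\alpha)$, does not give a lower bound on $\hyp_t(\alpha)$. A curve along a Teichm\"uller geodesic can become short for a second reason: some non-annular subsurface $Z$ with $\alpha\subseteq\partial Z$ may have no short common quasi-parallel of $\pi_Z(\bar\eta)$ and $\pi_Z(\nu)$ (large $K(Z)$ in the notation of the proof of Proposition~\ref{prop : activeint}), which produces large expanding annuli around $\alpha$ rather than a fat flat cylinder. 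This is exactly what Rafi's characterization of short curves (\cite[Theorem 6.1]{rshteich}, $\hyp_t(\alpha)\stackrel{*}{\asymp}1/K_\alpha$ with $K_\alpha$ the maximum over \emph{all} subsurfaces with $\alpha$ in their boundary) quantifies, and your argument controls only the annular term of $K_\alpha$. Your concluding inequality also points the wrong way: the flat cylinder embedded in the annular cover gives $\operatorname{mod}(\cyl_t(\alpha))\leq 1/\Ext_t(\alpha)$, so an upper bound on $\operatorname{mod}(\cyl_t(\alpha))$ yields no upper bound on $1/\Ext_t(\alpha)$ and hence no lower bound on $\hyp_t(\alpha)$.

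A symptom of the gap is that your proposal never uses condition $\mathcal P(iv)$, which the lemma genuinely needs. If some subsurface $Z$ with $\alpha\subseteq\partial Z$ were filled by curves of the sequence, then the huge annular coefficients $e_k$ of those curves would force $i(\pi_Z(\bar\eta),\pi_Z(\nu))$, hence $K(Z)$, to be enormous, and $\alpha$ would get arbitrarily short even though $\alpha\notin\{\gamma_k\}$; so a proof using only the annular projection to $\alpha$ cannot work. The paper's proof handles this by invoking $\mathcal P(iv)$: the sequence curves inside any such $Z$ fill a subsurface $Y$ not having $\alpha$ in its boundary, and in the complementary region $W$ meeting $\alpha$ all subsurface coefficients $d_V(\bar\eta,\nu)$ are uniformly bounded (Proposition~\ref{prop : anncoeff + coeffbd} again), so by Theorem~\ref{thm : i=dY} one can produce a curve $\omega$ crossing $\alpha$ at most twice with uniformly bounded intersection with $\pi_Z(\bar\eta)$ and $\pi_Z(\nu)$. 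This bounds $K(Z)$ for every non-annular $Z$ with $\alpha\subseteq\partial Z$, and then \cite[Theorem 6.1]{rshteich} gives the uniform lower bound on $\hyp_t(\alpha)$. To repair your argument you would need to add this non-annular analysis (or some equivalent control of expanding annuli), not just the flat-cylinder estimate.
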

\begin{proof} 
Let $\alpha$ be a curve not in $\{ \gamma_k\}_{k=0}^\infty$. We will show that 
$K_\alpha$ is uniformly bounded. This requires us to bound $K(Z)$ for all essential subsurfaces $Z$ with $\alpha\subseteq\partial{Z}$; see the proof of Proposition~\ref{prop : activeint} for the definition of $K_{\alpha}$ and $K(Z)$. 

By Proposition~\ref{prop : anncoeff + coeffbd} and Lemma~\ref{lem : horizontal like marking}, $d_\alpha(\eta, \nu) \leq R(\mu) + G +1$.  Consider the set of curves in $\{ \gamma_k\}_{k=0}^\infty$ that are contained in and fill an essential subsurface $Z$ with the property that $\alpha\subseteq \partial{Z}$. Then, by $\mathcal P(iv)$, this set of curves is contained in a subsurface $Y \subset Z$ such that $\alpha$ is not a boundary component of $Y$.  

Let $W \subset S - (Y \cup \alpha)$ be the (possibly disconnected) union of components meeting $\alpha$ (so two components of $\partial W$ are isotopic to $\alpha$ in $S$).  Since $W$ contains no curves in $\{\gamma_k\}$,  Proposition~\ref{prop : anncoeff + coeffbd} and Lemma~\ref{lem : horizontal like marking} imply that for all connected subsurfaces $V \subset W$, $d_V(\eta,\nu) \leq R(\mu) + G+1$.  By Theorem~\ref{thm : i=dY} $i(\pi_W(\eta),\pi_W(\nu))$ is bounded above (depending only on $R(\mu)$ and $G$).    Consequently, there exists a simple closed curve $\omega$ in $S$ intersecting $\alpha$ at most twice with $i(\pi_W(\omega),\pi_W(\eta))$ and $i(\pi_W(\omega),\pi_W(\nu))$ uniformly bounded (again depending on $R(\mu)$ and $G$).  Therefore, $i(\pi_Z(\omega),\pi_Z(\eta))$ and $i(\pi_Z(\omega),\pi_Z(\nu))$ are uniformly bounded, hence so is $K(Z)$.

According to \cite[Theorem 6.1]{rshteich}, there is uniform lower bound for $\hyp_t(\alpha)$.   The lemma is completed by setting $\epsilon> 0$ to be any number less than this uniform lower bound.
\end{proof}

In what follows, we will assume $L \geq L_0 = \exp(2W_0)\epsilon_0$ as in \S\ref{subsec : hyp est 1}.

\begin{thm}\label{thm : abds}
Let $\delta \in \mathcal C(S)$ be any curve and $L \geq L_0$.  Then there exists $K,C,T > 0$, depending on $L$, $\delta$, and $R(\mu)$ from Proposition~\ref{prop : anncoeff + coeffbd}, with the following property. Suppose $t \geq T$ and that $P$ is an $L$--bounded length pants decomposition of $S$ containing $\sigma_k$, for some $k$.  Then for all $\alpha \in P \setminus \sigma_k$, we have
\[ i(\delta,\alpha) \stackrel * \prec_K A(0,k+m-1).\]
and
\[ \tw_\alpha(\delta,X_t) \leq C.\]
\end{thm}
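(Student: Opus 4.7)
The plan is to prove the two estimates in order, with the twist bound feeding into the intersection number bound. Both crucially use that $\alpha\notin\{\gamma_j\}_{j\geq 0}$, so that Proposition~\ref{prop : anncoeff + coeffbd} yields the fundamental estimate $d_\alpha(\mu,\nu)<R(\mu)$, and more generally $d_W(\mu,\nu)<R(\mu)$ for every essential subsurface $W$ not equal to an annular neighborhood of some $\gamma_j$.

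For the twist bound, I would first invoke Lemma~\ref{lem : all twists similar} with some fixed curve $\delta'\in\mu$ and the threshold $L$ to reduce the task to bounding $\tw_\alpha(\delta',X_t)$ at the cost of an additive error $G$. I would then follow the template of the proof of Proposition~\ref{prop : activeint}(iii), itself built on \cite[Theorem 6.1]{rshteich}, together with its analog for twisting. The bounded coefficient $d_\alpha(\mu,\nu)<R(\mu)$ implies that any active interval for $\alpha$ has length $\stackrel{+}{\asymp}\log d_\alpha(\mu,\nu)\leq \log R(\mu)$, and that outside such an interval the orthogonal direction $\alpha^{\perp X_t}$ remains within a bounded distance of $\pi_\alpha(\nu)$ in $\mathcal{C}(\alpha)$. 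Together these give $\tw_\alpha(\delta',X_t)\leq d_\alpha(\mu,\nu)+O(1)\leq R(\mu)+O(1)$, hence the uniform constant $C$.

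For the intersection number bound, the plan is to apply Proposition~\ref{prop : hXgP} to $\delta$ at $X_t$ with the pants decomposition $P$, obtaining
\[ \hyp_{X_t}(\delta) \stackrel{+}{\asymp} \sum_{\beta\in P} i(\delta,\beta)\bigl[w_t(\beta) + \tw_\beta(\delta,X_t)\hyp_t(\beta)\bigr] \]
up to error $O\bigl(\sum_{\beta\in P} i(\delta,\beta)\bigr)$. Since the argument in the first part applies to every $\beta\in P\setminus\sigma_k$ (its only inputs were $\beta\notin\{\gamma_j\}$ and $\hyp_t(\beta)\leq L$), and since the Collar Lemma gives $w_t(\beta)\geq w(L)>0$, each such bracketed factor is $\stackrel{*}{\asymp} 1$, so $\hyp_{X_t}(\delta,\beta)\stackrel{*}{\asymp} i(\delta,\beta)$. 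Isolating the $\beta=\alpha$ term and using nonnegativity of the others gives
\[ i(\delta,\alpha) \stackrel{*}{\prec} \hyp_{X_t}(\delta) + \sum_{\beta\in P} i(\delta,\beta). \]
The contributions $\hyp_{X_t}(\delta,\gamma_j)$ from $\gamma_j\in\sigma_k$ are estimated directly using Theorem~\ref{thm : rafitw} for $\tw_{\gamma_j}(\delta,X_t)$, the modulus formula (\ref{eq : modulus estimate}) together with Lemmas~\ref{lem : gammak bounded at under ak} and \ref{lem : linear growth of width} for $w_t(\gamma_j)$ and $\hyp_t(\gamma_j)$, and Lemma~\ref{lem : int estimate general} for $i(\delta,\gamma_j)\stackrel{*}{\asymp} A(0,j)$. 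By Lemma~\ref{lem : af in Jl}, the range of $t$ in which $\sigma_k\subset P$ can hold is essentially $[\underline{a}_{k+m-1}-W,\bar{a}_k+W]$ for some $W$ depending only on $L$, and within this range each such contribution is bounded by a multiple of $A(0,k+m-1)$. Treating the bounds $i(\delta,\beta)$ for $\beta\in P\setminus\sigma_k$ as a coupled system and solving simultaneously then yields $i(\delta,\alpha)\leq K\,A(0,k+m-1)$.

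The main obstacle I anticipate is obtaining a sufficiently sharp estimate for $\hyp_{X_t}(\delta)+\sum_\beta i(\delta,\beta)$ in terms of $A(0,k+m-1)$. A crude bound via the $q$-length, $\hyp_{X_t}(\delta)\leq c\,\ell_t(\delta)\leq c(e^t i(\delta,\bar\nu)+e^{-t}i(\delta,\bar\eta))$, can exceed $A(0,k+m-1)$ when $t$ is near $\bar{a}_k$, so a finer analysis via the thick-thin decomposition of $X_t$ is needed. The precise ordering of active intervals supplied by Lemma~\ref{lem : af in Jl} and the rapid growth condition (\ref{eq : G1}) on $\{e_k\}$ will be the essential tools for pushing the estimate through and absorbing the error terms into the desired bound.
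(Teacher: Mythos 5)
There is a genuine gap, and it sits exactly where you anticipate one: your route to the bound $i(\delta,\alpha)\stackrel*\prec A(0,k+m-1)$ goes through $\hyp_{X_t}(\delta)$ via Proposition~\ref{prop : hXgP}, and then requires knowing that $\hyp_{X_t}(\delta)$ (plus the error term $O(\sum_{\beta\in P}i(\delta,\beta))$, which itself contains the unknown quantities) is controlled by $A(0,k+m-1)$ on the whole time range where $\sigma_k$ can sit in a bounded-length pants decomposition. You concede this estimate is missing and do not supply it; moreover it is essentially circular relative to the role this theorem plays in the paper, since Theorem~\ref{thm : abds} is precisely what is later used (in \S 9.6) to show that the contributions of $P\setminus\sigma_k$ to the length formula are negligible, so that $\hyp_{t}(\delta)$ is comparable to the $\sigma_k$--terms. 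The paper's proof avoids hyperbolic geometry entirely for this part: letting $Y$ be the component of $S\setminus\sigma_k$ containing $\alpha$ (which contains no curve of the sequence), it bounds $d_Z(\delta,\alpha)$ uniformly for every $Z\subseteq Y$ by combining the bounded-length characterization $d_Z(\eta,\alpha)+d_Z(\alpha,\nu)\stackrel+\asymp d_Z(\eta,\nu)$ from \cite{rteichhyper} with Proposition~\ref{prop : anncoeff + coeffbd} and Lemma~\ref{lem : horizontal like marking}, then applies Theorem~\ref{thm : i=dY} inside $Y$ to get a uniform bound $i(\pi_Y(\delta),\alpha)\leq I$ per arc, and finally counts the arcs of $\delta\cap Y$ by $\sum_{d=k}^{k+m-1} i(\delta,\gamma_d)\stackrel*\asymp\sum_d A(0,d)\leq m\,A(0,k+m-1)$ using Lemma~\ref{lem : int estimate general}. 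That is the missing idea in your proposal.

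A second, smaller gap: your twist bound is asserted from $d_\alpha(\mu,\nu)<R(\mu)$ alone, but the twisting estimate from \cite{rcombteich} (compare Theorem~\ref{thm : rafitw}) carries an error of order $1/\hyp_t(\alpha)$, so a uniform bound $\tw_\alpha(\delta,X_t)\leq C$ also requires a \emph{lower} bound on $\hyp_t(\alpha)$. This lower bound does not follow from the annular coefficient bound, nor from $d_W(\mu,\nu)<R(\mu)$ for non-sequence subsurfaces: a curve $\alpha$ disjoint from $\sigma_k$ could a priori bound a subsurface filled by curves of the sequence, whose annular coefficients $\asymp e_j$ would force $\alpha$ to become short (large $K(Y)$ in the sense of \cite{rshteich}). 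Ruling this out is exactly the content of condition $\mathcal P(iv)$ and Lemma~\ref{lem : lower bound on sequence curves}, which the paper invokes at this point and which your argument neither cites nor reproves; the same lower bound is also needed for your claim that the bracketed factors $w_t(\beta)+\tw_\beta(\delta,X_t)\hyp_t(\beta)$ are $\stackrel*\asymp 1$, since the width upper bound requires $\hyp_t(\beta)$ bounded below as well.
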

\begin{proof}
We first prove the bound on intersection numbers.  For any $t$, suppose $\alpha$ is part of an $L$--bounded length pants decomposition.  Then \cite[Theorem 6.1]{rteichhyper} and the triangle inequality imply that for every subsurface $Z \neq Y_\alpha$, we have
\[ d_Z(\eta,\alpha) + d_Z(\alpha,\nu) \stackrel + \asymp d_Z(\eta,\nu)\]
where the additive error depends on $S$ and $L$.

We assume that $T_0 > 0$ is large enough so that for all $t \geq T_0$ there exists $k$ so that every curve in $\sigma_k$ has length at most $L$ at time $t$.  We write $k(t)$ for such a $k$.
As in the proof of Lemma~\ref{lem : horizontal like marking} and Lemma~\ref{lem : all twists similar}, we may take $T \geq T_0$ so that for all $t \geq T$, $d_Z(\delta,\nu) \stackrel + \asymp d_Z(\eta,\nu)$ for surfaces $Z$ with $d_S(\partial Z,\gamma_{k(t)}) \leq 2$.

Now let $t \geq T$ and $P$ be an $L$--bounded length pants decomposition containing $\sigma_{k(t)}$, and let $Y$ be the component of $S \setminus \sigma_{k(t)}$ containing $\alpha$ and $Z \subseteq Y$ any subsurface.  According to Proposition~\ref{prop : anncoeff + coeffbd} and Lemma~\ref{lem : horizontal like marking} we have $d_Z(\eta,\nu) \leq R(\mu) + G + 1$, and so combining the inequalities above, there exists $R'$ (depending on $R(\mu)$ and $L$) so that for all surfaces $Z \subseteq Y$, we have
\[ d_Z(\delta,\alpha) \leq R'.\]
Therefore, taking the threshold sufficiently large in Theorem~\ref{thm : i=dY} for the subsurface $Y$, there exists a constant $I$ (depending on $R'$ and Theorem~\ref{thm : i=dY}) so that
\[ i(\pi_Y(\delta),\alpha) \leq I.\]
Now, every arc of $\pi_Y(\delta)$ comes from a pair of intersection points with curves in $\sigma_{k(t)}$.  Consequently, taking $\kappa(\delta)$ to be the constant from Lemma~\ref{lem : int estimate general} we have
\[ i(\delta,\alpha) \leq I \sum_{d = k(t)}^{k(t)+m-1} i(\delta,\gamma_d) \stackrel * \asymp_{\kappa(\delta)} I \sum_{d = k(t)}^{k(t)+m-1}A(0,d) \leq m I A(0,k(t)+m-1).\]
Thus, setting $K = mI \kappa(\delta)$ proves the first statement. 

For the bound on twist number, we again appeal to \cite{rcombteich}---the same estimate in Theorem~\ref{thm : rafitw}.  Since $\alpha \not \in \{\gamma_k\}_{k=0}^\infty$ (and $\alpha$ has bounded length at time $t \geq T$), we have $d_\alpha(\eta,\nu) \leq R(\mu) + G + 1$, where $R(\mu)$ is from Proposition~\ref{prop : anncoeff + coeffbd} and $G$ the constant appearing in Lemma~\ref{lem : horizontal like marking} (from Theorem~\ref{thm : bddgeod}).  Since the length of $\alpha$ is bounded below by $\epsilon$, according to Lemma~\ref{lem : lower bound on sequence curves}, \cite{rcombteich} implies
\[ \tw_\alpha(\delta,X_t) \leq C\]
for some $C >0$ depending on $R(\mu),G,\epsilon$ and the surface $S$.
\end{proof}

\subsection{Second reduction and division into cases}

We now consider the setup as in Theorem~\ref{thm : reduction 1}.  As mentioned in Remark~\ref{rem : simplify to h=0}, to simplify the notation we assume $h =0$ and $h'=1$.  It is convenient to switch to the notation $\gamma_i^h = \gamma_{im+h}$, $a_i^h = a_{im+h}$, $\sigma_i^h = \sigma_{im+h}$, etc.  

We consider sequences $\{t_i\}$ with $t_i \in [\bar{a}_i^0,\bar{a}_i^1]$ for all sufficiently large $i$, falling into one of two possible cases:
\[ \begin{array}{l} \mbox{{\bf Case 1.} There exists $W > 0$ so that $t_i \in [\bar{a}_i^0,\bar{a}_i^0+W]$.}\\\\
\mbox{{\bf Case 2.}$\displaystyle{\lim_{i \to \infty} t_i-\bar{a}_i^0 = \infty}$.}
\end{array}\]
For any curve $\delta \in \mathcal C(S)$ define
\[ U_i^h(t,\delta) = w_t(\gamma_i^h) + \tw_{\gamma_i^h}(\delta,X_t)\hyp_t(\gamma_i^h).\]
We will also fix a curve $\delta_0$ for reference and write
\[ U_i^h(t) = U_i^h(t,\delta_0). \]
The next lemma is not needed for the reduction, but for later use we make note of it now.
\begin{lem} \label{lem : can ignore delta} For any curve $\delta \in \mathcal C(S)$ and $L >0$, there exists $T >0$ so that for all $t \geq T$ and $i,h$ with $\hyp_t(\gamma_i^h) \leq L$, we have
\[ U_i^h(t,\delta) \stackrel + \asymp_{GL} U_i^h(t). \]
\end{lem}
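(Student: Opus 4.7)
The plan is to reduce this directly to Lemma \ref{lem : all twists similar}. First, I would expand the definitions and observe that the width contribution cancels:
\begin{equation*}
U_i^h(t,\delta) - U_i^h(t) = \bigl(\tw_{\gamma_i^h}(\delta,X_t) - \tw_{\gamma_i^h}(\delta_0,X_t)\bigr)\hyp_t(\gamma_i^h).
\end{equation*}
So the task is just to bound the difference of twist numbers about $\gamma_i^h$ with respect to the two fixed reference curves $\delta$ and $\delta_0$, evaluated at time $t$.

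Next, I would apply Lemma \ref{lem : all twists similar} to the pair $\delta,\delta_0$ (with the constant $L$), which produces a threshold $T > 0$ with the following property: if $\alpha$ is any curve and $t_0 \geq T$ satisfies $\hyp_{t_0}(\alpha) \leq L$, then $\tw_\alpha(\delta,X_t) \stackrel{+}{\asymp}_{G} \tw_\alpha(\delta_0,X_t)$ for \emph{every} $t$. Setting $\alpha = \gamma_i^h$ and $t_0 = t$, the hypothesis $t \geq T$ and $\hyp_t(\gamma_i^h) \leq L$ of the present lemma is exactly what is needed to invoke this, yielding
\begin{equation*}
\bigl| \tw_{\gamma_i^h}(\delta,X_t) - \tw_{\gamma_i^h}(\delta_0,X_t) \bigr| \leq G.
\end{equation*}

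Finally, multiplying by $\hyp_t(\gamma_i^h) \leq L$ gives $|U_i^h(t,\delta) - U_i^h(t)| \leq GL$, which is the desired additive comparison. There is no real obstacle here — the content is entirely in Lemma \ref{lem : all twists similar}, and the present statement is just the observation that once twists with respect to $\delta$ and $\delta_0$ differ by at most $G$, the corresponding contribution to length differs by at most $GL$ under the bounded length assumption. The only care needed is to note that the application of Lemma \ref{lem : all twists similar} is uniform in the choice of $(i,h)$, since the threshold $T$ depends only on $\delta,\delta_0$, and $L$, not on the specific curve $\gamma_i^h$.
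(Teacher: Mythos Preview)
Your proposal is correct and follows essentially the same approach as the paper's own proof: expand the definition so the width term cancels, apply Lemma~\ref{lem : all twists similar} to bound the twist difference by $G$, and multiply by the length bound $L$. The paper's argument is identical, including the observation that the threshold $T$ from Lemma~\ref{lem : all twists similar} is independent of the particular curve $\gamma_i^h$.
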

Here the constant $G$ is from Theorem~\ref{thm : bddgeod} appearing in Lemma~\ref{lem : all twists similar}.
\begin{proof} Given $L$, Lemma~\ref{lem : all twists similar} provides $T > 0$ so that for all $t \geq T$, if $\hyp_t(\gamma_i^h) \leq L$, then
\[ |\tw_{\gamma_i^h}(\delta,X_t) - \tw_{\gamma_i^h}(\delta_0,X_t)| \leq G.\]
Therefore, we have
\[ |U_i^h(t,\delta) - U_i^h(t)| = |\tw_{\gamma_i^h}(\delta,X_t) - \tw_{\gamma_i^h}(\delta_0,X_t)|\hyp_t(\gamma_i^h) \leq GL.\]
\end{proof}

We now turn to our second reduction.
\begin{thm} \label{thm : reduction 2}
Suppose $\{t_i\}$ is a sequence with $t_i \in [\bar{a}_i^0,\bar{a}_i^1]$ for all sufficiently large $i$ and $\delta$ is any curve (not necessarily $\delta_0$).

If $\{t_i\}$ falls into Case 1, then
\[ \lim_{i \to \infty} \frac{U_i^0(t_i)i(\delta,\gamma_i^0)}{\hyp_{t_i}(\delta)} = 1.\]

If $\{t_i\}$ falls into Case 2, then
\[ \lim_{i \to \infty} \frac{U_i^1(t_i)i(\delta,\gamma_i^1) + U_{i+1}^0(t_i)i(\delta,\gamma_{i+1}^0)}{\hyp_{t_i}(\delta)}=1.\]
\end{thm}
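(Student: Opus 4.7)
The strategy is to apply Proposition~\ref{prop : hXgP} to estimate $\hyp_{t_i}(\delta)$ in terms of contributions $i(\delta,\alpha) U_\alpha(t_i,\delta)$ over an appropriately chosen bounded length pants decomposition, identify the dominant contributions, and replace $U_\alpha(t_i,\delta)$ by $U_\alpha(t_i)$ via Lemma~\ref{lem : can ignore delta}.

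For Case 1, I would use Lemma~\ref{lem : bounded length pants sigmak} to extend $\sigma_i^0$ to an $L_0$--bounded length pants decomposition $P_i$ for $t_i \in [\bar a_i^0,\bar a_i^0+W]$, and then apply Proposition~\ref{prop : hXgP}. For $\alpha=\gamma_i^0$, since $t_i\geq a_i^0$, Theorem~\ref{thm : rafitw} yields $\tw_{\gamma_i^0}(\delta,X_{t_i})\stackrel+\asymp e_i^0$, while $\hyp_{t_i}(\gamma_i^0)\stackrel*\asymp 1$ by Lemmas~\ref{lem : gammak bounded at under ak} and~\ref{lem : 2 exponential growth}; hence $U_i^0(t_i,\delta)\stackrel*\asymp e_i^0$, and by Lemma~\ref{lem : int estimate general} the $\gamma_i^0$ contribution is of order $e_i^0\,A(0,im)$. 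For $\alpha=\gamma_i^h$ with $h\geq 1$, Lemma~\ref{lem : af in Jl} gives $t_i<a_i^h$, so the twist term is $O(1)$, and Lemma~\ref{lem : linear growth of width} together with the estimates in \S\ref{sec : actinterval} gives $U_i^h(t_i,\delta)=O(\log e_i^h)$. The ratio of this contribution to that of $\gamma_i^0$ then has the form $\tfrac{(\log e_i^h)}{e_i^0}\prod_{l=1}^{i-1}\tfrac{e_l^h}{e_l^0}$, which tends to $0$ by \eqref{eq : G1sp} and the super-exponential growth in \eqref{eq : G1}. Contributions from curves $\alpha\in P_i\setminus \sigma_i^0$ are controlled by Theorem~\ref{thm : abds} (giving $i(\delta,\alpha)\stackrel*\prec A(0,im+m-1)$ and bounded twist) combined with Lemma~\ref{lem : lower bound on sequence curves} (giving bounded width); these are again negligible by \eqref{eq : G1}. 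Lemma~\ref{lem : can ignore delta} finishes Case 1.

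For Case 2, once $t_i-\bar a_i^0\geq W_0$ for a fixed threshold, I would use $\sigma_{im+1}=\gamma_i^1\cup\cdots\cup\gamma_i^{m-1}\cup\gamma_{i+1}^0$: all these curves lie in their active intervals throughout $[\bar a_i^0+W_0,\bar a_i^1]$ by Lemma~\ref{lem : af in Jl} and the estimate $\underline{a}_{i+1}^0\stackrel+\asymp\bar a_i^0$ of Lemma~\ref{lem : af af+m}, so Lemma~\ref{lem : bounded length pants sigmak} again extends this to a bounded length pants decomposition $P_i'$. For $\gamma_{i+1}^0$, since $t_i<a_{i+1}^0$, twist is $O(1)$ and Lemma~\ref{lem : linear growth of width} gives $U_{i+1}^0(t_i,\delta)\stackrel+\asymp 4(t_i-\bar a_i^0)$. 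For $\gamma_i^1$, Theorem~\ref{thm : rafitw} and the width formula extracted from the modulus estimate \eqref{eq : modulus estimate} give the contribution, which grows large (especially through the twist term once $t_i\geq a_i^1$). For $\gamma_i^h$ with $h\geq 2$, Lemma~\ref{lem : af in Jl} again gives $t_i<a_i^h$, hence $O(1)$ twist, width at most of order $|J_i^h|\stackrel+\asymp\log e_i^h$; combined with Lemma~\ref{lem : int estimate general}, \eqref{eq : G1sp}, and \eqref{eq : G1}, these contributions are negligible compared to $U_{i+1}^0(t_i)i(\delta,\gamma_{i+1}^0)$. Non-sequence curves in $P_i'$ are handled as in Case 1 by Theorem~\ref{thm : abds} and Lemma~\ref{lem : lower bound on sequence curves}. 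Finally, Lemma~\ref{lem : can ignore delta} replaces $U_i^1(t_i,\delta)$ and $U_{i+1}^0(t_i,\delta)$ by $U_i^1(t_i)$ and $U_{i+1}^0(t_i)$, respectively.

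The main obstacle lies in Case 2: neither $\gamma_i^1$ nor $\gamma_{i+1}^0$ alone dominates uniformly over $t_i\in[\bar a_i^0+W_0,\bar a_i^1]$, since the width contribution of $\gamma_{i+1}^0$ is largest near $\bar a_i^1$ while the twist contribution of $\gamma_i^1$ only becomes large past $a_i^1$. One must show that their \emph{sum} approximates $\hyp_{t_i}(\delta)$ to within a factor $1+o(1)$ uniformly in $t_i$, using the explicit formulas for active intervals from Lemma~\ref{lem : a midpoint location} and Lemma~\ref{lem : af+al}. The delicate point is ruling out the other curves $\gamma_i^h$ ($h\geq 2$), whose widths may also grow with $t_i$; the super-exponential growth \eqref{eq : G1} via \eqref{eq : G1sp} is precisely what forces those ratios to vanish uniformly across the entire interval.
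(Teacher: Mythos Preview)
Your proposal is correct and follows essentially the same route as the paper: the same choice of bounded-length pants decompositions (containing $\sigma_i^0$ in Case~1 and $\sigma_i^1$ in Case~2), the same identification of dominant terms via the $U$-estimates, the same handling of non-sequence pants curves via Theorem~\ref{thm : abds} and Lemma~\ref{lem : lower bound on sequence curves}, and the same reduction to $U_j^h(t_i)$ via Lemma~\ref{lem : can ignore delta}. One small correction: in Case~1 your displayed ratio for the $\gamma_i^h$ contribution has the wrong index shift---the relevant product is $\prod_{j=1}^i e_{j-1}^h/e_j^0$ (each factor $\leq a^{-1}$), not $\prod_l e_l^h/e_l^0$; with this shift only \eqref{eq : ek} is needed, not the stronger \eqref{eq : G1}.
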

Note in this theorem, the terms $U_j^h(t_i)$ do not depend on $\delta$ (c.f.~Lemma~\ref{lem : can ignore delta}).
\begin{proof}[Proof of Theorem~\ref{thm : reduction 1} assuming Theorem~\ref{thm : reduction 2}.]  Suppose $\{t_j\}_{j=0}^\infty$ with $t_j \in [\bar{a}_{i_j}^0,\bar{a}_{i_j}^1]$ for all sufficiently large $j$ and some $i_j$, so that $X_{t_j}$ converges to some point in $\mathcal{PML}(S)$.  We may pass to a subsequence so that either $t_j - \bar a_{i_j}^0 \leq W$ for some $W$, or else $t_j - \bar a_{i_j}^0 \to \infty$ with $j$.  This subsequence can be viewed as a subsequence of a sequence falling into Case 1 or Case 2, respectively, and hence the conclusion of Theorem~\ref{thm : reduction 2} holds for $\{t_j\}$.

Now let $\delta,\delta' \in \mathcal C(S)$ be any two curves.  If we are in Case 1, then by Theorem~\ref{thm : reduction 2} and Theorem~\ref{thm : MLlimitgi} we have
\begin{eqnarray*} \lim_{j \to \infty} \frac{\hyp_{t_j}(\delta)}{\hyp_{t_j}(\delta')} & = & \lim_{j \to \infty} \frac{\hyp_{t_j}(\delta) \frac{U_{i_j}^0(t_j)i(\delta,\gamma_{i_j}^0)}{\hyp_{t_j}(\delta)}}{\hyp_{t_j}(\delta') \frac{U_{i_j}^0(t_j)i(\delta',\gamma_{i_j}^0)}{\hyp_{t_j}(\delta')}}\\\\  
& = & \lim_{j \to \infty} \frac{i(\delta,\gamma_{i_j}^0)}{i(\delta',\gamma_{i_j}^0)} = \frac{i(\delta,\bar \nu^0)}{i(\delta',\bar \nu^0)}.
\end{eqnarray*}
Since $\delta,\delta'$ were arbitrary, it follows that $X_{t_j} \to [\bar \nu^0]$.

Now suppose we are in the second case.  Compactness of $\mathcal{PML}(S)$ implies that by passing to a further subsequence (of the same name) the sequence
\[ \{[U_{i_j}^1(t_j)\gamma_{i_j}^1 + U_{i_j+1}^0(t_j)\gamma_{i_j+1}^0] \}_{j=0}^\infty \]
converges in $\PML(S)$.  Note that this limit is necessarily of the form
\[ [y_0\bar \nu^0 + y_1 \bar \nu^1] \in \Big[ [\bar \nu^0],[\bar \nu^1] \Big] \]
by Theorem~\ref{thm : MLlimitgi}.  Now observe that for all $j$, the numerator from Case 2 of Theorem~\ref{thm : reduction 2} is given by
\[ U_{i_j}^1(t_j)i(\delta,\gamma_{i_j}^1) + U_{i_j+1}^0(t_j)i(\delta,\gamma_{i_j+1}^0)  = i(\delta,U_{i_j}^1(t_j)\gamma_{i_j}^1 + U_{i_j+1}^0(t_j)\gamma_{i_j+1}^0).\]
Therefore, similar to the above calculation, appealing to Theorem~\ref{thm : reduction 2} we have
\begin{eqnarray*}
\lim_{j \to \infty} \frac{\hyp_{t_j}(\delta)}{\hyp_{t_j}(\delta')} & = &\lim_{j\to\infty} \frac{i(\delta,U_{i_j}^1(t_j)\gamma_{i_j}^1 + U_{i_j+1}^0(t_j)\gamma_{i_j+1}^0)}{i(\delta',U_{i_j}^1(t_j)\gamma_{i_j}^1 + U_{i_j+1}^0(t_j)\gamma_{i_j+1}^0)}\\\\
& = & \frac{i(\delta,y_0\bar \nu^0 + y_1 \bar \nu^1)}{i(\delta',y_0\bar \nu^0 + y_1 \bar \nu^1)}
\end{eqnarray*}
Again, because $\delta,\delta'$ were arbitrary we see that $X_{t_j}$ limits to $[y_0 \bar \nu^0+y_1\bar \nu^1]$.
This completes the proof.
\end{proof}

\subsection{Final estimates and proof of Theorem~\ref{thm : reduction 2}.} \label{subsec : final estimates}

Here we provide the final estimates necessary for the proof of Theorem~\ref{thm : reduction 2} (and hence the main theorem).  The proof for each of the two cases are similar, and many of the estimates can be made simultaneously.  

We assume for the remainder of the paper that $\{t_i\}$ is a sequence so that $t_i \in [\bar a_i^0,\bar a_i^1]$ for all sufficiently large $i$ and that $\delta$ is an arbitrary curve (not necessarily our reference curve $\delta_0$).

If we are in Case 1 with $t_i-\bar a_i^0 \leq W$, then by Lemma~\ref{lem : bounded length pants sigmak}, for all sufficiently large $i$ there exists $L \geq \exp(2W)\epsilon$ and an $L$--bounded length pants decomposition $P_i$ for $X_{t_i}$ containing $\sigma_i^0$.  Let
\[  P^c_i = \ P_i \setminus \sigma_i^0. \]

If we are in Case 2, then by Lemma~\ref{lem : af af+m}, for $i$ sufficiently large, we have $t_i \in [\underline{a}_{i+1}^0,a_{i+1}^0]$, and there exists $L \geq 0$ (depending only on $S$) and an  $L$--bounded pants decomposition $P_i$ for $X_{t_i}$ containing $\sigma_i^1$.  Similar to Case 1, we let
\[  P^c_i =  P_i \setminus \sigma_i^1.\]

We use Proposition~\ref{prop : hXgP} to estimate $\hyp_{t_i}(\delta)$.  Appealing to Theorem~\ref{thm : abds} together with Lemma~\ref{lem : int estimate general} and monotonicity of $\{A(0,k)\}_{k=0}^\infty$ (Lemma~\ref{lem : A ratio bound}) to group together all the intersection number errors in Proposition~\ref{prop : hXgP}, this takes a somewhat simpler form.  To write it, recall that for all $h \in \{0,\ldots,m-1\}$ and $i \geq 0$, we have
\[ c_i^h = A(0,im+h) = \prod_{j=0}^{i-1} be_j^h.\]
The estimates are then similar, but depend on the case:\\

\noindent {\bf Case 1.}
\begin{equation} \label{eq : Case 1a}
\hyp_{t_i}(\delta) = \sum_{h=0}^{m-1} \hyp_{t_i}(\delta,\gamma_i^h) + \sum_{\alpha \in  P^c_i} \hyp_{t_i}(\delta,\alpha) + O(c^{m-1}_i).
\end{equation}
The $O$-error term depends on $L$ (hence $W$) and $\delta$, but is independent of $i$.\\

\noindent {\bf Case 2.}
\begin{equation} \label{eq : Case 2a} \begin{array}{l} \displaystyle{ \hyp_{t_i}(\delta) = \sum_{h=1}^{m-1} \hyp_{t_i}(\delta,\gamma_i^h) + \hyp_{t_i}(\delta,\gamma_{i+1}^0)}\\
\displaystyle{ \hspace{4cm} + \sum_{\alpha \in P^c_i} \hyp_{t_i}(\delta,\alpha) + O(c^0_{i+1}).}
\end{array}
\end{equation}
In this case, the $O$--error term depends on $L$ (which depends only on $S$) and $\delta$, but is again independent of $i$.

We will appeal to the various estimates previously made, specifically those in \S\ref{sec : actinterval}, \S\ref{subsec : hyp est 1}, and \S\ref{subsec : bdd length pants}.  
The first estimate involves the contributions to (\ref{eq : Case 1a}) and (\ref{eq : Case 2a}) from the curves of $ P^c_i$.

\begin{lem} For all $i$ sufficiently large and $\alpha \in P^c_i$, we have 
\[ \hyp_{t_i}(\delta,\alpha) = \left\{ \begin{array}{ll}  O(c_i^{m-1}) & \mbox{ in Case 1}\\
O(c_{i+1}^m) & \mbox{ in Case 2.}  \end{array}  \right.\]
Here the implicit constant in the $O$--notation depends on $\delta$.
\end{lem}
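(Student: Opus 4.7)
The plan is to estimate each of the three factors appearing in the defining formula
$$\hyp_{t_i}(\delta,\alpha) = i(\delta,\alpha)\bigl[w_{t_i}(\alpha) + \tw_\alpha(\delta,X_{t_i})\hyp_{t_i}(\alpha)\bigr]$$
using the results of Section~\ref{subsec : bdd length pants} and the hyperbolic geometry from Section~\ref{subsec : hyp est 1}, observing that the right-hand bracket is uniformly bounded while the intersection number carries all of the growth.

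First I would invoke Theorem~\ref{thm : abds}. In Case~1 the pants decomposition $P_i$ contains $\sigma_k$ with $k = im$, so $A(0,k+m-1) = A(0,im+m-1) = c_i^{m-1}$, and for all $\alpha \in P_i^c$ and all sufficiently large $i$,
$$i(\delta,\alpha) \stackrel{*}{\prec} c_i^{m-1} \quad \text{and} \quad \tw_\alpha(\delta,X_{t_i}) \leq C.$$
In Case~2 the same theorem is applied with $k = im+1$, giving the analogous bound with $c_i^{m-1}$ replaced by $A(0,(i+1)m) = c_{i+1}^0$ (which is absorbed into $c_{i+1}^m$).

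Next I would control the hyperbolic quantities $w_{t_i}(\alpha)$ and $\hyp_{t_i}(\alpha)$. Since $\alpha \in P_i$ is part of an $L$--bounded length pants decomposition, $\hyp_{t_i}(\alpha) \leq L$. Since $\alpha \notin \{\gamma_k\}_{k=0}^\infty$, Lemma~\ref{lem : lower bound on sequence curves} gives a uniform lower bound $\hyp_{t_i}(\alpha) \geq \epsilon$ once $t_i$ is large. The Collar Lemma (Lemma~\ref{lem : collar lemma}) then provides
$$w_{t_i}(\alpha) \stackrel{+}{\asymp} 2\log\bigl(1/\hyp_{t_i}(\alpha)\bigr) = O(1),$$
with implicit constant depending only on $\epsilon$ and $L$.

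Assembling these, the bracket satisfies
$$w_{t_i}(\alpha) + \tw_\alpha(\delta,X_{t_i})\hyp_{t_i}(\alpha) \leq O(1) + C\cdot L = O(1),$$
and multiplying by the bound on $i(\delta,\alpha)$ yields
$$\hyp_{t_i}(\delta,\alpha) = O(A(0,k+m-1)),$$
which specializes to $O(c_i^{m-1})$ in Case~1 and $O(c_{i+1}^m)$ in Case~2. There is no substantive obstacle here beyond assembling the previously established ingredients; the only delicate point is noticing that the lower bound on the length of $\alpha$ furnished by Lemma~\ref{lem : lower bound on sequence curves} is precisely what prevents $w_{t_i}(\alpha)$ from growing and so keeps the contribution from the ``non-sequence'' pants curves asymptotically negligible compared to the contributions from $\sigma_i^0$ or $\sigma_i^1 \cup \{\gamma_{i+1}^0\}$ that will dominate in~\eqref{eq : Case 1a} and~\eqref{eq : Case 2a}.
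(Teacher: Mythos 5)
Your proposal is correct and follows essentially the same route as the paper's proof: both expand $\hyp_{t_i}(\delta,\alpha)$ via (\ref{eq : hypcollar}), bound the width, length, and twist terms uniformly using Lemma~\ref{lem : lower bound on sequence curves} together with Theorem~\ref{thm : abds}, and let the intersection-number bound from Theorem~\ref{thm : abds} (with $k=im$ in Case 1 and $k=im+1$ in Case 2) carry the growth. Your explicit use of the Collar Lemma to bound $w_{t_i}(\alpha)$ just spells out what the paper leaves implicit, so no further comment is needed.
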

\begin{proof}
From (\ref{eq : hypcollar}) we have
\[ \hyp_{t_i}(\delta,\alpha)  = \left(w_{t_i}(\alpha) + \tw_{\alpha}(\delta,X_{t_i})\hyp_{t_i}(\alpha)\right) i(\delta,\alpha).\]
By Lemma~\ref{lem : lower bound on sequence curves} and Theorem~\ref{thm : abds}, every term on the right except $i(\delta,\alpha)$ is bounded, depending on $\delta$ and $L$ (and the resulting constants from those statements).  The lemma follows.
\end{proof}
\begin{cor} \label{cor : reduced to sigmas}
For all $i$ sufficiently large we have
\begin{eqnarray}
\label{eq : final hyp est case 1} & & \hyp_{t_i}(\delta) = \sum_{h=0}^{m-1} \hyp_{t_i}(\delta,\gamma_i^h) +  O(c^{m-1}_i) \hspace{2.6cm} \mbox{ in Case 1}\\
\label{eq : final hyp est case 2} & & \hyp_{t_i}(\delta) = \sum_{h=1}^{m-1} \hyp_{t_i}(\delta,\gamma_i^h) + \hyp_{t_i}(\delta,\gamma_{i+1}^0) + O(c^0_{i+1}) \mbox{ in Case 2}.
\end{eqnarray}
\end{cor}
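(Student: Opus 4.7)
The plan is to derive the corollary by direct substitution of the preceding lemma into the identities \eqref{eq : Case 1a} and \eqref{eq : Case 2a}. First I would observe that $P^c_i$ consists of what remains of a pants decomposition of $S$ after removing $\sigma_i^0$ (in Case~1) or $\sigma_i^1$ (in Case~2). Since a pants decomposition of $S$ contains exactly $\xi(S)$ curves and each $\sigma_i^h$ contains $m$ curves, the cardinality $|P^c_i| \leq \xi(S) - m$ is bounded uniformly in $i$.

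Next I would apply the previous lemma termwise to the sum $\sum_{\alpha \in P^c_i} \hyp_{t_i}(\delta,\alpha)$. In Case~1, each term is $O(c_i^{m-1})$; in Case~2, each term is of the order of the error term already appearing in \eqref{eq : Case 2a}. Absorbing the uniform multiplicity $|P^c_i| \leq \xi(S) - m$ into the implicit constant of the $O$--notation (permissible since $\xi(S) - m$ is independent of $i$), the full sum $\sum_{\alpha \in P^c_i} \hyp_{t_i}(\delta,\alpha)$ is itself of the same order as a single term, namely $O(c_i^{m-1})$ in Case~1 and the corresponding quantity in Case~2.

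Finally I would substitute this aggregate bound back into \eqref{eq : Case 1a} and \eqref{eq : Case 2a}; in each case the new $O$--error and the existing $O$--error in those formulas are of matching orders and combine into a single $O$--error, producing \eqref{eq : final hyp est case 1} and \eqref{eq : final hyp est case 2}. All implicit constants depend only on $L$, $\delta$, and $R(\mu)$, each of which is uniform in $i$ by the hypotheses of the lemma and of Theorem~\ref{thm : abds}. No serious obstacle arises: the corollary is essentially a clean repackaging of the lemma together with \eqref{eq : Case 1a} and \eqref{eq : Case 2a} in a form that is convenient for the proof of Theorem~\ref{thm : reduction 2}, where contributions from curves outside $\sigma_i^0$ or $\sigma_i^1$ need to be discarded to isolate the dominant terms.
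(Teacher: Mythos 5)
Your proposal is correct and is essentially the paper's argument: the corollary is obtained by substituting the preceding lemma into \eqref{eq : Case 1a} and \eqref{eq : Case 2a}, using that $|P^c_i| \leq \xi(S)-m$ is bounded independently of $i$ so the sum over $P^c_i$ is absorbed into the existing $O$--error of matching order. (Note that in Case 2 the per-term bound coming from Theorem~\ref{thm : abds} with $k=im+1$ is $O(A(0,(i+1)m)) = O(c^0_{i+1})$, which is what you use and what the corollary's error term requires.)
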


We write the remaining terms using the notation set in the previous section as
\[ \hyp_{t_i}(\delta,\gamma_j^h) = U_j^h(t_i, \delta) i(\delta,\gamma_j^h). \]
Estimates for these terms are given in the next four lemmas.

\begin{lem} \label{lem : Uih}
For all sufficiently large $i$ and all $1 < h \leq m-1$, we have
\[ U_i^h(t_i,\delta) \stackrel + \asymp 4\Big(\sum_{j=1}^i\log\big(\tfrac{e_j^0}{e_{j-1}^h}\big) + t_i - \bar a_i^0 \Big).  \]
In Case 1, this also holds for $h =1$.
\end{lem}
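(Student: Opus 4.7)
The plan is to expand the defining formula
\[ U_i^h(t_i,\delta) = w_{t_i}(\gamma_i^h) + \tw_{\gamma_i^h}(\delta, X_{t_i})\hyp_{t_i}(\gamma_i^h),\]
control the second term via Theorem~\ref{thm : rafitw}, apply Lemma~\ref{lem : linear growth of width} to the width, and then use the explicit formulas from \S\ref{sec : actinterval} for the left endpoints and midpoints of the active intervals to rewrite everything in terms of the $e_j^h$.

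The first step is to verify that $t_i$ lies in the first half $[\underline{a}_i^h, a_i^h]$ of the active interval $J_i^h$, so that Lemma~\ref{lem : linear growth of width} applies. In Case 1 we have $t_i \in [\bar a_i^0, \bar a_i^0 + W]$ for a fixed $W$, while $t_i \leq \bar a_i^1$ in Case 2; applying Lemma~\ref{lem : af in Jl} with $k = im$ and $l = im+h$ shows $\underline{a}_i^h \ll \bar a_i^0$, and applying it with $k = im+1$ and $l = im+h$ shows $\bar a_i^1 \ll a_i^h$ for $h > 1$. Thus for $h > 1$ (in both cases) and for $h = 1$ (in Case 1 only, using that $t_i - \bar a_i^0$ is bounded so that $t_i \ll a_i^1$), we have $t_i \in [\underline{a}_i^h, a_i^h]$ for $i$ large. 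In particular $t_i \in J_i^h$, hence $\hyp_{t_i}(\gamma_i^h) \leq \epsilon_0$, and Theorem~\ref{thm : rafitw} gives
\[ \tw_{\gamma_i^h}(\delta, X_{t_i})\hyp_{t_i}(\gamma_i^h) = O(1) \]
with implicit constant depending on $\delta$. Combining with Lemma~\ref{lem : linear growth of width} yields
\[ U_i^h(t_i,\delta) \stackrel{+}{\asymp} 4(t_i - \underline{a}_i^h).\]

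It remains to expand $t_i - \underline{a}_i^h$ using the formulas from Section~\ref{sec : actinterval}. Writing $t_i - \underline{a}_i^h = (t_i - \bar a_i^0) + (\bar a_i^0 - \underline{a}_i^h)$, we compute the second term using \eqref{eq : lJ}, \eqref{eq : rJ}, and \eqref{eq : mJ}:
\begin{align*}
\bar a_i^0 - \underline{a}_i^h &\stackrel{+}{\asymp} \Bigl(\sum_{j=0}^{i-1} \log be_j^0 + \log e_i^0 - \tfrac{\log x_0}{2}\Bigr) - \Bigl(\sum_{j=0}^{i-1}\log be_j^h - \tfrac{\log x_h}{2}\Bigr)\\
&= \log e_0^0 + \sum_{j=1}^{i}\log e_j^0 - \sum_{j=1}^i \log e_{j-1}^h + \tfrac{1}{2}\log(x_h/x_0)\\
&\stackrel{+}{\asymp} \sum_{j=1}^{i} \log\bigl(\tfrac{e_j^0}{e_{j-1}^h}\bigr),
\end{align*}
where the constants $\log e_0^0$ and $\tfrac12 \log(x_h/x_0)$ are absorbed into the additive error. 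Substituting back gives the desired equation.

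The only mildly delicate point is the dichotomy between Case 1 and Case 2 for the index $h = 1$: in Case 2, $t_i$ may lie past the midpoint $a_i^1$ of $J_i^1$, where Lemma~\ref{lem : linear growth of width} no longer describes $w_{t_i}(\gamma_i^1)$ and where Theorem~\ref{thm : rafitw} produces the much larger twist $e_i^1$ instead of $O(1)$. This is precisely why $h = 1$ is omitted from the Case 2 conclusion and will be handled separately when Theorem~\ref{thm : reduction 2} is proved. The remaining steps are routine bookkeeping and do not present a genuine obstacle.
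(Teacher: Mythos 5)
Your proof is correct and follows essentially the same route as the paper: place $t_i$ in the first half $[\underline{a}_i^h,a_i^h]$ of $J_i^h$ via Lemma~\ref{lem : af in Jl}, bound the twist contribution by $O(1)$ using Theorem~\ref{thm : rafitw}, apply Lemma~\ref{lem : linear growth of width} to get $U_i^h(t_i,\delta)\stackrel{+}{\asymp}4(t_i-\underline{a}_i^h)$, and then expand $\bar a_i^0-\underline{a}_i^h$ via \eqref{eq : lJ}, \eqref{eq : rJ}, \eqref{eq : mJ}. Your write-up in fact spells out the interval-ordering verification and the case $h=1$ dichotomy in slightly more detail than the paper does, but the argument is the same.
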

\begin{proof}
Note that for $1 < h \leq m-1$ (as well as $h =1$ in Case 1), we have $\underline{a}_i^h < t_i < a_i^h$, for all sufficiently large $i$.  Therefore, $\hyp_t(\gamma_i^h) \leq \epsilon_0 < L$ and so Theorem~\ref{thm : rafitw} implies
\[ \tw_{\gamma_i^h}(\delta,X_{t_i})\hyp_{t_i}(\gamma_i^h) \stackrel * \prec 1. \]
On the other hand by Lemma~\ref{lem : linear growth of width}
\[ w_{t_i}(\gamma_i^h) \stackrel + \asymp 4(t_i- \underline{a}_i^h) =4(\bar a_i^0 - \underline{a}_i^h + (t- \bar a_i^0)), \]
since $\underline{a}_i^h \ll \bar a_i^0 \leq t_i \leq a_i^h$ (for sufficiently large $i$ and all $1 <  h \leq m-1$ in both cases, and also $h=1$ in Case 1) by Lemma~\ref{lem : af in Jl}.  
The lemma now follows from this by substituting in from (\ref{eq : lJ}), (\ref{eq : rJ}), and (\ref{eq : mJ}) and dropping constants.
\end{proof}

\begin{lem} \label{lem : Ui0}
Suppose $\{t_i\}$ falls into Case 1 with constant $W$.  Then for all sufficiently large $i$, we have
\[ U_i^0(t_i,\delta) \stackrel * \asymp e_i^0\]
where the multiplicative error depends on $W$, $\delta$, (and all resulting constants), but not $i$.
\end{lem}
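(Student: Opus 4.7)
The plan is to evaluate the two terms in
\[ U_i^0(t_i,\delta)=w_{t_i}(\gamma_i^0)+\tw_{\gamma_i^0}(\delta,X_{t_i})\hyp_{t_i}(\gamma_i^0) \]
separately, showing that the twist contribution dominates and is comparable to $e_i^0$ while the width is a bounded remainder.

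First I would nail down the length of $\gamma_i^0$ at time $t_i$. By Lemma~\ref{lem : gammak bounded at under ak}, $\hyp_{\bar a_i^0}(\gamma_i^0)\stackrel{*}{\asymp}1$. Since $0\leq t_i-\bar a_i^0\leq W$ by the Case~1 hypothesis, Wolpert's estimate (Lemma~\ref{lem : 2 exponential growth}) gives $\hyp_{t_i}(\gamma_i^0)\stackrel{*}{\asymp}1$ with multiplicative constant $\exp(2W)$. The Collar Lemma in the form (\ref{eq : w}) then immediately yields
\[ w_{t_i}(\gamma_i^0)\stackrel{+}{\asymp} 2\log\!\bigl(1/\hyp_{t_i}(\gamma_i^0)\bigr)=O(1), \]
with the implicit constant depending only on $W$.

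Next I would estimate the twist. Because $\bar a_i^0$ is the right endpoint of the active interval $J_i^0=[\underline a_i^0,\bar a_i^0]$ whose midpoint is the balance time $a_i^0$, we have $t_i\geq\bar a_i^0>a_i^0$ for all sufficiently large $i$. The post-balance case of Theorem~\ref{thm : rafitw} therefore applies and gives
\[ \tw_{\gamma_i^0}(\delta,X_{t_i})=e_i^0\pm O\!\Bigl(\tfrac{1}{\hyp_{t_i}(\gamma_i^0)}\Bigr)=e_i^0\pm O(1), \]
using $\hyp_{t_i}(\gamma_i^0)\stackrel{*}{\asymp}1$ from the previous step. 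Multiplying by $\hyp_{t_i}(\gamma_i^0)$ gives $\tw_{\gamma_i^0}(\delta,X_{t_i})\hyp_{t_i}(\gamma_i^0)\stackrel{*}{\asymp} e_i^0$, with multiplicative constant depending on $W$ (and on $\delta$ only through the additive constant $G$ of Lemma~\ref{lem : all twists similar} hidden inside Theorem~\ref{thm : rafitw}).

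Finally I would combine: since $e_i^0\to\infty$ by the growth condition (\ref{eq : ek}), the $O(1)$ contribution from $w_{t_i}(\gamma_i^0)$ is negligible compared to the $\Theta(e_i^0)$ twist contribution, and for $i$ large enough we conclude
\[ U_i^0(t_i,\delta)\stackrel{*}{\asymp}e_i^0, \]
with the implicit constant depending on $W$ and $\delta$ but not on $i$. The only potential obstacle is checking that the constants from Theorem~\ref{thm : rafitw} and from Lemma~\ref{lem : 2 exponential growth} can indeed be absorbed uniformly, but since $W$ is fixed and $\delta$ is fixed, the combined constant is independent of $i$, so this is not really an obstacle—it is a book-keeping step.
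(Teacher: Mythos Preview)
Your proof is correct and follows essentially the same approach as the paper's: both bound $\hyp_{t_i}(\gamma_i^0)$ above and below using Lemma~\ref{lem : gammak bounded at under ak} together with Wolpert's estimate (Lemma~\ref{lem : 2 exponential growth}), deduce that $w_{t_i}(\gamma_i^0)$ is bounded via the Collar Lemma, and then apply the post-balance case of Theorem~\ref{thm : rafitw} to obtain $\tw_{\gamma_i^0}(\delta,X_{t_i})\stackrel{*}{\asymp}e_i^0$. Your write-up is somewhat more explicit about why $t_i>a_i^0$ and about how the bounded width is absorbed, but the argument is the same.
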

\begin{proof}
Because $t_i - \bar a_i^0 \leq W$, $\hyp_{t_i}(\gamma_i^0)$ is bounded above and below by 
Lemma~\ref{lem : gammak bounded at under ak} and Lemma~\ref{lem : 2 exponential growth},  the bound depending on $W$.  By Lemma~\ref{lem : collar lemma}, $w_{t_i}(\gamma_i^0)$ is also bounded.  To complete the proof we note that by Theorem~\ref{thm : rafitw} 
\[ \tw_{\gamma_i^0}(\delta,t_i) \stackrel * \asymp e_i^0.\]
\end{proof}

\begin{lem} \label{lem : Ui+10} Suppose $\{t_i\}$ falls into Case 2.  Then for all large $i$, we have
\[ U_{i+1}^0(t_i,\delta) \stackrel + \asymp 4(t - \bar a_i^0). \]
\end{lem}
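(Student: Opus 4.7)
The plan is to unpack the definition $U_{i+1}^0(t_i,\delta)=w_{t_i}(\gamma_{i+1}^0)+\tw_{\gamma_{i+1}^0}(\delta,X_{t_i})\hyp_{t_i}(\gamma_{i+1}^0)$, estimate each summand separately, and show that the width term dominates while the twist term is bounded. First, I would locate $t_i$ relative to the active interval of $\gamma_{i+1}^0$. By Lemma~\ref{lem : af af+m}, $\underline{a}_{i+1}^0\stackrel{+}{\asymp}\bar a_i^0$ for all sufficiently large $i$, and by Lemma~\ref{lem : af in Jl}, $\bar a_i^0\ll a_{i+1}^0$. Combined with the hypothesis that $t_i\in[\bar a_i^0,\bar a_i^1]$, this places $t_i$ in the first half $[\underline{a}_{i+1}^0,a_{i+1}^0]$ of $J_{i+1}^0$ for $i$ large.

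With $t_i\leq a_{i+1}^0$, the first (pre-balance) branch of Theorem~\ref{thm : rafitw} applies to give
\[
\tw_{\gamma_{i+1}^0}(\delta,X_{t_i})\,\hyp_{t_i}(\gamma_{i+1}^0)=O(1),
\]
since $\tw_{\gamma_{i+1}^0}(\delta,X_{t_i})=O(1/\hyp_{t_i}(\gamma_{i+1}^0))$ and the $\hyp$ cancels. (If one prefers, this bound can be obtained for the reference curve $\delta_0$ and transported to $\delta$ via Lemma~\ref{lem : all twists similar}, since $\hyp_{t_i}(\gamma_{i+1}^0)\leq\epsilon_0$.) For the width term, Lemma~\ref{lem : linear growth of width} gives
\[
w_{t_i}(\gamma_{i+1}^0)\stackrel{+}{\asymp}4\bigl(t_i-\underline{a}_{i+1}^0\bigr),
\]
and replacing $\underline{a}_{i+1}^0$ by $\bar a_i^0$ using Lemma~\ref{lem : af af+m} yields $w_{t_i}(\gamma_{i+1}^0)\stackrel{+}{\asymp}4(t_i-\bar a_i^0)$. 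Because $t_i-\bar a_i^0\to\infty$ in Case~2, this width term dominates the $O(1)$ twist contribution, and summing gives the claimed additive estimate.

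I do not anticipate any serious obstacle here; the main point is to verify that the pre-balance branch of Theorem~\ref{thm : rafitw} is the one that applies (i.e.~$t_i\leq a_{i+1}^0$), which follows immediately from the active-interval bookkeeping established in Section~\ref{sec : actinterval}. The estimate is in fact essentially parallel to the middle case of Lemma~\ref{lem : Uih}, with the role of $\gamma_i^h$ played by $\gamma_{i+1}^0$ and the role of the ``left endpoint'' $\underline{a}_i^h$ played by $\underline{a}_{i+1}^0\stackrel{+}{\asymp}\bar a_i^0$.
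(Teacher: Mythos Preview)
Your proposal is correct and follows essentially the same approach as the paper, which simply says the proof is ``almost identical to the proof of Lemma~\ref{lem : Uih}'' and omits it. You have correctly filled in those details: locating $t_i$ in the first half of $J_{i+1}^0$ (here the Case~2 hypothesis $t_i-\bar a_i^0\to\infty$ is what guarantees $t_i\geq\underline{a}_{i+1}^0$, since $\underline{a}_{i+1}^0-\bar a_i^0$ is only bounded, not nonpositive), applying the pre-balance branch of Theorem~\ref{thm : rafitw} to bound the twist contribution, and invoking Lemma~\ref{lem : linear growth of width} together with $\underline{a}_{i+1}^0\stackrel{+}{\asymp}\bar a_i^0$ for the width.
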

\begin{proof}
This is almost identical to the proof of Lemma~\ref{lem : Uih}, so we omit it.\end{proof}

For the only remaining situation, a very coarse estimate will suffice.
\begin{lem} \label{lem : Ui1}
Suppose $\{t_i\}$ falls into Case 2.  Then
\[ U_i^1(t_i,\delta) \to \infty.\]
\end{lem}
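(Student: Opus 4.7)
The plan is to split Case 2 into two sub-cases according to whether $t_i$ lies in the first or second half of the active interval $J_i^1$, and in each sub-case show that one of the two summands defining $U_i^1(t_i,\delta)$ diverges (or that their sum does). Both summands of
\[ U_i^1(t_i,\delta) = w_{t_i}(\gamma_i^1) + \tw_{\gamma_i^1}(\delta,X_{t_i})\hyp_{t_i}(\gamma_i^1) \]
are nonnegative, so it suffices to produce such a divergent lower bound. The key inputs will be Lemma~\ref{lem : linear growth of width} (linear growth of the width on the first half of an active interval), Theorem~\ref{thm : rafitw} (the explicit estimate on twisting), Lemma~\ref{lem : collar lemma} (the collar lemma relating width and hyperbolic length), and the ordering in Lemma~\ref{lem : af in Jl}.

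First I would treat the sub-case $\bar a_i^0 \leq t_i \leq a_i^1$. Here Lemma~\ref{lem : linear growth of width} applied to $\gamma_i^1$ gives
\[ w_{t_i}(\gamma_i^1) \stackrel{+}{\asymp} 4(t_i - \underline a_i^1) \geq 4(\bar a_i^0 - \underline a_i^1) - O(1). \]
From Lemma~\ref{lem : af in Jl} we have $\underline a_i^1 \ll \bar a_i^0$, so $\bar a_i^0 - \underline a_i^1 \to \infty$ as $i \to \infty$, and hence $w_{t_i}(\gamma_i^1) \to \infty$. Since the twist contribution is nonnegative, this already forces $U_i^1(t_i,\delta) \to \infty$.

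Next I would handle the sub-case $a_i^1 \leq t_i \leq \bar a_i^1$, where $t_i$ lies past the balance time. Theorem~\ref{thm : rafitw} (applicable since $i$ is large) gives
\[ \tw_{\gamma_i^1}(\delta,X_{t_i})\hyp_{t_i}(\gamma_i^1) = e_i^1\,\hyp_{t_i}(\gamma_i^1) \pm O(1), \]
while the collar lemma \eqref{eq : w} gives $w_{t_i}(\gamma_i^1) \stackrel{+}{\asymp} 2\log(1/\hyp_{t_i}(\gamma_i^1))$. Writing $x = \hyp_{t_i}(\gamma_i^1)$ (which is bounded since $t_i \in J_i^1$), we obtain
\[ U_i^1(t_i,\delta) \stackrel{+}{\asymp} 2\log(1/x) + e_i^1 x, \]
up to a uniform additive error. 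A one-variable calculus exercise shows that for any $c > 0$ the function $f_c(x) = 2\log(1/x) + cx$ has minimum value $2\log(c/2) + 2$, attained at $x = 2/c$. Applying this with $c = e_i^1$ gives $U_i^1(t_i,\delta) \geq 2\log(e_i^1/2) + 2 - O(1)$, and since $e_i^1 \to \infty$ we again conclude $U_i^1(t_i,\delta) \to \infty$.

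I do not anticipate any substantial obstacle here; the main thing to verify carefully is that the split at $a_i^1$ is exhaustive for Case 2 (which follows from $t_i \in [\bar a_i^0,\bar a_i^1]$ together with $\bar a_i^0 < a_i^1 < \bar a_i^1$, coming from Lemma~\ref{lem : af in Jl}), and that Theorem~\ref{thm : rafitw} indeed applies with index $k = im+1$ once $i$ is large enough, which it does by that theorem's hypothesis.
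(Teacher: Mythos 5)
Your argument is correct and follows essentially the same route as the paper: the same split of $[\bar a_i^0,\bar a_i^1]$ at the balance time $a_i^1$, with Lemma~\ref{lem : linear growth of width} handling the first half and the combination of \eqref{eq : w} with Theorem~\ref{thm : rafitw} handling the second. The only (harmless) difference is cosmetic: past the balance time the paper argues by the soft dichotomy ``$w_{t_i}(\gamma_i^1)$ unbounded or bounded below in length,'' whereas you package the same two estimates into the explicit lower bound $\min_{x>0}\bigl(2\log(1/x)+e_i^1 x\bigr)=2\log(e_i^1/2)+2$, which is a slightly more quantitative version of the identical idea.
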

\begin{proof} Since we are in Case 2,  $t_i - \underline{a}_i^1 \geq t_i-\bar a_i^0 \to \infty$.  Then either $t_i \leq a_i^1$ or $a_i^1 \leq t_i \leq \bar a_i^1$.  In the former case, Lemma~\ref{lem : linear growth of width} shows that $w_{t_i}(\gamma_i^1) \to \infty$.  In the latter case, either $w_{t_i}(\gamma_i^1) \to \infty$, and we are done, or else $w_{t_i}(\gamma_i^1)$ is bounded.  If $w_{t_i}(\gamma_i^1)$ is bounded, then (\ref{eq : w}) implies $\hyp_{t_i}(\gamma_i^1)$ is bounded below.  Since $e_i^1 \to \infty$, Theorem~\ref{thm : rafitw} implies that $\tw_{\gamma_i^1}(\delta,\gamma_i^1) \to \infty$, completing the proof.
\end{proof} 
From these, we deduce the following
\begin{cor}
If $\{t_i\}$ falls into Case 1 (and hence $t_i-\bar a_i^0 \leq W$), then for all $i$ sufficiently large and $1 \leq h \leq m-1$ we have
\begin{eqnarray}
\label{eq : case 1 mult est 1} \hyp_{t_i}(\delta,\gamma_i^h) & \stackrel * \asymp & \Big(\sum_{j=1}^i\log\big(\tfrac{e_j^0}{e_{j-1}^h} \big)\Big) \prod_{j=0}^{i-1} be_j^h, \mbox{ and }\\
\label{eq : case 1 mult est 2} \hyp_{t_i}(\delta,\gamma_i^0) & \stackrel * \asymp & \prod_{j=0}^i be_j^0
\end{eqnarray}
If $\{t_i\}$ falls into Case 2 (and hence $t_i - \bar a_i^0 \to \infty$), then for all $i$ sufficiently large and $2 \leq h \leq m-1$ we have
\begin{eqnarray}
\label{eq : case 2 mult est 1} \hyp_{t_i}(\delta,\gamma_i^h) & \stackrel * \asymp &\Big(\sum_{j=1}^i\log\big(\tfrac{e_j^0}{e_{j-1}^h}\big) + t_i - \bar a_i^0 \Big) \prod_{j=0}^{i-1} be_j^h, \mbox{ and }\\
\label{eq : case 2 mult est 2} \hyp_{t_i}(\delta,\gamma_{i+1}^0) & \stackrel * \asymp & (t_i - \bar a_i^0)\prod_{j=0}^i be_j^0
\end{eqnarray}
\end{cor}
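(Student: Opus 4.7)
The plan is to apply the factorization $\hyp_{t_i}(\delta,\gamma_j^h)=U_j^h(t_i,\delta)\,i(\delta,\gamma_j^h)$ from the definition in (\ref{eq : hypcollar}), then insert the asymptotics for $U_j^h(t_i,\delta)$ supplied by Lemmas~\ref{lem : Uih}--\ref{lem : Ui+10} together with the intersection-number estimate $i(\delta,\gamma_k)\stackrel{*}{\asymp}A(0,k)$ coming from Lemma~\ref{lem : int estimate general}. In the notation $\gamma_i^h=\gamma_{im+h}$, this last estimate reads $i(\delta,\gamma_i^h)\stackrel{*}{\asymp}c_i^h$ and $i(\delta,\gamma_{i+1}^0)\stackrel{*}{\asymp}c_{i+1}^0$; up to absorbable multiplicative constants these agree with the products $\prod_{j=0}^{i-1}be_j^h$ and $\prod_{j=0}^ibe_j^0$ appearing on the right of (\ref{eq : case 1 mult est 1})--(\ref{eq : case 2 mult est 2}). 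All multiplicative constants below are allowed to depend on $\delta$ and (in Case 1) on $W$.

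Equation (\ref{eq : case 1 mult est 2}) follows immediately: Lemma~\ref{lem : Ui0} gives $U_i^0(t_i,\delta)\stackrel{*}{\asymp}e_i^0$, so multiplying by $i(\delta,\gamma_i^0)\stackrel{*}{\asymp}c_i^0$ yields $\hyp_{t_i}(\delta,\gamma_i^0)\stackrel{*}{\asymp}e_i^0\prod_{j=0}^{i-1}be_j^0\stackrel{*}{\asymp}\prod_{j=0}^ibe_j^0$. The other three estimates require promoting the additive bounds in Lemmas~\ref{lem : Uih} and \ref{lem : Ui+10} to multiplicative ones. For (\ref{eq : case 1 mult est 1}) I would apply Lemma~\ref{lem : Uih} in Case 1 with $1\leq h\leq m-1$; since $|t_i-\bar a_i^0|\leq W$ is bounded, it suffices to show that $S_i^h:=\sum_{j=1}^i\log(e_j^0/e_{j-1}^h)$ diverges. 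Because the indices $jm$ and $(j-1)m+h$ differ by $m-h\geq 1$, the growth hypothesis (\ref{eq : ek}) forces $\log(e_j^0/e_{j-1}^h)\geq(m-h)\log a>0$, so $S_i^h$ grows at least linearly in $i$ and the bounded additive error is absorbed into a multiplicative constant.

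The Case 2 estimates follow the same pattern. For (\ref{eq : case 2 mult est 1}) with $2\leq h\leq m-1$, Lemma~\ref{lem : Uih} together with $S_i^h\to\infty$ (by the same calculation) and the Case 2 hypothesis $t_i-\bar a_i^0\to\infty$ ensures that the full parenthesized expression diverges, so the additive error once again becomes a multiplicative one before multiplication by $c_i^h$. For (\ref{eq : case 2 mult est 2}), Lemma~\ref{lem : Ui+10} gives $U_{i+1}^0(t_i,\delta)\stackrel{+}{\asymp}4(t_i-\bar a_i^0)$, and $t_i-\bar a_i^0\to\infty$ by hypothesis, so the same absorption promotes this to a multiplicative estimate, which is then multiplied by $c_{i+1}^0$ to produce the claimed asymptotic.

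The only technical subtlety --- the main obstacle, such as it is --- is this additive-to-multiplicative conversion: in each of the three sum-type estimates I need either $S_i^h$ or $t_i-\bar a_i^0$ (or both) to tend to infinity. This is precisely why the growth condition (\ref{eq : ek}) with $a>1$ is essential, and why the corollary separately treats $h=0$ in Case 1 (handled by Lemma~\ref{lem : Ui0}, which already delivers a multiplicative estimate directly) and excludes $h=1$ from the Case 2 statements (where only $U_i^1\to\infty$ is available from Lemma~\ref{lem : Ui1}, and the $\gamma_i^1$ contribution will be retained as a leading term in the subsequent proof of Theorem~\ref{thm : reduction 2}).
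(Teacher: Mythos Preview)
Your proposal is correct and follows exactly the paper's approach: factor $\hyp_{t_i}(\delta,\gamma_j^h)=U_j^h(t_i,\delta)\,i(\delta,\gamma_j^h)$, use Lemma~\ref{lem : int estimate general} for the intersection factor, and Lemmas~\ref{lem : Uih}, \ref{lem : Ui0}, \ref{lem : Ui+10} for the $U$-factor. In fact you supply more detail than the paper, which simply cites the lemmas; your explicit justification of the additive-to-multiplicative conversion via divergence of $S_i^h$ (from $e_j^0/e_{j-1}^h\geq a^{m-h}$) and of $t_i-\bar a_i^0$ is exactly what makes the argument work, and your observation that Lemma~\ref{lem : Ui1} plays no role in this corollary is accurate.
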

The multiplicative constants depend on $W$ (in Case 1) and $\delta$, and all constants that depend on these.
\begin{proof}  By Lemma~\ref{lem : int estimate general}, there exists $\kappa(\delta) > 0$ so that
\[ i(\delta,\gamma_i^h) \stackrel * \asymp_{\kappa(\delta)} A(0,im+h) = c_i^h = \prod_{j=0}^{i-1} be_i^h.\]
Since
\[ \hyp_{t_i}(\delta,\gamma_j^h) = U_j^h(t_i,\delta) i(\delta,\gamma_j^h),\]
the corollary follows from Lemmas~\ref{lem : Uih}, \ref{lem : Ui0}, \ref{lem : Ui+10}, and \ref{lem : Ui1}.
\end{proof}

We are now ready for the
\begin{proof}[Proof of Theorem~\ref{thm : reduction 2}]  Observe that from Lemmas~\ref{lem : Uih}, \ref{lem : Ui0}, \ref{lem : Ui+10}, and \ref{lem : Ui1}, we see that for all $h$, as $i \to \infty$ we have
\[ U_i^h(\delta,t_i) \to \infty \mbox{ and } U_{i+1}^0(\delta,t_i) \to \infty.\]
where the second limit is only true in Case 2, and the first is only relevant for $h=0$ in Case 1.
By Lemma~\ref{lem : can ignore delta}, it suffices to prove Theorem~\ref{thm : reduction 2} replacing all terms of the form $U_j^h(t_i)$ with terms $U_j^h(t_i,\delta)$.

The proof will use the estimates (\ref{eq : final hyp est case 1}) and (\ref{eq : final hyp est case 2}) from Corollary~\ref{cor : reduced to sigmas} and we divide it into the two cases.\\

\noindent
{\bf Proof in Case 1.} We look at each term on the right-hand side of (\ref{eq : final hyp est case 1}) and divide by the term $\hyp_{t_i}(\delta,\gamma_i^0)$.  Doing this for the terms $\hyp_{t_i}(\delta,\gamma_i^h)$ for $1 \leq h \leq m-1$, Equations (\ref{eq : case 1 mult est 1}) and (\ref{eq : case 1 mult est 2}) imply
\[ \frac{\hyp_{t_i}(\delta,\gamma_i^h)}{\hyp_{t_i}(\delta,\gamma_i^0)} \stackrel * \asymp be_0^h \Big(\sum_{j=1}^i\log\big(\tfrac{e_j^0}{e_{j-1}^h}\big) \Big)  \prod_{j=1}^i \tfrac{e_{j-1}^h}{e_j^0} =  \log\Big(\prod_{j=1}^i \tfrac{e_j^0}{e_{j-1}^h}\Big)\prod_{j=1}^i \tfrac{e_{j-1}^h}{e_j^0}.\]
Since $jm > (j-1)m+h$ implies $e_j^0 \geq a e_{j-1}^h$, we have $\prod \tfrac{e_{j-1}^h}{e_j^0 }\leq a^{-i}$, and since $a >1$
\[ \lim_{i \to \infty} \frac{\hyp_{t_i}(\delta,\gamma_i^h)}{\hyp_{t_i}(\delta,\gamma_i^0)} = 0.\]
The only remaining term, other than $\hyp_{t_i}(\delta,\gamma_i^0)$, is $O(c_i^{m-1})$.  For this, we note that by definition
\[ c_i^h = \prod_{j=0}^{i-1} be_j^h,\]
and therefore, for the same reason as above, we have
\[ \frac{O(c_i^h)}{\hyp_{t_i}(\gamma_i^0)} \stackrel * \asymp be_0^h \prod_{j=1}^i \tfrac{e_{j-1}^h}{e_j^0} \to 0 \]
as $i \to \infty$.
Now combining all these estimates into (\ref{eq : final hyp est case 1}) we have
\[ \lim_{i \to \infty} \frac{\hyp_{t_i}(\delta)}{\hyp_{t_i}(\delta,\gamma_i^0)} = \lim_{i \to \infty} \sum_{h=0}^{m-1} \frac{\hyp_{t_i}(\delta,\gamma_i^h)}{\hyp_{t_i}(\delta,\gamma_i^0)} +  \tfrac{O(c^{m-1}_i)}{\hyp_{t_i}(\delta,\gamma_i^0)} = 1 \]
This completes the proof since
\[ \hyp_{t_i}(\delta,\gamma_i^0) = U_i^0(t_i,\delta)i(\delta,\gamma_i^0).\]

\medskip

\noindent
{\bf Proof in Case 2.}  We again look at each term on the right-hand side of (\ref{eq : final hyp est case 2}) and this time begin by dividing most of the terms by $\hyp_{t_i}(\delta,\gamma_{i+1}^0)$.  Doing this for the terms $\hyp_{t_i}(\delta,\gamma_i^h)$ for $2 \leq h \leq m-1$, Equations (\ref{eq : case 2 mult est 1}) and (\ref{eq : case 2 mult est 2}), together with the fact that $t_i - \bar a_i^0 \to \infty$, imply
\begin{eqnarray*}
\frac{\hyp_{t_i}(\delta,\gamma_i^h)}{\hyp_{t_i}(\delta,\gamma_{i+1}^0)} & \stackrel * \asymp &  \tfrac{be_0^h}{t_i-\bar a_i^0}\Big(\sum_{j=1}^i\log\big(\tfrac{e_j^0}{e_{j-1}^h} \big) + t_i-\bar a_i^0 \Big)  \prod_{j=1}^i \tfrac{e_{j-1}^h}{e_j^0}\\
& \leq & be_0^h\Big(1+ \log\Big( \prod_{j=1}^i \tfrac{e_j^0}{e_{j-1}^h}\Big) \Big)\prod_{j=1}^i \tfrac{e_{j-1}^h}{e_j^0}.
\end{eqnarray*}
Now as above, the right-hand side tends to $0$ as $i \to \infty$, and hence
\[ \lim_{i \to \infty} \frac{\hyp_{t_i}(\delta,\gamma_i^h)}{\hyp_{t_i}(\delta,\gamma_{i+1}^0)} = 0\]

Next we consider the $O(c_{i+1}^0)$ term of (\ref{eq : final hyp est case 2}).  By definition of $c_{i+1}^0$, together with (\ref{eq : case 2 mult est 2}) and the fact that $t_i - \bar a_i^0 \to \infty$, as $i \to \infty$ we have
\[  \frac{O(c_{i+1}^0)}{\hyp_{t_i}(\delta,\gamma_{i+1}^0)} \stackrel * \asymp \frac{\prod_{j=0}^i be_j^0}{(t_i-\bar a_i^0)\prod_{j=0}^i be_j^0} = \frac{1}{t_i-\bar a_i^0} \to 0.\]

Since $\hyp_{t_i}(\delta,\gamma_i^1) + \hyp_{t_i}(\delta,\gamma_{i+1}^0) > \hyp_{t_i}(\delta,\gamma_{i+1}^0)$, we could have divided by this larger quantity, and the above limits would still be zero.  Plugging into (\ref{eq : final hyp est case 2}) we deduce
\[ \lim_{i \to \infty} \frac{\hyp_{t_i}(\delta)}{\hyp_{t_i}(\delta,\gamma_i^1) + \hyp_{t_i}(\delta,\gamma_{i+1}^0)} = 1.\]
Since
\[ \hyp_{t_i}(\delta,\gamma_i^1) + \hyp_{t_i}(\delta,\gamma_{i+1}^0) = U_i^1(t_i,\delta)i(\delta,\gamma_i^1) + U_{i+1}^0(t_i,\delta)i(\delta,\gamma_{i+1}^0)\]
this completes the proof of Case 2, and hence of the theorem.
\end{proof}

\bibliographystyle{amsalpha}
\bibliography{reference}
\end{document}